\documentclass{article}

\PassOptionsToPackage{sort}{natbib}

\usepackage[preprint]{neurips26}

\usepackage[utf8]{inputenc} %
\usepackage[T1]{fontenc}    %
\usepackage[unicode]{hyperref}  %
\usepackage{url}            %
\usepackage{booktabs}       %
\usepackage{amsfonts}       %
\usepackage{nicefrac}       %
\usepackage{microtype}      %
\usepackage{xcolor}         %
\usepackage{amsmath}
\usepackage{amssymb}
\usepackage{amsthm}
\usepackage{thmtools, thm-restate}
\usepackage{graphicx}
\usepackage{tikz}
\usepackage{nicematrix}
\usepackage{enumitem}
\usepackage{wrapfig}

\usepackage[clean]{macro/main}
\usepackage{macro/theoremsave}

\definecolor{pastalred}{HTML}{e77c8d}
\newcommand{\declarecolor}[2]{\definecolor{#1}{RGB}{#2}\expandafter\newcommand\csname #1\endcsname[1]{\textcolor{#1}{##1}}}
\declarecolor{White}{255, 255, 255}
\declarecolor{Black}{0, 0, 0}
\declarecolor{Maroon}{128, 0, 0}
\declarecolor{Coral}{255, 127, 80}
\declarecolor{Red}{182, 21, 21}
\declarecolor{LimeGreen}{50, 205, 50}
\declarecolor{DarkGreen}{0, 80, 0}
\declarecolor{Purple}{146, 42, 158}
\declarecolor{Navy}{0, 0, 128}
\declarecolor{LightBlue}{84, 101, 202}
\hypersetup{
    colorlinks,
    citecolor=DarkGreen,
    linkcolor=LightBlue}
\usepackage[noabbrev, capitalise, nameinlink]{cleveref}
\usepackage[framemethod=TikZ]{mdframed}
\mdfsetup{%
roundcorner=4pt,
linewidth=1pt}

\declaretheorem[sibling=theorem]{lemma, corollary}
\newtheorem*{theorem*}{Theorem}

\declaretheorem[style=remark, numbered=no]{remark, observation}

\declaretheorem[within=section, name=Theorem]{apptheorem}
\declaretheorem[sibling=apptheorem, name=Lemma]{applemma}

\declaretheorem[sibling=apptheorem, style=remark, name=Remark]{appremark}
\declaretheorem[sibling=apptheorem, style=definition, name=Definition]{appdefinition}

\def\+#1{\mathcal{#1}}
\def\-#1{\mathbb{#1}}

\NiceMatrixOptions{
    notes/style=\alph{#1}
}

\newcommand{\Xcomment}[1]{{}}

\renewcommand{\part}[2]{\frac{\partial #1}{\partial #2}}

\title{Accelerating Min-Max Optimization \\ via Power-Law Stepsizes}

\date{June 1, 2026}
\author{%
  \mdseries
  \begin{minipage}{0.45\textwidth}
    \centering
    \textbf{Yue Wu} \\
    University of Southern California \\
    \texttt{wu.yue@usc.edu}
  \end{minipage}\hfill
  \begin{minipage}{0.45\textwidth}
    \centering
    \textbf{Weiqiang Zheng} \\
    Yale University \\
    \texttt{weiqiang.zheng@yale.edu}
  \end{minipage} \\[2.5em]
  \begin{minipage}{0.45\textwidth}
    \centering
    \textbf{Yang Cai} \\
    Yale University \\
    \texttt{yang.cai@yale.edu}
  \end{minipage}\hfill
  \begin{minipage}{0.45\textwidth}
    \centering
    \textbf{Haipeng Luo} \\
    University of Southern California \\
    \texttt{haipengl@usc.edu}
  \end{minipage}
}

\begin{document}

\newpage

\maketitle

\begin{abstract}
We revisit the convergence guarantees of the Extragradient (EG) method for unconstrained biaffine min-max optimization. It is known that EG with a fixed stepsize achieves a $\Theta(T^{-1/2})$ last-iterate convergence rate, which is slower than the optimal $\mathcal{O}(T^{-1})$ rate attainable by incorporating additional mechanisms such as anchoring.
Motivated by recent advances showing that dynamic stepsizes alone can significantly accelerate gradient descent, we ask whether dynamic stepsizes can similarly accelerate the last-iterate convergence of EG.

We present the first positive result in this direction. Specifically, we provide a deterministic dynamic stepsize schedule that accelerates the convergence rate of EG to $\mathcal{O}(T^{-2/3+\varepsilon})$ for any $\varepsilon > 0$. We also show that this rate is tight when the extrapolation and update steps of EG use the same stepsize. We then show that allowing different stepsizes for the extrapolation and update steps further improves the convergence rate to the near-optimal $\mathcal{O}(T^{-1+\varepsilon})$.
Our analysis reduces stepsize scheduling to an optimization problem, whose solution leads to a stepsize schedule that follows (a discretization of) a power-law distribution.
Our proposed stepsize schedules and analysis extend to other methods, such as Optimistic Gradient (OG), and suggest broader applicability to general min-max optimization problems.
\end{abstract}

\section{Introduction}\label{h1:intro}
Extragradient (EG)~\citep{korpelevich1976extragradient} is a simple iterative algorithm
for solving min-max optimization problems of the form $\min_{x \in \-R^n}\max_{y \in \-R^m} \ell (x,y)$,
where in each iteration $t$, an extrapolation step (with stepsize $\gamma_t$) is followed by an update step (with stepsize $\eta_t$):
\begin{align}
    \bigg\{\begin{aligned}
        x_{t+\frac 12} &= x_t-\gamma_t\nabla_x \ell(x_t,y_t),\\
        y_{t+\frac 12} &= y_t+\gamma_t\nabla_y \ell(x_t,y_t),
    \end{aligned}\qquad
    \bigg\{\begin{aligned}
        x_{t+1} &= x_t-\eta_t\nabla_x \ell(x_{t+\frac 12},y_{t+\frac 12}),\\
        y_{t+1} &= y_t+\eta_t\nabla_y \ell(x_{t+\frac 12},y_{t+\frac 12}),
    \end{aligned} \quad t=0,1,2,\dots
    \label{eq:EG}
\end{align}
The extrapolation step in EG is crucial, as the vanilla gradient descent-ascent
algorithm diverges even for a bilinear $\ell$. 
Despite being proposed 50 years ago, EG (and its variants) remains a standard method for solving large-scale problems in modern optimization, machine learning, and game theory, due to its efficiency, simplicity, and scalability. It is thus of both theoretical and practical interest to analyze the convergence of EG and to tune the parameters to achieve the fastest possible convergence.

In particular, the goal of this work is to understand the fastest convergence rate of EG that can be achieved through stepsize tuning alone. We revisit this problem in the fundamental setting of unconstrained biaffine min-max optimization:
\begin{align}\label{eq:minmax}
    \min_{x \in \-R^n}\max_{y \in \-R^m} \left\{\ell (x,y):= x^\trans Ay + p^\trans y + x^\trans q \right\}, 
\end{align}
where $A \in \-R^{n \times m}$, $p \in \-R^m$, and $q \in \-R^n$. 
A classic result is that, with a suitable constant stepsize schedule $\{\gamma_t =\eta_t = \eta\}$, EG converges to the set of solutions and has an $\calO(T^{-1})$ ergodic convergence rate in terms of the duality gap~\citep{korpelevich1976extragradient, facchinei2003finite}. However, ergodic convergence guarantees only apply to the averaged iterate, rather than to the actual iterate produced by the algorithm. 
This distinction is important in applications where iterates correspond to complex objects, such as outputs of large neural networks, for which averaging may be prohibitively expensive. 
We therefore focus on the non-ergodic last-iterate convergence rate with respect to the gradient norm, which provides a more direct convergence guarantee.  

The rate of last-iterate convergence for EG remained open for a long time until recent work revealed that EG with a constant stepsize achieves a last-iterate convergence rate of $\calO(T^{-\frac{1}{2}})$ with respect to the gradient norm and the restricted gap function~\citep{gorbunov2022extragradient, cai2022finite}. This rate is tight for EG with a constant stepsize: \citet{golowich2020last} showed that, for a class of time-invariant algorithms that includes EG with a constant stepsize, $\Omega(T^{-\frac{1}{2}})$ convergence rate is unavoidable for solving \eqref{eq:minmax}. However, this class does not include EG with dynamic stepsizes, leaving open the possibility that stepsize tuning alone can lead to faster last-iterate convergence.

We also note that recent advances in optimization have achieved the accelerated $\calO(T^{-1})$ last-iterate convergence rate by augmenting EG with additional mechanisms, such as anchoring, momentum, or H-duality. We review this line of work in \Cref{sec:related works}. This rate is optimal among first-order methods for solving \eqref{eq:minmax}~\citep{yoon2021accelerated}. 
This raises a natural question: are such additional mechanisms necessary for acceleration? In other words,
\medskip
\begin{mdframed}[roundcorner=2.5pt,linewidth=0.6pt]
\centering\textit{
    Can we accelerate the convergence of EG by optimizing the stepsizes alone?}
\end{mdframed}

We note that similar questions have been studied for the fundamental gradient descent (GD) algorithm in smooth convex optimization. The classic result shows that the convergence rate of GD with a constant stepsize is $\calO(T^{-1})$, while Nesterov's momentum accelerates GD to an optimal $\calO(T^{-2})$ rate~\citep{nesterov1983method,nemirovskij1983problem}. 
Acceleration using dynamic stepsizes alone dates back to the work of \citet{young1953richardson}, who used Chebyshev stepsizes for convex quadratics, and has recently culminated in the state-of-the-art rate of $T^{-\log_2 \rho} \approx T^{-1.2716}$ (where $\rho=1+\sqrt{2}$) for convex optimization using the Silver stepsize schedule~\citep{grimmer2024provably, altschuler2025accelerationI, altschuler2025acceleration}.
We review this line of work in \Cref{sec:related works}.
As summarized in \Cref{tab:parallel}, there is a striking historical and structural parallel between the convergence of GD and EG.
To the best of our knowledge, accelerating algorithms for min-max optimization via stepsize tuning only remain unexplored before this paper. %

\begin{table}[bht]
    \centering
    \renewcommand{\arraystretch}{1.2}
    \caption{Parallel between stepsize tuning in quadratic convex minimization and biaffine min-max optimization. The contributions of this paper are in bold.}
    \label{tab:parallel}
    \begin{NiceTabular}{@{} l l l @{}}
        \toprule
        \textbf{Problem} & \textbf{Convex Optimization} & \textbf{Min-Max Optimization} \\
        \midrule
        Setting & Quadratic & Biaffine \\
        \addlinespace
        Base algorithm & Gradient Descent (GD) & Extragradient (EG) \\
        \addlinespace
        Fixed stepsize rate & $\calO(T^{-1})$ & $\calO(T^{-1/2})$ \\
        \addlinespace
        Dynamic stepsize rate & Silver\tabularnote{Not limited to quadratics; also works for general smooth convex functions.}: $\calO(T^{-1.27})$ & \textbf{Ours (single):} $\calO(T^{-2/3+\eps})$ \\
         & Chebyshev, Arcsine\tabularnote{Not limited to quadratics; also works for $1$-d or {separable} smooth convex functions.}: $\calO(T^{-2})$ & \textbf{Ours (double):} $\calO(T^{-1+\eps})$ \\
        \addlinespace
        SOTA (augmentation) & Nesterov (Momentum): $\calO(T^{-2})$ & EAG (Anchoring): $\calO(T^{-1})$ \\
        \addlinespace
        Lower bound & $\Omega(T^{-2})$ & $\Omega(T^{-1})$ \\
        \bottomrule
    \end{NiceTabular}
\end{table}

\paragraph{Contributions}
In this paper, we present the first last-iterate acceleration result for EG obtained solely by stepsize tuning. We show that time-varying, non-monotone stepsize schedules can accelerate EG beyond the constant-stepsize rate and, with a suitable choice of stepsizes, lead to near-optimal last-iterate convergence. We first focus on the common setting where the extrapolation step and the update step of EG share the same stepsize, i.e., $\gamma_t = \eta_t$. In this setting, we
derive a dynamic stepsize schedule that accelerates the convergence rate of EG to $\mathcal{O}(T^{-\frac{2}{3}+\varepsilon})$ for any $\varepsilon > 0$.

\begin{informalthm}{thm:main-eg}
    For any $\eps > 0$, there exists a deterministic stepsize schedule $\{\gamma_t, \eta_t\}_{t\in \dN}$ with $\gamma_t = \eta_t$ such that EG achieves a last-iterate convergence rate of $\calO(T^{-\frac{2}{3}+\eps})$ for solving \eqref{eq:minmax}.
\end{informalthm}

\begin{figure}[bt]
    \centering
    \includegraphics[width=0.86\linewidth]{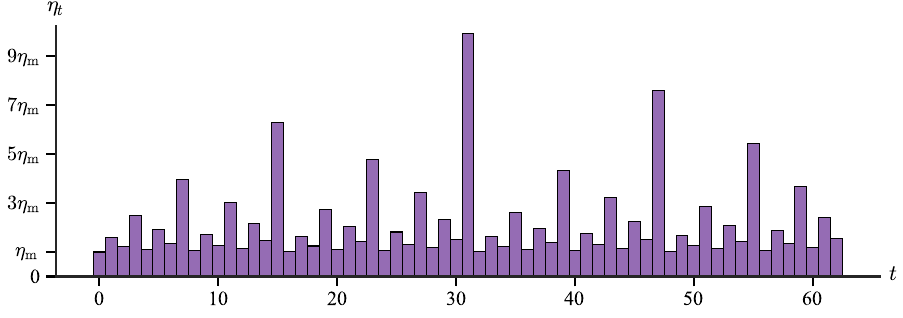}
    \caption{The discretized power-law stepsize schedule $\{\eta_0, \eta_1, \eta_2, \dots\}$ to achieve $\calO(T^{-0.66})$ in \cref{thm:main-eg}. Values are shown as multiples of a base stepsize $\etam$. Only the first 63 terms are shown.}
    \label{fig:vdc-schedule}
\end{figure}

The stepsize sequence has a base-2 structure and follows approximately a power-law distribution. A prefix of this sequence is visualized in \cref{fig:vdc-schedule}.
Moreover, we show that our analysis is nearly tight:
\begin{informalthm}{thm:lb-eg}
    For any $\eps > 0$, any stepsize schedule $\{\gamma_t, \eta_t\}_{t\in \dN}$ with $\gamma_t = \eta_t$, and any sufficiently large $T$, there exists a bilinear min-max optimization problem in which EG suffers a last-iterate convergence rate of $\Omega(T^{-\frac{2}{3}-\eps})$.
\end{informalthm}

Importantly, this lower bound does not preclude faster convergence when extrapolation and update steps are allowed to use different stepsizes.
We extend our analysis to this more flexible setting and show that decoupling the two stepsizes further accelerates convergence, yielding a near-optimal rate among first-order methods. Previously, such a rate was known only for variants of EG augmented with additional mechanisms such as anchoring.
\begin{informalthm}{thm:main-dseg}
    For any $\eps > 0$, there exists a deterministic stepsize schedule $\{\gamma_t, \eta_t\}$ that allows $\gamma_t$ and $\eta_t$ to differ such that EG achieves a last-iterate convergence rate of $\calO(T^{-1+\eps})$.
\end{informalthm}

We performed numerical experiments to verify our convergence rates; as \cref{fig:gradient-norm-compare} shows, our schedules closely follow the predicted rate. We also note that our proposed analysis and stepsize schedule are applicable to other methods, for example Optimistic Gradient (OG), another popular method for min-max optimization.
We believe that our work also sheds light on how to achieve acceleration via stepsize tuning alone for more general optimization problems.

\begin{figure}[tb]
    \centering
    \includegraphics[width=\linewidth]{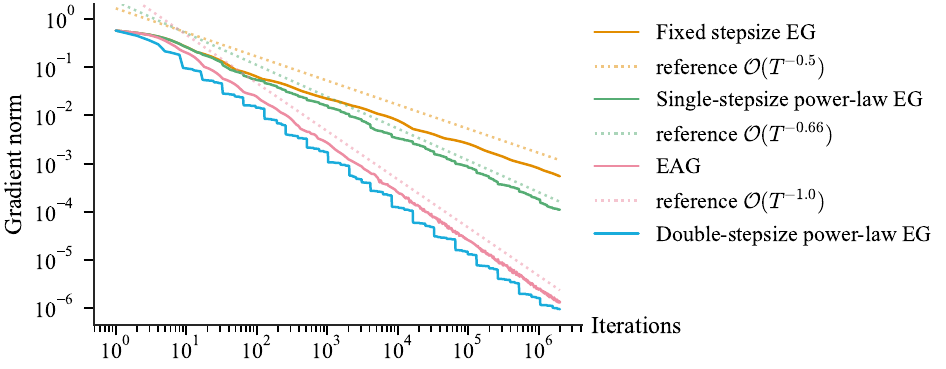}
    \caption{Last-iterate convergence of EG with a fixed stepsize, single-stepsize EG with power-law stepsize with $\beta=1/0.66$, the EAG algorithm of \cite{yoon2021accelerated}, and double-stepsize EG with power-law with $\beta=1/0.99$.
    X-axis: iteration number $T$; Y-axis: worst-case gradient norm $\GN(z_T)$ across random instances. Both axes are displayed in log scale.
    The three dashed lines are the reference slopes predicted by theory.
    }
    \label{fig:gradient-norm-compare}
\end{figure}

\paragraph{Analysis Overview}
We first consider the case where the extrapolation and update steps share the same stepsize, i.e., $\eta_t = \gamma_t$. 
Our analysis starts by reducing the problem of optimizing the stepsize schedule to the following  min-max optimization problem:
\begin{align}
    \minimize_{\{\eta_t\}_{t\in [T]}}\quad \max_{a\in [0,L]} \cbrm[\bigg]{
        a \prod_{t=0}^{T-1} \sqrt{1-\eta_t^2 a^2+\eta_t^4 a^4}
    }. %
    \label{eq:intro eg-minmax}
\end{align}
Here $L$ is the smoothness constant in \eqref{eq:minmax}. We show that the value of \eqref{eq:intro eg-minmax} tightly characterizes the best last-iterate convergence rate achievable by EG for any fixed schedule (\Cref{lem:opt}).

Despite its explicit form, \eqref{eq:intro eg-minmax} is still nontrivial to optimize. The difficulty is that one must choose a potentially different stepsize $\eta_t$ at every iteration.
Moreover, choosing all stepsizes to be equal is known to be insufficient with a $\calO(T^{-1/2})$ lower bound. Thus, any acceleration must come from a genuinely non-uniform choice of stepsizes.

Our first idea is to study a
probabilistic
relaxation of this problem. Instead of directly optimizing over the $T$ individual stepsizes, we suppose that the stepsizes are drawn from a common distribution $\+E$ and optimize the expected convergence rate. This relaxation reduces the problem from choosing $T$ coupled parameters to choosing a single distribution.

Optimizing over all distributions, however, remains challenging.
We therefore further restrict attention to a structured family of distributions: power-law distributions. The choice is based on the observation that the hard instances for fixed stepsizes have small singular values; a heavy-tailed schedule occasionally takes steps large enough to amplify those small singular directions. Similar power-law behavior can also be seen in recent advances in accelerating gradient descent for smooth convex optimization using stepsize scheduling~\citep{altschuler2025accelerationI,altschuler2024acceleration/preprint};
see \Cref{h2:result-eg} for a detailed discussion. Using a novel analysis, we show that for any $\varepsilon > 0$, an appropriate power-law distribution achieves an $\calO(T^{-2/3+\varepsilon})$ convergence rate in geometric-mean.

The final step is to derandomize the construction. 
While the randomized analysis provides the main intuition, an expected convergence guarantee is not fully satisfactory: we want the accelerated rate to hold for the realized stepsize schedule, rather than only on average over the randomness.
For any fixed time horizon $T$, it is possible to derandomize by using the $\frac{t}{T}$-quantile of the distribution as $\eta_t$. In order to obtain a stronger anytime bound, however, we
introduce a derandomization technique based on the quasirandom \emph{van der Corput} sequence, which discretizes the quantiles of the power-law distribution. See \Cref{fig:vdc-schedule} for an illustration of our stepsize schedule.
This yields a deterministic stepsize schedule with an $\calO(T^{-2/3+\varepsilon})$ convergence rate (\Cref{thm:main-eg}). 
We introduce a novel analysis technique based on wavelets to
carefully control the discretization error uniformly over all time horizons $T \ge 1$.

On the other hand, we prove a nearly matching lower bound via weak duality. 
Specifically, we construct a dual distribution over $a \in [0,L]$, also based on a power law, and show that for any stepsize schedule, when $a$ is sampled from this distribution, the geometric mean of the objective in \eqref{eq:intro eg-minmax} is at least $\Omega(T^{-2/3-\varepsilon})$ for any $\varepsilon > 0$. 
Consequently, for every stepsize schedule, there exists a choice of $a$ that forces the convergence rate to be no better than $\Omega(T^{-2/3-\varepsilon})$ (\Cref{thm:lb-eg}). 
This shows that the power-law stepsize schedule in \Cref{thm:main-eg} is nearly tight.

Our analysis for double-stepsize EG follows the same general logic, with one additional ingredient. 
We simplify the stepsize optimization problem by imposing a fixed ratio between the extrapolation and update stepsizes, $\gamma_t = \eta_t/\rho$. 
This reduction allows us to reduce to another optimization problem similar to \eqref{eq:intro eg-minmax}, and again derive power-law stepsize schedules, now achieving a near-optimal $O(T^{-1+\varepsilon})$ last-iterate convergence rate for any $\varepsilon > 0$ (\Cref{thm:main-dseg}). 
This nearly matches an existing $\Omega(T^{-1})$ lower bound for all gradient-based algorithms.

A preliminary analysis is also derived for the Optimistic Gradient (OG) algorithm in \cref{ah:og}, showing that OG achieves a randomized $\calO(T^{-0.577})$ convergence rate using power-law stepsizes. The result suggests that our method generalizes beyond the EG algorithm and serves as a broader acceleration method for min-max optimization.

\subsection{Related Works}\label{sec:related works}
There is a vast literature on solving min-max optimization problems. We review closely related work on EG and related algorithms and refer readers to the textbooks~\citep{facchinei2003finite, Ryu22Large} for a comprehensive review.

\paragraph{Extragradient} The Extragradient (EG)~\citep{korpelevich1976extragradient}, and the related Optimistic Gradient (OG) method~\citep{popov1980modification}, are classic methods for solving min-max optimization. The OG algorithm is also popular for online learning and game theory~\citep{chiang2012online, rakhlin2013optimization}. The average-iterate convergence of EG and OG has been studied extensively~\citep{nemirovski2004prox, nesterov2007dual, daskalakis2018training, mokhtari2020unified, mokhtari2020convergence}. More recently, there has been extensive attention on last-iterate convergence from the machine learning and game theory community~\citep{daskalakis2018training,daskalakis2019last,lei2021last}, culminating in the tight $\calO(T^{-\frac{1}{2}})$ rate for EG~\citep{gorbunov2022extragradient, cai2022finite} and OG~\citep{gorbunov2022last, cai2022finite}, matching lower bounds in~\citep{golowich2020last,golowich2020tight} for constant-stepsize algorithms. We also remark that EG and OG also have \emph{instance-dependent} linear last-iterate convergence for bilinear problems \eqref{eq:minmax}~\citep{liang2019interaction, wei2021last}.

Most of these works focus on the common case where the extrapolation and update steps of EG share the same stepsize, and the case with different stepsizes has been studied under the name double-stepsize EG~\citep{hsieh2020explore} and EG+~\citep{diakonikolas2021efficient}. The same idea has been applied to OG in \citep{hsieh2021adaptive}. We also note that although there is a line of work applying time-varying stepsize schedules to EG~\citep{rakhlin2013optimization, bach2019universal, antonakopoulos2019adaptive, antonakopoulos2021adaptive, hsieh2021adaptive, stonyakin2022generalized}, these works focus mostly on using adaptive and monotonic stepsizes for parameter-agnostic convergence and do not give accelerated convergence rates. To the best of our knowledge, accelerating the last-iterate convergence rate of EG or OG via stepsize optimization remains unexplored.

\paragraph{Accelerated Algorithms for Min-Max Optimization} The $\calO(T^{-\frac{1}{2}})$ rate is tight for constant-stepsize EG, but not tight for all first-order methods. Using Halpern iteration and anchoring~\citep{halpern1967fixed, lieder2021convergence}, \citet{diakonikolas2020halpern} obtain an accelerated $\Tilde{\calO}(T^{-1})$ rate, which was improved to the optimal $\calO(T^{-1})$ last-iterate convergence rate by~\citet{yoon2021accelerated} using the Extra Anchored Gradient (EAG) algorithm in the unconstrained setting. Following~\citep{yoon2021accelerated}, a line of work presents accelerated algorithms for certain nonconvex-nonconcave problems~\citep{lee2021fast}, generalizations in the constrained setting~\citep{kovalev2022first,cai2024accelerated}, and single-call variants~\citep{cai2023accelerated, cai2023doubly, sedlmayer2023fast}. On the other hand, \citet{tran2022connection} connects anchoring to Nesterov's momentum, while \citet{yoon2024optimal} propose different accelerated algorithms for anchoring-based algorithms through H-duality~\citep{kim2023time}. These known accelerated algorithms all use additional mechanisms, while our work focuses on acceleration based on stepsize optimization alone.

\paragraph{Acceleration by Dynamic Stepsizes in Smooth Convex Optimization} %
The early work of~\citet{young1953richardson} accelerates GD from $\calO(T^{-1})$ to $\calO(T^{-2})$ for convex quadratics using Chebyshev stepsizes. A recent work of~\citet{altschuler2025acceleration} uses randomized Arcsine stepsizes to fully accelerate separable convex functions. For general smooth convex optimization, \citet{grimmer2024provably} present constant acceleration of GD using frequent long stepsizes, and \citet{altschuler2025accelerationI, altschuler2025acceleration} present the state-of-the-art rate $\calO(T^{-1.2716})$ using the Silver stepsizes, slower than the optimal $\calO(T^{-2})$ rate. These results have been generalized to acceleration in terms of gradient norm~\citep{zhang2024accelerated/preprint, grimmer2025accelerated} and the proximal gradient descent algorithms~\citep{bok2025accelerating}. It is worth noting that all these stepsize schedules require the knowledge of the horizon and thus do not provide an anytime convergence rate. This problem~\citep{kornowski2024open} is partially solved by~\citet{zhang2025anytime} with a suboptimal $\calO(T^{-1.119})$ anytime rate. 
In contrast, our power-law stepsize schedule for EG achieves anytime and near-optimal convergence rate. \citet{altschuler2024acceleration/preprint} propose the randomized Arcsine stepsize schedule that fully accelerates \emph{separable} smooth strongly-convex functions (see also \Cref{h2:result-eg} for a detailed discussion). In contrast, we can achieve near-optimal acceleration using deterministic stepsize schedules.

\section{Preliminary}\label{h1:prelim}

\paragraph{Notation} Throughout this paper, we use $\dN$ to denote the set of non-negative integers $\{0,1,2,\dots\}$. The notation $[N]$ denotes the zero-based   set of non-negative integers below $N$, or formally, $[N]=\{0, 1, 2, \dots, N-1\}$.

We use $\norm{\cdot}=\norm{\cdot}_2$ to denote the $L_2$ norm for vectors, $\norm{v}=\sqrt{\sum_i v_i^2}$, and the induced operator norm for matrices, $\norm{M}=\sup_{\norm{v}=1} \norm{Mv}$. The notation $(x_1,\dots,x_n)$ represents an $n\times 1$ column vector.

We use the notation $\Pareto(\xm,\alpha)$ for the Pareto distribution with scale parameter $\xm$ and shape parameter $\alpha$; if $X\sim \Pareto(\xm, \alpha)$, then $\Pr[X\geq x]=\rbrm[\big]{\frac{\xm}{x}}^\alpha$ for all $x\geq \xm$.
If $\calD$ is a distribution over $\dR$, we define $Q_\calD : [0, 1)\to \dR$ as the quantile function of $\calD$, or formally $Q_\calD(k)=F_\calD^{-1}(k)$, where $F_\calD$ is the CDF of $\calD$ and $F^{-1}_\calD$ is its generalized inverse. For a distribution $\calD$, the notation $X_1, \dots, X_n \sim \calD$ denotes that each random variable $X_i$ has marginal distribution $\calD$. This notation makes no assumptions regarding the joint distribution or independence. The \emph{geometric mean} of a random variable $X$ supported on $(0, +\infty)$ is defined as $\exp(\EE[\ln X])$.

\paragraph{Setup}%

Suppose we have a function $\ell : \calX\times\calY\to \dR$ on two Euclidean spaces $\calX$ and $\calY$ with dimensions $n$ and $m$. A \emph{min-max optimization} (or \emph{minimax game}) is an optimization problem specified by $(\calX,\calY,\ell)$, of the form
\begin{align}\label{eq:minmax-problem}
    \minimize_{x \in \calX}\quad\maximize_{y \in \calY} \quad\ell(x,y).
\end{align}
The solutions of the problem are defined as \emph{saddle points} of $\ell$. A pair $(x^\star, y^\star)$ is a saddle point, or \emph{Nash equilibrium} of $\ell$, if 
\begin{align*}
    \forall x\in \calX, \forall y\in \calY, \ell(x^\star,y)\leq \ell(x^\star, y^\star)\leq \ell(x, y^\star).
\end{align*}
For notational simplicity, we write $z=(x,y)\in \calZ=\calX\times \calY$.

When $\ell$ is differentiable, define the gradient operator
$G_\ell(x,y)=(\nabla_x \ell(x,y),-\nabla_y \ell(x,y))$.
We omit the subscript $\ell$ if the function is clear from context.
In the unconstrained setting $\+X= \-R^n$, $\+Y = \-R^m$, the first-order condition $G_\ell(z)=0$ is necessary for $z$ to be a saddle point.

A function $\ell : \calX\times \calY\to \dR$ on two real vector spaces is biaffine if $\ell(\cdot, y)$ is affine for all $y\in \calY$, and $\ell(x,\cdot)$ is affine for all $x\in \calX$. If $\calX=\dR^n, \calY=\dR^m$, such a function can always be written as
\begin{equation*}
\ell(x,y)=x^\trans A y + p^\trans y + x^\trans q + c
,
\end{equation*}
where $A\in \dR^{n\times m}, p\in \dR^m,q\in \dR^n,$ and $c\in \dR$ are parameters. We additionally say that $\ell$ is bilinear if $p=0$, $q=0$, and $c=0$. The gradient operator for a biaffine function simplifies to $G_\ell(x,y)=(Ay+q,-A^\trans x-p)$.

While biaffinity/bilinearity is a strong condition, it is the natural starting point for analyzing acceleration in games, analogous to how quadratic functions are the standard starting point for analyzing gradient descent in convex optimization (where Arcsine or Silver stepsizes were first developed).

We say the function $\ell$ is $L$-smooth if the gradient operator $G_\ell$ is $L$-Lipschitz, or formally,
$
    \forall z_1,z_2\in \calX\times \calY, \norm{G_\ell(z_1)-G_\ell(z_2)}\leq L\norm{z_1-z_2}.
$
For biaffine functions, it is equivalent to $\norm{A}\leq L$.

It is often not possible to solve for $z^\star$ exactly, and we aim to find approximate solutions. We use the gradient-norm function $\GN_\ell(z)=\norm{G_\ell(z)}$ to evaluate how close a pair $z$ is to optimum. In the biaffine setting, this function is equal to zero only when $z$ is a saddle point, and is upper-bounded by $L\norm{z-z^\star}$. Similarly, we may omit the subscript $\ell$ in $\GN_\ell$ when it is clear from context.

The standard evaluation of suboptimality in min-max optimization, the duality gap, is often $\infty$ in unconstrained problems.
We use $\GN_\ell(\cdot)$ as a proxy of the duality gap. Note that, for biaffine $\ell$, $\GN_\ell(z)$ is equal to the duality gap when the comparison set is the unit ball around $z^\star$:
\begin{align*}
    \GN_\ell(z)=\max_{z'=(x',y') : \norm{z'-z^\star}\leq 1}\cbr{\ell(x,y')
     -\ell(x',y)}.
\end{align*}

\paragraph{Extragradient}

The Extragradient (EG) algorithm~\citep{korpelevich1976extragradient} for solving a min-max optimization problem $(\calX,\calY,\ell)$ is defined as in \eqref{eq:EG}.
If we write $z_t=(x_t,y_t)$, EG can be concisely represented with the gradient operator $G_\ell$ as
\begin{align}
    z_{t+\frac12}&=z_t-\gamma_t G_\ell(z_t), & %
    z_{t+1}&=z_t-\eta_t G_\ell(z_{t+\frac 12}).\label{eq:eg-def}
\end{align}
We assume that $z_0$ is initialized at most $R$ away from $z^\star$, or formally $\norm{z_0-z^\star}\leq R$.

\paragraph{Van der Corput sequence}

The van der Corput sequence is a one-dimensional low-discrepancy sequence over the unit interval $[0, 1)$, widely used in numerical integration and quasi-Monte Carlo methods~\citep{niederreiter1992random}.
Specifically, 
for any integer $b\geq 2$, define the base-$b$ radical inverse function as
$
    \VDC_b(n) = \frac{d_0(n)}{b} + \frac{d_1(n)}{b^2} + \frac{d_2(n)}{b^3}+\cdots,
$
for all integer $n\in \dN$ with base-$b$ expansion 
    $n = d_0(n) + d_1(n)b + d_2(n) b^2 + \cdots$,
where $d_j(n) \in \{0, 1, \dots, b-1\}$. Intuitively, $\VDC_b(n)$ mirrors the base-$b$ digit expansion of $n$ to the right of the radix point.
For example, $\VDC_{10}(816)=0.618$, and $\VDC_2(10)=(0.0101)_2=\frac{5}{16}$ because $10=(1010)_2$.

The van der Corput sequence in base $b$ is defined as $\{\vdc_n=\VDC_b(n)\}_{n\geq 0}$. Throughout this paper, we exclusively focus on the base-2 van der Corput sequence and denote it simply as $\vdc_n = \VDC_2(n)$. The first few elements corresponding to indices $n \in \{0, 1, 2, \dots, 5\}$ are $0, \frac{1}{2}, \frac{1}{4}, \frac{3}{4}, \frac{1}{8}, \frac{5}{8}$, respectively.

\section{Main results: acceleration by power-law stepsizes}\label{h1:result}

In this section, we provide formal statements and informal overviews of our main results.
To understand the last-iterate convergence of EG, we track the growth of the gradient norm throughout the trajectory. A decomposition argument proves a tight inequality that completely captures the convergence of EG in a simple formula, as shown below.

\begin{restatable}{lemma}{lemOpt}\label{lem:opt}
    Let $(\dR^n, \dR^m, \ell)$ be an $L$-smooth biaffine min-max optimization problem with a saddle point $z^\star=(x^\star, y^\star)$. The trajectory of EG in this problem satisfies the following:
    \begin{align}
        \GN(z_T)
        \leq
            \norm{z_0-z^\star}
            \max_{a\in [0,L]} \cbrm[\bigg]{
                a \prod_{t=0}^{T-1} \sqrt{1+\eta_t(\eta_t-2\gamma_t) a^2+\eta_t^2\gamma_t^2 a^4}
            }.
        \label{eq:lem-opt}
    \end{align}
    Furthermore, this inequality is tight: for any integer $T$ and any sequence of $\{(\gamma_t, \eta_t)\}_{t\in [T]}$,
    if $a^\star$ is a maximizer in the RHS of \eqref{eq:lem-opt}, then
    the inequality \eqref{eq:lem-opt} holds with equality on the problem $(\dR, \dR, \ell(x,y)=a^\star xy)$.
\end{restatable}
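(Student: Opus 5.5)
We reduce everything to the linear recursion satisfied by $w_t := z_t - z^\star$. Since $\ell$ is biaffine with saddle point $z^\star$, we have $G(z)=M(z-z^\star)$ with
\[
M=\begin{pmatrix}0 & A\\ -A^\trans & 0\end{pmatrix},
\]
which is skew-symmetric. Substituting into \eqref{eq:eg-def} gives
\[
w_{t+\frac12}=(I-\gamma_t M)w_t,\qquad w_{t+1}=w_t-\eta_t M(I-\gamma_t M)w_t=P_t(M)\,w_t,
\]
where $P_t(\lambda)=1-\eta_t\lambda+\eta_t\gamma_t\lambda^2$. Iterating, $G(z_T)=M\prod_{t<T}P_t(M)\,w_0$. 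All factors are polynomials in $M$, so they commute.

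\textbf{Diagonalizing $M$.} Take the SVD $A=U\Sigma V^\trans$. Then $M$ decomposes orthogonally into $2\times2$ blocks $\begin{pmatrix}0&\sigma_i\\-\sigma_i&0\end{pmatrix}$, one for each singular value $\sigma_i\in[0,L]$ (using $\norm{A}\le L$), together with zero blocks for the kernel. Each $2\times 2$ block is $\sigma_i J$ with $J$ a rotation by $\pi/2$, so $J^2=-I$. Hence a real polynomial $p(\sigma J)$ acts as multiplication by the complex number $p(i\sigma)$, under the identification $\dR^2\cong\mathbb{C}$, and its operator norm is $|p(i\sigma)|$.

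\textbf{Computing the modulus.} For one factor,
\[
|P_t(ia)|^2=(1-\eta_t\gamma_t a^2)^2+\eta_t^2a^2=1+\eta_t(\eta_t-2\gamma_t)a^2+\eta_t^2\gamma_t^2a^4,
\]
which is exactly the quantity under the square root in \eqref{eq:lem-opt}. On the block for $\sigma_i$ the full operator $M\prod_t P_t(M)$ therefore has norm $\sigma_i\prod_t|P_t(i\sigma_i)|$; on zero blocks it vanishes. Because the blocks are orthogonal, the norm of the whole operator is the maximum over blocks, which is at most $\max_{a\in[0,L]} a\prod_t\sqrt{\cdots}$. Multiplying by $\norm{w_0}$ gives \eqref{eq:lem-opt}.

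\textbf{Tightness.} Take $n=m=1$ and $\ell=a^\star xy$, so $M=a^\star J$. Then $G(z_T)=a^\star J\prod_t P_t(a^\star J)\,w_0$. Since each factor acts on $\mathbb{C}$ as multiplication by a complex scalar, it scales every vector by exactly its modulus. So equality $\GN(z_T)=\norm{w_0}\,a^\star\prod_t|P_t(ia^\star)|$ holds for every $z_0$, and by the choice of $a^\star$ this equals the RHS of \eqref{eq:lem-opt}. The maximum exists because the objective is continuous on the compact set $[0,L]$.

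The main point of care is the block decomposition when $n\ne m$ or $A$ is rank-deficient. The extra coordinates lie in $\ker M$ and contribute zero because of the leading factor $M$; the $a=0$ case of the maximum covers them as well. Beyond this, the argument is routine.
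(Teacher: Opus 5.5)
Your proof is correct and follows essentially the same route as the paper: shift to $z_t-z^\star$, use the SVD of $A$ to split the gradient operator into orthogonal invariant $2\times 2$ blocks $\sigma_i J$ plus a kernel, identify each block with multiplication by a complex number to obtain the factor $|1-\eta_t\gamma_t a^2-\eta_t a i|$, and prove tightness on the scalar instance $a^\star xy$. The only cosmetic difference is in how the blocks are recombined: you bound $\|G(z_T)\|$ by the operator norm of $M\prod_t P_t(M)$ taken as a maximum over blocks, whereas the paper decomposes the trajectory into independent $1\times 1$ EG runs and sums their squared gradient norms, which gives the same estimate.
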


The proof of this lemma uses affine translation and SVD, and is deferred to \cref{ah:proof-decompose}.
This lemma abstracts away all the details of the algorithm, including the trajectory and the geometry of the function $\ell$, and reduces the problem of stepsize tuning into the problem of optimizing the right-hand side of \eqref{eq:lem-opt}.
In the rest of this section, we will utilize this tight reduction and present our results in both directions.

\subsection{Analysis of single-stepsize Extragradient}\label{h2:result-eg}

In this section, we focus on optimizing the RHS of \eqref{eq:lem-opt} with the traditional, single-stepsize setting, $\gamma_t=\eta_t$. We are then solving the following optimization problem:
\begin{align}
    \minimize_{\{\eta_t\}_{t\in [T]}}\;\maximize_{a\in[0,L]}\;
    \calR_T(\{\eta_t\}_{t\in [T]}, a) \defeq
    {
        a \prod_{t=0}^{T-1} \sqrt{1-\eta_t^2 a^2+\eta_t^4 a^4}
    }. %
    \label{eq:eg-minmax}
\end{align}

If the stepsize is fixed as $\eta_t\equiv \eta$, traditional analysis of EG chooses $\eta<\frac{1}{L}$ to ensure that the contraction ratio $\sqrt{1-a^2\eta^2+a^4\eta^4}<1$ for all $\abs{a}\leq L$. Informally, when $a$ is close to zero, the contraction ratio can be arbitrarily close to 1.
In particular, when $a=\frac{1}{\eta\sqrt T}$, we have $\calR_T(\eta,a)=a(1-a^2\eta^2+a^4\eta^4)^{T/2}>\frac{1}{\eta\sqrt T}(1-\frac 1T)^{T/2}\approx \frac{1}{\eta\sqrt{eT}}$. This shows that EG with a fixed stepsize cannot optimize beyond $\calO(T^{-1/2})$.

To optimize the last-iterate convergence further, we analyze the use of time-varying stepsizes.
We observe that the objective $\calR_T(\{\eta_t\}_{t\in [T]}, a)$ is invariant under permutations of the stepsize sequence; it depends solely on the multiset, or \emph{empirical distribution}, of the stepsizes.
Rather than solving the discrete-time optimization for finite $T$, we relax the problem by replacing the empirical distribution with a general probability distribution $\calE$ over $\dR_+$. Define $\calR_T(\calE, a)$ as the \emph{geometric mean} of $\calR_T(\{\eta_t\}_{t\in [T]}, a)$ if each stepsize is sampled from $\calE$, or formally, $\calR_T(\calE, a)$ is the value that satisfies
\begin{align*}
    \ln \calR_T(\calE, a)
    & =
    \EE_{\eta_0,\dots,\eta_{T-1}\sim \calE}
    \sbrm[\Bigg]{
    \ln\rbrm[\bigg]{
        a \prod_{t=0}^{T-1} \sqrt{1-\eta_t^2 a^2+\eta_t^4 a^4}
    }} \\
    & =
    {
        \ln a + \frac T2\EE_{\eta\sim \calE}\sbrm[\big]{\ln(1-\eta^2a^2+\eta^4a^4)}
    }.\yestag\label{eq:calL-def}
\end{align*}
If $\calE=\frac1T\sum_{t=0}^{T-1}\delta_{\eta_t}$ is the empirical distribution of $\{\eta_t\}_{t\in [T]}$, the RHS of \eqref{eq:calL-def} coincides with $\ln \calR_T(\{\eta_t\}_{t\in [T]},a)$. Therefore, this formulation is indeed a generalization.
The continuous formulation allows us to leverage analytical tools to design distributions and analyze the contraction, before eventually discretizing them back into practical deterministic sequences.

We employ an ansatz that $\calE$ is a power-law distribution, or more specifically, $\calE=\Pareto(\etam/L, \beta)$.
This educated guess is based on the intuition that $\calE$ must control the worst-case choice of $a$, which, as we have seen in the fixed-stepsize setting, can be arbitrarily small;
therefore, the best-response $\argmin_{\eta} \ln(1-\eta^2a^2+\eta^4a^4)=\frac{1}{\sqrt2 a}$ for a given $a$ can be arbitrarily large, requiring a scale-free distribution.
The choice of a power-law distribution is also inspired by recent advances in accelerating gradient descent (GD) for convex optimization, where stepsize hedging has been shown to similarly improve convergence rates. More specifically, we draw inspiration from two results:
\begin{itemize}[leftmargin=*]
    \item {\bfseries Silver Stepsize Schedule}: \citet{altschuler2025acceleration} propose a discrete sequence of stepsizes that accelerates GD to achieve a convergence rate of $\calO(T^{-\log_2\rho})=\calO(T^{-1.27})$, where $\rho=1+\sqrt2$ is the silver ratio.
    In comparison, fixed-stepsize GD achieves $\calO(T^{-1})$. Stepsizes of scale $\rho^k$ are utilized with a frequency of $2^{-k}$; the tail of this empirical distribution satisfies $\Pr[\eta\geq x]=\Theta(x^{-\log_\rho 2})$ for large $x$, implicitly forming a discrete power-law distribution.
    \item {\bfseries Arcsine Stepsize Distribution}: \cite{altschuler2024acceleration/preprint} show that for $m$-strongly-convex, $M$-smooth separable functions, the optimal convergence rate can be achieved by sampling $\eta^{-1}\sim \Arcsin(m,M)$. If we na\"ively extrapolate their result to $m\to 0$, the distribution satisfies $\Pr[\eta\geq x]=\frac2\pi\arcsin\sqrt{(Mx)^{-1}}=\Theta(x^{-1/2})$ for large $x$, again approximating a power-law.
\end{itemize}

It turns out that using a Pareto distribution $\calE$ indeed improves the geometric-mean convergence rate to $\calO(T^{-2/3+\eps})$, as shown in the following lemma:

\begin{restatable}{lemma}{lemEgCont}\label{lem:eg-cont}
    For any $\beta\in (\frac32,2)$, there exists $\etam,C\in \dR_+$, such that the distribution $\calE=\Pareto(\etam/L, \beta)$ satisfies
    $
        \calR_T(\calE, a)\leq CLT^{-1/\beta}
    $ for all $a\in (0, L]$.
    As a corollary, for any $L$-smooth biaffine min-max optimization $(\dR^n, \dR^m, \ell)$ with a saddle point $z^\star$, %
    if EG is initiated at $z_0$ with $\norm{z_0-z^\star}\leq R$ and samples its stepsizes from $\calE$, we have for all $T$,
    $
        \EE_{\eta_0,\dots,\eta_{T-1}\sim \calE}[\ln \GN(z_T)]\leq \ln\sbrm[\big]{CLRT^{-1/\beta}}.
    $
\end{restatable}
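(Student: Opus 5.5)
We need $\ln a + \frac T2\,\EE_{\eta\sim\calE}[\ln(1-\eta^2a^2+\eta^4a^4)] \le \ln(CLT^{-1/\beta})$ for all $a\in(0,L]$. By scaling ($\eta\to\eta L$, $a\to a/L$) we may take $L=1$. Write $f(u)=\ln(1-u^2+u^4)$ with $u=\eta a$. Then $f(u)<0$ exactly when $u\in(0,1)$, $f(u)\approx -u^2$ for small $u$, and $f(u)\approx 4\ln u$ for large $u$. The plan is to show that
\[
\Phi(a)\defeq \EE_{\eta\sim\Pareto(\etam,\beta)}[f(\eta a)] \le -c\,a^{\beta}
\]
for some $c>0$ and all $a\in(0,1]$, with a suitably small $\etam$. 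Granting this, we get $\ln\calR_T\le \ln a - \frac{cT}{2}a^\beta$. Maximizing the right-hand side over $a>0$ gives $a^\beta=\frac{2}{c\beta T}$, so $\ln\calR_T \le \frac1\beta\ln\frac{2}{c\beta T}-\frac1\beta$, i.e.\ $\calR_T\le C T^{-1/\beta}$. The corollary then follows from \cref{lem:opt}. Take logs of \eqref{eq:lem-opt} with $\gamma_t=\eta_t$, and note that $\max_a\EE\le\EE\max_a$ goes the wrong way. So I would instead use that the bound $\ln a+\frac12\sum_t f(\eta_ta)$ holds with the max inside, and handle this carefully (see the last paragraph).

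To bound $\Phi(a)$, change variables to $u=\eta a$. The density of $\eta$ is $\beta\etam^\beta\eta^{-\beta-1}$ on $[\etam,\infty)$, so
\[
\Phi(a)=\beta(\etam a)^\beta\int_{\etam a}^\infty f(u)\,u^{-\beta-1}\,du .
\]
Since $\etam a\le\etam$, split the integral as $\int_{\etam a}^{\infty}=\int_0^\infty-\int_0^{\etam a}$.

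The integral $I_\beta\defeq\int_0^\infty f(u)u^{-\beta-1}du$ converges exactly when $\beta\in(1,4)$. Near $0$ the integrand is $\sim -u^{1-\beta}$, which is integrable iff $\beta<2$. At $\infty$ it is $\sim 4\ln u\,u^{-\beta-1}$, which is fine. The key claim is that $I_\beta<0$ for $\beta\in(\frac32,2)$. Heuristically, as $\beta\uparrow2$ the negative small-$u$ contribution blows up like $-\frac1{2-\beta}$, while the positive tail stays bounded; so $I_\beta<0$ near $2$. The threshold $\frac32$ should be the root of $I_\beta=0$.

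To verify the sign, I would expand $f(u)=\ln(1+u^6)-\ln(1+u^2)$, which holds because $1-u^2+u^4=\frac{1+u^6}{1+u^2}$. Then use the Mellin-type formula $\int_0^\infty\ln(1+u^k)u^{-\beta-1}du=\frac{\pi}{\beta\sin(\pi\beta/k)}$, valid for $0<\beta<k$. This gives
\[
I_\beta=\frac{\pi}{\beta}\Bigl(\frac1{\sin(\pi\beta/6)}-\frac1{\sin(\pi\beta/2)}\Bigr).
\]
This is negative iff $\sin(\pi\beta/2)<\sin(\pi\beta/6)$. On $(1,2)$ that holds iff $\pi-\frac{\pi\beta}{2}<\frac{\pi\beta}{6}$, i.e.\ $\beta>\frac32$. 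This confirms the range in the statement.

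For the correction term, $-\int_0^{\etam a}f(u)u^{-\beta-1}du\le\int_0^{\etam a}u^{1-\beta}du=\frac{(\etam a)^{2-\beta}}{2-\beta}$, using $-f(u)\le u^2$. So
\[
\Phi(a)\le\beta(\etam a)^\beta\Bigl(I_\beta+\frac{\etam^{2-\beta}}{2-\beta}\Bigr).
\]
Choosing $\etam$ small enough that the bracket is at most $I_\beta/2$ yields $\Phi(a)\le -c\,a^\beta$ with $c=\frac{\beta\etam^\beta|I_\beta|}{2}$.

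The main obstacle is the sign computation of $I_\beta$, which the closed form resolves. A secondary subtlety is the corollary. Under sampling, $\ln\GN(z_T)\le\ln\norm{z_0-z^\star}+\max_a\bigl[\ln a+\frac12\sum_t f(\eta_ta)\bigr]$, and the max does not commute with the expectation. To fix this, I would apply the SVD decomposition from the proof of \cref{lem:opt} directly. This gives $\GN(z_T)^2=\sum_i \sigma_i^2 c_i^2\prod_t(1-\eta_t^2\sigma_i^2+\eta_t^4\sigma_i^4)$, where $\sum_i c_i^2\le R^2$. Then Jensen's inequality (concavity of $\ln$ over the weighted sum, with weights $c_i^2/R^2$) gives $\EE\ln\GN(z_T)\le\ln R+\max_i\ln\calR_T(\calE,\sigma_i)$, and the last term is bounded by $\ln(CLT^{-1/\beta})$.
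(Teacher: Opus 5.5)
Your proof of the main bound $\calR_T(\calE,a)\le CLT^{-1/\beta}$ is correct and follows the paper's argument step for step. You write the expected log-contraction as a Mellin-type integral and show the full integral is negative for $\beta>\frac32$. You bound the piece near $0$ using $-\ln(1-t+t^2)\le t$, choose $\etam$ so that this correction is at most half of $|I_\beta|$, and then maximize $\ln b-Kb^\beta$ as in \cref{lem:ln-plus-monomial}.

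The one difference is how you evaluate the integral. Your factorization $1-u^2+u^4=(1+u^6)/(1+u^2)$ reduces it to the standard transform of $\ln(1+v)$ and gives $\frac{\pi}{\beta}\bigl(\frac{1}{\sin(\pi\beta/6)}-\frac{1}{\sin(\pi\beta/2)}\bigr)=\frac{2\pi\cos(\pi\beta/3)}{\beta\sin(\pi\beta/2)}$. This is exactly half the paper's $I_\beta(0)$, because you substituted $u=\eta a$ rather than $t=\eta^2a^2$. The paper instead proves the general formula for $\int_0^\infty\ln(1+2t\cos\theta+t^2)\,t^{-s-1}\,dt$ by differentiating in $\theta$ (\cref{lem:m-func}). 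Your route is more elementary. The paper's handles arbitrary angles, which it needs for the double-stepsize case with general $\rho$, where no such factorization is available. One small slip: $\int_0^\infty f(u)u^{-\beta-1}\,du$ converges for $\beta\in(0,2)$, not $(1,4)$, as your own endpoint analysis shows.

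The corollary argument, however, has a genuine gap: your Jensen step runs the wrong way. Take $w_i=c_i^2/R^2$ and $X_i=\sigma_i^2\prod_t(1-\eta_t^2\sigma_i^2+\eta_t^4\sigma_i^4)$. Concavity of $\ln$ gives $\ln\sum_i w_iX_i\ge\sum_i w_i\ln X_i$, which is a lower bound on $\ln\GN(z_T)^2$, not an upper bound. Applying Jensen over the randomness instead, $\EE\ln\le\ln\EE$, would replace geometric means by arithmetic means $\EE[1-\eta^2\sigma^2+\eta^4\sigma^4]$. These are infinite for a Pareto law with $\beta<4$.

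The target inequality $\EE\ln\GN(z_T)\le\ln R+\max_i\ln\calR_T(\calE,\sigma_i)$ is also false in general. $\EE\ln\sum_i w_ie^{Y_i}$ can exceed $\max_i\EE Y_i$: two equal weights with $Y_2=-Y_1$ symmetric give $\EE\ln\cosh Y_1>0$. Anticorrelation of this kind is natural here, since a long step with $\eta\sigma_1<1<\eta\sigma_2$ contracts one component while expanding the other.

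You are right that something beyond \cref{lem:opt} is needed, and the paper does not supply it. It calls the corollary a direct implication of \cref{lem:opt}. That is literally valid only when the maximum over $a$ can be taken outside the expectation, for example when all nonzero singular values of $A$ coincide, including the scalar instances of \cref{lem:opt-1x1}. There the exact identity plus linearity of expectation gives the claim.

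For general instances you would have to bound $\EE\max_{a\in(0,L]}\bigl[\ln a+\tfrac12\sum_t\ln(1-\eta_t^2a^2+\eta_t^4a^4)\bigr]$ itself. That requires two ingredients, neither of which is in your proposal:
\begin{itemize}
\item Independence of the $\eta_t$. The paper's $\sim$ notation fixes only marginals, and with $\eta_0=\dots=\eta_{T-1}$ an instance with $O(\log T)$ dyadically spaced singular values already forces $\EE\ln\GN(z_T)\ge\ln(LR)-\tfrac12\ln T-O(\ln\ln T)$.
\item A concentration argument that is uniform over the scales of $a$, presumably at the cost of a different constant.
\end{itemize}
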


The full proof of this lemma is deferred to \cref{ah:eg-cont}. 
As a high-level overview,
the expectation in the RHS of \eqref{eq:calL-def} under our ansatz of the Pareto distribution becomes
\begin{align*}
    \EE_{\eta\sim \calE}\sbrm[\big]{\ln(1-\eta^2a^2+\eta^4a^4)}
    & =
    \int_{\etam/L}^{\infty}\ln(1-\eta^2a^2+\eta^4a^4)\cdot(-1)\dd\rbrm[\Big]{\frac{\etam}{L\eta}}^\beta \\
    & = \frac{\beta}{2}\rbrm[\Big]{ \frac{\etam a}{L} }^{\beta} \int_{(\etam a/L)^2}^{\infty} \ln(1-t+t^2)t^{-\beta/2-1}\dd{t},\yestag\label{eq:pareto-contraction-int}
\end{align*}
where we used a substitution $t=\eta^2a^2$.
The value of \eqref{eq:pareto-contraction-int} represents the expected log contraction of EG in one iteration, and thus should ideally be negative.
A careful analysis of this Mellin-type integration shows that it can be negative only when $\beta>\frac32$, and $\etam$ is a small constant that depends on $\beta$. In this case, the entire term in \eqref{eq:pareto-contraction-int} grows at a rate of $-\Theta((a/L)^\beta)$, where $\Theta(\cdot)$ hides constants that depend on $\beta$; substituting it into \eqref{eq:calL-def},  we see that the worst-case $a^\star=\Theta(L T^{-1/\beta})$, which implies $\calR_T(\calE, a)=\calO(LT^{-1/\beta})$. The second claim of the lemma is a direct implication of \cref{lem:opt}.

\paragraph{Derandomization}{}

The $\calO(T^{-1/\beta})$ rate of \Cref{lem:eg-cont} only holds in geometric mean.
To obtain a deterministic bound, a natural idea is to use the quantiles of the distribution as stepsizes $\{\eta_t\}_{t\in [T]}=\{Q_\calE\rbrm[\big]{\frac tT}\}_{t\in [T]}$. However, this approach requires knowing the time horizon $T$ and does not give an any-time convergence guarantee.
In this work, we obtain a stronger any-time convergence result that holds deterministically for the last iterate uniformly for all $T$.
Instead of using quantiles indexed by the uniform grid $\{\frac tT\}_{t\in [T]}$,
we utilize the quasirandom van der Corput sequence --- informally speaking, a quasirandom sequence that approaches the uniform grid for all $T$.
The technical result of quantile discretization is summarized in the following lemma.

\begin{restatable}{lemma}{lemDiscretization}
\label{lem:discretization}
    For any $\beta\in(\frac 32, \frac 85)$, let $\calE$ be the distribution specified in \cref{lem:eg-cont}.
    For $k\in [0, 1)$, we use $Q_\calE(k)$ to denote the $k$-th quantile of $\calE$, that is, $Q_\calE(k)=(1-k)^{-1/\beta}\etam/L$.
    There exists a universal constant $C'$ such that, if $\eta_t=Q_\calE(\vdc_t)$, where $\{\vdc_t\}_{t\in \dN}$ is the base-2 van der Corput sequence, then for any $T>0$, 
    \begin{align*}
        \frac1T
        \sum_{t=0}^{T-1} \ln(1-\eta_t^2 a^2 + \eta_t^4 a^4)
        \leq
        \EE_{\eta\sim \calE}[\ln(1-\eta^2a^2+\eta^4 a^4)]
        + \frac{C'}{T}.
    \end{align*}
    Consequently, 
    $
        \calR_T(\{\eta_t\}_{t\in [T]}, a)
        \leq
        \exp(C'/2)\calR_T(\calE, a).
    $
\end{restatable}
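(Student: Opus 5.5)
The plan is to write the left-hand side as a quasi-Monte Carlo sum of a single function on $[0,1)$ and expand the error in the Haar basis, where the base-2 van der Corput sequence has especially clean partial sums. Fix $a\in(0,L]$ and set $h(u)=\ln(1-u^2+u^4)$ and $f(k)=h\bigl(aQ_\calE(k)\bigr)=h\bigl(\tfrac{a\etam}{L}(1-k)^{-1/\beta}\bigr)$, so the claim reads $\sum_{t<T} f(\vdc_t)\le T\int_0^1 f+C'$. The constant $C'$ must not depend on $a$ or $T$.

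\textbf{Step 1: shape of $f$.} Since $h$ decreases on $[0,1/\sqrt2]$ and increases afterwards, and $Q_\calE$ is increasing, $f$ is unimodal: decreasing on $[0,k^\ast)$ and increasing on $(k^\ast,1)$. Three quantitative facts are needed.
\begin{itemize}
\item On the decreasing part, $f\in[\ln\frac34,0]$, so its total variation there is at most $\ln\frac43$.
\item Writing $u=aQ_\calE(k)$, one has $f'(k)=\frac{u h'(u)}{\beta(1-k)}$, so $|f'(k)|\le C/(1-k)$ wherever $u$ is bounded.
\item Near $k=1$, $f(k)=\frac4\beta\ln\frac1{1-k}+O(1)$, which is integrable.
\end{itemize}

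\textbf{Step 2: Haar expansion and one-sided partial sums.} Take the unnormalized Haar functions $\psi_{j,i}$, equal to $+1$ on the left half and $-1$ on the right half of the dyadic interval $I_{j,i}=[i2^{-j},(i+1)2^{-j})$. Then
\[
f=\int_0^1 f+\sum_{j,i} d_{j,i}\psi_{j,i},\qquad d_{j,i}=2^j\int_{I_{j,i}} f\psi_{j,i},
\]
so the error equals $\sum_{j,i} d_{j,i}S_{j,i}(T)$ with $S_{j,i}(T)=\sum_{t<T}\psi_{j,i}(\vdc_t)$. I will check from the digit-reversal structure two facts. First, among each aligned block of $2^{j+1}$ consecutive indices exactly one $\vdc_t$ falls in each half of $I_{j,i}$. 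Second, within such a block the left-half point comes first, since the left half corresponds to the lower digit value. Together these give $S_{j,i}(T)\in\{0,1\}$ for all $T$. This one-sidedness is the key point: it avoids the $\log T$ loss of Koksma--Hlawka.

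\textbf{Step 3: bounding the coefficients.} On any $I_{j,i}$ contained in the increasing region, $\int_I f\psi\le0$, so all these terms, including those near the singularity at $1$, contribute $\le 0$ and can be dropped. At each level $j$ at most one interval contains $k^\ast$. There the oscillation of $f$ is bounded by a constant: for the finest level on which the interval reaches past $k^\ast$, $1-k$ changes by a bounded factor, so $u$ stays bounded and $f$ is $O(1)$. Whether this also holds for the coarse intervals straddling $k^\ast$ (which can extend up to $k=1$) has to be checked, since the gap between $f$'s average on the right half and its minimum there can be large. I am not sure that case closes, but I will try to control it through the $O(1/(1-k))$ growth of $f'$ and the exact Pareto form of $Q_\calE$. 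For intervals in the decreasing region I use $|\int_I f\psi|\le \tfrac14|I|^2\sup_I|f'|$, which gives $d_{j,i}\le \frac{|I|}4\sup_I|f'|$.

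\textbf{Step 4: summing over levels (main obstacle).} The difficulty is that $\sup|f'|$ on the decreasing region is not bounded uniformly in $a$: as $a\to0$ that region extends to $1-k\approx(a\etam/L)^\beta$. Any such interval other than the one abutting $k^\ast$ or $1$ satisfies $\sup_I(1-k)^{-1}\le 2\inf_I(1-k)^{-1}$. Using $|f'|\propto|uh'(u)|/(1-k)$, I plan to compare $|I|\sup_I|f'|$ with $2\int_I|f'|$ up to the variation of $uh'(u)$ across $I$. This gives $\sum_i d_{j,i}\lesssim 2^{-j}\cdot 2\ln\frac43+O(1)$ per level, where the $O(1)$ comes from the boundary intervals. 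That is still not summable, so for the boundary intervals I must exploit that either $|I|$ is comparable to the distance to $1$, where the scale invariance of $\ln(1-k)$ makes $\int f\psi$ of size $O(|I|)$ with a favourable sign, or $u$ is small, where $|f'|\lesssim u^2/(1-k)$ decays geometrically in the level. Making this geometric decay uniform in $a$ and summing over $j$ is where the wavelet argument must be most careful, and it is the step I expect to be hardest. If it works, the total is bounded by a constant $C'$.

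\textbf{Step 5: the consequence.} By \eqref{eq:calL-def},
\[
\ln\calR_T(\{\eta_t\}_{t\in[T]},a)=\ln a+\tfrac12\sum_{t<T} f(\vdc_t)\le \ln a+\tfrac T2\,\EE_{\eta\sim\calE}\bigl[\ln(1-\eta^2a^2+\eta^4a^4)\bigr]+\tfrac{C'}2=\ln\calR_T(\calE,a)+\tfrac{C'}2.
\]
Exponentiating yields $\calR_T(\{\eta_t\}_{t\in[T]},a)\le\exp(C'/2)\,\calR_T(\calE,a)$.
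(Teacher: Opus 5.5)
Your one-sided identity is correct. With $\psi_{j,i}=\pm1$ on the two halves of $I_{j,i}$, the left-half index in each aligned block of $2^{j+1}$ indices does come first, so $S_{j,i}(T)\in\{0,1\}$. Discarding the coefficients on the increasing branch is also legitimate: it is the Haar analogue of the paper's left-bias monotonicity lemma, which is how the paper disposes of the logarithmic singularity.

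The gap is in Steps 3--4, and it is not a bookkeeping issue. With your normalization, $d_{j,i}\approx -f'(\xi)\,|I_{j,i}|/4$, a quarter of the variation of $f$ across $I_{j,i}$. So at every level $j$ with $2^{-j}\ll (a\etam/L)^{\beta}$, the coefficients on the decreasing branch sum to about $\frac14\bigl(f(0)-\ln\frac34\bigr)\approx\frac14\ln\frac43$. There is no factor $2^{-j}$. Your own comparison $|I|\sup_I|f'|\le 2\int_I|f'|$ gives $d_{j,i}\le\frac12\int_I|f'|$, whose sum over $i$ is of order one. Hence $\sum_{j,i}d_{j,i}^{+}=+\infty$.

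Tracking which $S_{j,i}(T)$ vanish does not rescue this. At level $j$ only $\min(\tau_j,2^{j+1}-\tau_j)$ of them are nonzero, where $\tau_j=T\bmod 2^{j+1}$. Still, for suitable $T$ the levels between $\beta\log_2\frac{L}{a\etam}$ and $\log_2 T$ each contribute order one. You therefore get at best $C'\asymp\log T$, which after exponentiation costs a factor $T^{\Theta(1)}$ in $\calR_T$.

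This loss is intrinsic to any argument that keeps only the positive coefficients. For the decreasing function $g(x)=-x$, your identity gives exactly $\sum_{t<T}g(\vdc_t)-T\int_0^1 g=\sum_{j}2^{-j-2}\min(\tau_j,2^{j+1}-\tau_j)$. This is about $\frac16\log_2 T$ when $T=(1010\cdots10)_2$. So an $O(1/T)$ bound cannot come from the decreasing branch in isolation. It needs cancellation between the decreasing and increasing branches, that is, second-order smoothness plus matching endpoint values once the singularity is removed.

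That is the missing idea, and it is what the paper does:
\begin{enumerate}
\item It splits off the monotone part $\acute H(u)=2\ln\bigl(a^2Q_\calE^2(u)\bigr)$ for $u\ge 1-\sigma$ (zero before).
\item It uses $\ell_\psi(z)=2\ln z+\ell_\psi(1/z)$, so the remainder $\Psi_{\beta,\psi}\bigl((1-u)/\sigma\bigr)$ is bounded and vanishes at $u=1$.
\item It subtracts the increasing linear interpolant.
\item It expands the rest in the Faber--Schauder valleys, which are antiderivatives of your Haar functions.
\end{enumerate}
The valley coefficients are second differences $\frac12\Delta^2_{2^{-m-1}}f(i\,2^{-m})$. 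Their level sums decay geometrically away from the scale $2^{-m}\approx\sigma$, uniformly in $\sigma$, and each valley has QMC error at most $1/n$. Summation by parts over $i$ in your Haar expansion would produce exactly these second differences; without that step, Step 4 cannot close.
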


At a high level, we are approximating the integral in \eqref{eq:pareto-contraction-int} with a finite sum using a discrete sequence $\{\vdc_t\}_{t\in [T]}$, which is exactly quasi-Monte Carlo (QMC).
Except at the singular point $\eta\to \infty$, the function satisfies a certain Besov-type smoothness condition, which implies that QMC converges at a rate of $\calO(1/T)$.
The singular point is handled using a monotonicity condition produced by the bias of the van der Corput sequence.
The proof is available in \cref{ah:eg-cont}, as a special case of \cref{thm:pareto-qmc-discretize}.

We are finally ready to present our first main theorem. It is proved by combining \cref{lem:discretization} with \cref{lem:opt,lem:eg-cont}.

\begin{formalrestatethm}{thm:main-eg}\label{thm:main-eg}
    For any $\eps\in (0, \frac{1}{24})$, there exists $\etam>0$ and $\beta=\rbrm[\big]{\frac23-\eps}^{-1}$ such that the following holds:
    For any $L$-smooth biaffine min-max optimization $(\dR^n, \dR^m, \ell)$ with a saddle point $z^\star = (x^\star, y^\star)$, if EG is initiated at $z_0=(x_0,y_0)$ with $\norm{z_0-z^\star}\leq R$ and uses stepsizes $\gamma_t = \eta_t = (1-\vdc_t)^{-1/\beta}\etam/L$, the $\vdc_t$-quantile of $\Pareto(\etam/L, \beta)$, then for all $T$,
    \begin{align*}
        \GN(z_T)\leq \calO(LRT^{-\frac 23+\eps}).
    \end{align*}
    The $\calO(\cdot)$ notation hides a constant that depends only on $\eps$.
\end{formalrestatethm}

\paragraph{Lower bound of single-stepsize Extragradient} %

Having established the $\calO(T^{-2/3+\eps})$ upper bound achieved by our power-law schedule, it is natural to ask: is the power-law schedule optimal? Can we further improve the rate by using a different stepsize schedule? The following theorem shows that the answer is negative for the standard, single-stepsize EG algorithm.

\begin{formalrestatethm}{thm:lb-eg}\label{thm:lb-eg}
    For any $\eps>0$, there exists a constant $C>0$ such that the following holds. For any sufficiently large $T$ and any distribution $\calE$ supported on $\dR_+$ of stepsizes, there exists a $1$-smooth biaffine min-max optimization $(\dR, \dR, \ell)$ with a saddle point $z^\star=(0,0)$, such that if EG is initiated at $z_0$ with $\norm{z_0}=R$ and samples its stepsizes from $\gamma_i=\eta_i\sim \calE$ for $i=0,\dots,T-1$, then
    \begin{align*}
        \EE[\ln \GN(z_T)]\geq \ln\sbrm[\Big]{CRT^{-\frac 23-\eps}}.
    \end{align*}
    Further, for any sufficiently large $T$ and any length-$T$ sequence of stepsizes $\{\gamma_t=\eta_t\}_{t\in [T]}$, there exists a $1$-smooth biaffine min-max optimization problem in which EG suffers
    \begin{align*}
        \GN(z_T)\geq CRT^{-\frac 23-\eps}.
    \end{align*}
\end{formalrestatethm}

The proof to \cref{thm:lb-eg} is deferred to \cref{ah:lower-bound} and relies on viewing the stepsize scheduling problem in \eqref{eq:eg-minmax} as a zero-sum meta-game. In this game, Algorithm chooses a stepsize schedule, and Nature adversarially chooses a parameter $a\in [0, L]$ that parameterizes the instance.
We find that, if Nature samples $a$ from a power-law-type distribution, the geometric mean of the gradient norm cannot be lower than $\Omega(T^{-1/\beta})$ for any $\beta<\frac 32$.
Consequently, no deterministic or randomized stepsize schedule can guarantee a rate polynomially faster than $\calO(T^{-\frac23})$.

\subsection{Analysis of double-stepsize Extragradient}\label{h2:result-dseg}

We have seen in the previous parts that power-law stepsizes in the standard single-stepsize EG algorithm achieve a near-tight convergence $\calO(T^{-2/3+\eps})$. We now show that this barrier can be bypassed by decoupling the two stepsizes of EG. In the spirit of accelerating purely through stepsize scheduling, we allow $\gamma_t \neq \eta_t$ without changing the updates in EG.

Analogous to \eqref{eq:eg-minmax}, we have a subtly different optimization problem.
By applying the same Pareto ansatz as in \cref{h2:result-eg}, it turns out that we can indeed break the $\calO(T^{-2/3})$ barrier.
We consider a schedule where we sample $\eta_t$ from a Pareto distribution, and set a constant ratio between the two stepsizes, $\gamma_t=\eta_t/\rho$. Our previous results in \cref{h2:result-eg} cover the case when $\rho=1$. 

It turns out that when $\rho\to 0^+$, we can obtain even better bounds by using a smaller $\beta>1$. The analysis follows our previous methodology closely. We defer the proof of the following theorem to \cref{ah:dseg} to avoid repetition.

\begin{formalrestatethm}{thm:main-dseg}\label{thm:main-dseg}
    For any $\eps\in (0, \frac{1}{8})$, there exist constants $\beta=(1-\eps)^{-1}, \rho, \etam, C>0$, such that for any $L$-smooth biaffine min-max optimization instance $(\dR^n, \dR^m, \ell)$ with a saddle point $z^\star$, if EG is initiated at $z_0$ with $\norm{z_0-z^\star}\leq R$, and uses stepsizes $\eta_t$ sampled from $\calE=\Pareto(\etam/L, \beta)$ and $\gamma_t=\eta_t/\rho$, the following holds for all $T$:
    \begin{align*}
        \EE_{\eta_0,\dots,\eta_{T-1}\sim \calE}[\ln \GN(z_T)]\leq \ln\sbrm[\Big]{CLRT^{-1+\eps}}.
    \end{align*}
    Further, if $\{\vdc_i\}_{i\in \dN}$ is the base-2 van der Corput sequence, $\eta_t =  (1-\vdc_t)^{-1/\beta} \etam/L$ is the $\vdc_t$-th quantile of $\calE$, and again $\gamma_t=\eta_t/\rho$, then we have for all $T$,
    \begin{align*}
        \GN(z_T)\leq \calO(LRT^{-1+\eps}),
    \end{align*}
    where the $\calO(\cdot)$ notation hides a constant that depends only on $\eps$.
\end{formalrestatethm}

\begin{remark}
    The $\calO(T^{-1+\eps})$ rate is almost optimal. \citet[Theorem 4]{yoon2021accelerated} provides a lower bound of $\GN(z_T)\geq \frac{LR}{2\floor{T/2}+1}=\Omega(LRT^{-1})$ for all deterministic gradient-based algorithms. They also argue informally that the same lower bound applies to all randomized algorithms using techniques of \citep{woodworth2016tight}.
\end{remark}

\section{Experiments}\label{h1:experiment}

In order to empirically illustrate our main results, we conduct numerical experiments with our stepsize schedules. We randomly generate instances of biaffine functions 
and simulate EG on them with our stepsize schedules. To optimize for constant factors, we use the truncated stepsize distributions specified in \cref{lem:eg-mixture} and \cref{lem:dseg-mixture}, and apply the van der Corput sequence to obtain a deterministic stepsize sequence. Details of the experimental setup and further numerical results are deferred to \cref{ah:experiment-more}.

The numerical results are shown in \cref{fig:gradient-norm-compare}. We see that, over the sampled instances and plotted horizons, the slope of $\GN(z_t)$ of our single-stepsize schedule with $\beta=100/66$ follows a $\calO(T^{-0.66})$ trend line, and our double-stepsize schedule with $\beta=100/99$ behaves very close to $\calO(T^{-1})$.
We include the EAG algorithm of \citet{yoon2021accelerated} for comparison, which converges at the optimal rate of $\calO(T^{-1})$.

\section{Conclusion}\label{h1:conclusion}

In this paper, we proposed stepsize schedules that accelerate the last-iterate convergence of Extragradient. We show that, in the case of \emph{unconstrained biaffine} min-max optimization problems, carefully designed nonmonotone stepsize schedules accelerate EG to nearly optimal without adding anchoring, momentum, or other algorithmic mechanisms. We introduced an unexplored distribution family, namely power-law distributions, to the literature of stepsize tuning. We also developed a novel technique of using the van der Corput sequence to discretize a distribution to obtain a deterministic, non-adaptive stepsize sequence, linking the worlds of numerical integration and stepsize scheduling.

Several questions remain open. 
The most important is whether this stepsize-only acceleration extends beyond our setting, for example to convex-concave min-max problems or general monotone variational inequalities. Our present analysis relies on the exact reduction in \cref{lem:opt}, so new ideas would be needed when the linear structure is unavailable.
Another direction is extending this stepsize tuning to other algorithms. For instance, we provide an initial theoretical analysis for using power-law schedules to accelerate the Optimistic Gradient (OG) method in \cref{ah:og}; however, our numerical experiments suggest that it empirically outperforms the theory's prediction, and developing a tighter analysis for OG remains open.

\begin{ack}
YC is supported by the NSF award CCF-2342642. WZ is supported by the NSF award CCF-2342642 and
a research fellowship from the Center for Algorithms, Data, and Market Design at Yale (CADMY).
HL and YW are supported by NSF award IIS-1943607.

Generative AI technology, including OpenAI Codex, Google Gemini, and Claude Code, is utilized at several stages of this work.
In developing the theoretical results, AI was used to explore the theory and initially draft the proofs for the special function integrations of \cref{lem:m-func} and \ref{lem:m-func-og}, as well as the discretization of \cref{ah:discretization}.
In the numerical experiments, AI was employed to generate utility scripts and testing code, while the main algorithms were implemented exclusively by hand.
Finally, for manuscript preparation, AI also provided writing assistance and proofreading.
\end{ack}

\bibliography{ref}

\begin{thebibliography}{62}
\providecommand{\natexlab}[1]{#1}
\providecommand{\url}[1]{\texttt{#1}}
\expandafter\ifx\csname urlstyle\endcsname\relax
  \providecommand{\doi}[1]{doi: #1}\else
  \providecommand{\doi}{doi: \begingroup \urlstyle{rm}\Url}\fi

\bibitem[Altschuler and Parrilo(2024)]{altschuler2024acceleration/preprint}
Jason~M. Altschuler and Pablo~A. Parrilo.
\newblock Acceleration by random stepsizes: {{Hedging}}, equalization, and the
  arcsine stepsize schedule.
\newblock ArXiv preprint, 2412.05790, December 2024.

\bibitem[Altschuler and
  Parrilo(2025{\natexlab{a}})]{altschuler2025acceleration}
Jason~M. Altschuler and Pablo~A. Parrilo.
\newblock Acceleration by stepsize hedging {{II}}: {{Silver}} stepsize schedule
  for smooth convex optimization.
\newblock \emph{Mathematical Programming}, 213\penalty0 (1-2):\penalty0
  1105--1118, September 2025{\natexlab{a}}.
\newblock ISSN 0025-5610, 1436-4646.
\newblock \doi{10.1007/s10107-024-02164-2}.

\bibitem[Altschuler and
  Parrilo(2025{\natexlab{b}})]{altschuler2025accelerationI}
Jason~M. Altschuler and Pablo~A. Parrilo.
\newblock Acceleration by stepsize hedging: Multi-step descent and the silver
  stepsize schedule.
\newblock \emph{Journal of the ACM}, 72\penalty0 (2):\penalty0 1--38,
  2025{\natexlab{b}}.

\bibitem[Antonakopoulos et~al.(2019)Antonakopoulos, Belmega, and
  Mertikopoulos]{antonakopoulos2019adaptive}
Kimon Antonakopoulos, Veronica Belmega, and Panayotis Mertikopoulos.
\newblock An adaptive mirror-prox method for variational inequalities with
  singular operators.
\newblock \emph{Advances in Neural Information Processing Systems}, 32, 2019.

\bibitem[Antonakopoulos et~al.(2021)Antonakopoulos, Belmega, and
  Mertikopoulos]{antonakopoulos2021adaptive}
Kimon Antonakopoulos, Veronica Belmega, and Panayotis Mertikopoulos.
\newblock Adaptive extra-gradient methods for min-max optimization and games.
\newblock In \emph{International Conference on Learning Representations}, 2021.

\bibitem[Bach and Levy(2019)]{bach2019universal}
Francis Bach and Kfir~Y. Levy.
\newblock A universal algorithm for variational inequalities adaptive to
  smoothness and noise.
\newblock In \emph{Conference on learning theory}, pages 164--194. PMLR, 2019.

\bibitem[Bateman(1954)]{bateman1954integral}
Harry Bateman.
\newblock \emph{Tables of Integral Transforms}, volume~1.
\newblock McGraw-Hill Book Company, New York, 1954.
\newblock Compiled by the Bateman Manuscript Project and edited by Arthur
  Erd{\'e}lyi.

\bibitem[Bok and Altschuler(2025)]{bok2025accelerating}
Jinho Bok and Jason~M. Altschuler.
\newblock Accelerating proximal gradient descent via silver stepsizes.
\newblock In \emph{The Thirty Eighth Annual Conference on Learning Theory},
  pages 421--453. PMLR, 2025.

\bibitem[Cai and Zheng(2023{\natexlab{a}})]{cai2023accelerated}
Yang Cai and Weiqiang Zheng.
\newblock Accelerated single-call methods for constrained min-max optimization.
\newblock \emph{International Conference on Learning Representations (ICLR)},
  2023{\natexlab{a}}.

\bibitem[Cai and Zheng(2023{\natexlab{b}})]{cai2023doubly}
Yang Cai and Weiqiang Zheng.
\newblock Doubly optimal no-regret learning in monotone games.
\newblock In \emph{International Conference on Machine Learning}, pages
  3507--3524. PMLR, 2023{\natexlab{b}}.

\bibitem[Cai et~al.(2022)Cai, Oikonomou, and Zheng]{cai2022finite}
Yang Cai, Argyris Oikonomou, and Weiqiang Zheng.
\newblock Finite-time last-iterate convergence for learning in multi-player
  games.
\newblock In \emph{Advances in Neural Information Processing Systems
  (NeurIPS)}, 2022.

\bibitem[Cai et~al.(2024)Cai, Oikonomou, and Zheng]{cai2024accelerated}
Yang Cai, Argyris Oikonomou, and Weiqiang Zheng.
\newblock Accelerated algorithms for constrained nonconvex-nonconcave min-max
  optimization and comonotone inclusion.
\newblock In \emph{Forty-first International Conference on Machine Learning},
  2024.

\bibitem[Chiang et~al.(2012)Chiang, Yang, Lee, Mahdavi, Lu, Jin, and
  Zhu]{chiang2012online}
Chao-Kai Chiang, Tianbao Yang, Chia-Jung Lee, Mehrdad Mahdavi, Chi-Jen Lu, Rong
  Jin, and Shenghuo Zhu.
\newblock Online optimization with gradual variations.
\newblock In \emph{Conference on Learning Theory}, pages 6--1. JMLR Workshop
  and Conference Proceedings, 2012.

\bibitem[Daskalakis and Panageas(2019)]{daskalakis2019last}
Constantinos Daskalakis and Ioannis Panageas.
\newblock Last-iterate convergence: Zero-sum games and constrained min-max
  optimization.
\newblock In \emph{10th Innovations in Theoretical Computer Science Conference
  (ITCS)}, 2019.

\bibitem[Daskalakis et~al.(2018)Daskalakis, Ilyas, Syrgkanis, and
  Zeng]{daskalakis2018training}
Constantinos Daskalakis, Andrew Ilyas, Vasilis Syrgkanis, and Haoyang Zeng.
\newblock Training {GANs} with optimism.
\newblock In \emph{International Conference on Learning Representations
  (ICLR)}, 2018.

\bibitem[Diakonikolas(2020)]{diakonikolas2020halpern}
Jelena Diakonikolas.
\newblock Halpern iteration for near-optimal and parameter-free monotone
  inclusion and strong solutions to variational inequalities.
\newblock In \emph{Conference on Learning Theory (COLT)}, 2020.

\bibitem[Diakonikolas et~al.(2021)Diakonikolas, Daskalakis, and
  Jordan]{diakonikolas2021efficient}
Jelena Diakonikolas, Constantinos Daskalakis, and Michael Jordan.
\newblock Efficient methods for structured nonconvex-nonconcave min-max
  optimization.
\newblock \emph{International Conference on Artificial Intelligence and
  Statistics}, 2021.

\bibitem[Faber(1910)]{faber1910uber}
Georg Faber.
\newblock {{\"Uber}} die {{Orthogonalfunktionen}} des {{Herrn Haar}} [{{On
  Mr}}. {{Haar}}'s orthogonal functions].
\newblock \emph{Jahresbericht der Deutschen Mathematiker-Vereinigung},
  19:\penalty0 104--112, 1910.

\bibitem[Facchinei and Pang(2003)]{facchinei2003finite}
Francisco Facchinei and Jong-Shi Pang.
\newblock \emph{Finite-dimensional variational inequalities and complementarity
  problems}.
\newblock Springer, 2003.

\bibitem[Golowich et~al.(2020{\natexlab{a}})Golowich, Pattathil, and
  Daskalakis]{golowich2020tight}
Noah Golowich, Sarath Pattathil, and Constantinos Daskalakis.
\newblock Tight last-iterate convergence rates for no-regret learning in
  multi-player games.
\newblock \emph{Advances in neural information processing systems (NeurIPS)},
  2020{\natexlab{a}}.

\bibitem[Golowich et~al.(2020{\natexlab{b}})Golowich, Pattathil, Daskalakis,
  and Ozdaglar]{golowich2020last}
Noah Golowich, Sarath Pattathil, Constantinos Daskalakis, and Asuman Ozdaglar.
\newblock Last iterate is slower than averaged iterate in smooth convex-concave
  saddle point problems.
\newblock In \emph{Conference on Learning Theory (COLT)}, 2020{\natexlab{b}}.

\bibitem[Gorbunov et~al.(2022{\natexlab{a}})Gorbunov, Loizou, and
  Gidel]{gorbunov2022extragradient}
Eduard Gorbunov, Nicolas Loizou, and Gauthier Gidel.
\newblock Extragradient method: $\mathcal{O}(1/k)$ last-iterate convergence for
  monotone variational inequalities and connections with cocoercivity.
\newblock In \emph{International Conference on Artificial Intelligence and
  Statistics (AISTATS)}, pages 366--402. PMLR, 2022{\natexlab{a}}.

\bibitem[Gorbunov et~al.(2022{\natexlab{b}})Gorbunov, Taylor, and
  Gidel]{gorbunov2022last}
Eduard Gorbunov, Adrien Taylor, and Gauthier Gidel.
\newblock Last-iterate convergence of optimistic gradient method for monotone
  variational inequalities.
\newblock In \emph{Advances in Neural Information Processing Systems},
  2022{\natexlab{b}}.

\bibitem[Grimmer(2024)]{grimmer2024provably}
Benjamin Grimmer.
\newblock Provably faster gradient descent via long steps.
\newblock \emph{SIAM Journal on Optimization}, 34\penalty0 (3):\penalty0
  2588--2608, 2024.

\bibitem[Grimmer et~al.(2025)Grimmer, Shu, and Wang]{grimmer2025accelerated}
Benjamin Grimmer, Kevin Shu, and Alex~L Wang.
\newblock Accelerated objective gap and gradient norm convergence for gradient
  descent via long steps.
\newblock \emph{INFORMS Journal on Optimization}, 7\penalty0 (2):\penalty0
  156--169, 2025.

\bibitem[Haar(1910)]{haar1910zur}
Alfred Haar.
\newblock {Zur Theorie der orthogonalen Funktionensysteme: Erste Mitteilung}.
\newblock \emph{Mathematische Annalen}, 69\penalty0 (3):\penalty0 331--371,
  September 1910.
\newblock ISSN 0025-5831, 1432-1807.
\newblock \doi{10.1007/BF01456326}.

\bibitem[Halpern(1967)]{halpern1967fixed}
Benjamin Halpern.
\newblock Fixed points of nonexpanding maps.
\newblock \emph{Bulletin of the American Mathematical Society}, 73\penalty0
  (6):\penalty0 957--961, 1967.

\bibitem[Hsieh et~al.(2020)Hsieh, Iutzeler, Malick, and
  Mertikopoulos]{hsieh2020explore}
Yu-Guan Hsieh, Franck Iutzeler, J{\'e}r{\^o}me Malick, and Panayotis
  Mertikopoulos.
\newblock Explore aggressively, update conservatively: Stochastic extragradient
  methods with variable stepsize scaling.
\newblock \emph{Advances in Neural Information Processing Systems},
  33:\penalty0 16223--16234, 2020.

\bibitem[Hsieh et~al.(2021)Hsieh, Antonakopoulos, and
  Mertikopoulos]{hsieh2021adaptive}
Yu-Guan Hsieh, Kimon Antonakopoulos, and Panayotis Mertikopoulos.
\newblock Adaptive learning in continuous games: Optimal regret bounds and
  convergence to nash equilibrium.
\newblock In \emph{Conference on Learning Theory}, pages 2388--2422. PMLR,
  2021.

\bibitem[Kashin(1989)]{kashin1989orthogonal}
Boris~S. Kashin.
\newblock \emph{Orthogonal Series}.
\newblock American Mathematical Society, Providence, R.I., USA, 1989.
\newblock ISBN 978-0-8218-4527-1.

\bibitem[Kim et~al.(2023)Kim, Ozdaglar, Park, and Ryu]{kim2023time}
Jaeyeon Kim, Asuman Ozdaglar, Chanwoo Park, and Ernest Ryu.
\newblock Time-reversed dissipation induces duality between minimizing gradient
  norm and function value.
\newblock \emph{Advances in Neural Information Processing Systems},
  36:\penalty0 23389--23440, 2023.

\bibitem[Kornowski and Shamir(2024)]{kornowski2024open}
Guy Kornowski and Ohad Shamir.
\newblock Open problem: Anytime convergence rate of gradient descent.
\newblock In \emph{The Thirty Seventh Annual Conference on Learning Theory},
  pages 5335--5339. PMLR, 2024.

\bibitem[Korpelevich(1976)]{korpelevich1976extragradient}
Galina~M. Korpelevich.
\newblock The extragradient method for finding saddle points and other
  problems.
\newblock \emph{Ekonomika i Matematicheskie Metody}, 12:\penalty0 747--756,
  1976.

\bibitem[Kovalev and Gasnikov(2022)]{kovalev2022first}
Dmitry Kovalev and Alexander Gasnikov.
\newblock The first optimal algorithm for smooth and
  strongly-convex-strongly-concave minimax optimization.
\newblock \emph{Advances in Neural Information Processing Systems},
  35:\penalty0 14691--14703, 2022.

\bibitem[Lee and Kim(2021)]{lee2021fast}
Sucheol Lee and Donghwan Kim.
\newblock Fast extra gradient methods for smooth structured
  nonconvex-nonconcave minimax problems.
\newblock In \emph{Annual Conference on Neural Information Processing Systems
  (NeurIPS)}, 2021.

\bibitem[Lei et~al.(2021)Lei, Nagarajan, and Panageas]{lei2021last}
Qi~Lei, Sai~Ganesh Nagarajan, and Ioannis Panageas.
\newblock Last iterate convergence in no-regret learning: constrained min-max
  optimization for convex-concave landscapes.
\newblock In \emph{International Conference on Artificial Intelligence and
  Statistics}, 2021.

\bibitem[Liang and Stokes(2019)]{liang2019interaction}
Tengyuan Liang and James Stokes.
\newblock Interaction matters: A note on non-asymptotic local convergence of
  generative adversarial networks.
\newblock In \emph{The 22nd International Conference on Artificial Intelligence
  and Statistics}, pages 907--915. PMLR, 2019.

\bibitem[Lieder(2021)]{lieder2021convergence}
Felix Lieder.
\newblock On the convergence rate of the halpern-iteration.
\newblock \emph{Optimization Letters}, 15\penalty0 (2):\penalty0 405--418,
  2021.

\bibitem[Mokhtari et~al.(2020{\natexlab{a}})Mokhtari, Ozdaglar, and
  Pattathil]{mokhtari2020unified}
Aryan Mokhtari, Asuman Ozdaglar, and Sarath Pattathil.
\newblock A unified analysis of extra-gradient and optimistic gradient methods
  for saddle point problems: {{Proximal}} point approach.
\newblock In \emph{Proceedings of the {{Twenty Third International Conference}}
  on {{Artificial Intelligence}} and {{Statistics}}}, pages 1497--1507. PMLR,
  June 2020{\natexlab{a}}.

\bibitem[Mokhtari et~al.(2020{\natexlab{b}})Mokhtari, Ozdaglar, and
  Pattathil]{mokhtari2020convergence}
Aryan Mokhtari, Asuman~E Ozdaglar, and Sarath Pattathil.
\newblock Convergence rate of $\mathcal{O}(1/k)$ for optimistic gradient and
  extragradient methods in smooth convex-concave saddle point problems.
\newblock \emph{SIAM Journal on Optimization}, 30\penalty0 (4):\penalty0
  3230--3251, 2020{\natexlab{b}}.

\bibitem[Nemirovski(2004)]{nemirovski2004prox}
Arkadi Nemirovski.
\newblock Prox-method with rate of convergence {{$\mathcal{O}(1/t)$}} for
  variational inequalities with lipschitz continuous monotone operators and
  smooth convex-concave saddle point problems.
\newblock \emph{SIAM Journal on Optimization}, 15\penalty0 (1):\penalty0
  229--251, 2004.

\bibitem[Nemirovskij and Yudin(1983)]{nemirovskij1983problem}
Arkadij~Semenovi{\v{c}} Nemirovskij and David~Borisovich Yudin.
\newblock \emph{Problem complexity and method efficiency in optimization}.
\newblock Wiley-Interscience, 1983.

\bibitem[Nesterov(2007)]{nesterov2007dual}
Yurii Nesterov.
\newblock Dual extrapolation and its applications to solving variational
  inequalities and related problems.
\newblock \emph{Mathematical Programming}, 109\penalty0 (2):\penalty0 319--344,
  2007.

\bibitem[Nesterov(1983)]{nesterov1983method}
Yurii~Evgen'evich Nesterov.
\newblock A method of solving a convex programming problem with convergence
  rate {{$\mathcal{O}\bigl(\frac{1}{k^2}\bigr)$}}.
\newblock In \emph{Doklady Akademii Nauk}, volume 269, pages 543--547. Russian
  Academy of Sciences, 1983.

\bibitem[Niederreiter(1992)]{niederreiter1992random}
Harald Niederreiter.
\newblock \emph{Random Number Generation and Quasi-Monte Carlo Methods}.
\newblock {{CBMS-NSF Regional Conference Series}} in {{Applied Mathematics}}.
  {Society for Industrial and Applied Mathematics}, January 1992.
\newblock ISBN 978-0-89871-295-7.
\newblock \doi{10.1137/1.9781611970081}.

\bibitem[Popov(1980)]{popov1980modification}
Leonid~Denisovich Popov.
\newblock A modification of the arrow-hurwicz method for search of saddle
  points.
\newblock \emph{Mathematical notes of the Academy of Sciences of the USSR},
  28\penalty0 (5):\penalty0 845--848, 1980.

\bibitem[Rakhlin and Sridharan(2013)]{rakhlin2013optimization}
Sasha Rakhlin and Karthik Sridharan.
\newblock Optimization, learning, and games with predictable sequences.
\newblock \emph{Advances in Neural Information Processing Systems}, 2013.

\bibitem[Ryu and Yin(2022)]{Ryu22Large}
Ernest~K. Ryu and Wotao Yin.
\newblock \emph{Large-Scale Convex Optimization via Monotone Operators}.
\newblock Cambridge University Press, 2022.

\bibitem[Schauder(1927)]{schauder1927zur}
Julius Schauder.
\newblock {Zur Theorie stetiger Abbildungen in Funktionalr\"aumen [On the
  theory of continuous mappings in function spaces]}.
\newblock \emph{Mathematische Zeitschrift}, 26\penalty0 (1):\penalty0 47--65,
  December 1927.
\newblock ISSN 1432-1823.
\newblock \doi{10.1007/BF01475440}.

\bibitem[Sedlmayer et~al.(2023)Sedlmayer, Nguyen, and Bot]{sedlmayer2023fast}
Michael Sedlmayer, Dang-Khoa Nguyen, and Radu~Ioan Bot.
\newblock A fast optimistic method for monotone variational inequalities.
\newblock In \emph{International Conference on Machine Learning}, pages
  30406--30438. PMLR, 2023.

\bibitem[Stonyakin et~al.(2022)Stonyakin, Gasnikov, Dvurechensky, Titov, and
  Alkousa]{stonyakin2022generalized}
Fedor Stonyakin, Alexander Gasnikov, Pavel Dvurechensky, Alexander Titov, and
  Mohammad Alkousa.
\newblock Generalized mirror prox algorithm for monotone variational
  inequalities: {{Universality}} and inexact oracle.
\newblock \emph{Journal of Optimization Theory and Applications}, 194\penalty0
  (3):\penalty0 988--1013, September 2022.
\newblock ISSN 1573-2878.
\newblock \doi{10.1007/s10957-022-02062-7}.

\bibitem[Tran-Dinh(2022)]{tran2022connection}
Quoc Tran-Dinh.
\newblock The connection between {{Nesterov's}} accelerated methods and halpern
  fixed-point iterations.
\newblock \emph{arXiv preprint arXiv:2203.04869}, 2022.

\bibitem[Triebel(2010)]{triebel2010bases}
Hans Triebel.
\newblock \emph{Bases in Function Spaces, Sampling, Discrepancy, Numerical
  Integration}, volume~11 of \emph{{{EMS Tracts}} in {{Mathematics}}}.
\newblock EMS Press, 1 edition, June 2010.
\newblock ISBN 978-3-03719-085-2.
\newblock \doi{10.4171/085}.

\bibitem[Ul'yanov(1972)]{ulyanov1972representation}
Pyotr~Lavrentyevich Ul'yanov.
\newblock Representation of functions by series and classes {$\phi(L)$}.
\newblock \emph{Uspekhi Matematicheskikh Nauk}, 27\penalty0 (2 (164)):\penalty0
  3--52, April 1972.
\newblock ISSN 0036-0279.
\newblock \doi{10.1070/RM1972v027n02ABEH001370}.

\bibitem[{van der Corput}(1935)]{vandercorput1935verteilungsfunktionen}
Johannes~G. {van der Corput}.
\newblock Verteilungsfunktionen.
\newblock \emph{Proceedings of the Section of Sciences (Koninklijke Akademie
  van Wetenschappen te Amsterdam)}, 38\penalty0 (8):\penalty0 813--821,
  September 1935.

\bibitem[Wei et~al.(2021)Wei, Lee, Zhang, and Luo]{wei2021last}
Chen-Yu Wei, Chung-Wei Lee, Mengxiao Zhang, and Haipeng Luo.
\newblock Last-iterate convergence of decentralized optimistic gradient
  descent/ascent in infinite-horizon competitive markov games.
\newblock In \emph{Conference on learning theory}, pages 4259--4299. PMLR,
  2021.

\bibitem[Woodworth and Srebro(2016)]{woodworth2016tight}
Blake~E Woodworth and Nati Srebro.
\newblock Tight complexity bounds for optimizing composite objectives.
\newblock \emph{Advances in neural information processing systems}, 29, 2016.

\bibitem[Yoon and Ryu(2021)]{yoon2021accelerated}
TaeHo Yoon and Ernest~K. Ryu.
\newblock Accelerated algorithms for smooth convex-concave minimax problems
  with $\mathcal{O}(1/k^2)$ rate on squared gradient norm.
\newblock In \emph{Proceedings of the 38th {{International Conference}} on
  {{Machine Learning}}}, pages 12098--12109. PMLR, July 2021.

\bibitem[Yoon et~al.(2024)Yoon, Kim, Suh, and Ryu]{yoon2024optimal}
Taeho Yoon, Jaeyeon Kim, Jaewook~J. Suh, and Ernest~K. Ryu.
\newblock Optimal acceleration for minimax and fixed-point problems is not
  unique.
\newblock In \emph{Proceedings of the 41st International Conference on Machine
  Learning}, pages 57244--57314, 2024.

\bibitem[Young(1953)]{young1953richardson}
David Young.
\newblock On {{Richardson}}'s method for solving linear systems with positive
  definite matrices.
\newblock \emph{Journal of Mathematics and Physics}, 32\penalty0
  (1-4):\penalty0 243--255, 1953.

\bibitem[Zhang and Jiang(2024)]{zhang2024accelerated/preprint}
Zehao Zhang and Rujun Jiang.
\newblock Accelerated gradient descent by concatenation of stepsize schedules,
  October 2024.
\newblock ArXiv preprint, 2410.12395.

\bibitem[Zhang et~al.(2025)Zhang, Lee, Du, and Chen]{zhang2025anytime}
Zihan Zhang, Jason Lee, Simon Du, and Yuxin Chen.
\newblock Anytime acceleration of gradient descent.
\newblock In \emph{The Thirty Eighth Annual Conference on Learning Theory},
  pages 5991--6013. PMLR, 2025.

\end{thebibliography}

\appendix
\crefalias{section}{appendix}
\crefalias{subsection}{appendix}
\crefalias{subsubsection}{appendix}

\tableofcontents

\section{Lemmas involving elementary math and calculus}\label{ah:elem}

\begin{applemma}\label{lem:ln-plus-monomial}
    For $a,b,c>0$, we have
    \begin{align*}
        \max_{x\in (0,\infty)} \cbrm[\big]{- a x^b + c \ln x} = -\frac{c\ln(ab/c)+c}{b},
    \end{align*}
    where the maximum is achieved at $x=(ab/c)^{-1/b}$.
\end{applemma}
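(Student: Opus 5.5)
Set $f(x) = -a x^b + c\ln x$ on $(0,\infty)$ and use single-variable calculus. Since $f$ is differentiable there, we find its critical points and show the one we get is the global maximum.

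First, compute the derivative:
\[
f'(x) = -ab\,x^{b-1} + \frac{c}{x} = \frac{c - ab\,x^b}{x}.
\]
Because $x>0$, the sign of $f'(x)$ is the sign of $c - ab\,x^b$. The map $x\mapsto x^b$ is strictly increasing on $(0,\infty)$ when $b>0$. Hence $f'$ is positive for $x < x^\star$ and negative for $x > x^\star$, where $x^\star = (c/(ab))^{1/b} = (ab/c)^{-1/b}$. So $f$ is strictly increasing on $(0,x^\star]$ and strictly decreasing on $[x^\star,\infty)$. This makes $x^\star$ the unique global maximizer, and the supremum is attained, so writing $\max$ is justified. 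We also have $f\to-\infty$ at both ends: as $x\to 0^+$ through the $c\ln x$ term, and as $x\to\infty$ because $ax^b$ dominates $\ln x$. This boundary behaviour is consistent with the monotonicity argument but is not needed for it.

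Second, evaluate $f$ at the maximizer. From $(x^\star)^b = c/(ab)$ we get $-a(x^\star)^b = -c/b$. Also
\[
c\ln x^\star = \frac{c}{b}\ln\frac{c}{ab} = -\frac{c\ln(ab/c)}{b}.
\]
Adding the two terms gives
\[
f(x^\star) = -\frac{c\ln(ab/c) + c}{b},
\]
which is the claimed value. There is no real obstacle here. The only care needed is the sign analysis of $f'$, which relies on $a,b,c>0$.
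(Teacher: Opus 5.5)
Your proof is correct and follows the paper's argument: solve the first-order condition $f'(x)=0$, use the sign change of $f'$ to show the critical point is the global maximizer, and evaluate $f$ there. Writing $f'(x)=(c-ab\,x^b)/x$ and invoking the strict monotonicity of $x\mapsto x^b$ also makes explicit the sign check that the paper only says is easy to verify.
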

\begin{proof}
    Define $f(x)=-a x^b+c\ln x$. Using the first-order condition,
    \begin{align*}
        f'(x)=\frac cx - ab x^{b-1},
    \end{align*}
    and setting $f'(x^*)=0$ gives
    \begin{align*}
        \frac c{x^*} - ab (x^*)^{b-1} = 0 \implies x^*=(ab/c)^{-1/b}.
    \end{align*}
    It is easy to verify that $f'(x)>0$ for $0<x<x^*$ and $f'(x)<0$ for $x>x^*$. Therefore, \phantom{\qedhere}
    \begin{align*}
        \max_{x\in (0,\infty)}f(x)=f(x^*)=- a (ab/c)^{-1} + c\ln\sbrm[\big]{ (ab/c)^{-1/b} } = -\frac{c\ln(ab/c)+c}{b}.\mqed
    \end{align*}
\end{proof}

\begin{applemma}\label{lem:ceil-sub-1}
    If $x\in \dR,y\in [0, 1)$, then $\abs{\ceil{x-y}-x} \leq 1$.
\end{applemma}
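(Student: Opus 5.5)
The statement to prove is that for $x\in\dR$ and $y\in[0,1)$ we have $\abs{\ceil{x-y}-x}\le 1$. The plan is to sandwich $\ceil{x-y}$ between two expressions in $x$ using only the defining property of the ceiling function. For every real $u$, the integer $\ceil{u}$ satisfies $u\le \ceil{u}<u+1$. We will apply this with $u=x-y$ and then use the range of $y$ to convert it into bounds relative to $x$.

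\textbf{Upper bound.} From $\ceil{x-y}<x-y+1$ and $y\ge 0$ we get $\ceil{x-y}<x+1$. Hence $\ceil{x-y}-x<1$.

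\textbf{Lower bound.} From $\ceil{x-y}\ge x-y$ and $y<1$ we get $\ceil{x-y}>x-1$. Hence $\ceil{x-y}-x>-1$.

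Combining the two bounds gives $-1<\ceil{x-y}-x<1$, which yields $\abs{\ceil{x-y}-x}<1\le 1$, and the strict version is in fact available. No step here is a genuine obstacle. The only points to watch are which endpoint of $[0,1)$ feeds which inequality: $y\ge0$ is used in the upper bound and $y<1$ in the lower bound.
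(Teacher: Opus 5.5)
Your proof is correct and is essentially the paper's argument. Both sandwich $\lceil x-y\rceil$ using basic ceiling inequalities, with $y\ge 0$ feeding the upper bound and $y<1$ the lower bound. The paper routes this through monotonicity of the ceiling, via $\lceil x-y\rceil\le\lceil x\rceil<x+1$ and $\lceil x-y\rceil\ge\lceil x-1\rceil\ge x-1$. You instead apply $u\le\lceil u\rceil<u+1$ directly at $u=x-y$, which also gives you the strict bound on both sides.
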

\begin{proof}\phantom{\qedhere}
    \begin{align*}
        \ceil{x-y}-x&\leq \ceil{x}-x<1,\\
        \ceil{x-y}-x&\geq\ceil{x-1}-x\geq x-1-x=-1.\mqed
    \end{align*}
\end{proof}

\begin{applemma}\label{lem:m-func} For $0<s<1$ and $-\pi<\theta<\pi$, we have
    \begin{align*}
        \int_0^{\infty} \ln(1+2t\cos \theta+t^2)t^{-s-1}\dd{t}=\frac{2\pi \cos(\theta s)}{s\sin(\pi s)}.
    \end{align*}
    As a special case, when $\theta=2\pi/3$, we have
    \begin{align*}
        \int_0^{\infty} \ln(1-t+t^2)t^{-s-1}\dd{t}=\frac{2\pi \cos(2\pi s/3)}{s\sin(\pi s)}.
    \end{align*}
\end{applemma}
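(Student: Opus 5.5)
The plan is to integrate by parts once, so that the logarithm is replaced by a rational function, and then evaluate the resulting rational Mellin transform by partial fractions and a standard contour (keyhole) integral. Write $f(t)=\ln(1+2t\cos\theta+t^2)=\ln\abs{1+te^{i\theta}}^2$. Since $\abs{\theta}<\pi$, the quadratic $1+2t\cos\theta+t^2$ has no root on $[0,\infty)$, so $f$ is smooth there. Near $0$ we have $f(t)=\calO(t)$, and as $t\to\infty$ we have $f(t)=\calO(\ln t)$. Hence for $0<s<1$ the integrand $f(t)t^{-s-1}$ behaves like $t^{-s}$ at $0$ and like $t^{-s-1}\ln t$ at $\infty$, and both are integrable.

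\textbf{Step 1 (integration by parts).} Use $-\frac1s\,\dd(t^{-s})$. The boundary term $-\frac1s f(t)t^{-s}$ vanishes at both ends: at $0$ because $f(t)t^{-s}=\calO(t^{1-s})$, and at $\infty$ because $t^{-s}\ln t\to0$. This gives
\begin{align*}
\int_0^\infty f(t)t^{-s-1}\dd t=\frac1s\int_0^\infty \frac{2t+2\cos\theta}{1+2t\cos\theta+t^2}\,t^{-s}\dd t .
\end{align*}

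\textbf{Step 2 (partial fractions).} Factor $1+2t\cos\theta+t^2=(t+e^{i\theta})(t+e^{-i\theta})$. Then
\begin{align*}
\frac{2t+2\cos\theta}{(t+e^{i\theta})(t+e^{-i\theta})}=\frac1{t+e^{i\theta}}+\frac1{t+e^{-i\theta}} .
\end{align*}
The problem therefore reduces to the classical formula
\begin{align*}
\int_0^\infty \frac{t^{-s}}{t+e^{i\varphi}}\dd t=\frac{\pi\, e^{-i\varphi s}}{\sin(\pi s)},\qquad \abs{\varphi}<\pi,\ 0<s<1 .
\end{align*}
For $\varphi=0$ this is the Beta-function identity $\int_0^\infty t^{-s}(1+t)^{-1}\dd t=\pi/\sin(\pi s)$. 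The general case follows by analytic continuation in $\varphi$: rotate the contour, substituting $t=e^{i\varphi}u$, which is legitimate because the integrand decays like $\abs{t}^{-1-s}$ in the sector swept out and has no poles inside it. Alternatively, one can apply the residue theorem on a keyhole contour with the branch of $t^{-s}$ cut along $[0,\infty)$, where the single pole is at $t=-e^{i\varphi}=e^{i(\varphi+\pi)}$.

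\textbf{Step 3 (combine).} Adding the contributions for $\varphi=\pm\theta$ gives $\frac{\pi}{\sin\pi s}(e^{-i\theta s}+e^{i\theta s})=\frac{2\pi\cos(\theta s)}{\sin(\pi s)}$. Dividing by $s$ yields the stated formula. The special case follows by setting $\theta=2\pi/3$, since then $2\cos\theta=-1$.

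\textbf{Main obstacle.} The one delicate point is getting the branch and sign conventions right in the complex Mellin formula, i.e.\ confirming that the answer is $e^{-i\varphi s}$ rather than $e^{+i\varphi s}$. Because the two conjugate terms are summed, this ambiguity cancels in the final cosine. I would still double-check the formula at $\varphi=0$ and argue real-analyticity in $\varphi$ on $(-\pi,\pi)$ to pin it down rigorously.
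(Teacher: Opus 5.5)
Your proof is correct. Its core is the same as the paper's: both use the partial fractions over $(t+e^{i\theta})(t+e^{-i\theta})$ and the Mellin formula $\int_0^\infty t^{-s}(t+z)^{-1}\,\mathrm{d}t=\pi z^{-s}/\sin(\pi s)$ at $z=e^{\pm i\theta}$. The two proofs differ in how the logarithm is removed.

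The paper uses Feynman's trick. It differentiates in $\theta$, which gives $J'(\theta)=-2\sin\theta\int_0^\infty t^{-s}(1+2t\cos\theta+t^2)^{-1}\,\mathrm{d}t=-2\pi\sin(\theta s)/\sin(\pi s)$. It then integrates back in $\theta$ and fixes the constant of integration by computing $J(0)$. It computes $J(0)$ by exactly your integration by parts, specialized to $\theta=0$.

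You do that integration by parts in $t$ for every $\theta$ at once, so the detour through $\theta$ becomes unnecessary. There is no differentiation under the integral sign to justify, and the paper never justifies it. There is no partial-fraction coefficient $1/(2i\sin\theta)$, which degenerates at $\theta=0$. There is no integration constant to determine. As you note, summing the two conjugate terms also makes the branch convention for $z^{-s}$ irrelevant.

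You are also more careful than the paper about the complex substitution $t=zu$, which the paper makes without comment. Your sector-rotation argument, or alternatively the keyhole computation, puts it on rigorous footing: the integrand decays like $\lvert t\rvert^{-1-s}$ and the pole $-e^{i\varphi}$ lies outside the swept sector.

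One small caveat concerns your closing remark. Agreement at $\varphi=0$ plus real-analyticity in $\varphi$ does not by itself pin down the formula. The clean version is analyticity in $z$ on $\mathbb{C}\setminus(-\infty,0]$, together with agreement for all real $z>0$, which follows from the real substitution $t=zu$. Your rotation and keyhole arguments already settle the formula, so this does not affect correctness.
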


This equality appears in \cite[p. 316, Eq. 6.4(27)]{bateman1954integral}.

\begin{proof}
We use Feynman's Trick. Define $J(\theta)=\int_0^{\infty} \ln(1+2t\cos \theta+t^2)t^{-s-1}dt$, and differentiate it with respect to $\theta$,
\begin{align*}
    J'(\theta) 
    = \int_{0}^{\infty} \frac{-2t\sin\theta}{1+2t\cos\theta+t^2} t^{-s-1} \dd{t} 
    = -2\sin\theta \int_{0}^{\infty} \frac{t^{-s}}{1+2t\cos\theta+t^2} \dd{t}.
\end{align*}
We can decompose the integrand using partial fractions: with $e^{i\theta} + e^{-i\theta} = 2\cos\theta$ and $e^{i\theta} - e^{-i\theta} = 2i\sin\theta$, we have:
\begin{align*}
\frac{1}{1+2t\cos\theta+t^2} 
= \frac{1}{(t+e^{i\theta})(t+e^{-i\theta})} 
= \frac{1}{2i\sin\theta} \rbr{ \frac{1}{t+e^{-i\theta}} - \frac{1}{t+e^{i\theta}} }.
\end{align*}
Therefore, substituting this back into the integration yields
\begin{align*}
    J'(\theta) 
    & = -2\sin\theta \cdot \frac{1}{2i\sin\theta} \rbr{ \int_{0}^{\infty} \frac{t^{-s}}{t+e^{-i\theta}} \dd{t} - \int_{0}^{\infty} \frac{t^{-s}}{t+e^{i\theta}} \dd{t} } \\
    & = i \rbr{ \int_{0}^{\infty} \frac{t^{-s}}{t+e^{-i\theta}} \dd{t} - \int_{0}^{\infty} \frac{t^{-s}}{t+e^{i\theta}} \dd{t} }. \yestag\label{eq:lem-m-func-j'}
\end{align*}

For any $z\in \dC$, we have
\begin{align*}
    \int_{0}^{\infty} \frac{t^{-s}}{t+z} \dd{t}=\int_0^\infty \frac{(zu)^{-s}}{zu+z}z\dd{u}=z^{-s}\int_0^\infty \frac{u^{-s}}{u+1}\dd{u}=\frac{\pi z^{-s}}{\sin(\pi s)},
\end{align*}
where we substitute $t=zu$ and use the standard Mellin transform $\int_0^\infty \frac{u^{-s}}{u+1}\dd{u}=\frac{\pi}{\sin(\pi s)}$. Plugging this back into \eqref{eq:lem-m-func-j'}, and we have
\begin{align*}
    J'(\theta) 
    = i \rbr{ \frac{\pi e^{is\theta}}{\sin(\pi s)} - \frac{\pi e^{-is\theta}}{\sin(\pi s)} } 
    = \frac{i\pi}{\sin(\pi s)} \rbr{ e^{is\theta} - e^{-is\theta} }
    = -\frac{2\pi \sin(\theta s)}{\sin(\pi s)}.
\end{align*}
Integrating both sides with respect to $\theta$, and we can recover $J(\theta)$ as
\begin{align*}
    J(\theta) = \int -\frac{2\pi \sin(s\theta)}{\sin(\pi s)} \dd{\theta} = \frac{2\pi \cos(s\theta)}{s \sin(\pi s)} + C(s).
\end{align*}

To determine the constant of integration $C(s)$, we evaluate $J(\theta)$ at $\theta=0$:
\begin{align*}
    J(0)
    & = \int_0^\infty \ln(1+2t+t^2)t^{-s-1}\dd{t} = 2\int_0^\infty \ln(1+t)t^{-s-1}\dd{t} \\
    & = \frac{2}{-s}\int_0^\infty \ln(1+t)\dd{t^{-s}} \\
    & = \frac{2}{-s} \underbrace{\rbr{\ln(1+t)\cdot t^{-s}}\big|_0^\infty}_{{}=0}
        {} + \frac{2}{s}\int_0^\infty t^{-s}\dd{\,\ln(1+t)} \tag{integration by parts}\\
    & = \frac{2}{s}\int_0^\infty \frac{t^{-s}}{1+t}\dd{t} = \frac{2\pi}{s\sin(\pi s)},
\end{align*}
where, in the last step, we again used the aforementioned standard Mellin transform. Therefore,
\begin{align*}
    C(s)=\frac{2\pi}{s\sin(\pi s)}-\frac{2\pi \cos(0)}{s \sin(\pi s)}=0.
\end{align*}

Thus we have proved the formula for $J(\theta)$. The special case of $\theta=-2\pi/3$ can be derived by simply plugging the value in.
\end{proof}

\begin{applemma}\label{lem:dseg-function-convexity}
    When $0<\rho<\frac{3-\sqrt{7}}{2}$, the function $f(z)=\ln(1+(\rho-2)z+z^2)$ is concave on $[0, \frac12]$, and satisfies $f'(0)=\rho-2$.
\end{applemma}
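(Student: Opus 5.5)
The plan is to reduce both claims to elementary facts about the quadratic $g(z)=1+(\rho-2)z+z^2$ on $[0,\frac12]$. The derivative claim is immediate: $f'(z)=\frac{(\rho-2)+2z}{g(z)}$, and since $g(0)=1$ we get $f'(0)=\rho-2$.

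For concavity, differentiate once more:
\begin{align*}
    f''(z)=\frac{2g(z)-\rbrm[\big]{(\rho-2)+2z}^2}{g(z)^2}.
\end{align*}
Concavity on $[0,\frac12]$ will follow from two facts: $g>0$ on the interval, so $f$ is well defined and smooth there, and the numerator $N(z)=2g(z)-(\rho-2+2z)^2$ is nonpositive on the interval.

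\textbf{Step 1: positivity of $g$.} The discriminant of $g$ is $(\rho-2)^2-4$, which is negative for $0<\rho<4$. Hence $g>0$ on all of $\dR$.

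\textbf{Step 2: sign of the numerator.} Expanding,
\begin{align*}
    N(z)=2+2(\rho-2)z+2z^2-(\rho-2)^2-4(\rho-2)z-4z^2=-2z^2-2(\rho-2)z+2-(\rho-2)^2.
\end{align*}
This is a downward-opening parabola in $z$ whose vertex sits at $z=-(\rho-2)/2=(2-\rho)/2$. For $\rho<1$ the vertex lies to the right of $\frac12$, so $N$ is increasing on $[0,\frac12]$ and its maximum over the interval is attained at $z=\frac12$. Evaluating there gives
\begin{align*}
    N(\tfrac12)=-\tfrac12-(\rho-2)+2-(\rho-2)^2=-\rho^2+3\rho-\tfrac12 .
\end{align*}
The condition $N(\frac12)\le 0$ is therefore $\rho^2-3\rho+\frac12\ge 0$. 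The roots of this quadratic are $\rho=\frac{3\pm\sqrt7}{2}$, so the condition holds exactly when $\rho\le\frac{3-\sqrt7}{2}$, which is the stated hypothesis. Note that $\frac{3-\sqrt7}{2}\approx 0.177<1$, so the vertex assumption used above is valid throughout the allowed range.

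Putting the steps together, $N(z)\le N(\frac12)\le 0$ for all $z\in[0,\frac12]$, hence $f''\le 0$ there and $f$ is concave on $[0,\frac12]$.

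The only step needing care is confirming that the maximum of $N$ on the interval occurs at the right endpoint, i.e.\ checking the vertex location; everything else is direct computation.
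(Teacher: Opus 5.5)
Your proof is correct and follows the paper's route: compute $f''$ and show that the quadratic numerator $-2z^2-2(\rho-2)z-\rho^2+4\rho-2$ is nonpositive on $[0,\frac12]$. The only difference is cosmetic. The paper locates the smaller root $z_1=\frac12\bigl(2-\rho-\sqrt{4\rho-\rho^2}\bigr)$ and shows $z_1\ge\frac12$ via monotonicity in $\rho$. You instead use the vertex location and evaluate at $z=\frac12$, which has the small advantage of producing the threshold $\frac{3-\sqrt7}{2}$ directly, and you also check $g>0$, which the paper leaves implicit.
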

\begin{proof}
    We only need to prove $f''(z)\leq 0$. We have
    \begin{align*}
        f'(z)&=\frac{(\rho-2)+2z}{1+(\rho-2)z+z^2},\\
        f''(z)&=\frac{2}{1+(\rho-2)z+z^2}-\frac{[(\rho-2)+2z]^2}{[1+(\rho-2)z+z^2]^2}\\
        &= \frac{-2 z^2-2 (\rho-2) z-\rho ^2+4 \rho -2}{[1+(\rho-2)z+z^2]^2}.
    \end{align*}
    The second claim is obtained by simply substituting $z=0$ into $f'(z)$. For the first claim, solving the quadratic equation $-2 z^2-2 (\rho-2) z-\rho ^2+4 \rho -2=0$ gives two solutions $\{z_1,z_2\}=\cbrm[\big]{\frac12 \rbrm[\big]{2-\rho\pm\sqrt{4\rho-\rho^2}}}$; then the solution of the quadratic inequality $-2 z^2-2 (\rho-2) z-\rho ^2+4 \rho -2\leq 0$ is the region $(-\infty, z_1]\cup [z_2, +\infty)$.

    We show that $[0, \frac12] \subseteq (-\infty, z_1]$, or equivalently, $z_1\geq \frac12$. Indeed, the function $\rho\mapsto z_1=\frac12 \rbrm[\big]{2-\rho-\sqrt{4\rho-\rho^2}}$ is monotonically decreasing on $(0, \frac{3-\sqrt{7}}{2})$, because both $\rho\mapsto\rho$ and $\rho\mapsto4\rho-\rho^2$ are monotonically increasing. Also, when $\rho=\frac{3-\sqrt{7}}{2}$, direct calculation shows $z_1=\frac12$. The claim is thus proved.
\end{proof}

\begin{applemma}\label{lem:zeta-bound}
    The Riemann zeta function
    \begin{align*}
        \zeta(z)=\sum_{n=1}^\infty \frac{1}{n^z}=\frac{1}{1^z}+\frac{1}{2^z}+\frac{1}{3^z}+\cdots
    \end{align*}
    is monotonically decreasing on $(1, \infty)$ and satisfies $\zeta(z)<1+\frac 1{z-1}$.
\end{applemma}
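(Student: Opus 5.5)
Both claims follow from elementary comparisons of the series $\sum_{n\ge 1} n^{-z}$, term by term and against an integral. For monotonicity, I will fix $1<z_1<z_2$. For every $n\geq 2$ we have $n^{-z_2}<n^{-z_1}$, since $n>1$ and $z\mapsto n^{-z}=e^{-z\ln n}$ is strictly decreasing. The $n=1$ term equals $1$ for both exponents, so summing the termwise inequalities (both series converge for $z>1$) gives $\zeta(z_2)<\zeta(z_1)$. This is strict decrease on $(1,\infty)$.

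For the upper bound, I will split off the first term and compare the tail with an integral. Since $x\mapsto x^{-z}$ is strictly decreasing on $[1,\infty)$, for each $n\geq 2$
\begin{align*}
    \frac{1}{n^z} < \int_{n-1}^{n} x^{-z}\dd{x},
\end{align*}
with strict inequality because the integrand exceeds $n^{-z}$ on $[n-1,n)$. Summing over $n\geq 2$ gives
\begin{align*}
    \zeta(z) = 1+\sum_{n=2}^\infty \frac{1}{n^z} < 1+\int_1^\infty x^{-z}\dd{x} = 1+\frac{1}{z-1},
\end{align*}
where the last integral is finite precisely because $z>1$.

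There is no real obstacle in this argument. The only point that needs care is strictness: it holds in the tail comparison because every term with $n\ge 2$ is strictly dominated by its integral, and the sum of those strictly positive differences is still positive.
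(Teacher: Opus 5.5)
Your proposal is correct and follows the paper's proof essentially verbatim: termwise monotonicity of $n^{-z}$ for the first claim, and the comparison $\sum_{n\ge 2} n^{-z} < \sum_{n\ge 2}\int_{n-1}^{n} x^{-z}\,\dd{x} = \int_1^\infty x^{-z}\,\dd{x} = \frac{1}{z-1}$ for the bound. Your explicit justification of strictness, and your observation that the $n=1$ term is constant, are small additions that the paper leaves implicit.
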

\begin{proof}
    Monotonicity comes from the monotonicity of each term in the summation. The upper bound is proved with an integral,\phantom{\qedhere}
    \begin{align*}
        \zeta(z)
        & = 1+\sum_{n=2}^{\infty}\frac{1}{n^z} \\
        & < 1 + \sum_{n=2}^\infty \int_{n-1}^n \frac{1}{x^z}\dd{x} \\
        & = 1 + \int_{1}^\infty \frac{1}{x^z}\dd{x} = 1 + \frac{1}{z-1}.\mqed
    \end{align*}
\end{proof}

\begin{applemma}\label{lem:og-first-order-0}
    For $t>0$, we have
    \begin{align*}
        \ln\frac{4t^2+1+\sqrt{(4t^2+1)^2-4t}}{2}\geq -t.
    \end{align*}
\end{applemma}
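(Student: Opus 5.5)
We need to show $\ln\frac{4t^2+1+\sqrt{(4t^2+1)^2-4t}}{2}\ge -t$ for $t>0$. The argument of the logarithm is the larger root $\lambda_+$ of $\lambda^2-(4t^2+1)\lambda+t=0$. Indeed, the sum of the roots is $4t^2+1$ and their product is $t$.

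Write $u=4t^2+1$. The plan is to show $\lambda_+\ge e^{-t}$, which is the claim after exponentiating. We split into two regimes.

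\emph{Regime $t\ge 1/4$.} First check that the discriminant is nonnegative: $u^2-4t\ge 0$ for all $t>0$. We have $u^2=(4t^2+1)^2\ge 8t^2+1$. For $t\le 1$ we use the AM–GM bound $4t^2+1\ge 4t$, which gives $u\ge 4t$ and $u\ge 1$, so $u^2\ge u\ge 4t$. For $t>1$ we simply have $u^2\ge 16t^4\ge 4t$. Consequently $\lambda_+\ge u/2\ge 1/2$. Since $e^{-t}\le e^{-1/4}\approx 0.78$ in this regime, the bound $1/2$ alone is not enough, so a sharper estimate is needed. For $t\ge 1$ it suffices to note $\lambda_+\ge u/2\ge 5/2>1>e^{-t}$.

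\emph{Regime $t\in(0,1)$.} Avoid case analysis by a cleaner test. The quadratic $p(\lambda)=\lambda^2-u\lambda+t$ is positive above $\lambda_+$. If we show $p(e^{-t})\le 0$, then $e^{-t}$ lies in $[\lambda_-,\lambda_+]$, hence $e^{-t}\le\lambda_+$, which is what we want. Put $x=e^{-t}\in(0,1)$. The condition $p(x)\le 0$ reads
\[
x^2-(4t^2+1)x+t\le 0 \iff t-x(1-x)\le 4t^2x .
\]
Use $1-x=1-e^{-t}\ge t-t^2/2$, so $x(1-x)\ge x\,t-x\,t^2/2$. It then suffices to prove
\[
t(1-x)\le 4t^2x+\tfrac{x t^2}{2}=\tfrac92 t^2x, \quad\text{equivalently}\quad 1-e^{-t}\le \tfrac92\, t\, e^{-t},
\]
that is, $e^{t}-1\le \tfrac92 t$. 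For $t\le 1$ this holds, since $e^t-1\le (e-1)t<\tfrac92 t$ by convexity of $e^t-1$. This settles all $t\in(0,1)$, and the case $t\ge1$ was handled above, so the inequality holds for all $t>0$.

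The main risk is the bookkeeping in the inequality $p(e^{-t})\le0$. If the constant margin turns out tighter than expected, the fallback is to use the full Taylor bound $1-e^{-t}\ge t-t^2/2+t^3/6-\dots$, or to treat small $t$ via the expansion $\lambda_+\approx 1-t+O(t^2)$ versus $e^{-t}=1-t+t^2/2-\dots$.
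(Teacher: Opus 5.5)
Your proof is correct and takes a genuinely different route from the paper's, but it contains one arithmetic slip. From $x(1-x)\ge xt-\tfrac12 xt^2$ you get $t-x(1-x)\le t(1-x)+\tfrac12 xt^2$. The sufficient condition is therefore $t(1-x)\le 4t^2x-\tfrac12 xt^2=\tfrac72 t^2x$, i.e.\ $e^t-1\le \tfrac72 t$, not $\tfrac92 t$ as you wrote. With $\tfrac92$ the claimed implication is not valid as stated. With the corrected constant the argument still closes with room to spare, since $e^t-1\le (e-1)t$ on $[0,1]$ and $e-1<\tfrac72$. Also, your heading ``Regime $t\ge 1/4$'' is misleading: that paragraph only settles $t\ge 1$, and $[1/4,1)$ is covered by your second regime. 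The case split in the discriminant check is unnecessary too, since $4t^2+1\ge 4t$ and $4t^2+1\ge 1$ hold for all $t$.

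The paper sets $f(t)=\ln(\cdot)+t$ and notes $f(0)=0$. It then differentiates through the nested radical and, after squaring, reduces $f'(t)\ge 0$ to the degree-six certificate $4t^2\bigl((4t^2-1)^2+t(4t-2)^2+4\bigr)\ge 0$.

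You instead identify the argument of the logarithm as the larger root $\lambda_+$ of $\lambda^2-(4t^2+1)\lambda+t$, which is the same characteristic polynomial that appears in the OG one-dimensional lemma. You then certify $e^{-t}\le\lambda_+$ by checking the sign of this quadratic at $e^{-t}$. For large $t$ the claim is trivial because $\lambda_+\ge (4t^2+1)/2$.

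Your route replaces the derivative computation and polynomial factorization with a second-order Taylor bound on $e^{-t}$ and a chord bound on $e^t-1$. It also makes visible that the inequality has a comfortable margin: near $0$ you need roughly $\tfrac32 t^2\le 4t^2$. The paper's route is more mechanical, but it yields the stronger fact that $\ln\lambda_+ + t$ is monotonically increasing in $t$.
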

\begin{proof}
    Let $f(t)=\ln\frac{4t^2+1+\sqrt{(4t^2+1)^2-4t}}{2}+t$; we only need to prove $f(t)>0$ for $t>0$. It is clear that $f(0)=0$; we claim that $f'(t)\geq 0$ for $t>0$. Taking the derivative,
    \begin{align*}
        f'(t)
        & = \frac{\frac{\dd}{\dd{t}}(4t^2+1+\sqrt{(4t^2+1)^2-4t})/2}{4t^2+1+\sqrt{(4t^2+1)^2-4t}/2}+1\\
        & = \frac{8t+\frac{16t(1+4t^2)-4}{2\sqrt{(4t^2+1)^2-4t}}}{4t^2+1+\sqrt{(4t^2+1)^2-4t}}+1\\
        & = \frac{12t^2-1+\sqrt{(4t^2+1)^2-4t}}{2t\sqrt{(4t^2+1)^2-4t}}+1\\
        & = \frac{12t^2-1+(2t+1)\sqrt{(4t^2+1)^2-4t}}{2t\sqrt{(4t^2+1)^2-4t}}.
    \end{align*}
    The denominator is clearly nonnegative. If $12t^2-1\geq 0$, the numerator is also clearly nonnegative; now assume $12t^2-1<0$. We need to prove
    \begin{align*}
        && 12t^2-1+(2t+1)\sqrt{(4t^2+1)^2-4t} & \geq 0 \\
        \Longleftrightarrow && (2t+1)\sqrt{(4t^2+1)^2-4t} & \geq 1-12t^2 \\
        \Longleftrightarrow && (2t+1)^2\rbrm[\big]{(4t^2+1)^2-4t} & \geq (1-12t^2)^2 \\
        \Longleftrightarrow && 64 t^6+64 t^5+48 t^4+16 t^3-4 t^2+1 & \geq 144 t^4-24 t^2+1 \\
        \Longleftrightarrow && 4t^2 (16 t^4+16 t^3-24 t^2+4 t+5) & \geq 0\\
        \Longleftrightarrow && 4t^2 \rbrm[\big]{
                (4t^2-1)^2+t(4t-2)^2+4
            }& \geq 0,
    \end{align*}
    and the last statement is true. Therefore, we have $f'(t)\geq 0$ for all $t>0$. Together with the fact that $f(0)=0$, this proves $f(t)\geq 0$ for all $t>0$, which is equivalent to the statement of the lemma.
\end{proof}

\begin{applemma}\label{lem:m-func-og}
    For $0<s<1$, the integral
    \begin{align*}
        &\int_{0}^\infty \ln\rbrm[\Big]{
            \frac{4t^2+1+\sqrt{(4t^2+1)^2-4t}}{2}
        } t^{-s-1}\,\dd{t}\\
        {}={} & \begin{aligned}[t]
            &\frac{2^s\pi}{s\sin(\pi s/2)}
            \ghg43\rbrm[\Big]{
                -\frac{s}{2},\frac{s}{6},\frac{s+2}{6},\frac{s+4}{6};
                1, \frac12, \frac12;
                \frac{27}{64}
            }\\
            &{}-
            \frac{2^{s-3}(s+1)\pi}{\cos(\pi s/2)}
            \ghg43\rbrm[\Big]{
                \frac{1-s}{2},\frac{s+3}{6},\frac{s+5}{6},\frac{s+7}{6};
                1, \frac32, \frac32;
                \frac{27}{64}
            },
            \end{aligned}
    \end{align*}
    where $\ghg43$ is the generalized hypergeometric function
    \begin{align*}
        \ghg{p}{q}(a_1,\dots,a_p;b_1,\dots,b_q;z)
        = \sum_{n=0}^\infty \frac{\rpow{a_1}{n}\cdots\rpow{a_p}{n}}{\rpow{b_1}{n}\cdots\rpow{b_q}{n}}\frac{z^n}{n\fact},
    \end{align*}
    and $\rpow{a}{n}$ is the rising factorial power
    \begin{align*}
        \rpow{a}{n}=a(a+1)\cdots(a+n-1).
    \end{align*}
\end{applemma}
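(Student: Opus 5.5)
The plan is to evaluate the integral as a Mellin transform of an algebraic function by means of Ramanujan's master theorem (equivalently, a Mellin--Barnes representation). Write
\[
g(t)=\frac{4t^2+1+\sqrt{(4t^2+1)^2-4t}}{2}, \qquad F(t)=\ln g(t).
\]
First I check convergence for $0<s<1$.
\begin{itemize}
\item Near $t=0$, $g(t)=1-t+O(t^2)$, so $F(t)\sim -t$. This matches the bound $F(t)\geq -t$ of \cref{lem:og-first-order-0}, and makes $F(t)t^{-s-1}$ integrable at $0$ because $s<1$.
\item Near $t=\infty$, $g(t)\sim 4t^2$, so $F(t)\sim 2\ln t$, which is integrable against $t^{-s-1}$ because $s>0$.
\end{itemize}
Next I integrate by parts as in the proof of \cref{lem:m-func}. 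The boundary terms vanish, leaving $\frac1s\int_0^\infty t^{-s}F'(t)\,\dd{t}$. The key structural fact is that $g$ is the larger root of
\[
x^2-(4t^2+1)x+t=0,
\]
so $F'(t)=g'/g$ can be computed implicitly as a rational function of $t$ and $g$.

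The second step is to obtain an explicit power series for $F$ (or $F'$) around $t=0$, with coefficients in closed form. Set $g=1-w$. Then $w$ satisfies an algebraic equation of the form $w = t\,\Phi(t,w)$, with a polynomial $\Phi$ coming from $t = g(4t^2+1)-g^2$. I apply the Lagrange--B\"urmann formula to $\ln(1-w)$ to get $[t^n]F$ as a finite double sum of binomial coefficients.

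The parity structure of $4t^2$ versus $t$ should let me collapse these sums. I then split the coefficients by parity of the index: even and odd $n$ will give the two hypergeometric pieces. I will use Gauss's multiplication formula for $\Gamma(3k+\cdot)/\Gamma(k+\cdot)^3$-type ratios, which is where the argument $27/64=3^3/4^3$ and the parameters $\frac{s}{6},\frac{s+2}{6},\frac{s+4}{6}$ come from. If the double sum does not collapse cleanly, the fallback is to write $F'(t)$ itself as a Mellin--Barnes integral. I would obtain it by expanding $g$ in the variable $t/(4t^2+1)^2$ via the standard generating function for the roots of a quadratic, namely Catalan-type coefficients, and then expanding each power of $(1+4t^2)^{-1}$ binomially.

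The third step is to interpolate the coefficient sequence $c_n$ by an analytic function $\varphi$, so that $F(t)=\sum_n \varphi(n)(-t)^n/n!$ in each parity class. Ramanujan's master theorem then gives
\[
\int_0^\infty t^{-s-1}F(t)\,\dd{t}
\]
as $\Gamma$-factors times $\varphi$ evaluated at $-s$, for each parity class. The reflection formulas produce the prefactors $\frac{\pi}{\sin(\pi s/2)}$ and $\frac{\pi}{\cos(\pi s/2)}$ from the even and odd parts respectively, with the odd class shifted by $\tfrac12$. The residual sums over the interpolated index should be recognised as the two $\ghg43$ series at $27/64$, with the stated parameters. The factor $2^s$ comes from the $4t^2$ scaling via $4^{s/2}$.

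I expect the main obstacle to be justifying Ramanujan's master theorem, that is, the growth conditions on $\varphi$ in a right half-plane. I would circumvent this by working directly with a Mellin--Barnes contour integral for $F'$ and closing the contour, checking decay via Stirling's formula. This gives a rigorous residue sum, and the residues should reproduce the two hypergeometric series; the convergence condition of $\ghg43$ at $27/64<1$ makes the resulting series absolutely convergent. Matching all constants exactly, such as $2^{s-3}(s+1)$, is routine but error-prone bookkeeping. As a sanity check, I would compare the leading behaviour as $s\to 0^+$, where both sides should behave like $\frac{2}{s^2}\cdot$(the coefficient of $\ln t$ at infinity), against the formula.
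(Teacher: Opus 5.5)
Your outline has a genuine gap, and it sits where you put the ``main obstacle''. The steps feeding into it are also asserted rather than derived.

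You start from the Taylor expansion of $F=\ln g$ at $t=0$, but that series has radius of convergence at most $1/2$. The quartic $(4t^2+1)^2-4t$ has four non-real zeros whose moduli multiply to $1/16$, and the nearest is a square-root branch point of $F$. So the series says nothing directly about $\int_0^\infty$, and you need Ramanujan's master theorem (or an equivalent continuation argument) to bridge the gap. Using it requires two things you do not supply:
\begin{itemize}
\item the correct interpolant $\varphi$ of the coefficients, since interpolants are only determined up to adding $\sin(\pi z)h(z)$;
\item a Hardy-type growth bound for $\varphi$ in a half-plane.
\end{itemize}
The Mellin--Barnes escape for $F'$ is circular as stated. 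Writing $F'(t)=\frac{1}{2\pi i}\int M(z)\,t^{-z}\,dz$ presupposes knowing $M$, the Mellin transform of $F'$. After your integration by parts, that is exactly the quantity being computed, unless you assemble $M$ from pieces with known transforms, and you do not say which.

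Upstream, with $g=1-w$ one gets $w=t-4t^2+w^2+4t^2w$. This is not of the form $w=t\,\Phi(w)$ with a polynomial $\Phi$ independent of $t$, because the $w^2$ term is not divisible by $t$. So the binomial double sum you expect from Lagrange--B\"urmann is not available as stated. The ``collapse'' of that sum, and the identification of the interpolated residual sums with the two ${}_4F_3$'s, are left as expectations.

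The missing idea is one your fallback nearly reaches. Write $g(t)=\frac{4t^2+1}{2}\bigl(1+\sqrt{1-u}\bigr)$ with $u=\frac{4t}{(4t^2+1)^2}$. On all of $[0,\infty)$ one has $0\le u\le\frac{3\sqrt3}{8}<1$, and $u_{\max}^2=\frac{27}{64}$ is where the argument of the ${}_4F_3$'s comes from. Hence $F(t)=\ln(1+4t^2)-\sum_{n\ge1}\frac{(2n)!}{2n(n!)^2}\,\frac{t^n}{(4t^2+1)^{2n}}$ holds on the whole half-line with nonnegative summands. You may therefore integrate term by term (Tonelli) without ever expanding $(1+4t^2)^{-2n}$ binomially; that expansion is what confines you to $t<1/2$ and forces analytic continuation. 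The paper then proceeds as follows.
\begin{itemize}
\item Each term is a Beta integral, $\int_0^\infty t^{n-s-1}(4t^2+1)^{-2n}\,dt=2^{s-n-1}\Gamma(\frac{n-s}{2})\Gamma(\frac{3n+s}{2})/\Gamma(2n)$, and the prefactor turns $1/\Gamma(2n)$ into $1/(n!)^2$.
\item The $\ln(1+4t^2)$ piece equals $\frac{2^s\pi}{s\sin(\pi s/2)}$ by \cref{lem:m-func} with $\theta=\pi/2$. This is exactly minus the $n=0$ instance of the same Gamma expression, so the integral is $-\sum_{n\ge0}T_n$ with $T_n=\frac{2^{s-n-1}}{(n!)^2}\Gamma(\frac{n-s}{2})\Gamma(\frac{3n+s}{2})$.
\item Splitting by the parity of $n$ and computing the rational ratio $T_{n+2}/T_n$ gives the two ${}_4F_3$ series. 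The terms $-T_0$ and $-T_1$, via the reflection formula, supply the $\sin$ and $\cos$ prefactors.
\end{itemize}
Any correct residue computation along your lines would have to reproduce this same series, so the Ramanujan/Mellin--Barnes machinery is unnecessary. (Minor: as $s\to0^+$ both sides behave like $2/s^2$, i.e.\ $1/s^2$ times the coefficient $2$ of $\ln t$, not $2/s^2$ times it.)
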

\begin{proof}
    We define 
    \begin{align*}
        f(t)=\frac{4t^2+1+\sqrt{(4t^2+1)^2-4t}}{2}
        =\frac{4t^2+1}{2}\rbrm[\Bigg]{
            1+\sqrt{
                1-\frac{4t}{(1+4t^2)^2}
            }
        }.
    \end{align*}

    Let $u=\frac{4t}{(4t^2+1)^2}$. For all $t\geq 0$, from $4t^2+1\geq 1$ and AM-GM, we have $u\leq \frac{4t}{4t^2+1}\leq 1$. Therefore, we separate the logarithm
    \begin{align}
        \ln f(t)=\ln(4t^2+1)+\ln\rbrm[\Big]{
            \frac{1+\sqrt{1-u}}{2}
        }.\label{eq:ln-ft-expand}
    \end{align}
    Let $g(u)=\ln\rbrm[\big]{ \frac{1+\sqrt{1-u}}{2} }$. We have $g'(u)=\frac{1}{1+\sqrt{1-u}}\frac{\dd}{\dd{u}}(1+\sqrt{1-u})=\frac{1}{2u}-\frac{1}{2u\sqrt{1-u}}$. From the generalized binomial theorem, we have
    \begin{align*}
        (1-u)^{-1/2}
        &=\sum_{n=0}^\infty {-1/2\choose n} (-u)^n \\
        &=\sum_{n=0}^\infty \frac{(-1/2)(-3/2)\cdots(-1/2-n+1)}{n\fact} (-1)^n u^n \\
        &=\sum_{n=0}^\infty \frac{(2n)\fact}{(n\fact)^2 4^n} u^n.
    \end{align*}
    Now, substitute this series back into our expression for $g'(u)$ and extracting the $n=0$ term:
    \begin{align*}
        g'(u)=\frac1{2u}-\frac{1}{2u}\rbrm[\Big]{
            1+\sum_{n=1}^\infty \frac{(2n)\fact}{(n\fact)^2 4^n} u^n
        }=-\frac12\sum_{n=1}^\infty \frac{(2n)\fact}{(n\fact)^2 4^n} u^{n-1}.
    \end{align*}
    Integrate term-by-term and notice that $g(0)=\ln 1=0$, we have
    \begin{align*}
        g(u) 
        & = -\frac12\sum_{n=1}^\infty \frac{(2n)\fact}{(n\fact)^2 4^n } \int_{0}^u t^{n-1}\,\dd{t} \\
        & = -\sum_{n=1}^\infty \frac{(2n)\fact}{2n (n\fact)^2 4^n} u^n.
    \end{align*}
    Substituting the definition of $u$ gives
    \begin{align*}
        \ln\rbrm[\Bigg]{
            1+\sqrt{
                1-\frac{4t}{(1+4t^2)^2}
            }
        }
        = g\rbrm[\Big]{\frac{4t}{(4t^2+1)^2}}
        & = -\sum_{n=1}^\infty \frac{(2n)\fact}{2n (n\fact)^2 4^n} \rbrm[\Big]{\frac{4t}{(4t^2+1)^2}}^n \\
        & = -\sum_{n=1}^\infty \frac{(2n)\fact}{2n (n\fact)^2} \frac{t^n}{(4t^2+1)^{2n}}
    \end{align*}
    Define $I$ as the integration we try to evaluate in this lemma.
    From \eqref{eq:ln-ft-expand} and integrating term-by-term on the sum above, we have
    \begin{align*}
        I & =
        \int_0^{\infty} \sbr{\ln(4t^2+1) + \ln\rbrm[\Bigg]{
            1+\sqrt{
                1-\frac{4t}{(1+4t^2)^2}
            }
        }}t^{-s-1} \,\dd{t} \\
        & = 
        \underbrace{\int_0^{\infty} \ln(4t^2+1) t^{-s-1}\,\dd{t}}_{{} \defeq I_0}
        -\sum_{n=1}^\infty
        \frac{(2n)\fact}{2n (n\fact)^2} \underbrace{\int_0^{\infty} \frac{t^{n-s-1}}{(4t^2+1)^{2n}}\,\dd{t}}_{{}\defeq I_n}. \yestag\label{eq:Is-expand-integrals}
    \end{align*}
    We evaluate $I_0$ first. Using the substitution $u=2t$, $\dd{t}=\frac12 \dd{u}$, we have
    \begin{align*}
        I_0
        & = \int_0^{\infty} \ln(1+u^2) (u/2)^{-s-1}\frac12\dd{u}\\
        & = 2^s \int_0^{\infty} \ln(1+u^2) u^{-s-1}\,\dd{u} \\
        & = 2^s \frac{2\pi \cos(\pi s/2)}{s \sin(\pi s)} \tag{\cref{lem:m-func}, $\theta=\pi/2$}\\
        & = \frac{2^s\pi}{s \sin(\pi s/2)},
    \end{align*}
    where in the last step we use the trigonometric identity $\sin(2\theta)=2\sin\theta\cos\theta$ with $\theta=\frac{\pi s}{2}$.

    We then evaluate $I_n$. Using the substitution $v=4t^2$, $t=\frac{v^{1/2}}{2}$ and $\dd{t}=\frac14 v^{-1/2}\,\dd{v}$, we have
    \begin{align*}
        I_n
        & = \int_0^{\infty} \frac{\rbrm[\big]{\frac{v^{1/2}}{2}}^{n-s-1}}{(v+1)^{2n}}\frac14 v^{-1/2}\,\dd{v}\\
        & = 2^{s-n-1} \int_0^\infty \frac{v^{(n-s)/2-1}}{(1+v)^{2n}}\dd{v} \\
        & = 2^{s-n-1} \Beta\rbrm[\Big]{\frac{n-s}{2},\frac{3n+s}{2}} \\
        & = 2^{s-n-1} \frac{
            \Gamma\rbrm[\big]{\frac{n-s}{2}}
            \Gamma\rbrm[\big]{\frac{3n+s}{2}}
        }{
            \Gamma\rbrm[\big]{2n}
        }
    \end{align*}
    where $\Beta(x,y)=\int_0^\infty \frac{t^{x-1}}{(1+t)^{x+y}}\dd{t}=\frac{\Gamma(x)\Gamma(y)}{\Gamma(x+y)}$ is the Beta function.

    Substituting these integrals back into \eqref{eq:Is-expand-integrals}, we have
    \begin{align*}
        I
        & = 
        \frac{2^s\pi}{s \sin(\pi s/2)}
        -\sum_{n=1}^\infty
        \frac{(2n)\fact}{2n (n\fact)^2} 2^{s-n-1} \frac{
            \Gamma\rbrm[\big]{\frac{n-s}{2}}
            \Gamma\rbrm[\big]{\frac{3n+s}{2}}
        }{
            \Gamma\rbrm[\big]{2n}
        } \\
        & = \frac{2^s\pi}{s \sin(\pi s/2)}
        -\sum_{n=1}^\infty
        \frac{1}{(n\fact)^2} 2^{s-n-1}{
            \Gamma\rbrm[\Big]{\frac{n-s}{2}}
            \Gamma\rbrm[\Big]{\frac{3n+s}{2}}
        }
    \end{align*}
    Coincidentally, if we extend the sum to $n=0$, the term evaluates to
    \begin{align*}
        \frac{1}{(0\fact)^2} 2^{s-0-1}{
            \Gamma\rbrm[\Big]{\frac{0-s}{2}}
            \Gamma\rbrm[\Big]{\frac{0+s}{2}}
        }
        & =2^{s-1}\Gamma(-s/2)\Gamma(s/2) \\
        & = 2^{s-1}\Gamma(1-s/2)\Gamma(s/2)\cdot \frac{\Gamma(-s/2)}{\Gamma(1-s/2)} \\
        & = 2^{s-1}\cdot \frac{\pi}{\sin(\pi s/2)}\cdot \frac{1}{-s/2} = -I_0,
    \end{align*}
    where we used the identities $\Gamma(z)\Gamma(1-z)=\frac{\pi}{\sin(\pi z)}$ and $\Gamma(z+1)=z\Gamma(z)$. So, the $I_0$ term is absorbed into the summation as the $n=0$ term, and
    \begin{align}
        I
        & = 
        -\sum_{n=0}^\infty
        \underbrace{\frac{1}{(n\fact)^2} 2^{s-n-1}{
            \Gamma\rbrm[\Big]{\frac{n-s}{2}}
            \Gamma\rbrm[\Big]{\frac{3n+s}{2}}
        }}_{{}\defeq T_n}.\label{eq:Is-series}
    \end{align}
    The series converges at a reasonably fast rate (expanding using Stirling's formula shows that terms grow at a rate of $(\sqrt{27/64})^n\approx 0.65^n$). However, we will further simplify it by rewriting it as hypergeometric functions.

    A series can be written as a generalized hypergeometric function only when the ratio between consecutive terms is a rational function in the index. Formally, let $G_k=\frac{\rpow{a_1}{k}\cdots\rpow{a_p}{k}}{\rpow{b_1}{k}\cdots\rpow{b_q}{k}}\frac{z^k}{k\fact}$ be the $k$-th term in the definition of $\ghg{p}{q}$, and we have
    \begin{align*}
        \frac{G_{k+1}}{G_n}=\frac{(k+a_1)\cdots(k+a_p)}{(k+b_1)\cdots(k+b_q)}\frac{z}{(k+1)},
    \end{align*}
    which is a rational function in $k$. In the converse, all rational functions in $k$ can be represented this way, because of the fundamental theorem of algebra.

    By observing \eqref{eq:Is-series}, we see that the ratio $T_{n+1}/T_n$ cannot be easily simplified, as it is nontrivial to relate $\Gamma\rbrm[\big]{z+\frac12}$ to $\Gamma(z)$. However, if we consider $T_{n+2}/T_n$, we need only to relate $\Gamma(z+1)$ to $\Gamma(z)$, for which we can use $\Gamma(z+1)=z\Gamma(z)$. Specifically, we have
    \begin{align*}
        \frac{T_{n+2}}{T_n}
        & =
            \frac{(n\fact)^2}{((n+2)\fact)^2}
            \frac{2^{s-n-3}}{2^{s-n-1}}
            \frac{\Gamma\rbrm[\big]{\frac{n-s}{2}+1}}{\Gamma\rbrm[\big]{\frac{n-s}{2}}}
            \frac{\Gamma\rbrm[\big]{\frac{3n+s}{2}+3}}{\Gamma\rbrm[\big]{\frac{3n+s}{2}}} \\
        & = \frac{1}{(n+1)^2(n+2)^2}
            \cdot
            \frac{1}{4}
            \cdot 
            \frac{n-s}{2}
            \rbrm[\Big]{
                \frac{3n+s}{2}
            }\rbrm[\Big]{
                \frac{3n+s}{2}+1
            }\rbrm[\Big]{
                \frac{3n+s}{2}+2
            } \\
        & = \frac{
            (n-s)
            \rbrm[\big]{n+\frac{s}{3}}
            \rbrm[\big]{n+\frac{s+2}{3}}
            \rbrm[\big]{n+\frac{s+4}{3}}
        }{
            (n+1)^2
            (n+2)^2
        }\cdot \underbrace{\frac14 \cdot \frac12 \cdot \rbrm[\Big]{\frac32}^3}_{{}=\frac{27}{64}}.
    \end{align*}
    It is easy to see that the ratio is a rational function in $n$. Therefore, we split the sum into even and odd entries,
    \begin{align*}
        I
        & = 
        -\sum_{n=0}^\infty T_n
        = -\sum_{k=0}^\infty T_{2k}
        -\sum_{k=0}^\infty T_{2k+1}
    \end{align*}
    For the sum of $\{-T_{2k}\}_{k\in \dN}$, the ratio between consecutive terms is
    \begin{align*}
        \frac{-T_{2(k+1)}}{-T_{2k}}
        & = \frac{
            (2k-s)
            \rbrm[\big]{2k+\frac{s}{3}}
            \rbrm[\big]{2k+\frac{s+2}{3}}
            \rbrm[\big]{2k+\frac{s+4}{3}}
        }{
            (2k+1)^2
            (2k+2)^2
        }\cdot \frac{27}{64} \\
        & = \frac{
            \rbrm[\big]{k-\frac{s}{2}}
            \rbrm[\big]{k+\frac{s}{6}}
            \rbrm[\big]{k+\frac{s+2}{6}}
            \rbrm[\big]{k+\frac{s+4}{6}}
        }{
            \rbrm[\big]{k+\frac{1}{2}}^2
            \rbrm{k+1}
        }
        \frac{
            27/64
        }{k+1},
    \end{align*}
    and the initial term is
    \begin{align*}
        -T_0=I_0=\frac{2^s\pi}{s \sin(\pi s/2)}.
    \end{align*}
    Therefore, the even sum is
    \begin{align*}
        -\sum_{k=0}^\infty T_{2k}
        & = \sum_{k=0}^\infty (-T_0) \frac{
            \rpow{\rbrm[\big]{-\frac{s}{2}}}{k}
            \rpow{\rbrm[\big]{\frac{s}{6}}}{k}
            \rpow{\rbrm[\big]{\frac{s+2}{6}}}{k}
            \rpow{\rbrm[\big]{\frac{s+4}{6}}}{k}
        }{
            \rpow{\rbrm[\big]{\frac{1}{2}}}{k}
            \rpow{\rbrm[\big]{\frac{1}{2}}}{k}
            \rpow{1}{k}
        }
        \frac{
            (27/64)^k
        }{k\fact} \\
        & = (-T_0)\cdot \ghg43\rbrm[\Big]{
                -\frac{s}{2},\frac{s}{6},\frac{s+2}{6},\frac{s+4}{6};
                \frac12, \frac12, 1;
                \frac{27}{64}
            } \\
        & = \frac{2^s\pi}{s \sin(\pi s/2)} \ghg43\rbrm[\Big]{
                -\frac{s}{2},\frac{s}{6},\frac{s+2}{6},\frac{s+4}{6};
                \frac12, \frac12, 1;
                \frac{27}{64}
            }.
    \end{align*}
    Similarly, for the sum of $\{-T_{2k+1}\}_{k\in \dN}$, the ratio between consecutive terms is
    \begin{align*}
        \frac{-T_{2(k+1)+1}}{-T_{2k+1}}
        & = \frac{
            (2k+1-s)
            \rbrm[\big]{2k+1+\frac{s}{3}}
            \rbrm[\big]{2k+1+\frac{s+2}{3}}
            \rbrm[\big]{2k+1+\frac{s+4}{3}}
        }{
            (2k+1+1)^2
            (2k+1+2)^2
        }\cdot \frac{27}{64} \\
        & = \frac{
            \rbrm[\big]{k+\frac{1-s}{2}}
            \rbrm[\big]{k+\frac{s+3}{6}}
            \rbrm[\big]{k+\frac{s+5}{6}}
            \rbrm[\big]{k+\frac{s+7}{6}}
        }{
            \rbrm{k+1}
            \rbrm[\big]{k+\frac{3}{2}}^2
        }
        \frac{
            27/64
        }{k+1},
    \end{align*}
    and the initial term is
    \begin{align*}
        -T_1
        & = -\frac{1}{(1\fact)^2} 2^{s-1-1}{
            \Gamma\rbrm[\Big]{\frac{1-s}{2}}
            \Gamma\rbrm[\Big]{\frac{3+s}{2}}
        } \\
        & = -2^{s-2}{
            \Gamma\rbrm[\Big]{\frac{1-s}{2}}
            \Gamma\rbrm[\Big]{\frac{1+s}{2}}
            \cdot
            \frac
                {\Gamma\rbrm[\big]{\frac{3+s}{2}}}
                {\Gamma\rbrm[\big]{\frac{1+s}{2}}}
        } \\
        & = -2^{s-2}{
            \frac{\pi}{\sin(\pi(1-s)/2)}
            \cdot
            \frac{1+s}{2}
        } \\
        & = -2^{s-3}{
            \frac{\pi(1+s)}{\cos(\pi s/2)}
        }
    \end{align*}
    where, again, we used the identities $\Gamma(z)\Gamma(1-z)=\frac{\pi}{\sin(\pi z)}$ and $\Gamma(z+1)=z\Gamma(z)$; we also used the trigonometric identity $\sin(\pi/2-\theta)=\cos\theta$. Therefore, the odd sum is
    \begin{align*}
        -\sum_{k=0}^\infty T_{2k+1}
        & = \sum_{k=0}^\infty (-T_1) \frac{
            \rpow{\rbrm[\big]{\frac{1-s}{2}}}{k}
            \rpow{\rbrm[\big]{\frac{s+3}{6}}}{k}
            \rpow{\rbrm[\big]{\frac{s+5}{6}}}{k}
            \rpow{\rbrm[\big]{\frac{s+7}{6}}}{k}
        }{
            \rpow{1}{k}
            \rpow{\rbrm[\big]{\frac{3}{2}}}{k}
            \rpow{\rbrm[\big]{\frac{3}{2}}}{k}
        }
        \frac{
            (27/64)^k
        }{k\fact} \\
        & = (-T_1)\cdot \ghg43\rbrm[\Big]{
                \frac{1-s}{2},\frac{s+3}{6},\frac{s+5}{6},\frac{s+7}{6};
                1, \frac32, \frac32;
                \frac{27}{64}
            } \\
        & = -{
            \frac{2^{s-3}(1+s)\pi}{\cos(\pi s/2)}
            } \ghg43\rbrm[\Big]{
                \frac{1-s}{2},\frac{s+3}{6},\frac{s+5}{6},\frac{s+7}{6};
                1, \frac32, \frac32;
                \frac{27}{64}
            }.
    \end{align*}
     Combining the even and odd sums finishes the proof.
\end{proof}

\section{Supporting proofs on decomposition and reformulation as optimization}\label{ah:proof-decompose}

In this part of the appendix, we will prove our reduction from the stepsize scheduling problem of general biaffine min-max optimizations to the optimization problem in \cref{lem:opt}. 

Assume
\begin{equation}
\ell(x,y)=x^\trans A y + p^\trans y + x^\trans q + c
=
\rbrm[\Big]{\begin{matrix}
    x\\1
\end{matrix}}^\trans
\rbrm[\bigg]{\begin{matrix}
    A&q\\p^\trans&c
\end{matrix}}
\rbrm[\Big]{\begin{matrix}
    y\\1
\end{matrix}}\label{eq:def-biaffine'}
\end{equation}
is a biaffine function.
If a saddle point $z^\star=(x^\star,y^\star)$ exists for $\ell$, the first-order condition $G(z^\star)=0$ implies $p=-A^\trans x^\star$ and $q=-Ay^\star$, which simplifies the function to $\ell(x,y)=(x-x^\star)^\trans A (y-y^\star)+c'$. It is easy to see that the first-order condition $G(z^\star)=0$ is also sufficient for $z^\star$ to be a saddle point when $\ell$ is biaffine.

We will first prove the equality form of \cref{lem:opt} for $n=m=1$ and bilinear $\ell$.

\begin{applemma}\label{lem:opt-1x1}
    Let $(\dR, \dR, \ell(x,y)=axy)$ be a bilinear min-max optimization problem. The trajectory of EG in this problem satisfies the following:
    \begin{align}
        \GN(z_T)
        =
            \norm{z_0}
                \cdot \abs{a} \prod_{t=0}^{T-1} \sqrt{1+\eta_t(\eta_t-2\gamma_t) a^2+\eta_t^2\gamma_t^2 a^4}.
        \label{eq:eg-1x1}
    \end{align}
\end{applemma}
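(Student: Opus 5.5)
The plan is to write one EG step on this $1\times 1$ bilinear instance as multiplication by a $2\times 2$ matrix, and to observe that this matrix is a scalar multiple of a rotation. For $\ell(x,y)=axy$ we have $G(z)=(ay,-ax)=aJz$, where
\begin{align*}
    J=\begin{pmatrix}0&1\\-1&0\end{pmatrix},\qquad J^2=-I .
\end{align*}

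Substituting into \eqref{eq:eg-def}, the extrapolation step gives $z_{t+\frac12}=(I-\gamma_t aJ)z_t$. The update step then gives
\begin{align*}
    z_{t+1}=z_t-\eta_t aJ(I-\gamma_t aJ)z_t=\bigl((1-\eta_t\gamma_t a^2)I-\eta_t aJ\bigr)z_t \defeq M_t z_t,
\end{align*}
where we used $J^2=-I$. Every matrix of the form $\alpha I+\beta J$ with real $\alpha,\beta$ is $\sqrt{\alpha^2+\beta^2}$ times an orthogonal matrix. This follows because $J^\trans=-J$ and $J^\trans J=I$, which give $(\alpha I+\beta J)^\trans(\alpha I+\beta J)=(\alpha^2+\beta^2)I$.

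Consequently $\norm{M_tz}=c_t\norm{z}$ holds exactly for every $z$, with
\begin{align*}
    c_t^2=(1-\eta_t\gamma_t a^2)^2+\eta_t^2a^2=1+\eta_t(\eta_t-2\gamma_t)a^2+\eta_t^2\gamma_t^2a^4 .
\end{align*}
Iterating from $t=0$ to $T-1$ gives $\norm{z_T}=\norm{z_0}\prod_{t=0}^{T-1}c_t$. This is an equality and not merely a bound, because each factor scales every vector by the same amount. Alternatively, we can note that all the $M_t$ commute, since they lie in the commutative algebra spanned by $I$ and $J$.

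Finally, $\GN(z_T)=\norm{aJz_T}=\abs{a}\norm{z_T}$, since $J$ is orthogonal. Combining this with the previous step yields \eqref{eq:eg-1x1}.

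There is no real obstacle here. The only point needing care is the sign bookkeeping in $G$: the $y$-component is $-\nabla_y\ell=-ax$, and the expansion of $J(I-\gamma_t aJ)$ must use $J^2=-I$. This is exactly what produces the cross term $-2\eta_t\gamma_t a^2$ in $c_t^2$.
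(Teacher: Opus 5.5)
Your proof is correct and is essentially the paper's argument. The paper also writes $G_\ell = aJ$ with $J^2=-I$ and expands each EG step as $I-\eta_t G_\ell+\eta_t\gamma_t G_\ell^2$. It then uses the isomorphism $x+yi\mapsto xI+yJ$ to get $\|\iota(w)v\|=|w|\,\|v\|$, which is exactly your observation that $\alpha I+\beta J$ is $\sqrt{\alpha^2+\beta^2}$ times an orthogonal matrix; the only cosmetic difference is that the paper folds all steps and the final $G_\ell$ into one polynomial $p_T(G_\ell)$ and evaluates $|p_T(ai)|$ once, whereas you apply the norm identity step by step.
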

\begin{remark}
    This lemma also holds for any 1d Euclidean vector spaces. Formally, assume $(\calX, \calY, \ell)$ is a bilinear min-max optimization problem defined on real inner product spaces $\calX,\calY$ with $\dim\calX=\dim\calY=1$. Then, the trajectory of EG satisfies \eqref{eq:eg-1x1},
    where $a$ is $\ell(u,v)$ for arbitrary vectors $u\in \calX,v\in \calY$ such that $\norm{u}=\norm{v}=1$. This claim can be simply shown with isomorphisms $\calX\iso\dR$ and $\calY\iso\dR$ that maps $u$ and $v$ to $1$ respectively.
\end{remark}
\begin{proof}[Proof of \cref{lem:opt-1x1}]
    As the loss function is bilinear, the point $z^\star=(0,0)$ is a saddle point.
    
    Let $z_t=(x_t, y_t)$ be the trajectory for $t\in \dN$.
    From the definition $\ell(x,y)=axy$, we know that 
    $G_{\ell}(x,y)=(a y, -a x)=\smimatrix{0&a\\-a&0}\smimatrix{x\\y}$ is linear; we slightly abuse the notation to use $G_\ell$ to also denote the matrix $\smimatrix{0&a\\-a&0}$. Expanding the EG iterates \eqref{eq:eg-def}, we have
    \begin{align*}
        z_{t+\frac12} & = z_t-\gamma_t G_\ell z_t = (I-\gamma_t G_\ell) z_t,\\
        z_{t+1} & = z_t-\eta_t G_\ell z_{t+\frac12} = z_t - \eta_t G_\ell(I-\gamma_t G_\ell) z_t = (I - \eta_t G_\ell + \eta_t \gamma_t G_\ell^2) z_t.\yestag\label{eq:opt-1x1-expand}
    \end{align*}
    Define the polynomial $f_t(x)=1-\eta_t x+\eta_t \gamma_t x^2$; we know that $z_{t+1}=f_t(G_\ell)z_t$. Applying this to all $t$ and telescoping the product yields
    \begin{align*}
        z_T = \rbrm[\bigg]{\prod_{t=0}^{T-1}f_t(G_\ell)}z_0
            = \rbrm[\bigg]{\prod_{t=0}^{T-1}f_t}(G_\ell)z_0.
    \end{align*}
    The product notation is well defined because polynomials of the same matrix commute. Left-multiplying $G_\ell$ on both sides, we have
    \begin{align*}
        G_\ell z_T = G_\ell \rbrm[\bigg]{\prod_{t=0}^{T-1}f_t}(G_\ell)z_0
        = p_T(G_\ell) z_0,
    \end{align*}
    where we define $p_T(x)=x\cdot \prod_{t=0}^{T-1}f_t(x)$.
    
    Define the matrix $J=\smimatrix{0&1\\-1&0}$. We can verify that $J^2=-I$. We know that $\Span\anglem{I,J}$ is a commutative division algebra with the following canonical isomorphism from $\dC$:
    \begin{align*}
        \iota : \qquad \dC &\iso \Span\anglem{I,J}\\
        x+yi &\mapsto xI+yJ = \smimatrix{x&y\\-y&x}
    \end{align*}
    Since $J^\trans=-J$, we know that $\iota(w)^\trans = \iota(\overline{w})$ for all $w\in \dC$. This implies that $\iota(w)^\trans \iota(w)=\iota(\overline w)\iota(w)=\iota(\overline{w}w)=\iota(\abs{w}^2)=\abs{w}^2I$; therefore, for any vector $v\in \dR^2$, we have $\norm{\iota(w)v}^2=v^\trans \iota(w)^\trans \iota(w) v = v^\trans \rbrm[\big]{\abs{w}^2 I}v=\abs{w}^2 \norm{v}^2$; in other words, $\norm{\iota(w)v}=\abs{w}\norm{v}$.
    
    Observe that $G_\ell=\iota(ai)$. Since $\iota$ is an algebra homomorphism, it commutes with polynomials, and specifically the polynomial $p_t$. Combining with \eqref{eq:opt-1x1-expand}, we have
    \begin{align*}
        \GN_\ell(z_T)=\norm{G_\ell z_T} 
        & = \norm{p_T(G_\ell)z_0} \\
        & = \norm{p_T(\iota(ai))z_0} \\
        & = \norm{\iota(p_T(ai))z_0} \\
        & = \abs{p_T(ai)}\cdot\norm{z_0} \\
        & = \absm[\bigg]{
            ai\cdot \prod_{t=0}^{T-1}f_t(ai)
        }\norm{z_0} \yestag\label{eq:gn-to-poly}
    \end{align*}
    Now, $\abs{ai}=\abs{a}$, and direct computation shows
    \begin{align*}
        \abs{f_t(ai)}
        & = \abs{1-\eta_t ai+\eta_t \gamma_t a^2 i^2} \\
        & = \abs{1-\eta_t \gamma_t a^2 - \eta_t ai} \\
        & = \sqrt{(1-\eta_t\gamma_t a^2)^2+(-\eta_t a)^2} \\
        & = \sqrt{1+\eta_t(\eta_t-2\gamma_t) a^2+\eta_t^2\gamma_t^2 a^4}.
    \end{align*}
    Substituting these back into \eqref{eq:gn-to-poly} finishes the proof.
\end{proof}

We now show that any biaffine min-max optimization can be decomposed to direct sums of \cref{lem:opt-1x1} instances. We first show that EG on biaffine instances can be reduced to EG on bilinear instances; then in \cref{lem:decompose} we will show that any bilinear instance can be decomposed to $1\times 1$ instances.

\begin{applemma}\label{lem:decompose-translation}
    Let $\ell$ define a $L$-smooth biaffine function on $\dR^n \times \dR^m$ with a saddle point $z^\star=(x^\star, y^\star)$. Then, there exists an isometric affine bijection $P : \dR^n \times \dR^m \iso \dR^n \times \dR^m$ and an $L$-smooth bilinear function $\ell' : \dR^n\times \dR^m \to \dR$, such that:
    \begin{itemize}
        \item  Let $\{z_t\}_{t \in \dN} = \{(x_t, y_t)\}_{t \in \dN}$ be any trajectory generated by EG with a stepsize sequence $\{\eta_t, \gamma_t\}_{t \in \dN}$ on the instance defined by $\ell$. Then, $\{P(z_t)\}_{t\in \dN}$ is a valid trajectory generated by EG with the same stepsize sequence on the instance defined by $\ell'$.
        \item $P$ preserves the gradient norm,
        \begin{align}
            \GN_\ell(z)=\GN_{\ell'}(P(z)). \label{eq:translate-gn}
        \end{align}
    \end{itemize}
\end{applemma}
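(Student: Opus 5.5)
The plan is to take $P$ to be the translation $P(z)=z-z^\star$ and $\ell'(x,y)=x^\trans A y$, where $A$ is the bilinear coefficient matrix of $\ell$ in \eqref{eq:def-biaffine'}. As noted just before \cref{lem:opt-1x1}, the first-order condition $G_\ell(z^\star)=0$ gives $p=-A^\trans x^\star$ and $q=-Ay^\star$. Hence $\ell(x,y)=(x-x^\star)^\trans A(y-y^\star)+c'$.

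\textbf{Step 1: properties of $P$ and $\ell'$.} The map $P$ is clearly an affine bijection, and it is isometric because translations preserve Euclidean distances. The function $\ell'$ is bilinear. It is $L$-smooth because $\norm{A}\le L$, which is the same smoothness condition already satisfied by $\ell$.

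\textbf{Step 2: the gradient identity.} The key computation is
\begin{align*}
G_\ell(x,y)=(Ay+q,\,-A^\trans x-p)=(A(y-y^\star),\,-A^\trans(x-x^\star))=G_{\ell'}(P(x,y)).
\end{align*}
So $G_\ell=G_{\ell'}\circ P$. Taking norms immediately gives \eqref{eq:translate-gn}.

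\textbf{Step 3: the trajectory claim.} We argue by induction on $t$, using \eqref{eq:eg-def} and the fact that $P$ commutes with adding vectors: $P(z+v)=P(z)+v$. Suppose $P(z_t)$ is the $t$-th iterate on $\ell'$. Then
\begin{align*}
P(z_{t+\frac12})=P(z_t)-\gamma_t G_\ell(z_t)=P(z_t)-\gamma_t G_{\ell'}(P(z_t)),
\end{align*}
which is exactly the extrapolation step of EG on $\ell'$. Applying the same identity again at $z_{t+\frac12}$ gives
\begin{align*}
P(z_{t+1})=P(z_t)-\eta_t G_{\ell'}(P(z_{t+\frac12})),
\end{align*}
which is the update step of EG on $\ell'$ with the same stepsizes.

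No step here is a real obstacle. The only point needing care is the algebra in Step 2, namely rewriting $\ell$ in centered form using the first-order condition.
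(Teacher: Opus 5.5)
Your proposal is correct and follows essentially the same route as the paper: take $P(z)=z-z^\star$ and $\ell'(x,y)=x^\trans Ay$, use the first-order condition to get $G_\ell=G_{\ell'}\circ P$, and apply $P$ to both EG update equations using $P(z+v)=P(z)+v$. The induction framing in your Step 3 is harmless but not actually needed, since the identity holds at every $t$ directly; the paper simply applies $P$ to both sides of the two equations.
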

\begin{proof}
    Because $(x^\star, y^\star)$ is a saddle point of $\ell$, we can rewrite the loss function as
    \begin{align}
        \ell(x,y)=(x-x^\star)^\trans A (y-y^\star) + c'\label{eq:biaffine-ne}
    \end{align}
    for some $c'\in \dR$. Define
    \begin{align*}
        \ell'(x,y)& \defeq x^\trans A y, \\
        P(x,y)& \defeq(x-x^\star, y-y^\star),
    \end{align*}
    The smoothness of $\ell'$ inherits that of $\ell$ because $\norm{A}\leq L$.
    The mapping $P$ is clearly isometric, biaffine, and bijective. Since $P$ is a translation, its total derivative is just the identity matrix $I$. We also have $P(z+\eta g)=z+\eta g-z^\star=P(z)+\eta g$.

    From \eqref{eq:biaffine-ne}, it is clear that $\ell(x,y)=\ell'(P(x,y))+c'$; taking the derivative with respect to $x$ and $y$ eliminates the constant $c'$ and shows $G_\ell(x,y)=G_{\ell'}(P(x,y))$. This immediately proves \eqref{eq:translate-gn}.

    Therefore, $P$ preserves affine operations (adding a multiple of a vector to a point) and calls to the gradient operator $G$. As EG only makes use of affine operations and calls to $G$, we know that $P$ preserves EG trajectories as well. Formally, if $\{z_t\}_{t\geq 0} = \{(x_t, y_t)\}_{t\geq 0}$ is a trajectory of EG on the instance $\ell$, we know that
    \begin{align*}
        z_{t+\frac12}&=z_t-\gamma_t G_\ell(z_t), & %
        z_{t+1}&=z_t-\eta_t G_\ell(z_{t+\frac 12}).
    \end{align*}
    applying $P$ on both sides of both equations gives
    \begin{align*}
        P(z_{t+\frac12})&=P(z_t-\gamma_t G_\ell(z_t))=P(z_t)-\gamma_t G_\ell(z_t)=P(z_t)-\gamma_t G_{\ell'}(P(z_t)), \\
        P(z_{t+1})&=P(z_t-\eta_t G_\ell(z_{t+\frac 12}))
        =P(z_t)-\eta_t G_\ell(z_{t+\frac 12})
        =P(z_t)-\eta_t G_{\ell'}(P(z_{t+\frac 12})).
    \end{align*}
    The final equations have the same structure as the definition of EG \eqref{eq:eg-def}. Therefore, $\{P(z_t)\}_{t\geq 0}$ is also a trajectory of EG on the instance $\ell'$.
\end{proof}

\begin{applemma}\label{lem:decompose}
    Let $\ell$ define a $L$-smooth bilinear function on $\dR^n \times \dR^m$. Then, there exists a number $r\in \dN$, a direct sum decomposition $\dR^n \times \dR^m = \calZ_0 \oplus (\calX_1\times \calY_1)\oplus\cdots\oplus (\calX_r\times\calY_r)$, and a family of bilinear functions $\{\ell_i : \calX_i \times \calY_i \to \dR\}_{i=1,\dots,r}$, such that:
    \begin{itemize}
        \item $\dim{\calX_1}=\dim{\calY_1}=\cdots=\dim{\calX_r}=\dim{\calY_r}=1$.
        \item For any $z=(x,y)\in \dR^n\times \dR^m$, we can decompose it with $z=z!0+(x!1,y!1)+\cdots+(x!r,y!r)$, where $z!0\in \calZ_0, x!1\in \calX_1,y_1\in \calY_1,\dots,x!r\in \calX_r, y!r\in \calY_r$. Furthermore, this decomposition preserves gradient norm, as
        \begin{align}
            \GN_\ell(z)^2=\sum_{i=1}^r \GN_{\ell_i}(z!i)^2, \label{eq:decompose-gn}
        \end{align}
        in which we write $z!i=(x!i,y!i)$.
        \item  For any trajectory $\{z_t\}_{t \in \dN} = \{(x_t, y_t)\in \dR^n\times \dR^m\}_{t \in \dN}$ generated by EG with a stepsize sequence $\{\eta_t, \gamma_t\}_{t \in \dN}$ on the instance $(\dR^n,\dR^m,\ell)$,
        if we decompose it as defined above, then we have
        \begin{enumerate}
            \item $z!0_t=z!0_0$ for all $t$;
            \item for each $i\in\{1,\dots,r\}$, $\{z!i_t\}_{t\in \dN}$ is a trajectory generated by EG with the same stepsize sequence on the instance $(\calX_i, \calY_i, \ell_i)$.
        \end{enumerate}
    \end{itemize}
\end{applemma}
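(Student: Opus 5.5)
The plan is to use the singular value decomposition of $A$, where $\ell(x,y)=x^\trans A y$ with $\norm{A}\leq L$. Let $r=\operatorname{rank}A$ and write $A=\sum_{i=1}^r \sigma_i u_i v_i^\trans$ with $\sigma_i>0$. Here $\{u_i\}$ is orthonormal in $\dR^n$ and $\{v_i\}$ is orthonormal in $\dR^m$. We set $\calX_i=\Span\anglem{u_i}$ and $\calY_i=\Span\anglem{v_i}$, and take $\calZ_0=(\ker A^\trans)\times(\ker A)$. The kernels are exactly the orthogonal complements of $\Span\{u_i\}$ and $\Span\{v_i\}$ respectively, so $\dR^n\times\dR^m$ is the orthogonal direct sum $\calZ_0\oplus(\calX_1\times\calY_1)\oplus\cdots\oplus(\calX_r\times\calY_r)$. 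The local functions are $\ell_i(x,y)=\sigma_i (u_i^\trans x)(v_i^\trans y)$, i.e. the restriction of $\ell$ to $\calX_i\times\calY_i$. In the sense of the remark after \cref{lem:opt-1x1}, each $\ell_i$ is a $1\times 1$ bilinear game with parameter $a=\sigma_i\leq L$.

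The key structural step is to show that the gradient operator respects this decomposition, namely $G_\ell(z)=\sum_{i=1}^r G_{\ell_i}(z!i)$ with $G_{\ell_i}(z!i)\in\calX_i\times\calY_i$, and $G_\ell(z!0)=0$. This follows from $G_\ell(x,y)=(Ay,-A^\trans x)$ together with $Av_i=\sigma_i u_i$, $A^\trans u_i=\sigma_i v_i$, $A\,(\ker A)=0$ and $A^\trans(\ker A^\trans)=0$. Concretely, for $z!i=(\alpha u_i,\beta v_i)$ we get $G_\ell(z!i)=(\sigma_i\beta u_i,-\sigma_i\alpha v_i)$, which is exactly $G_{\ell_i}(z!i)$ and again lies in $\calX_i\times\calY_i$. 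The components $G_{\ell_i}(z!i)$ live in mutually orthogonal subspaces, so Pythagoras gives \eqref{eq:decompose-gn}.

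For the trajectory claims, note that both EG updates in \eqref{eq:eg-def} are linear combinations of $z_t$ and $G_\ell$ evaluated at points. Projecting each update onto the summands is linear, and $G_\ell$ acts block-diagonally by the previous step. The $\calZ_0$-component of $G_\ell(\cdot)$ is zero, so $z!0_{t+\frac12}=z!0_t$ and $z!0_{t+1}=z!0_t$; by induction $z!0_t=z!0_0$. For the $i$-th component, projecting gives
\[
z!i_{t+\frac12}=z!i_t-\gamma_t G_{\ell_i}(z!i_t),
\qquad
z!i_{t+1}=z!i_t-\eta_t G_{\ell_i}(z!i_{t+\frac12}).
\]
This is precisely EG on $(\calX_i,\calY_i,\ell_i)$ with the same stepsizes.

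I expect no real obstacle here; the argument is bookkeeping. The only points needing care are:
\begin{itemize}
\item verifying that $\calZ_0$ is exactly the orthogonal complement of the other summands, which requires the exact kernel/range identities of the SVD rather than just $\sigma_i\ge 0$;
\item interpreting $\GN_{\ell_i}$ on the abstract 1-d spaces via the isometries described in the remark after \cref{lem:opt-1x1}.
\end{itemize}
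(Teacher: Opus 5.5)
Your proposal is correct and follows the paper's proof essentially step for step: the SVD $A=\sum_{i=1}^r\sigma_i u_iv_i^\trans$, one-dimensional blocks $\calX_i\times\calY_i$ spanned by $u_i$ and $v_i$, $\calZ_0=\ker A^\trans\times\ker A$ (the same subspace as the paper's span of the remaining singular vectors), the block-diagonal action of $G_\ell$ with $G_\ell(\calZ_0)=\{0\}$, and Pythagoras for \eqref{eq:decompose-gn}. The only difference is cosmetic: you project the extrapolation and update steps separately, while the paper projects the collapsed one-step form $z_{t+1}=(I-\eta_t G_\ell+\eta_t\gamma_t G_\ell^2)z_t$; your version additionally shows explicitly that the $i$-th component of each half-iterate is the corresponding EG half-iterate on $(\calX_i,\calY_i,\ell_i)$.
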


\begin{proof}
    As $\ell$ is bilinear, we define $A$ to be its matrix, i.e., $\ell(x,y)=x^\trans A y$.
    Let $r=\rank(A)$ and $A=\sum_{i=1}^r \sigma_i u_i v_i^\trans$ be the singular value decomposition (SVD) of $A$, and $\{u_1,\dots,u_n\}, \{v_1,\dots,v_m\}$ be two sets of orthonormal bases of $\dR^n$ and $\dR^m$, respectively. The singular values satisfy $\sigma_i\in (0, L]$.

    Define the following subspaces of $\dR^n\times \dR^m$: 
    \begin{itemize}
        \item $\calZ_0\defeq\Span\anglem{u_{r+1},\dots,u_n} \times \Span\anglem{v_{r+1},\dots,v_m}$.
        \item For each $i=1,2,\dots,r$, define $\calZ_i=\calX_i\times\calY_i$, where $\calX_i=\Span\anglem{u_i}$ and $\calY_i=\Span\anglem{v_i}$.
    \end{itemize}
    From the definition, we can see that the dimensionality conditions are satisfied. Because $\{u_1,\dots,u_n\}$ and $\{v_1,\dots,v_m\}$  are bases of $\dR^n$ and $\dR^m$, the following direct sum holds:
    \begin{align}
        \dR^n \times \dR^m= \calZ_0 \oplus\calZ_1\oplus\cdots\oplus\calZ_r\label{eq:invariant-subspace-decomposition}
    \end{align}
    The sum is also orthogonal, because the bases are orthogonal.

    We claim that \eqref{eq:invariant-subspace-decomposition} is an invariant subspace decomposition of $G_\ell$. We first show that $\calZ_i$ is an invariant subspace for $i=1,\dots,r$. If $(x,y)\in \calZ_i$, then $x\in \calX_i$ and $y\in \calY_i$, which means $x=pu_i$ and $y=qv_i$, which means
    \begin{align*}
        Ay=qAv_i&=q\rbrm[\Big]{\sum_{j=1}^r \sigma_ju_jv_j^\trans}v_i=q\sigma_i u_i v_i^\trans v_i=q\sigma_i u_i,\\
        A^\trans x=pA^\trans u_i&=p\rbrm[\Big]{\sum_{j=1}^r \sigma_jv_ju_j^\trans}u_i=p\sigma_i v_i u_i^\trans u_i=p\sigma_i v_i,
    \end{align*}
    due to orthogonality. From the definition of $G_\ell$, we see that
    \begin{align}
        G_{\ell}(x,y)=G_\ell(pu_i,qv_i)=(Ay,-A^\trans x)=(\sigma_i q u_i, -\sigma_i p v_i)\in \calX_i\times \calY_i=\calZ_i.\label{eq:Gl-subspace}
    \end{align}
    Further, this proves that  $G_\ell\big|_{\calZ_i}$, the restriction of $G_\ell$ to $\calZ_i$, is represented with the matrix $\smimatrix{0&\sigma_i\\-\sigma_i&0}$ with respect to the basis $\{(u_i,0),(0,v_i)\}$. This matrix is precisely the matrix of $G_{\ell_i}$ if we define $\ell_i : \calX_i\times\calY_i\to\dR, \ell_i(pu_i,qv_i)=pq\sigma_i$; so $G_\ell\big|_{\calZ_i}=G_{\ell_i}$.

    We then show that $\calZ_0$ is also invariant. If $i>r$, from the property of SVD, we know that
    \begin{align*}
        Av_i=0 & \implies G_\ell(0,v_i)=(Av_i,-A^\trans 0)=0,\\
        u_i^\trans A=0 & \implies G_\ell(u_i,0)=(A0, -A^\trans u_i)=0.
    \end{align*}
    These two equations imply that $G_\ell$ nullifies the spanning vectors of $\calZ_0$, which means $G_\ell(\calZ_0)=\{0\}\subseteq \calZ_0$. Therefore,
    \eqref{eq:invariant-subspace-decomposition}
    is an invariant subspace decomposition of the linear endomorphism $G_\ell$. If a vector $z$ decomposes into $z=z!0+z!1+\cdots+z!r=z_0+(x!1,y!1)+\cdots+(x!r,y!r)$, we have
    \begin{align*}
        G_\ell(z)
        & = G_\ell(z!0)+\sum_{i=1}^r G_\ell(z!i)\tag{linearity}\\
        & = \sum_{i=1}^r G_\ell(z!i) \tag{$G_\ell$ nullifies $\calZ_0$}\\
        & = \sum_{i=1}^r G_{\ell_i}(z!i). \tag{$G_\ell\big|_{\calZ_i}=G_{\ell_i}$}
    \end{align*}
    Since each $G_{\ell_i}(z!i)\in \calZ_i$, the last sum is orthogonal, and the norm of the sum decomposes as
    \begin{align*}
        \normm[\big]{G_\ell(z)}^2
        = \sum_{i=1}^r \normm[\big]{G_{\ell_i}(z!i)}^2,
    \end{align*}
    which is exactly \eqref{eq:decompose-gn}.
    
    As EG only makes use of linear operations (addition and scalar multiplication) and calls to $G_\ell$, the EG formulas decompose into each subspace. Formally, this is more easily verified with the expanded definition \eqref{eq:opt-1x1-expand} of EG,
    \begin{align*}
        z!0_{t+1}+\sum_{i=1}^r z!i_{t+1}
        & = z_{t+1} \\
        & = (I - \eta_t G_\ell + \eta_t \gamma_t G_\ell^2) z_t \\
        & = (I - \eta_t G_\ell + \eta_t \gamma_t G_\ell^2) \rbrm[\bigg]{
            z!0_{t}+\sum_{i=1}^r z!i_{t}
        } \\
        & = (I - \eta_t G_\ell + \eta_t \gamma_t G_\ell^2) z!0_{t} + 
        \sum_{i=1}^r (I - \eta_t G_\ell + \eta_t \gamma_t G_\ell^2) z!i_{t}\\
        & = z!0_{t} + 
        \sum_{i=1}^r (I - \eta_t G_\ell + \eta_t \gamma_t G_\ell^2) z!i_{t} \tag{$G_\ell z!0_{t}=0$}\\
        & = z!0_{t} + 
        \sum_{i=1}^r (I - \eta_t G_{\ell_i} + \eta_t \gamma_t G_{\ell_i}^2) z!i_{t}. \tag{$G_\ell\big|_{\calZ_i}=G_{\ell_i}$}
    \end{align*}
    The final equation can be decomposed with \eqref{eq:invariant-subspace-decomposition}, and we have
    \begin{align*}
        z!0_{t+1}&=z!0_{t},\\
        \forall i\in \{1,\dots,r\},
        z!i_{t+1}&=(I - \eta_t G_{\ell_i} + \eta_t \gamma_t G_{\ell_i}^2) z!i_{t}.
    \end{align*}
    The final two statements of the lemma holds true because this holds for any $n$.
\end{proof}

We are finally ready to prove \cref{lem:opt}, restated here for convenience:
\lemOpt*
\begin{proof}
    Let $\{\eta_t,\gamma_t\}_{t\in \dN}$ be an arbitrary sequence of stepsizes. Assume $T\in \dN$. Define
    \begin{align*}
        H = \max_{a\in [0,L]} \cbrm[\bigg]{
            a \prod_{t=0}^{T-1} \sqrt{1+\eta_t(\eta_t-2\gamma_t) a^2+\eta_t^2\gamma_t^2 a^4}
        }.
    \end{align*}
    The maximum exists because the maximized function is continuous on $[0, L]$.

    We first apply \cref{lem:decompose-translation}. Assume $\{z_t\}_{t\in \dN}$ is a trajectory of EG on the instance $(\dR^n,\dR^m,\ell)$, we have $\{P(z_t)\}_{t\in \dN}$ is a trajectory of EG on $(\dR^n,\dR^m,\ell')$ where $\ell'$ is bilinear.

    We then apply the decomposition of \cref{lem:decompose} on $\{P(z_t)\}$ and decompose $\ell'$ into $r$ instances $\ell'_1,\dots,\ell'_r$, we have
    \begin{align*}
        P(z_t)=z!0_t+z!1_t+\cdots+z!r_t.
    \end{align*}
    Now, for each $i=1,2,\dots,r$, from \cref{lem:opt-1x1} we have
    \begin{align*}
        \GN_{\ell_i'}(z!i_T)=\normm[\big]{z!i_0}
                \cdot \sigma_i \prod_{t=0}^{T-1} \sqrt{1+\eta_t(\eta_t-2\gamma_t) \sigma_i^2+\eta_t^2\gamma_t^2 \sigma_i^4}\leq \normm[\big]{z!i_0}H.
    \end{align*}
    Squaring both sides, summing across $i=1,2,\dots,r$, and using \eqref{eq:decompose-gn}, we have %
    \begin{align*}
        \GN_\ell(z_T)^2 
        & = \GN_{\ell'}(P(z_T))^2
         = \sum_{i=1}^r \GN_{\ell_i'}(z!i_T)^2\\
        & \leq \sum_{i=1}^r \normm[\big]{z!i_0}^2H^2 \\
        & \leq H^2 \cdot \rbrm[\Big]{
            \normm[\big]{z!0_0}^2
            +
            \sum_{i=1}^r \normm[\big]{z!i_0}^2
        } \\
        & = H^2 \normm[\big]{P(z_0)}^2 \\
        & = H^2 \normm[\big]{z_0-z^\star}^2.
    \end{align*}
    Taking the square root of both sides proves the first claim of the lemma.

    For the second claim on the tightness, let
    \begin{align*}
        a^\star = \argmax_{a\in [0,L]} \cbrm[\bigg]{
            a \prod_{t=0}^{T-1} \sqrt{1+\eta_t(\eta_t-2\gamma_t) a^2+\eta_t^2\gamma_t^2 a^4}
        }.
    \end{align*}
    Applying \cref{lem:opt-1x1} on the instance $(\dR,\dR,\ell_\star(x,y)=a^\star xy)$ directly proves the tightness claim.
\end{proof}

\section{Supporting proofs on power-law stepsizes in EG}\label{ah:eg-cont}

In this section, we provide an analysis of Pareto stepsize schedules for EG.

\subsection{On single-stepsize EG}

We show our proofs to single-stepsize EG first.
As a recap, we have
\begin{align*}
    \ln \calR_T(\calE, a)
    & \defeq
    \EE_{\eta_0,\dots,\eta_{T-1}\sim \calE}
    \sbrm[\Bigg]{
    \ln\rbrm[\bigg]{
        a \prod_{t=0}^{T-1} \sqrt{1-\eta_t^2 a^2+\eta_t^4 a^4}
    }} \\
    & =
    {
        \ln a + \frac T2\EE_{\eta\sim \calE}\sbrm[\big]{\ln(1-\eta^2a^2+\eta^4a^4)}
    },\tag{\ref{eq:calL-def}, again}\label{eq:calL-def'}
\end{align*}
and, when $\calE=\Pareto(\etam/L, \beta)$,
\begin{align*}
    \EE_{\eta\sim \calE}\sbrm[\big]{\ln(1-\eta^2a^2+\eta^4a^4)}
    & = \frac{\beta}{2}\rbrm[\Big]{ \frac{\etam a}{L} }^{\beta} \int_{(\etam a/L)^2}^{\infty} \ln(1-t+t^2)t^{-\beta/2-1}\dd{t},\tag{\ref{eq:pareto-contraction-int}, again}\label{eq:pareto-contraction-int'}
\end{align*}

Define the integral in the RHS as $I_\beta((\etam a/L)^2)$, where $I_\beta(s)=\int_{s}^{\infty} \ln(1-t+t^2)t^{-\beta/2-1}\dd{t}$. In order for the log-contraction-ratio to be smaller than 0 on average, we require $I_\beta((\etam a/L)^2)<0$.
The derivative $I'_\beta(s)=-\ln(1-s+s^2)s^{-\beta/2-1}$ shares the same sign as $-\ln(1-s+s^2)$; therefore, $I_\beta(s)$ is monotonically increasing on $[0, 1]$ and decreasing on $[1, \infty)$. Because $I_\beta(+\infty)=0$, any solution of $I_\beta(s)<0$ must lie in $[0, 1)$, and the existence of such a solution requires $I_\beta(0)<0$. The integration $I_\beta(0)$ is the Mellin transform of the function $\ln(1-t+t^2)$, and solved in \cref{lem:m-func} as
\begin{align*}
    I_\beta(0)=\int_0^\infty \ln(1-t+t^2) t^{-\beta/2-1}\dd{t}=\frac{4\pi \cos(\pi \beta/3)}{\beta\sin(\pi \beta/2)}.
\end{align*}
The inequality $I_\beta(0)<0$ holds when $\beta>\frac 32$. For all values of $\beta$ strictly larger than $\frac32$,
we have
\begin{align*}
    I_\beta(s)
    & = I_\beta(0)
        - \int_0^s \ln(1-t+t^2) t^{-\beta/2-1}\dd{t} \\
    & \leq I_\beta(0)
        + \int_0^s t \cdot t^{-\beta/2-1}\dd{t} \tag{$-\ln(1-t+t^2)\leq t$ whenever $t\geq 0$}\\
    & = I_\beta(0)+\int_0^s t^{-\beta/2}\dd{t} = I_\beta(0)+\frac{s^{1-\beta/2}}{1-\beta/2}.\yestag\label{eq:Ibeta-bound}
\end{align*}
Therefore, for any $\beta>\frac32$, we have $I_\beta(0)<0$, and it is possible to choose a sufficiently small $\etam$ such that $I_\beta((\etam a/L)^2)<0$ for all $a\in [0, L]$. We are now ready to prove the lemma, restated here:

\lemEgCont*
\begin{proof}
    Assume $\beta\in (\frac32, 2)$. Let $\etam=\rbrm[\big]{-(1-\frac\beta2)\frac{I_\beta(0)}{2}}^{1/(2-\beta)}>0$; the value satisfies the following equality:
    \begin{align}
        \frac{(\etam^2)^{1-\beta/2}}{1-\beta/2}=-\frac{I_\beta(0)}{2}.\label{eq:step:eg-cont-proof-first}
    \end{align}
    Therefore, for all $a\in [0, L]$, $(\etam a/L)^2\in [0, \etam^2]$, and we have $I_\beta((\etam a/L)^2)\leq I_\beta(\etam^2)\leq I_\beta(0)-\frac{I_\beta(0)}{2}=\frac12 I_\beta(0)<0$ from \eqref{eq:Ibeta-bound}. Then, from (\hyperref[eq:pareto-contraction-int']{\ref*{eq:pareto-contraction-int}}), we know that
    \begin{align*}
        \EE_{\eta\sim \calE}\sbrm[\big]{\ln(1-\eta^2a^2+\eta^4a^4)}
        \leq
        \frac{\beta}{2}\rbrm[\Big]{ \frac{\etam a}{L} }^{\beta}
        \rbrm[\Big]{\frac{I_\beta(0)}{2}}.
    \end{align*}
    Combining this with (\hyperref[eq:calL-def']{\ref*{eq:calL-def}}), we have
    \begin{align*}
        \ln \calR_T(\calE, a)
        & \leq {
            \ln L + \ln \frac{a}{L}
            +
            \frac T2 
            \frac{\beta}{2}\rbrm[\Big]{ \frac{\etam a}{L} }^{\beta}
            \rbrm[\Big]{\frac{I_\beta(0)}{2}}
        } \\
        & = \ln L + {
            \ln b
            +
            \frac T2 
            \frac{\beta}{2}\rbrm{ \etam b }^{\beta}
            \rbrm[\Big]{\frac{I_\beta(0)}{2}}
        } \tag{substitute $b=\frac aL$}\\
        & = \ln L + \rbrm[\Big]{
            \ln b
            +
            \frac {T \beta \etam^\beta I_\beta(0)}{8}
            b^\beta
        } \\
        & \leq
        \ln L - \frac{1}{\beta}\rbrm[\Big]{
            \ln \frac {-T \beta^2 \etam^\beta I_\beta(0)}{8} +1
        } \tag{$\star$}\label{step:lem-eg-cont-1st-order} \\
        & = \ln L - \frac{1}{\beta} \ln T - \frac{1}{\beta}\rbrm[\Big]{
            \ln \frac {\beta^2 \etam^\beta (-I_\beta(0))}{8} +1
        }.\yestag\label{eq:lem-eg-cont-final}
    \end{align*}
    In the above, we substitute $b=\frac aL$, and use \cref{lem:ln-plus-monomial} for the step \eqref{step:lem-eg-cont-1st-order}. The first statement of the lemma is proved if we take $C=\exp\sbrm[\big]{- \frac{1}{\beta}\rbrm[\big]{
            \ln \frac {\beta^2 \etam^\beta (-I_\beta(0))}{8} +1
        }}$.
    The second statement is a direct implication of \cref{lem:opt} and the definition of $\calR_T(\cdot, \cdot)$.
\end{proof}

The constant factor $C$ in the argument above is wildly suboptimal; to achieve a rate of $\calO(T^{-0.66})$ with $\beta=\frac{100}{66}=\frac 32+\frac{1}{66}$, the above proof sets $\etam\approx 0.00042$ and $C\approx 8331$. Symbolic computation suggests that $C\sim \eps^{-8/3}$.
The suboptimality comes from the fact that $\etam$ is too small; indeed, using any stepsize below $\eta_*=\frac{1}{\sqrt2 L}$ is unwise, because the contraction ratio suffers as
\begin{align*}
    1-\eta^2a^2+\eta^4 a^4> 1-\eta_*^2 a^2 + \eta_*^4 a^4
\end{align*}
for all $\eta<\eta_*$, and any $a\in (0, L]$. We can significantly improve the constant factor by truncating the Pareto distribution at $\eta_*$ and using a mixture between Pareto and a point mass (see  \cref{fig:vdc-schedule-mixture} for an illustration). This can be formulated in the following lemma.

\begin{figure}[tb]
    \centering
    \includegraphics[width=0.92\linewidth]{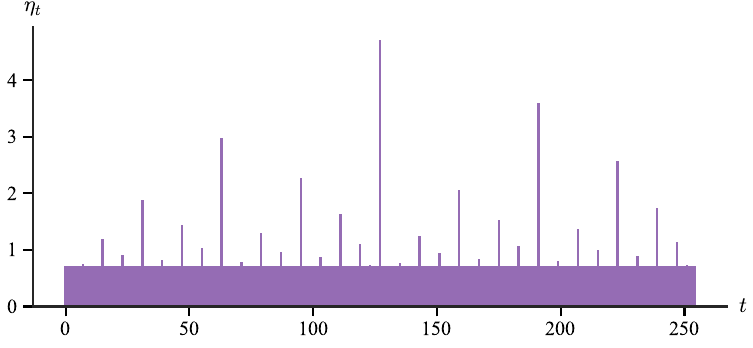}
    \caption{The discretized power-law stepsize schedule $\{\eta_0, \eta_1, \eta_2, \dots\}$ to achieve $\calO(T^{-0.66})$ in \cref{lem:eg-mixture}. Only the first 255 terms are shown. The majority of iterations have $\eta_t=\etam=1/\sqrt 2$.}
    \label{fig:vdc-schedule-mixture}
\end{figure}

\begin{applemma}\label{lem:eg-mixture}
    For any $\beta\in (\frac 32, 2)$, there exists $p\in (0, 1), C\in \dR_+$, and distribution $\calE=p\cdot \Pareto(\eta_*, \beta)+(1-p)\cdot \delta_{\eta_*}$ where $\eta_*=\frac{1}{\sqrt{2}L}$, such that
    \begin{align*}
        \calR_T(\calE, a)\leq CLT^{-1/\beta},
    \end{align*}
    for all $a\in (0, L]$.
\end{applemma}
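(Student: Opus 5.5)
We follow the proof of \cref{lem:eg-cont} closely, now with the mixture. Write $u=\eta_* a=a/(\sqrt2 L)\in(0,1/\sqrt2]$ and $h(t)=\ln(1-t+t^2)$, so that $\ln(1-\eta^2a^2+\eta^4a^4)=h(\eta^2a^2)$. By linearity of expectation over the mixture,
\begin{align*}
    \EE_{\eta\sim\calE}[h(\eta^2a^2)]
    = (1-p)\,h(u^2) + p\,\frac{\beta}{2}u^{\beta} I_\beta(u^2),
\end{align*}
using \eqref{eq:pareto-contraction-int'} with scale $\eta_*$ in place of $\etam/L$ (the same substitution $t=\eta^2a^2$). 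The goal is to show that this expectation is at most $-c\,u^\beta$ for some constant $c>0$ and all $u\in(0,1/\sqrt2]$. Once this holds, the rest is identical to \cref{lem:eg-cont}. We have $\ln\calR_T(\calE,a)\le \ln L+\ln b - \frac{cT}{2}(b/\sqrt2)^\beta$ with $b=a/L$. Maximizing over $b>0$ via \cref{lem:ln-plus-monomial} gives $\ln\calR_T\le \ln L-\frac1\beta\ln T+\mathrm{const}$, which yields $C$.

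To prove the key bound, handle the two terms separately. First, the point-mass term is harmless: for $t=u^2\in(0,1/2]$ we have $1-t+t^2<1$, so $h(u^2)\le 0$. We may therefore drop it, or keep it as extra negative contribution. Second, for the Pareto term, \eqref{eq:Ibeta-bound} gives $I_\beta(u^2)\le I_\beta(0)+\frac{u^{2-\beta}}{1-\beta/2}$. This is negative only when $u$ is small, and since $\etam$ is no longer ours to shrink, it can fail near $u=1/\sqrt2$. This is the main obstacle, and it is why the mixture weight $p$ is needed.

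We split into two ranges using a threshold $u_0$ with $\frac{u_0^{2-\beta}}{1-\beta/2}=-I_\beta(0)/2$.

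For $u\le u_0$, the Pareto term is at most $p\frac\beta2 u^\beta I_\beta(0)/2=-c_1u^\beta$ and the point-mass term is nonpositive, so the bound holds.

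For $u\in[u_0,1/\sqrt2]$, use the point mass instead. For $t\in[0,1/2]$ we have $h(t)\le -t/2$; this is checked by concavity-type estimates since $1-t+t^2\le e^{-t/2}$ there. Hence $(1-p)h(u^2)\le -(1-p)u^2/2\le -(1-p)u_0^2/2$, a fixed negative constant. Meanwhile the Pareto term is at most $p\frac\beta2 u^\beta \sup_{s\ge0}I_\beta(s)=p\frac\beta2u^\beta I_\beta(1)$, which is bounded by $p\cdot K$ for a finite constant $K$, since $I_\beta$ is maximized at $s=1$ and is finite because $\beta>0$. Choosing $p$ small enough that $pK\le (1-p)u_0^2/4$ makes the total at most $-(1-p)u_0^2/4\le -c_2u^\beta$ on this range, because $u^\beta\le 1$.

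Taking $c=\min(c_1,c_2)$ completes the key bound. The only delicate point is verifying the elementary inequality $h(t)\le -t/2$ on $[0,1/2]$, or replacing it with any inequality of the form $h(t)\le -\kappa t$.
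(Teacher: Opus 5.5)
Your proof is correct, but it is organized differently from the paper's. The paper makes no case split. Plugging \eqref{eq:Ibeta-bound} into the Pareto branch gives the error term $p\frac{\beta}{2}u^\beta\cdot\frac{u^{2-\beta}}{1-\beta/2}=p\frac{\beta}{2-\beta}u^2$. This is exactly quadratic in $u$, so it is of the same order as the gain $(1-p)\ln(1-u^2+u^4)\le-(1-p)\frac{u^2}{2}$ from the point mass. That gain inequality follows from \emph{convexity} of $\ln(1-z+z^2)$ on $[0,\frac12]$ together with $\ln\frac34<-\frac14$, via the chord; this is the precise version of your ``concavity-type'' remark. Taking $p=\frac{2-\beta}{2+\beta}$ cancels the error uniformly on $(0,1/\sqrt2]$ and yields $\mathbb{E}_{\eta\sim\calE}\bigl[\ln(1-\eta^2a^2+\eta^4a^4)\bigr]\le\frac{p\beta I_\beta(0)}{2}u^\beta$ in one step, with an explicit $p$ that does not depend on the small quantity $I_\beta(0)$.

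Your two-regime argument uses the same ingredients but needs only cruder facts: a fixed negative contribution from the point mass once $u\ge u_0$, and $\sup_s I_\beta(s)=I_\beta(1)<\infty$. It would therefore survive less precise estimates. The price is that it forces $p\lesssim u_0^2/I_\beta(1)$, where your $u_0=\bigl((1-\frac\beta2)(-I_\beta(0))/2\bigr)^{1/(2-\beta)}$ is exactly the tiny scale used in \cref{lem:eg-cont}. As a result, the constant $C$ you obtain is no better than the one in \cref{lem:eg-cont}, so the constant-factor improvement that motivates the mixture is lost. This does not affect correctness, since the lemma only asserts the existence of $p$ and $C$.
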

\begin{proof}
    Since $\calE$ is a mixture, we have, similar to (\hyperref[eq:pareto-contraction-int']{\ref*{eq:pareto-contraction-int}}),
    \begin{align*}
    & \phantom{{}={}} \EE_{\eta\sim \calE}\sbrm[\big]{\ln(1-\eta^2a^2+\eta^4a^4)} \\
    & = p \EE_{\eta\sim \Pareto(\eta_*,\beta)}\sbrm[\big]{\ln(1-\eta^2a^2+\eta^4a^4)}
        + (1-p) \ln(1-\eta_*^2a^2+\eta_*^4a^4) \\
    & = p\cdot \frac{\beta}{2}\rbrm[\Big]{ \frac{\etam a}{L} }^{\beta} I_\beta((\etam a/L)^2) 
      + (1-p) \ln\sbrm[\Big]{1-\etam^2\rbrm[\big]{\frac aL}^2+\etam^4\rbrm[\big]{\frac aL}^4}
    \end{align*}
    where, for notational consistency, we define $\etam=\frac{1}{\sqrt 2}$. If we substitute $b=\frac aL$ and apply \eqref{eq:Ibeta-bound}, we have
    \begin{align*}
    & \phantom{{}={}} 
        \EE_{\eta\sim \calE}\sbrm[\big]{\ln(1-\eta^2a^2+\eta^4a^4)} \\
    & =
        p\cdot \frac{\beta}{2}\rbrm{ \etam b }^{\beta} I_\beta((\etam b)^2) 
        + (1-p) \ln\sbrm[\big]{1-\etam^2b^2+\etam^4b^4} \\
    & \leq 
        p\cdot \frac{\beta}{2}\rbrm{ \etam b }^{\beta} \rbrm[\Big]{
            I_\beta(0)
            +
            \frac{(\etam b)^{2-\beta}}{1-\beta/2}
        }
        + (1-p) \ln\sbrm[\big]{1-\etam^2b^2+\etam^4b^4} \\
    & \leq
        \frac{p\beta\cdot I_\beta(0)}{2}\rbrm{ \etam b }^{\beta}
        + p\cdot \frac{\beta}{2-\beta}\rbrm{ \etam b }^2
        + (1-p) \ln\sbrm[\big]{1-\etam^2b^2+\etam^4b^4}.
        \yestag\label{eq:eg-mixture-contraction}
    \end{align*}
    The function $\ln(1-z+z^2)$ is convex on $[0, \frac12]$, has a value of $0$ when $z=0$, and has a value of $-\ln \frac43<-\frac 14$ when $z=\frac12$; therefore, $\ln(1-z+z^2)\leq -\frac z2$ on $[0, \frac12]$. Because $\etam^2b^2\in [0, \frac12]$, we have
    \begin{align*}
        p\cdot \frac{\beta}{2-\beta}\rbrm{ \etam b }^2
        + (1-p) \ln\sbrm[\big]{1-\etam^2b^2+\etam^4b^4}
        \leq
        \rbrm[\Big]{
            p\cdot \frac{\beta}{2-\beta}-(1-p)\cdot \frac12
        } (\etam b)^2,
    \end{align*}
    and this value is nonpositive when $p\leq \frac{2-\beta}{2+\beta}$. Combining this with \eqref{eq:eg-mixture-contraction} and (\hyperref[eq:calL-def']{\ref*{eq:calL-def}}), we have
    \begin{align*}
        \ln\calR_T(\calE, a)
        & \leq
        \ln L + {
            \ln b
            +
            \frac T2 
            \frac{p\beta\cdot I_\beta(0)}{2}\rbrm{ \etam b }^{\beta}
        }\\
        & \leq
        \ln L - \frac{1}{\beta}\rbrm[\Big]{
            \ln \frac {T p \beta^2 \etam^\beta (- I_\beta(0))}{4} +1
        } \tag{$\star$}\label{step:lem-eg-mixture-1st-order}\\
        & \leq
        \ln L - \frac{1}{\beta}\ln T
        - \frac{1}{\beta}\rbrm[\Big]{
            \ln \frac {p \beta^2 \etam^\beta (- I_\beta(0))}{4} +1
        }.
    \end{align*}
    We again use \cref{lem:ln-plus-monomial} for the step \eqref{step:lem-eg-mixture-1st-order}. The lemma is proved when we take $p= \frac{2-\beta}{2+\beta}$ and $C=\exp\sbrm[\big]{- \frac{1}{\beta}\rbrm[\big]{
            \ln \frac {p \beta^2 \etam^\beta (- I_\beta(0))}{4} +1
        }}$.
\end{proof}

\begin{appremark}
    To achieve the same $\calO(T^{-0.66})$ rate with $\beta=\frac 32+\frac 1{66}$, \cref{lem:eg-mixture} sets $C\approx 11.6$, significantly lower than that of \cref{lem:eg-cont}. Symbolic computation suggests that $C\sim \eps^{-2/3}$.
\end{appremark}

We then show the result of discretizing the distribution into a stepsize sequence using the van der Corput sequence. As the technique for discretization is quite convoluted and independent of the rest of the paper, it is detailed separately in \cref{ah:discretization}.

\lemDiscretization*
\begin{proof}
    The first claim is a special case of \cref{thm:pareto-qmc-discretize}, obtained by setting $\psi=\pi/3$ and $p=1$.
    The second claim is obtained by plugging the first claim into the formulas of $\calR_T$ (see \eqref{eq:eg-minmax}, \eqref{eq:calL-def}).
\end{proof}

\subsection{On the lower bound of single-stepsize EG}\label{ah:lower-bound}

In this part of the appendix, we will prove \cref{thm:lb-eg}. The proof again uses the one-dimensional formulation in \eqref{eq:eg-minmax}. From \cref{lem:opt-1x1}, for a scalar bilinear instance $\ell(x,y)=a xy$ with $a\in [0,1]$, single-stepsize EG satisfies
\begin{align}
    \ln \GN(z_T)
    =
    \ln R+
    \ln a+\frac12\sum_{t=0}^{T-1}\ln(1-a^2\eta_t^2+a^4\eta_t^4),
    \label{eq:lb-log-gn-scalar}
\end{align}
provided that $\norm{z_0}=R$. Thus it suffices to prove a lower bound on the right-hand side.

We first show the minimax lower bound on this scalar expression.

\begin{applemma}\label{lem:eg-lb-meta}
    For any $\eps$, there exists a constant $C>0$ such that the following holds. For any $T\geq 3$ and any distribution $\calE$ supported on $\dR_+$,
    \begin{align*}
        \sup_{a\in (0,1]}
        \cbrm[\Big]{
            \ln a+\frac T2\EE_{\eta\sim \calE}\sbrm[\big]{\ln(1-a^2\eta^2+a^4\eta^4)}
        }
        \geq
        - \rbrm[\Big]{\frac23 + \eps} \ln T + \ln C 
        =
        \ln\sbrm[\big]{C T^{-\frac23-\eps}}.
    \end{align*}
\end{applemma}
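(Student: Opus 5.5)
The plan is weak duality: replace the supremum over $a$ by an average over a "Nature" distribution $\calA$ on $(0,1]$, exchange expectations, and lower bound the resulting quantity uniformly over $\eta$. Concretely, for any probability measure $\calA$ on $(0,1]$,
\begin{align*}
\sup_{a\in(0,1]}\cbrm[\Big]{\ln a+\frac T2\EE_{\eta\sim\calE}[g(a\eta)]}
\geq \EE_{a\sim\calA}[\ln a]+\frac T2\EE_{\eta\sim\calE}\EE_{a\sim\calA}[g(a\eta)],
\end{align*}
where $g(u)=\ln(1-u^2+u^4)$. It therefore suffices to choose $\calA$ (depending on $T$ and $\eps$) so that $\EE_{a\sim\calA}[\ln a]\geq -(\frac23+\eps)\ln T-O(1)$ and $\inf_{\eta>0}\EE_{a\sim\calA}[g(a\eta)]\geq -O(1/T)$. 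Fubini is fine here because $g$ is bounded below by $\ln\frac34$ and is nonnegative for $u\geq1$.

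For the choice of $\calA$, I will take a truncated power law, mirroring the upper bound: the density is proportional to $a^{\alpha-1}$ on $[a_0,1]$, with $\alpha=\frac32-\delta$ slightly below $\frac32$ and $a_0=T^{-1/\alpha}$. Equivalently, $\Pr[a\leq x]\propto x^\alpha-a_0^\alpha$. Then $\EE[\ln a]\geq \ln a_0=-\frac1\alpha\ln T$, and choosing $\delta$ so that $\frac1\alpha\leq\frac23+\eps$ handles the first term.

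The main obstacle is the second term. Substituting $t=a^2\eta^2$ and writing $Z\approx 1-a_0^\alpha$ for the normalizing constant, we get
\begin{align*}
\EE_{a}[g(a\eta)]=\frac{\alpha}{2Z}\eta^{-\alpha}\int_{a_0^2\eta^2}^{\eta^2}\ln(1-t+t^2)\,t^{\alpha/2-1}\dd{t}.
\end{align*}
The key point is that the Mellin integral $\int_0^\infty \ln(1-t+t^2)t^{-s-1}\dd t$ with $s=-\alpha/2$ is not directly covered by \cref{lem:m-func}, since that lemma requires $0<s<1$. Instead I will use the symmetry $\ln(1-t+t^2)=2\ln t+\ln(1-t^{-1}+t^{-2})$ and substitute $t\mapsto 1/t$. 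For large $t$, the $2\ln t$ part contributes positively. I also observe that the sign-relevant part on $[0,\eta^2]$ satisfies
\begin{align*}
\int_0^{X}\ln(1-t+t^2)t^{\alpha/2-1}\dd t\geq \min\cbrm[\Big]{0,\ \inf_X(\cdot)}.
\end{align*}
The negative region $t\in(0,1)$ is bounded, and its contribution is a constant $-c_\alpha$, while for large $X$ the integral becomes positive. More carefully, the worst case over $X$ is attained at $X=1$. Hence
\begin{align*}
\int_{a_0^2\eta^2}^{\eta^2}\geq \int_0^{1}\ln(1-t+t^2)t^{\alpha/2-1}\dd t\geq -c_\alpha,
\end{align*}
which is finite for $\alpha>0$. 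This alone gives $\EE_a[g]\geq -C\eta^{-\alpha}$ when $\eta\geq1$. That is not $O(1/T)$, so I need the lower truncation at $a_0$.

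The refined argument splits into two cases. When $\eta\leq 1/a_0=T^{1/\alpha}$, I use the dual Mellin identity (the analytic continuation of \cref{lem:m-func} to $s=-\alpha/2\in(-1,-\frac12)$ after subtracting the $-t$ linear term). The relevant constant $\cos(2\pi s/3)$ at $s=-\alpha/2$ equals $\cos(\pi\alpha/3)$, which is positive exactly when $\alpha<\frac32$. This is where the threshold $\frac23$ arises. The positivity means that the full-range integral $\int_0^\infty[\ln(1-t+t^2)+t]t^{\alpha/2-1}\cdots$ is favorable, so that the truncated integral on $[a_0^2\eta^2,\eta^2]$ is at least $-O\big((a_0\eta)^{\alpha}\big)$ in relative terms. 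This should give $\EE_a[g]\geq -O(a_0^\alpha)=-O(1/T)$.

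When $\eta>1/a_0$, every $a$ in the support has $a\eta\geq1$, so $g(a\eta)\geq0$.

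Finally, combining the two terms yields the bound with a constant $C$ depending only on $\eps$. I expect the delicate part to be the first case: controlling the truncated Mellin integral uniformly in $\eta$. I would first attempt it via the exact formula plus tail estimates near $t\to0$ (where the integrand is $\approx -t\cdot t^{\alpha/2-1}$) and near $t\to\infty$.
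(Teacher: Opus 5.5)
Your weak-duality framing matches the paper's, but your Nature distribution has the power law pointing the wrong way. With it, your central claim $\inf_{\eta>0}\EE_{a\sim\calA}[g(a\eta)]\geq -O(1/T)$ is false.

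A density $\propto a^{\alpha-1}$ with $\alpha\approx\frac32$ is \emph{increasing}, so $\EE_{\calA}[a^2]\approx\frac{\alpha}{\alpha+2}=\Theta(1)$. Take the constant stepsize $\eta=1/\sqrt2$. Since $\ln(1-z+z^2)\leq -z/2$ on $[0,\frac12]$, you get $g(a/\sqrt2)\leq -a^2/4$ for every $a\in(0,1]$. Hence $\frac T2\EE_{a\sim\calA}[g(a/\sqrt2)]\leq -\frac T8\EE_{\calA}[a^2]=-\Theta(T)$, and the averaged lower bound is useless. The same computation, together with $\EE[\ln a]\leq 0$, shows that any $\calA$ that can work must have $\EE_{\calA}[a^2]=O(\ln T/T)$. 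In other words, it must concentrate near the truncation point; truncating at $a_0$ does nothing when the mass sits near $a=1$.

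So your ``refined'' case $\eta\leq 1/a_0$ cannot be established; it already fails at $\eta=1/\sqrt2$. The identity you invoke there also does not exist as stated. The integral $\int_0^\infty[\ln(1-t+t^2)+t]\,t^{\alpha/2-1}\dd{t}$ diverges as $t\to\infty$, because removing the small-$t$ linear term does not control the growth at infinity.

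The repair, which is what the paper does, is to flip the exponent. Take $\beta=(\frac23+\eps)^{-1}\in(\frac43,\frac32)$ and $a_*=T^{-1/\beta}$, and let $\calA$ have density $\frac{\beta}{T-1}a^{-\beta-1}$ on $[a_*,1]$. You still have $\EE[\ln a]\geq \ln a_*=-\frac1\beta\ln T$. Now, however, the normalizer $(T-1)/\beta$ absorbs the factor $T/2$. After substituting $t=a^2\eta^2$,
\begin{align*}
\frac T2\EE_{a\sim\calA}[g(a\eta)]=\frac{\beta T}{4(T-1)}S(\eta),\qquad S(\eta)=\eta^\beta\int_{a_*^2\eta^2}^{\eta^2}\ln(1-t+t^2)\,t^{-\beta/2-1}\dd{t},
\end{align*}
so it suffices to bound $S$ below by a constant uniformly in $\eta$. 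Write $I_\beta(s)=\int_s^\infty\ln(1-t+t^2)\,t^{-\beta/2-1}\dd{t}$; there are three cases.
\begin{enumerate}
\item For $\eta\leq1$, the bound $-\ln(1-t+t^2)\leq t$ gives $S\geq -\eta^2/(1-\beta/2)\geq-4$.
\item For $\eta\geq1/a_*$, the integrand is nonnegative, so $S\geq 0$.
\item For $1<\eta<1/a_*$, use $I_\beta(a_*^2\eta^2)\geq I_\beta(0)=\frac{4\pi\cos(\pi\beta/3)}{\beta\sin(\pi\beta/2)}>0$. This follows directly from \cref{lem:m-func} with $s=\beta/2\in(0,1)$; no analytic continuation is needed. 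Combine it with $\ln(1-t+t^2)\leq 2\ln t$ for $t\geq1$ and \cref{lem:ln-plus-monomial} to get $S\geq \eta^\beta I_\beta(0)-\frac8\beta\ln\eta-\frac8{\beta^2}\geq \frac8{\beta^2}\ln\frac{\beta^2 I_\beta(0)}{8}$.
\end{enumerate}

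Your substitution $t\mapsto 1/t$ turns your weight $t^{\alpha/2-1}$ into exactly $t^{-\alpha/2-1}$. That is why you landed on the correct threshold $\cos(\pi\alpha/3)>0\iff\alpha<\frac32$. That sign condition belongs to the decreasing power law, not to the one you wrote down.
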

\begin{proof}
    Without loss of generality, we assume $\eps < \frac1{12}$. Let $\beta=(\frac23+\eps)^{-1}\in (\frac 43, \frac 32)$.

    Define $a_* = T^{-1/\beta}$. Consider a Pareto distribution $\calA$ with shape $\beta$ truncated on $[a_*, 1]$. Specifically, we have for $x\in [a_*, 1]$,
    \begin{align*}
        \dd{}\calA(x)=\frac 1V x^{-\beta-1}\dd{x},
    \end{align*}
    where $V$ is the normalizing constant $V=\int_{a_*}^1 a^{-\beta-1}\dd{a}=(a_\star^{-\beta}-1)/{\beta}=(T-1)/{\beta}$.

    In the next steps, we will prove that
    \begin{align*}
        \EE_{a\sim \calA}
        \sbrm[\Big]{
            \ln a+\frac T2\EE_{\eta\sim \calE}\sbrm[\big]{\ln(1-a^2\eta^2+a^4\eta^4)}
        }
        \geq
        \ln C - \rbrm[\Big]{\frac23 + \eps} \ln T.
    \end{align*}

    We first analyze $\EE_{a\sim \calA}[\ln a]$. We have
    \begin{align*}
        \EE_{a\sim \calA}[\ln a]
        & = \frac{1}{V} \int_{a_*}^{1} a^{-\beta-1}\ln a\,\dd{a} \\
        & = \frac{\beta}{a_*^{-\beta}-1}\rbrm[\Big]{-\frac{a^{-\beta}}{\beta}\ln a-\frac{a^{-\beta}}{\beta^2}}\Big|_{a_*}^{1} \tag{$\star$}\label{step:lb-int-by-parts}\\
        & = \frac{1}{a_*^{-\beta}-1}\rbrm[\Big]{-\frac{1}{\beta} +{a_*^{-\beta}}\ln a_*+\frac{a_*^{-\beta}}{\beta}} \\
        & = \frac{1}{\beta}-\frac{T}{(T-1)\beta}\ln T,\yestag\label{eq:ln-a-ln-T}
    \end{align*}
    where in \eqref{step:lb-int-by-parts} we used integration by parts:
    \begin{align*}
        \int a^{-\beta-1}\ln a\,\dd{a}
        & =\frac{1}{-\beta}\int \ln a\,\dd{}a^{-\beta} \\
        & = \frac{1}{-\beta}\rbrm[\Big]{a^{-\beta}\ln a - \int a^{-\beta} \dd{}\ln a} \\
        & = \frac{1}{-\beta}\rbrm[\Big]{a^{-\beta}\ln a - \int a^{-\beta-1} \dd{a}}=-\frac{a^{-\beta}}{\beta}\ln a-\frac{a^{-\beta}}{\beta^2}.
    \end{align*}

    Now we turn our attention to $\frac T 2 \EE_{a\in\calA}[\ln(1-a^2\eta^2+a^4\eta^4)]$. We have
    \begin{align*}
        \frac T 2 \EE_{a\in\calA}[\ln(1-a^2\eta^2+a^4\eta^4)]
        & = \frac{T}{2V} \int_{a_*}^1 \ln(1-a^2\eta^2+a^4\eta^4) a^{-\beta-1} \dd{a}\\
        & = \frac{T}{4V} \eta^\beta \int_{a_*^2 \eta^2}^{\eta^2}\ln(1-t+t^2) t^{-\beta/2-1} \dd{t},\yestag\label{eq:s-integral}
    \end{align*}
    where we substituted $t=a^2\eta^2$, $a=\eta^{-1}t^{1/2}$, and $a^{-\beta-1} \dd{a}=\rbr{\eta^{-1}t^{1/2}}^{-\beta-1}\cdot \rbr{\eta^{-1}\cdot \frac12 t^{-1/2}\dd{t}}=\frac12 \eta^\beta t^{-\beta/2-1}\dd{t}$.

    We use case-splitting on $\eta$ to lower-bound
    \begin{align*}
        S\defeq \eta^\beta \int_{a_*^2 \eta^2}^{\eta^2}\ln(1-t+t^2) t^{-\beta/2-1} \dd{t}
        = \eta^\beta \rbrm[\big]{
            I_\beta(a_*^2 \eta^2) - I_\beta(\eta^2)
        }.
    \end{align*}
    \begin{itemize}
        \item When $\eta\leq 1$, we have $S\geq 1^\beta \int_0^1 \ln(1-t+t^2)\cdot 1^{-\beta-1}\dd{t}\geq -1$. (Notice that $\ln(1-t+t^2)$ is negative on $(0, 1)$ and loosely lower-bounded by $-1$.)
        \item When $\eta\geq \frac{1}{a_*}=T^{1/\beta}$, we know that $t=\eta^2 a^2\geq\eta^2 a_*^2\geq 1$, so $\ln(1-t+t^2)\geq 0$, and thus $S\geq 0$.
        \item The most complicated case is $1<\eta<\frac{1}{a^*}$. Since $\eta^2 a_*^2<1$, we know that $I_\beta(0)<I_\beta(\eta^2 a_*^2)$ due to the monotonicity of $I_\beta$ (see the discussion below \eqref{eq:pareto-contraction-int'}). Therefore,
        \begin{align*}
            S & > \eta^\beta \rbrm[\big]{
                I_\beta(0) - I_\beta(\eta^2)
            } \\
            & = \eta^\beta I_\beta(0) - \eta^\beta \int_{\eta^2}^{\infty} \ln(1-t+t^2) t^{-\beta/2-1} \dd{t}.\yestag\label{eq:lem-lb-int-s-lb}
        \end{align*}
        For $t\geq \eta^2>1$, we have $t-1>0$, so $\ln(1-t+t^2)<\ln t^2=2\ln t$, so
        \begin{align*}
            \int_{\eta^2}^{\infty} \ln(1-t+t^2) t^{-\beta/2-1} \dd{t}
            & \leq
            2 \int_{\eta^2}^{\infty} \ln t \cdot t^{-\beta/2-1} \dd{t} \\
            & = 2 \rbrm[\Big]{-\frac{t^{-\beta/2}}{\beta/2}\ln t-\frac{t^{-\beta/2}}{(\beta/2)^2}}\Big|_{t=\eta^2}^{t=+\infty} \\
            & = 2 \rbrm[\Big]{0+\frac{\eta^{-\beta}}{\beta/2}\cdot 2\ln \eta+\frac{\eta^{-\beta}}{(\beta/2)^2}}\\
            & = 8\eta^{-\beta}\rbrm[\Big]{\frac{1}{\beta}\ln \eta+\frac{1}{\beta^2}}.
        \end{align*}
        Substituting this back to \eqref{eq:lem-lb-int-s-lb} and using \cref{lem:ln-plus-monomial}, we have
        \begin{align*}
            S & \geq \eta^\beta I_\beta(0) - \frac{8}{\beta}\ln \eta-\frac{8}{\beta^2} \\
              & \geq \frac{8}{\beta^2}\sbrm[\Big]{
                \ln \frac{I_\beta(0)\beta^2}{8}+1
              } -\frac{8}{\beta^2} \\
              & = \frac{8}{\beta^2}\ln \frac{I_\beta(0)\beta^2}{8}.
        \end{align*}
    \end{itemize}
    Combining three cases, we can say that $S\geq \min\{-1, \frac{8}{\beta^2}\ln \frac{I_\beta(0)\beta^2}{8}\}$. 
    Now, we will prove that the $-1$ branch is unnecessary, or $\frac{8}{\beta^2}\ln \frac{I_\beta(0)\beta^2}{8}<-1$. Indeed, we have
    \begin{align*}
        \frac{8}{\beta^2}\ln \frac{I_\beta(0)\beta^2}{8}
        = \frac{8}{\beta^2}\ln \frac{\pi\beta\cos(\pi\beta/3)}{2\sin(\pi\beta/2)},
    \end{align*}
    and on $\beta\in (\frac 43, \frac 32)$, we know that
    \begin{align*}
        \frac{8}{\beta^2}>\frac{32}{9},\qquad 2\sin(\pi\beta/2)>2\sin(\pi\cdot \frac32/2)=\sqrt{2},\qquad \cos(\pi\beta/3)<\cos(\pi\cdot \frac43/3)=\cos(4\pi/9).
    \end{align*}
    So, by equality $\cos\theta=\sin(\pi/2-\theta)$ and the bound $\sin\theta<\theta$ for $\theta>0$,
    \begin{align*}
        \frac{\pi\beta\cos(\pi\beta/3)}{2\sin(\pi\beta/2)}
        <
        \frac{\pi \cdot \frac32 \cos(4\pi/9)}{\sqrt2}=\frac{3\pi\sin(\pi/18)}{2\sqrt{2}}<\frac{3\pi\cdot \pi/18}{2\sqrt{2}}=\frac{\pi^2}{12\sqrt{2}}<1.
    \end{align*}
    Thus,
    \begin{align*}
        \frac{8}{\beta^2}\ln \frac{I_\beta(0)\beta^2}{8}<\frac{32}{9}\ln \frac{\pi^2}{12\sqrt{2}}<\frac{32}{9}\rbr{\frac{\pi^2}{12\sqrt{2}}-1}<\frac{32}{9}\rbr{\frac{10}{12\cdot\frac{4}{3}}-1}=-\frac43<-1,
    \end{align*}
    where we used $\ln x<x-1$ on $(0, 1)$, $\pi^2<10$ and $\sqrt2>\frac 43$. Therefore,
    \begin{align*}
        S\geq \min\cbrm[\Big]{-1, \frac{8}{\beta^2}\ln \frac{I_\beta(0)\beta^2}{8}}=\frac{8}{\beta^2}\ln \frac{I_\beta(0)\beta^2}{8}.
    \end{align*}
    We finally can substitute it back into \eqref{eq:s-integral},
    \begin{align*}
        \frac T 2 \EE_{a\in\calA}[\ln(1-a^2\eta^2+a^4\eta^4)] 
        = \frac{T}{4V}\cdot S
        \geq\frac{2T}{C\beta^2}\ln \frac{I_\beta(0)\beta^2}{8}
        =\frac{2T}{\beta(T-1)}\ln \frac{I_\beta(0)\beta^2}{8}.
    \end{align*}
    Therefore,
    \begin{align*}
        & \phantom{{}={}}
        \EE_{a\sim \calA}
        \sbrm[\big]{
            \ln a+\frac T2\EE_{\eta\sim\calE}[\ln(1-a^2\eta^2+a^4\eta^4)]
        } \\
        & = \EE_{a\sim \calA}[\ln a] 
          + \frac T2\EE_{\eta\sim \calE, a\sim \calA}
          \sbrm[\big]{
            \ln(1-a^2\eta^2+a^4\eta^4)
          } \\
        & \geq 
            \frac{1}{\beta}-\frac{T}{\beta(T-1)}\ln T
            +
            \frac{2T}{\beta(T-1)}\ln \frac{I_\beta(0)\beta^2}{8} \\
        & =
           -\frac{1}{\beta}\ln T
           +\frac{1}{\beta}\sbrm[\Big]{
                \frac{\ln T}{T-1}
                +1
                + \frac{T}{T-1}\cdot 2\ln \frac{I_\beta(0)\beta^2}{8}
            }.
    \end{align*}
    Since $T\geq 3$, we know that $\frac{\ln T}{T-1}>0$, and $\frac{T}{T-1}\leq \frac32$; therefore,
    \begin{align*}
        & \phantom{{}={}}
        \EE_{a\sim \calA}
        \sbrm[\big]{
            \ln a+\frac T2\EE_{\eta\sim\calE}[\ln(1-a^2\eta^2+a^4\eta^4)]
        } \\
        & >
           -\frac{1}{\beta}\ln T
           +\frac{1}{\beta}\sbrm[\Big]{
                1
                + 3\ln \frac{I_\beta(0)\beta^2}{8}
            }.
    \end{align*}
    Define $C=\exp\sbrm[\big]{\frac{1}{\beta}\rbrm[\big]{
                1
                + 3\ln \frac{I_\beta(0)\beta^2}{8}
            }}$.
    Since $\sup_a\{\cdot\}\geq \EE_a[\cdot]$, we know that
    \begin{align*}
        &
        \sup_{a\in [a_*, 1]}
        \cbrm[\big]{
            \ln a+\frac T2\EE_{\eta\sim\calE}[\ln(1-a^2\eta^2+a^4\eta^4)]
        } \\
        & \geq
        \EE_{a\sim \calA}
        \sbrm[\big]{
            \ln a+\frac T2\EE_{\eta\sim\calE}[\ln(1-a^2\eta^2+a^4\eta^4)]
        } \\
        & > -\frac{1}{\beta}\ln T + \ln C
          = \ln\rbrm[\big]{CT^{-2/3-\eps}}.
    \end{align*}
    And thus the lemma is proven. In addition, because the supremum is strictly larger than $\ln(CT^{-2/3-\eps})$, there must be an $a\in [a_*, 1]\subseteq (0, 1]$ such that the expression is also strictly larger than $\ln(CT^{-2/3-\eps})$ as well.
\end{proof}

We are now ready to prove our main lower-bound theorem.

\begin{proof}[Proof of \cref{thm:lb-eg}]
    For any $\eps$, define $\beta=(\frac23+\min\{\eps,\frac1{12}\})^{-1}$ and $C=\exp\sbrm[\big]{\frac{1}{\beta}\rbrm[\big]{
                1
                + 3\ln \frac{I_\beta(0)\beta^2}{8}
            }}$ as in \cref{lem:eg-lb-meta}.

    Suppose $T\geq 3$ and $\calE$ is a distribution on $\calR_{+}$. From \cref{lem:eg-lb-meta}, there exists a number $a\in (0, 1]$ such that
    \begin{align*}
        \ln a+\frac T2\EE_{\eta\sim\calE}[\ln(1-a^2\eta^2+a^4\eta^4)]
        >
        \ln\rbrm[\big]{CT^{-2/3-\eps}}.
    \end{align*}
    The LHS is exactly $\ln \GN(z_T)-\ln R$ if EG is initialized at $\norm{z_0}=R$, draws each stepsize $\gamma_i=\eta_i\sim \calE$ for all $i$, and runs on the instance $(\dR,\dR,\ell(x,y)=axy)$. This proves the first claim.

    For the second claim, let $T\geq 3$ and $\{\gamma_t=\eta_t\}_{t\in[T]}$ is an arbitrary stepsize sequence. Define $\calE_T=\frac1T\sum_{t=0}^{T-1}\delta_{\eta_t}$ is the empirical distribution of the stepsize sequence, and from \cref{lem:eg-lb-meta}, there exists a number $a\in (0, 1]$ such that
    \begin{align*}
        \ln\rbrm[\big]{CT^{-2/3-\eps}}
        & <
        \ln a+\frac T2\EE_{\eta\sim\calE_T}[\ln(1-a^2\eta^2+a^4\eta^4)] \\
        & = \ln a + \frac T2 \frac 1T \sum_{t=0}^{T-1}\ln(1-a^2\eta_t^2+a^4\eta_t^4),
    \end{align*}
    which is exactly $\ln\GN(z_T)-\ln R$ if EG initializes at $\norm{z_0}=R$, uses the stepsize sequence $\{\gamma_t=\eta_t\}_{t\in [T]}$, and runs on the instance $(\dR,\dR,\ell(x,y)=axy)$.
\end{proof}

\subsection{On double-stepsize EG}\label{ah:dseg}

Similar to \cref{h2:result-eg}, we consider an optimization problem related to minimizing \eqref{eq:lem-opt}. As we have discussed in \cref{h2:result-dseg}, we assume that $\gamma_t = \eta_t/\rho$ across all iterations $t$. Define  $\lambda_t=\sqrt{\gamma_t \eta_t}$; we have $\gamma_t=\lambda_t/\sqrt\rho$ and $\eta_t=\lambda_t \sqrt\rho$. In this reparametrization, we have
\begin{align*}
    1+\eta_t (\eta_t-2\gamma_t)a^2+\eta_t^2\gamma_t^2a^4
    =
    1+(\rho-2) \lambda_t^2a^2 + \lambda_t^4 a^4,
\end{align*}
and therefore minimizing \eqref{eq:lem-opt} is the same as the following optimization problem:

\begin{align}
    \minimize_{\rho,\{\lambda_t\}_{t\in [T]}}\; \maximize_{a\in [0,L]} \;
    \calR_{\rho,T}(\{\lambda_t\}_{t\in [T]},a)\defeq
    {
        a \prod_{t=0}^{T-1} \sqrt{1+(\rho-2)\lambda_t^2 a^2+\lambda_t^4 a^4}
    }.
    \label{eq:dseg-minmax}
\end{align}

It is clear that our previous $\calR_T$ for single-stepsize EG is a special case of $\calR_{\rho,T}$ with $\rho=1$. If $\lambda_0,\lambda_1,\dots$ are sampled from a distribution $\calE$, we again define $\calR_{\rho,T}(\calE, a)$ as the geometric mean, or formally,
\begin{align}
    \ln \calR_{\rho,T}(\calE, a)
    & \defeq
    \EE_{\lambda_0,\dots,\lambda_{T-1}\sim \calE}
    \sbrm[\Bigg]{
    \ln\rbrm[\bigg]{
        a \prod_{t=0}^{T-1} \sqrt{1+(\rho-2)\lambda^2a^2+\lambda^4a^4}
    }} \\
    & =
    {
        \ln a + \frac T2\EE_{\lambda\sim \calE}\sbrm[\big]{\ln(1+(\rho-2)\lambda^2a^2+\lambda^4a^4)}
    },\label{eq:calL-dseg}
\end{align}
and, when $\calE=\Pareto(\lambdam/L, \beta)$,
\begin{align}
    \EE_{\lambda\sim \calE}\sbrm[\big]{\ln(1+(\rho-2)\lambda^2a^2+\lambda^4a^4)}
    & = \frac{\beta}{2}\rbrm[\Big]{ \frac{\lambdam a}{L} }^{\beta} \int_{(\lambdam a/L)^2}^{\infty} \ln(1+(\rho-2)t+t^2)t^{-\beta/2-1}\dd{t},\label{eq:pareto-contraction-dseg}
\end{align}
where we use the substitution $t=\eta^2a^2$ in the last step. These two equations are double-stepsize versions of \eqref{eq:calL-def} and \eqref{eq:pareto-contraction-int}, respectively. We similarly define the integration in the RHS of \eqref{eq:pareto-contraction-dseg} as
\begin{align*}
    I_{\rho,\beta}(s)=\int_{s}^{\infty} \ln(1+(\rho-2)t+t^2)t^{-\beta/2-1}\dd{t},
\end{align*}
and we need to find $I_{\rho,\beta}(s)<0$ for sufficiently small $s$. Due to the monotonicity in $\rho$, this requires $\rho\to 0^+$; we assume $\rho\leq 2$, and thus $\ln(1+(\rho-2)t+t^2)$ is negative on $(0, 2-\rho)$ and positive everywhere else. The integral $I_{\rho,\beta}(0)$ is solved in \cref{lem:m-func}, as
\begin{align*}
    I_{\rho,\beta}(0)=\frac{4\pi \cos(\theta \beta/2)}{\beta\sin(\pi \beta/2)},
\end{align*}
where $2\cos\theta=(\rho-2)$, or $\theta=\arccos\frac{\rho-2}{2}$. When $\rho\to 0^+$, we have $\theta\to \pi^-$, and $I_{\rho,\beta}(0)=\frac{4\pi \cos(\theta \beta/2)}{\beta\sin(\pi \beta/2)}$ converges to $\frac{4\pi\cot(\pi\beta/2)}{\beta}$ from above. Solving $\frac{4\pi\cot(\pi\beta/2)}{\beta}$ yields $\beta>1$. Therefore, as a counterpart of \cref{lem:eg-cont}, we have

\begin{applemma}\label{lem:dseg-cont}
    For any $\beta\in (1, 2)$, there exists $\rho, \lambdam, C\in \dR_+$, and distribution $\calE=\Pareto(\lambdam/L, \beta)$, such that for all $a\in (0, L]$,
    \begin{align*}
        \calR_{\rho,T}(\calE,a)\leq CLT^{-1/\beta}.
    \end{align*}
    As a corollary, for any $L$-smooth biaffine min-max optimization instance $(\dR^n, \dR^m, \ell)$ with an NE $z^\star$, if EG is initiated at $z_0$ with $\norm{z_0-z^\star}\leq R$ and, samples $\lambda_0,\lambda_1,\dots$ from $\calE$, and uses $\gamma_t=\lambda_t/\sqrt\rho$, $\eta_t=\lambda_t\sqrt\rho$ as its stepsizes, its gradient norm will satisfy
    \begin{align*}
        \EE[\ln \GN(z_T)]\leq \ln\sbrm[\big]{CLRT^{-1/\beta}}.
    \end{align*}
\end{applemma}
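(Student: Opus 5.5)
The plan is to copy the proof of \cref{lem:eg-cont}, with $I_\beta$ replaced by $I_{\rho,\beta}$. The only new ingredient is choosing $\rho$ first, so that the Mellin integral is negative.

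Fix $\beta\in(1,2)$. By \cref{lem:m-func} with $2\cos\theta=\rho-2$, we have $I_{\rho,\beta}(0)=\frac{4\pi\cos(\theta\beta/2)}{\beta\sin(\pi\beta/2)}$. As $\rho\to0^+$, $\theta\to\pi^-$, and the value tends to $\frac{4\pi\cot(\pi\beta/2)}{\beta}$. For $\beta\in(1,2)$ we have $\sin(\pi\beta/2)>0$ and $\cos(\pi\beta/2)<0$. So by continuity in $\theta$, there is $\rho\in(0,2)$ with $\cos(\theta\beta/2)<0$ directly, since $\theta\beta/2\in(\pi/2,\pi)$ for $\theta$ close to $\pi$. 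Fix such a $\rho$; then $I_{\rho,\beta}(0)<0$.

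Next, I will bound the truncation, mirroring \eqref{eq:Ibeta-bound}. The function $f(t)=\ln(1+(\rho-2)t+t^2)$ is $\ge -c_\rho t$ for $t\ge0$, for some constant $c_\rho$; one can take $c_\rho=2-\rho\le 2$. This follows from $\ln(1+x)\ge x/(1+x)$, or more simply from concavity-type estimates near $0$ together with boundedness below on the negative region $(0,2-\rho)$. It gives
\[
I_{\rho,\beta}(s)\le I_{\rho,\beta}(0)+c_\rho\frac{s^{1-\beta/2}}{1-\beta/2}.
\]
Because $f$ is negative on $(0,2-\rho)$ and positive beyond it, $I_{\rho,\beta}$ is increasing then decreasing. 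It therefore suffices to choose $\lambdam$ with $c_\rho\frac{\lambdam^{2-\beta}}{1-\beta/2}=-\frac12 I_{\rho,\beta}(0)$. Then for all $a\in[0,L]$, $I_{\rho,\beta}((\lambdam a/L)^2)\le\frac12 I_{\rho,\beta}(0)$. This bound uses only the truncation estimate, which holds for every $s\le\lambdam^2$, so monotonicity is not really needed.

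Substituting into \eqref{eq:pareto-contraction-dseg} and \eqref{eq:calL-dseg} with $b=a/L$ gives
\[
\ln\calR_{\rho,T}(\calE,a)\le \ln L+\ln b+\frac{T\beta\lambdam^\beta I_{\rho,\beta}(0)}{8}b^\beta.
\]
Maximizing over $b>0$ via \cref{lem:ln-plus-monomial} yields $\ln L-\frac1\beta\ln T+\ln C$, with $C$ explicit in $\beta,\rho,\lambdam,I_{\rho,\beta}(0)$.

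The corollary follows from \cref{lem:opt}. With $\gamma_t=\lambda_t/\sqrt\rho$ and $\eta_t=\lambda_t\sqrt\rho$, the per-step factor becomes $1+(\rho-2)\lambda_t^2a^2+\lambda_t^4a^4$. Taking logs, the maximum over $a$ of $\ln$ of a product is at most the log of the deterministic max. However, $\EE\ln\max_a$ is not bounded by $\max_a\EE\ln$, so this step needs care.

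This exchange of expectation and maximum is the main obstacle. Rather than using \cref{lem:opt}'s max, I will work per instance. By \cref{lem:decompose}, $\GN(z_T)^2=\sum_i\|z^{(i)}_0\|^2\sigma_i^2\prod_t(\cdot)$. Using concavity of $\ln$ with weights $w_i=\|z^{(i)}_0\|^2/R'^2$, Jensen's inequality goes the wrong direction, so I would instead read the corollary as it is done in \cref{lem:eg-cont}: a direct implication for each fixed singular direction, i.e.\ for the rank-one/worst-case instance. If a fully general statement is required, I expect to rely on the paper's convention there. The substantive content is the choice of $\rho$ and $\lambdam$.
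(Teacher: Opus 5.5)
For the bound on $\calR_{\rho,T}(\calE,a)$ you follow the paper's route. You choose $\theta$ (hence $\rho=2+2\cos\theta$) with $\pi<\theta\beta<2\pi$, so that $I_{\rho,\beta}(0)<0$ by \cref{lem:m-func}; the paper simply fixes $\theta=\frac{\pi+\pi/\beta}{2}$. You then control the truncated part of the integral by a linear lower bound on the logarithm, pick $\lambdam$ so the truncation costs at most $-\frac12 I_{\rho,\beta}(0)$, and finish with \cref{lem:ln-plus-monomial}.

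The one wrong step is $c_\rho=2-\rho$. With $c=2-\rho$ one has $\ln(1-ct+t^2)=-ct+(1-c^2/2)t^2+O(t^3)$. So once $\rho<2-\sqrt2$, which holds for $\theta$ near $\pi$, the logarithm lies below $-(2-\rho)t$ for all small $t>0$. Globally, its minimum $\ln(\rho-\rho^2/4)$ at $t=1-\rho/2$ forces any valid constant to be of order $\ln(1/\rho)$. The inequality $\ln(1+x)\ge x/(1+x)$ that you cite does give a correct global constant, $c_\rho=\frac{2-\rho}{\rho(1-\rho/4)}$. With that constant, your remark that monotonicity of $I_{\rho,\beta}$ is unnecessary is right. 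The paper instead uses $\ln(1+(\rho-2)t+t^2)\ge 2\ln(1-t)\ge -4\ln 2\cdot t$ only on $(0,\frac12)$ and caps $\lambdam\le 1/\sqrt2$, so only that local bound is ever needed.

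On the corollary, your worry is justified, and the paper shares this gap. It treats the statement, as in \cref{lem:eg-cont}, as a direct consequence of \cref{lem:opt}, which needs $\EE\max_a\le\max_a\EE$, the wrong direction. The gap is not cosmetic. Under the paper's convention that $\lambda_0,\dots,\lambda_{T-1}\sim\calE$ fixes only the marginals, let all $\lambda_t$ equal a single Pareto draw. Take an instance with singular values $L$ and $LT^{-1/\beta}$, with $z_0$ split evenly between the two directions. On the event $\lambda\ge T^{1/\beta}/L$, of probability $\lambdam^\beta/T$, the top component is multiplied by about $(\lambda L)^{2T}\ge T^{2T/\beta}$. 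This gives $\EE[\ln\GN(z_T)]\ge\ln(LR)-\frac{1-2\lambdam^\beta}{\beta}\ln T-O(1)$, which contradicts the claimed bound for large $T$.

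What your argument and the paper's actually establish is the case where all nonzero singular values coincide, in particular the scalar instances of \cref{lem:opt-1x1}. There $\GN(z_T)$ is a deterministic multiple, at most $R$, of $\calR_{\rho,T}(\{\lambda_t\},a)$. For general instances you would need independent stepsizes together with a bound on $\EE\sup_{a\in[0,L]}\ln\calR_{\rho,T}(\{\lambda_t\},a)$. One way is a union bound over dyadic scales of $a$ combined with tail estimates for $\sum_t\ln(1+(\rho-2)\lambda_t^2a^2+\lambda_t^4a^4)$. The deterministic van der Corput statement of \cref{thm:main-dseg} is unaffected, because \cref{lem:dseg-discretization} bounds $\calR_{\rho,T}(\{\lambda_t\},a)$ for every $a$ before \cref{lem:opt} takes the maximum.
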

\begin{proof}
    The proof will be very close to the proof of \cref{lem:eg-cont}, with modifications added to handle $\rho$.

    Assume $\beta\in (1, 2)$ and $\theta\in (\frac{2\pi}3, \pi)$ such that $\theta\beta>\pi$. Let $\rho=2+2\cos\theta\in (0, 1)$. 
    
    On the range $\beta\in (1,2)$, we know that $\frac{4\pi}{\beta\sin(\pi\beta/2)}>0$, so $I_{\rho,\beta}(0)$ shares the same sign as $\cos(\theta\beta/2)$; from the assumption, we have $\pi<\theta\beta<2\pi$, so $\cos(\theta\beta/2)<0$, and thus $I_{\rho,\beta}(0)<0$.

    For any $t\in (0, \frac12)$, we know that $\ln(1+(\rho-2)t+t^2)\geq \ln(1-2t+t^2)=2\ln(1-t)$. This function is concave on $(0, \frac12)$, with values $2\ln (1-0)=0$ and $2\ln (1-\frac12)=-2\ln 2$, so $2\ln(1-t)>-4\ln 2 \cdot t$. Therefore, for $s\in (0, \frac12]$, 
    \begin{align*}
        I_{\rho,\beta}(s)
        & = I_{\rho,\beta}(0)-\int_{0}^{s} \ln(1+(\rho-2)t+t^2)t^{-\beta/2-1}\dd{t} \\
        & \leq I_{\rho,\beta}(0)+\int_{0}^{s} (4\ln 2\cdot t) t^{-\beta/2-1}\dd{t} \\
        & = I_{\rho,\beta}(0)+4\ln 2\int_0^s t^{-\beta/2}\dd{t} = I_{\rho,\beta}(0)+\frac{4\ln 2}{1-\beta/2}s^{1-\beta/2}.\yestag\label{eq:Irhobeta-bound}
    \end{align*}

    Let $\lambdam=\min\cbrm[\Big]{\rbrm[\big]{(1-\frac\beta2)\frac{-I_{\rho,\beta}(0)}{8\ln 2}}^{1/(2-\beta)},\frac{1}{\sqrt2}}>0$; the value satisfies $\lambdam^2\leq \frac12$ and the following equality:
    \begin{align*}
        \frac{4\ln 2}{1-\beta/2}(\lambdam^2)^{1-\beta/2}\leq \frac{-I_{\rho,\beta}(0)}{2}.
    \end{align*}
    Therefore, for all $a\in [0, L]$, $(\lambdam a/L)^2\in [0, \lambdam^2]$, and thus we have 
    \begin{align*}
        I_{\rho,\beta}((\lambdam a/L)^2)\leq I_{\rho,\beta}(\lambdam^2)\leq I_{\rho,\beta}(0)-\frac{I_{\rho,\beta}(0)}{2}=\frac12 I_{\rho,\beta}(0)<0,
    \end{align*}
    where we used the monotonicity of $I_{\rho,\beta}(\cdot)$ on $(0, 2-\rho)\supseteq(0, \frac12)$ and \eqref{eq:Irhobeta-bound}. Then, we plug this into \eqref{eq:pareto-contraction-dseg} and obtain
    \begin{align*}
        \EE_{\lambda\sim \calE}\sbrm[\big]{\ln(1+(\rho-2)\lambda^2a^2+\lambda^4a^4)}
        & = \frac{\beta}{2}\rbrm[\Big]{ \frac{\lambdam a}{L} }^{\beta} \int_{(\lambdam a/L)^2}^{\infty} \ln(1+(\rho-2)t+t^2)t^{-\beta/2-1}\dd{t}\\
        & = \frac{\beta}{2}\rbrm[\Big]{ \frac{\lambdam a}{L} }^{\beta}I_{\rho,\beta}\rbrm[\big]{(\lambdam a/L)^2}\\
        & \leq \frac{\beta}{2}\rbrm[\Big]{ \frac{\lambdam a}{L} }^{\beta}\frac{I_{\rho,\beta}(0)}{2},
    \end{align*}
    which can be combined with \eqref{eq:calL-dseg} to get
    \begin{align*}
        \ln \calR_{\rho,T}(\calE, a)
        & \leq {
            \ln L + \ln \frac{a}{L}
            +
            \frac T2 
            \frac{\beta}{2}\rbrm[\Big]{ \frac{\lambdam a}{L} }^{\beta}
            \frac{I_{\rho,\beta}(0)}{2}
        } \\
        & = \ln L +  \rbrm[\Big]{
            \ln b
            +
            \frac T2 
            \frac{\beta}{2}\rbrm{ \lambdam b }^{\beta}
            \frac{I_{\rho,\beta}(0)}{2}
        } \tag{substitute $b=\frac aL$}\\
        & \leq \ln L - \frac{1}{\beta} \ln T - \frac{1}{\beta}\rbrm[\Big]{
            \ln \frac {\beta^2 \lambdam^\beta (-I_{\rho,\beta}(0))}{8} +1
        }.
    \end{align*}
    The last step is similar to the derivation in \eqref{eq:lem-eg-cont-final}. Choosing $\theta=\frac{\pi+\pi/\beta}{2}$, $\rho=2+2\cos\theta$, and $C=\exp\sbrm[\Big]{- \frac{1}{\beta}\rbrm[\big]{
            \ln \frac {\beta^2 \lambdam^\beta (-I_{\rho,\beta}(0))}{8} +1
        }}$
    finishes the proof.
\end{proof}

This lemma, together with \cref{lem:dseg-cont} and \cref{lem:opt}, proves the first statement of \cref{thm:main-dseg}; note that we need to define $\etam=\lambdam\sqrt{\rho}$ to obtain the exact parametrization of the theorem.

Again, this version of the lemma suffers from a large constant factor; to achieve $\calO(T^{-0.99})$, we choose $\beta=100/99$, and we have $\lambdam\approx 0.0084$ and $C\approx 3365$. Symbolic computation suggests that $C\sim \eps^{-2}$. This is, again, due to stepsize being too small.
To improve the constant factor, we again use a mixture between the Pareto distribution and a point mass.
\begin{applemma}\label{lem:dseg-mixture}
    For any $\beta\in (1, \frac 54)$, there exists $\rho\in (0, 1), p\in (0, 1), C\in \dR_+$, such that if $\lambda_0, \dots, \lambda_{T-1}$ are sampled from $\calE=p\cdot \Pareto(\lambda_*, \beta)+(1-p)\cdot \delta_{\lambda_*}$ where $\lambda_*=\frac{1}{\sqrt{2}L}$, we will have for all $a\in (0, L]$,
    \begin{align*}
        \calR_{\rho,T}(\calE, a)\leq CLT^{-1/\beta}.
    \end{align*}
\end{applemma}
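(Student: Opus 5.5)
I will follow the proof of \cref{lem:eg-mixture}, replacing $\ln(1-t+t^2)$ by $\ln(1+(\rho-2)t+t^2)$ and using \cref{lem:dseg-function-convexity} to handle the point mass. As before, set $\lambdam=\frac{1}{\sqrt2}$ and $b=\frac aL\in(0,1]$, so that $\lambda_*a=\lambdam b$ and $(\lambdam b)^2\in(0,\frac12]$. Splitting the expectation over the mixture, as in \eqref{eq:pareto-contraction-dseg}, gives
\begin{align*}
\EE_{\lambda\sim\calE}\sbrm[\big]{\ln(1+(\rho-2)\lambda^2a^2+\lambda^4a^4)}
= p\,\frac{\beta}{2}(\lambdam b)^\beta I_{\rho,\beta}((\lambdam b)^2) + (1-p)\ln\sbrm[\big]{1+(\rho-2)(\lambdam b)^2+(\lambdam b)^4}.
\end{align*}

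\textbf{Step 1: bounding the Pareto part.} I bound $I_{\rho,\beta}(s)$ for $s\le\frac12$ exactly as in \eqref{eq:Irhobeta-bound}, using $-\ln(1+(\rho-2)t+t^2)\le 4\ln2\cdot t$ on $(0,\frac12]$. This yields
\[
I_{\rho,\beta}(s)\le I_{\rho,\beta}(0)+\frac{4\ln2}{1-\beta/2}\,s^{1-\beta/2}.
\]
The Pareto contribution is then at most
\[
\frac{p\beta I_{\rho,\beta}(0)}{2}(\lambdam b)^\beta+\frac{4p\beta\ln2}{2-\beta}(\lambdam b)^2.
\]

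\textbf{Step 2: absorbing the quadratic term.} Take $\rho<\frac{3-\sqrt7}{2}$. By \cref{lem:dseg-function-convexity}, $f(z)=\ln(1+(\rho-2)z+z^2)$ is concave on $[0,\frac12]$ with $f(0)=0$, so it lies above its chord there. This is the wrong direction for an upper bound, and I expect it to be the main obstacle. I will use the chord inequality for the Pareto part's lower control and instead bound the point-mass term from above by a direct estimate. Since $\rho<1$,
\[
f(z)\le \ln(1-z+z^2)\le -\frac z2 \quad\text{on }[0,\tfrac12],
\]
and the last inequality is already established in \cref{lem:eg-mixture}. (I will also check whether concavity gives something sharper, such as $f(z)\le f'(0)z=(\rho-2)z$; that inequality does hold for concave $f$ and gives the better constant $-(2-\rho)$.) Using $f(z)\le(\rho-2)z$, the quadratic terms cancel once
\[
p\,\frac{4\beta\ln 2}{2-\beta}\le(1-p)(2-\rho),
\]
which holds for a suitable $p\in(0,1)$.

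\textbf{Step 3: choosing $\rho$ and concluding.} I still need $I_{\rho,\beta}(0)<0$ with $\rho$ small. Following \cref{lem:dseg-cont}, take $\theta=\frac{\pi+\pi/\beta}{2}$ and $\rho=2+2\cos\theta$, so that $\theta\beta>\pi$ and hence $\cos(\theta\beta/2)<0$. The restriction $\beta<\frac54$ is exactly what makes this $\rho$ satisfy $\rho<\frac{3-\sqrt7}{2}$. At $\beta=\frac54$ we get $\theta=\frac{9\pi}{10}$ and $\rho=2-2\cos(\pi/10)\approx0.098$, while $\frac{3-\sqrt7}{2}\approx0.177$; the constraint tightens as $\beta$ grows, which is the likely source of the bound $\frac54$. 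The small-$z$ convexity is used only on $[0,\frac12]$, which contains $(\lambdam b)^2$. Then
\[
\ln\calR_{\rho,T}(\calE,a)\le \ln L+\ln b+\frac{T p\beta I_{\rho,\beta}(0)}{4}(\lambdam b)^\beta,
\]
and \cref{lem:ln-plus-monomial} gives
\[
\ln\calR_{\rho,T}(\calE,a)\le \ln L-\frac1\beta\ln T+\mathrm{const},
\]
which is $\calR_{\rho,T}(\calE,a)\le CLT^{-1/\beta}$ with explicit $C$.
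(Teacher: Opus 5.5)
Your proposal is correct and follows the paper's proof: the same mixture split, the bound \eqref{eq:Irhobeta-bound} on the Pareto part, the tangent-line inequality $\ln(1+(\rho-2)z+z^2)\le(\rho-2)z$ on $[0,\frac12]$ from the concavity in \cref{lem:dseg-function-convexity} to cancel the quadratic term, and then \cref{lem:ln-plus-monomial}. The paper weakens the tangent bound to $-\frac{11}{6}z$ by assuming $\rho\le\frac16$, and it uses $\theta=\frac{2\pi}{3}+\frac{\pi}{3\beta}$ instead of your $\theta=\frac{\pi+\pi/\beta}{2}$, but your choice works just as well. Two small remarks: the detour about the chord and ``lower control'' of the Pareto part is unnecessary, and $\beta<\frac54$ is only sufficient, not ``exactly'' what gives $\rho<\frac{3-\sqrt7}{2}$; with your $\theta$ that constraint holds up to about $\beta\approx 1.37$.
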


Careful readers can spot that the lemma is almost identical to \cref{lem:eg-mixture}. Their proofs are almost identical as well; we present the proof here for completeness. %

\begin{proof}
    Since $\calE$ is a mixture, we have
    \begin{align*}
    & \phantom{{}={}} \EE_{\lambda\sim \calE}\sbrm[\big]{\ln(1+(\rho-2)\lambda^2a^2+\lambda^4a^4)} \\
    & = p \cdot \EE_{\lambda\sim \Pareto(\lambda_*,\beta)}\sbrm[\big]{\ln(1+(\rho-2)\lambda^2a^2+\lambda^4a^4)}
        + (1-p) \ln(1+(\rho-2)\lambda_*^2a^2+\lambda_*^4a^4) \\
    & = p \cdot \frac{\beta}{2}\rbrm[\Big]{ \frac{\lambdam a}{L} }^{\beta}I_{\rho,\beta}\rbrm[\big]{(\lambdam a/L)^2}
      + (1-p) \ln\sbrm[\Big]{1+(\rho-2)\lambdam^2\rbrm[\big]{\frac aL}^2+\lambdam^4\rbrm[\big]{\frac aL}^4}
    \end{align*}
    where, for notational consistency, we define $\lambdam=\frac{1}{\sqrt 2}$. Substituting $b=\frac aL$ and applying \eqref{eq:Irhobeta-bound}, we have
    \begin{align*}
    & \phantom{{}={}} 
        \EE_{\lambda\sim \calE}\sbrm[\big]{\ln(1-(\rho-2)\lambda^2a^2+\lambda^4a^4)} \\
    & =
        p\cdot \frac{\beta}{2}\rbrm{ \lambdam b }^{\beta} I_{\rho,\beta}((\lambdam b)^2) 
        + (1-p) \ln\sbrm[\big]{1+(\rho-2)\lambdam^2b^2+\lambdam^4b^4} \\
    & \leq 
        p\cdot \frac{\beta}{2}\rbrm{ \lambdam b }^{\beta} \rbrm[\Big]{
            I_{\rho,\beta}(0)
            +
            \frac{4\ln 2}{1-\beta/2}(\lambdam b)^{2-\beta}
        }
        + (1-p) \ln\sbrm[\big]{1+(\rho-2)\lambdam^2b^2+\lambdam^4b^4} \\
    & \leq
        \frac{p\beta\cdot I_{\rho,\beta}(0)}{2}\rbrm{ \lambdam b }^{\beta}
        + p\cdot \frac{4\ln 2\cdot \beta}{2-\beta}\rbrm{ \lambdam b }^2
        + (1-p) \ln\sbrm[\big]{1+(\rho-2)\lambdam^2b^2+\lambdam^4b^4}.
        \yestag\label{eq:dseg-mixture-contraction}
    \end{align*}
    Assuming $\rho\leq\frac16<\frac{3-\sqrt{7}}{2}$; \Cref{lem:dseg-function-convexity} guarantees that the function $\ln(1+(\rho-2)z+z^2)$ is concave on $[0, \frac12]$, has a value of $0$ and a derivative of $\rho-2<-\frac{11}{6}$ when $z=0$; therefore, $\ln(1+(\rho-2)z+z^2)\leq -\frac{11}{6}z$ on $[0, \frac12]$. Because $\lambdam^2b^2\in [0, \frac12]$, we have
    \begin{align*}
        p\cdot \frac{4\ln 2\cdot \beta}{2-\beta}\rbrm{ \lambdam b }^2
        + (1-p) \ln\sbrm[\big]{1+(\rho-2)\lambdam^2b^2+\lambdam^4b^4}
        \leq
        \rbrm[\Big]{
            p\cdot \frac{4\ln 2\cdot \beta}{2-\beta}-(1-p)\cdot \frac{11}{6}
        } (\lambdam b)^2,
    \end{align*}
    and this value is nonpositive when $p\leq \frac{11(2-\beta)}{11(2-\beta)+24\ln 2\cdot \beta}$. Combining this with \eqref{eq:dseg-mixture-contraction} and \eqref{eq:calL-dseg}, we have
    \begin{align*}
        \ln \calR_{\rho,T}(\calE, a)
        & \leq
        \ln L + \max_{b\in (0, 1]} \cbrm[\Big]{
            \ln b
            +
            \frac T2 
            \frac{p\beta\cdot I_{\rho,\beta}(0)}{2}\rbrm{ \lambdam b }^{\beta}
        }\\
        & \leq
        \ln L - \frac{1}{\beta}\rbrm[\Big]{
            \ln \frac {T p \beta^2 \lambdam^\beta (- I_{\rho,\beta}(0))}{4} +1
        } \tag{$\star$}\label{step:lem-dseg-mixture-1st-order}\\
        & \leq
        \ln L - \frac{1}{\beta}\ln T
        - \frac{1}{\beta}\rbrm[\Big]{
            \ln \frac {p \beta^2 \lambdam^\beta (- I_{\rho,\beta}(0))}{4} +1
        }.
    \end{align*}
    Step \eqref{step:lem-dseg-mixture-1st-order} is again due to \cref{lem:ln-plus-monomial}. The lemma is proved when we take $\theta=\frac{2\pi}{3}+\frac{\pi}{3\beta}\in (\frac{14}{15}\pi, \pi)$, $\rho=2+2\cos\theta\in (0, 1/20)$, $p= \frac{11(2-\beta)}{11(2-\beta)+24\ln 2\cdot \beta}\in (0.28, 0.4)$ and $C=\exp\sbrm[\big]{- \frac{1}{\beta}\rbrm[\big]{
            \ln \frac {p \beta^2 \lambdam^\beta (- I_{\rho,\beta}(0))}{4} +1
        }}$.
\end{proof}

\begin{appremark}
    To achieve $\calO(T^{-0.99})$ rate with $\beta=1+\frac 1{99}$, \cref{lem:dseg-mixture} sets $C\approx 38.1$, significantly lower than that of \cref{lem:dseg-cont}. Symbolic computation suggests that $C\sim \eps^{-1}$.
\end{appremark}

The next lemma analyzes the error due to discretization and 
is analogous to \cref{lem:discretization}.

\begin{applemma}\label{lem:dseg-discretization}
    For any $\beta\in(1, \frac 87)$, let $\rho$ and $\calE$ be their values specified in \cref{lem:dseg-cont} or \cref{lem:dseg-mixture}.
    For $k\in [0, 1)$, We use $Q_\calE(k)$ to denote the $k$-th quantile of $\calE$.
    There exists a constant $C'$ such that, if $\lambda_t=Q_\calE(\vdc_t)$, where $\{\vdc_t\}_{t\in \dN}$ is the base-2 van der Corput sequence, then for any $T>0$, 
    \begin{align*}
        \frac1T
        \sum_{t=0}^{T-1} \ln(1+(\rho-2)\lambda_t^2 a^2 + \lambda_t^4 a^4)
        \leq
        \EE_{\lambda\sim \calE}[\ln(1+(\rho-2)\lambda^2a^2+\lambda^4 a^4)]
        + \frac{C'}{T}.
    \end{align*}
    Consequently, 
    $
        \calR_{\rho,T}(\{\lambda_t\}_{t\in [T]}, a)
        \leq
        e^{C'/2}
        \calR_{\rho,T}(\calE, a)
        .
    $
\end{applemma}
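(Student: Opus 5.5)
The plan is to obtain this lemma the same way \cref{lem:discretization} is obtained: as a special case of the general quasi-Monte Carlo discretization result \cref{thm:pareto-qmc-discretize} in \cref{ah:discretization}. That result is proved for a Pareto-type quantile function composed with a function of the form $\ln(1-2\cos\psi\, z + z^2)$, with a mixture weight $p$. The single-stepsize case corresponds to $\psi=\pi/3$ and $p=1$.

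\textbf{Step 1: match parameters.} Write
\[
1+(\rho-2)t+t^2 = 1-2\cos\psi\,t+t^2
\quad\text{with}\quad
\cos\psi=\tfrac{2-\rho}{2}.
\]
With the choices of $\rho$ in \cref{lem:dseg-cont} and \cref{lem:dseg-mixture}, $\rho\in(0,1)$, so $\psi=\pi-\theta\in(0,\pi/3)$.
\begin{itemize}
\item For the pure Pareto distribution of \cref{lem:dseg-cont}, set $p=1$.
\item For the mixture of \cref{lem:dseg-mixture}, use its $p$. The quantile function is then constant, equal to $\lambda_*$, on $[0,1-p)$ and equals the Pareto quantile on $[1-p,1)$.
\end{itemize}
After rescaling by $a/L$, the summand $\ln(1+(\rho-2)\lambda_t^2a^2+\lambda_t^4a^4)$ becomes $g(Q(\vdc_t))$ with $g(\lambda)=\ln(1-2\cos\psi\,\lambda^2b^2+\lambda^4b^4)$ and $b=a/L\in(0,1]$.

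\textbf{Step 2: check the hypotheses of \cref{thm:pareto-qmc-discretize}.} The two hypotheses we expect it to require are:
\begin{itemize}
\item a shape range on $\beta$. In the single-stepsize case this is $\beta<8/5$; here the assumed range is $\beta<8/7$. We presume this is the range the theorem demands for the given $\psi$, or at least that $(1,8/7)$ falls inside it, since $\beta>1$ keeps $\lambda\mapsto\lambda^2$ integrable against the tail;
\item a constant $C'$ that is uniform in $b\in(0,1]$ and $T$, obtained through the Besov-type (Faber--Schauder coefficient) bound away from the singular endpoint $k\to1$. The singular endpoint is handled by monotonicity: for large $\lambda$ the function $g$ is increasing, and the van der Corput points never reach the top quantile, so the discrete sum under-samples the large-stepsize region and only biases the error in the favorable direction.
\end{itemize}
The point mass in the mixture adds only a jump in the quantile at $1-p$. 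The general theorem already accommodates this through the parameter $p$; if it did not, we would bound the jump separately, since the number of van der Corput points in $[0,1-p)$ among the first $T$ deviates from $T(1-p)$ by $\calO(\log T)$, or by $\calO(1)$ for dyadic $p$. That deviation would have to be absorbed into $C'$.

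\textbf{Step 3: conclude.} Multiply the resulting inequality by $T/2$, add $\ln a$, and compare with \eqref{eq:calL-dseg} and the definition of $\calR_{\rho,T}$ in \eqref{eq:dseg-minmax}. Exponentiating gives the factor $e^{C'/2}$.

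\textbf{Main obstacle.} The main obstacle is confirming that \cref{thm:pareto-qmc-discretize} covers $\psi\neq\pi/3$ with constants uniform in $b$, and that its $\beta$-range translates to $(1,8/7)$. The tail-integrability and wavelet-coefficient estimates depend on where $g$ changes sign, namely at $\lambda^2b^2=2-\rho$, and on the decay exponent $1/\beta$ of the quantile. If the theorem is stated only for a fixed $\psi$, I would redo its coefficient estimates with $2\cos\psi$ as a parameter. Those estimates should depend continuously on $\psi$ over the compact range in question.
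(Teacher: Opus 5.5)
Your proposal is correct and matches the paper's own proof: apply \cref{thm:pareto-qmc-discretize} with $\psi=\pi-\theta$ (so $\cos\psi=(2-\rho)/2$ and $\psi\in(0,\pi/3)$) and with the lemma's $p$, then multiply by $T/2$ and exponentiate. That theorem is already stated for all $\psi\in(0,\pi/3]$, $\beta\in(1,\tfrac85]$, $p\in(0,1]$ and base stepsize at most $1/\sqrt2$, and both \cref{lem:dseg-cont} and \cref{lem:dseg-mixture} satisfy these conditions, so none of your contingency plans is needed.

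Two side remarks are wrong, though nothing in the argument rests on them. First, the atom does not create a jump in the quantile. It makes the quantile flat on $[0,1-p]$ and continuous at $1-p$, so there is only a kink, which the paper absorbs by monotonicity. Second, $\beta>1$ does not make $\lambda^2$ integrable under the Pareto tail; that would require $\beta>2$. Only the logarithm needs to be integrable, and it is for every $\beta>0$.
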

\begin{proof}
    The first claim is implied by \cref{thm:pareto-qmc-discretize} with $\psi=\pi-\theta=\pi-\arccos \frac{\rho-2}{2}$.

    The second claim is obtained by multiplying both sides by $T/2$, exponentiating, and using the definition of $\calR_{\rho,T}$ in \eqref{eq:calL-dseg}.
\end{proof}

This lemma together with \cref{lem:dseg-cont} proves the second statement of \cref{thm:main-dseg}.

\begin{appremark}
    In our discretization step, we introduce a multiplicative constant factor of $e^{C'/2}=\exp\sbrm[\big]{6(\ln(\sin\psi)^{-1})^2+\frac{45}{2}\ln(\sin\psi)^{-1}+1}$. When $\beta=100/99$, we choose $\psi=\pi/300$, and the constant factor evaluates to an astronomical $\approx1.4\times 10^{99}$. As our experiments (e.g., \cref{fig:gradient-norm-compare}) shows, this constant factor is purely an artifact of our analysis, and does not affect the empirical performance of our stepsize schedule.
\end{appremark}

\section{Quasi-Monte Carlo on $[0, 1)$ using the van der Corput sequence}\label{ah:discretization}

In this appendix section, we aim to provide a self-contained treatment on approximating an integral $\int_0^1 f(x) \dd{x}$ using a finite sum of discrete points $\frac{1}{n}\sum_{i=0}^{n-1} f(x_i)$, i.e., quasi-Monte Carlo, using the van der Corput sequence $x_i=\vdc_i$.
We will also apply the theory to discretize the function used in this paper.

\paragraph{Notation}
In this section, we follow the standard notation of real analysis. For a finite set $A$, $\abs{A}$ denotes its cardinality.
The $L_p$ norm of a sequence $x=\{x_n\}_{n\in \dN}$ is defined as $\norm{x}_{p}=\rbrm[\big]{\sum_{n=0}^{\infty}\abs{x_n}^p}^{1/p}$.

We use $A\to B$ to denote functions from $A$ to $B$. For an interval $S\subseteq\dR$,
$\calL_1(S)$ denotes the space of Lebesgue integrable functions on $S$, i.e., functions $f : S\to\dR$ satisfying $\int_S \abs{f(x)}\dd{x}<\infty$.
We denote by $C(S)$ the space of continuous functions on $S$ and
by $C^n(S)$ the space of functions that are $n$-times continuously differentiable on the interior of $S$.

The total variation of a function on an interval $[a,b]$ is defined as
\begin{align*}
    V_a^b(f)=\sup_{P\in \calP}\sum_{i=0}^{n_P-1}\abs{f(x_{i+1})-f(x_i)},
\end{align*}
where the supremum is taken over the set of partitions $\calP=\cbrm[\big]{P=\{x_0,\dots,x_{n_P}\}\mid a=x_0<x_1<\cdots<x_{n_P}=b, n_P \in\{1,2,\ldots\}}$. If one or both endpoint(s) of the interval is infinite, then the total variation is defined as the limit when that endpoint approaches infinity; formally, $V_a^{+\infty}(f)=\lim_{b\to+\infty} V_a^b(f)$.
The other infinite directions $V_{-\infty}^b(f)$ and $V_{-\infty}^{+\infty}(f)$ can be similarly defined.
A function is said to have \emph{bounded variation} on $[a,b]$ if $V_a^b(f)<\infty$.
If $f'$ is integrable, then $V_a^b(f)=\int_a^b \abs{f'(x)}\,\dd{x}$.

\subsection{Properties of the van der Corput sequence}

As a recap, we have the following definition.

\begin{appdefinition}
    For any integer $n \in \dN$, let its base-$b$ expansion (for a chosen integer base $b \geq 2$) be
    $n = \sum_{j=0}^\infty d_j(n) b^j$,
where $d_j(n) \in \{0, 1, \dots, b-1\}$ represents the $j$-th digit of $n$. We define:
\begin{itemize}
    \item The base-$b$ radical inverse of $n$, denoted by $\VDC_b(n)$, as the number obtained by mirroring the digits and putting them after the radix point,
    \begin{align*}
        \VDC_b(n) = \sum_{j=0}^\infty d_j(n) b^{-j-1}.
    \end{align*}
    \item The base-$b$ van der Corput sequence, denoted by $\VDC_b = \{\vdc_n\}_{n\in \dN}$, as $\vdc_n=\VDC_b(n)$.
\end{itemize}
\end{appdefinition}
We will assume $b=2$ for the entirety of this appendix. Equivalently, we can define the sequence recursively:
\begin{align*}
    \vdc_0&=0,\\
    \vdc_{2n}&=\frac{\vdc_n}2,\\
    \vdc_{2n+1}&=\frac{\vdc_n}2+\frac12.
\end{align*}
The equivalence can be proved by comparing the binary expansion of $n$ with those of $2n$ and $2n+1$.

We will use some properties of the sequence. First, we will show that this sequence is approximately asymptotically uniform; formally, the proportion of the sequence that is in an interval $[a,b)$ is roughly $b-a$. As the sequence is defined using binary digits, we will prove this for dyadic intervals.
\begin{appdefinition}\label{def:dyadic-interval}
    For any level $m\in \dN$ and any index $i\in \{0, 1, \dots, 2^m-1\}$, the $i$-th dyadic interval of length $2^{-m}$ is defined as $\calI_{m,i}=[i/2^{m}, (i+1)/2^{m})$.
\end{appdefinition}

\begin{applemma}\label{lem:vdc-proportion}
    For any level $m\in \dN$, any $0\leq i<2^m$, and any $n\in \dN$, the $n$-th element of the van der Corput sequence satisfies $\vdc_n\in \calI_{m,i}$ if and only if $n=2^m\VDC_2(i)+2^m q$ for some integer $q\geq 0$.
    
    Consequently, for $N\geq 1$,
    \begin{align*}
        \absm[\Big]{\frac{\abs{\cbr{0\leq n<N\mid \vdc_n\in \calI_{m,i}}}}{N}-\frac{1}{2^m}}\leq\frac 1N.
    \end{align*}
\end{applemma}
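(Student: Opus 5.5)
The plan is to characterize membership in $\calI_{m,i}$ through the low-order binary digits of $n$, and then count. By definition, $\vdc_n=\sum_{j\ge0} d_j(n)2^{-j-1}$. The first $m$ binary digits after the radix point of $\vdc_n$ are $d_0(n),\dots,d_{m-1}(n)$, and the remaining tail $\sum_{j\ge m} d_j(n)2^{-j-1}$ lies in $[0,2^{-m})$. The tail is strictly below $2^{-m}$ because only finitely many digits of $n$ are nonzero, so it never reaches the all-ones value. Hence
\[
\lfloor 2^m\vdc_n\rfloor=\sum_{j=0}^{m-1} d_j(n)\,2^{m-1-j}.
\]
Since $\vdc_n\in\calI_{m,i}$ is equivalent to $\lfloor 2^m\vdc_n\rfloor=i$, membership depends only on the residue $r=n\bmod 2^m$. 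It holds exactly when the $m$-bit reversal of $r$ equals $i$.

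Next, I will identify that residue with $2^m\VDC_2(i)$. Write $i=\sum_{k=0}^{m-1} c_k2^k$. Reversing $m$ bits gives $r=\sum_{k} c_k 2^{m-1-k}$. On the other hand, $2^m\VDC_2(i)=\sum_k c_k 2^{m-k-1}$, which is the same integer. The $m$-bit reversal is an involution on $\{0,\dots,2^m-1\}$, so the condition $\mathrm{rev}_m(n\bmod 2^m)=i$ is equivalent to $n\bmod 2^m=2^m\VDC_2(i)$. This is exactly the claim that $n=2^m\VDC_2(i)+2^mq$ for some $q\ge0$. I expect this bookkeeping of bit order to be the only real care point. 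In particular, I must check that $2^m\VDC_2(i)$ is an integer in $[0,2^m)$, which holds because $i<2^m$ has at most $m$ bits.

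For the counting consequence, set $r_i=2^m\VDC_2(i)\in[0,2^m)$. The indices $0\le n<N$ with $\vdc_n\in\calI_{m,i}$ form the arithmetic progression $r_i,r_i+2^m,\dots$ truncated below $N$. Its cardinality is $K=\lceil (N-r_i)/2^m\rceil$ if $N>r_i$, and $0$ otherwise; in both cases $K=\max\{0,\lceil N/2^m - r_i/2^m\rceil\}$. Applying \cref{lem:ceil-sub-1} with $x=N/2^m$ and $y=r_i/2^m\in[0,1)$ gives $\abs{K-N/2^m}\le1$. In the case where the ceiling is negative the count is $0$ and $N<r_i<2^m$, so $\abs{0-N/2^m}<1$ as well. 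Dividing by $N$ yields the stated $1/N$ bound.
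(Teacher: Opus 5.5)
Your proof is correct and follows the paper's argument. You characterize $\vdc_n\in\calI_{m,i}$ by requiring the last $m$ bits of $n$ to be the $m$-bit reversal of $i$, i.e.\ $n\equiv 2^m\VDC_2(i)\pmod{2^m}$, and then count the resulting arithmetic progression via $\lceil N/2^m-\VDC_2(i)\rceil$ and \cref{lem:ceil-sub-1}. You also add two details the paper leaves implicit: the tail of $\vdc_n$ is strictly below $2^{-m}$, and bit reversal is an involution. Your ``negative ceiling'' case is vacuous but harmless, since $N\ge 1$ and $r_i<2^m$ give $(N-r_i)/2^m>-1$.
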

\begin{proof}
    The condition $\vdc_n\in \calI_{m,i}$ means that the first $m$ binary digits of $\vdc_n$ after the radix point are $d_{m-1}(i),\dots,d_0(i)$. Since $\vdc_n$ is obtained by reversing the binary digits of $n$, this is equivalent to saying that the last $m$ binary digits of $n$ are $d_{0}(i),\dots,d_{m-1}(i)$; formally, if $q=\floor{n/2^m}$, we have
    \begin{align*}
        n 
        & = 2^m q + \sum_{j=0}^{m-1} d_{m-1-j}(i) 2^j \\
        & = 2^m q + \sum_{j=0}^{m-1} d_{j}(i) 2^{m-1-j} \tag{substitute $j\leftarrow m-1-j$} \\
        & = 2^m q + 2^m \sum_{j=0}^{\infty} d_{j}(i) 2^{-1-j} \tag{$i$ has at most $m$ bits} \\
        & = 2^m q + 2^m \VDC_2(i).
    \end{align*}
    This also proved that $2^m\VDC_2(i)$ is an integer.

    Therefore, the proportion of numbers in the length-$N$ prefix that satisfy this condition is
    \begin{align*}
        \frac{\abs{\cbr{0\leq n<N\mid \vdc_n\in \calI_{m,i}}}}{N}
        & =
        \frac{\abs{\cbr{q\in \dN\mid 2^m q+2^m \VDC_2(i)<N}}}{N} \\
        & =
        \frac{\abs{\cbr{q\in \dN\mid 2^m q<N - 2^m \VDC_2(i)}}}{N} \\
        & = \frac{\ceil{N/2^m-\VDC_2(i)}}{N}\in \sbrm[\Big]{\frac{1}{2^m}-\frac{1}{N}, \frac{1}{2^m}+\frac{1}{N}}.
    \end{align*}
    The last statement is due to $\Phi_2(i)\in [0, 1)$ and \cref{lem:ceil-sub-1}.
\end{proof}

We will also make use of the fact that the van der Corput sequence is biased to the left, in the sense that the proportion of numbers below $t$ is at least $t$ for any prefix length. Formally,
\begin{applemma}\label{lem:vdc-bias}
    For every $N\geq 1$ and every $t\in [0, 1]$, we have
    \begin{align*}
        \abs{\cbr{0\leq n<N\mid \vdc_n<t}}\geq Nt.
    \end{align*}
    Equivalently, if $\phi!N_0<\cdots<\phi!N_{N-1}$ is the monotonically increasing permutation of $\{\vdc_n\}_{n\in [N]}$, then $\phi!N_k\leq \frac{k}{N}$ for $0\leq k<N$.
\end{applemma}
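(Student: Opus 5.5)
We need to prove: for every $N\geq 1$ and $t\in[0,1]$, the number of $n<N$ with $\vdc_n<t$ is at least $Nt$. Equivalently, in sorted order, $\phi!N_k\leq k/N$.

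The plan is to prove the sorted-order form by strong induction on $N$, using the recursive structure $\vdc_{2n}=\vdc_n/2$ and $\vdc_{2n+1}=\vdc_n/2+\frac12$. The first $N$ terms split into two groups:
\begin{itemize}
\item the even indices $2n<N$, i.e.\ $n<\lceil N/2\rceil$, giving the points $\vdc_n/2\in[0,\tfrac12)$;
\item the odd indices $2n+1<N$, i.e.\ $n<\lfloor N/2\rfloor$, giving the points $\tfrac12+\vdc_n/2\in[\tfrac12,1)$.
\end{itemize}
Write $A=\lceil N/2\rceil$ and $B=\lfloor N/2\rfloor$. The sorted list of the first $N$ terms is then the sorted first-$A$ prefix scaled by $\tfrac12$, followed by the sorted first-$B$ prefix scaled by $\tfrac12$ and shifted by $\tfrac12$.

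For $k<A$ we have $\phi!N_k=\tfrac12\phi!A_k$. The induction hypothesis (valid since $A<N$ for $N\geq2$) gives
\[
\phi!N_k\leq \frac{k}{2A}\leq\frac{k}{N},
\]
because $2A\geq N$.

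For $k=A+j$ with $0\leq j<B$ we have $\phi!N_k=\tfrac12+\tfrac12\phi!B_j\leq \tfrac12+\tfrac{j}{2B}$. We need $\tfrac12+\tfrac{j}{2B}\leq \tfrac{A+j}{N}$.
\begin{itemize}
\item If $N$ is even, then $A=B=N/2$ and the two sides are equal.
\item If $N=2B+1$, then $A=B+1$, and the inequality is equivalent to $(2B+1)(B+j)\leq 2B(B+1+j)$, which simplifies to $j\leq B$. This holds since $j<B$.
\end{itemize}
The base case $N=1$ is immediate since $\phi!1_0=\vdc_0=0$.

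Finally, we convert to the counting form. Fix $t$. Every $k<N$ with $k<Nt$ satisfies $\phi!N_k\leq k/N<t$, so the count of $n<N$ with $\vdc_n<t$ is at least $\lceil Nt\rceil\geq Nt$. The $\lceil Nt\rceil$ here should be capped at $N$, which covers the case $t=1$, where all $N$ points qualify.

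The main obstacle is the odd-$N$ case of the upper half, where the indices of the second block are offset by $A=B+1$ rather than $N/2$. The algebra above shows the slack is exactly enough, so no further argument is needed there.
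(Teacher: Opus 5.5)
Your proof is correct and follows essentially the same route as the paper: strong induction on $N$ via the even/odd index split $\vdc_{2n}=\vdc_n/2$, $\vdc_{2n+1}=\vdc_n/2+\tfrac12$, where the odd-$N$ case closes using the extra term of the first block. The only difference is bookkeeping. The paper inducts on the counting form, splitting on $t\le\tfrac12$ versus $t>\tfrac12$, and then derives the order-statistic form. You induct on the order-statistic form, splitting on $k<\lceil N/2\rceil$ versus $k\ge\lceil N/2\rceil$, and convert to counting at the end.
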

\begin{proof}
    Define $A(N,t)=\abs{\cbr{0\leq n<N\mid \vdc_n<t}}$.     
    We first prove $A(N,t)\geq Nt$ for all $N\geq 1$ and $t\in [0, 1]$ by an induction on $N$. The case $N=1$ is immediate.

    Assume the first claim holds for all $N'<N$. Write $N=2^m+r$ with $0\leq r<2^m$. Consider two cases:
    \begin{itemize}
        \item $t\in [0, \frac12]$. In this case, if $\vdc_n<t\leq \frac12$, then $n$ must be even. Thus, if we count all the even indices below $N$ with $\vdc_{2m}=\vdc_m/2$,
        \begin{align*}
            A(N,t)
            & = \abs{\cbr{0\leq 2m<N\mid \frac{\vdc_m}{2}<t}} \\
            & = \absm[\Big]{\cbrm[\Big]{0\leq m<\ceilm[\Big]{\frac N2}\mid \vdc_m<2t}}
            = A(\ceil{N/2},2t).
        \end{align*}
        Applying the inductive hypothesis with $N'=\ceil{N/2}$ yields \begin{equation*}
            A(N,t)=A(\ceil{N/2},2t)\geq \ceil{N/2}\cdot 2t\geq \rbr{N/2}\cdot 2t=Nt.
        \end{equation*}
        \item $t\in (\frac12, 1]$. In this case, $\vdc_{2m}<\frac12<t$ is unconditionally true. Thus, all even indices below $N$ contribute towards $A(N,t)$; there are $\ceil{\frac N2}$ of them. For odd indices $n=2m+1<N$, or equivalently $m<\floor{\frac N2}$, the condition becomes $t>\vdc_{2m+1}=\frac12+\frac{\vdc_m}{2}$, which simplifies to $\vdc_m<2t-1$. Summing these parts:
        \begin{align*}
            A(N,t)=\ceilm[\Big]{\frac N2}+A\rbrm[\Big]{\floorm[\Big]{\frac N2},2t-1}.
        \end{align*}
        Applying the inductive hypothesis with $N'=\floor{N/2}$ yields
        \begin{align*}
            A(N,t)
            \geq \ceilm[\Big]{\frac N2}+\floorm[\Big]{\frac N2}(2t-1)
            = 2t\floorm[\Big]{\frac N2}+\rbrm[\bigg]{\ceilm[\Big]{\frac N2}-\floorm[\Big]{\frac N2}}.
        \end{align*}
        For even $N$, the term $\ceil{N/2}-\floor{N/2}=0$, so
        \begin{align*}
            A(N,t)\geq 2t\floor{N/2}\geq 2t\cdot N/2=Nt.
        \end{align*}
        For odd $N=2M+1$, the term $\ceil{N/2}-\floor{N/2}=1$, so
        \begin{align*}
            A(N,t)\geq 2tM+1\geq 2tM+t=(2M+1)t=Nt.
        \end{align*}
    \end{itemize}
    In all cases, $A(N,t)\geq Nt$, so the first claim of the lemma holds for every $N\geq 1$ by induction.

    For the second claim, we observe that all terms in the van der Corput sequence are distinct; therefore, exactly $k$ elements in the first $N$ are less than $\phi!N_k$. So, together with the first claim,
    \begin{align*}
        k=A(N,\phi!N_k)\geq N\phi!N_k,
    \end{align*}
    which rearranges to $\phi!N_k\leq \frac kN$.
\end{proof}

\subsection{Properties of the quasi-Monte Carlo approximation}

We denote as $E_n(f)$ the estimation error of approximating the integral of $f$ using the length-$n$ prefix of the van der Corput sequence, or formally,
\begin{align}
    E_n(f)=\frac{1}{n}\sum_{i=0}^{n-1} f(\vdc_i)-\int_0^1 f(x) \dd{x}.\label{eq:def-En}
\end{align}
The estimation error functional has a signature of $E_n : \calL_1\rbrm[\big]{[0, 1)}\to\dR$. It requires $f$ to be defined on $[0, 1)$, because $\vdc_i\in [0, 1)$; it also requires $f$ being integrable on $(0, 1)$, so that the integration in \eqref{eq:def-En} is defined. Therefore, $E_n(f)$ requires $f\in \calL_1\rbrm[\big]{[0, 1)}$. From the definition, $E_n$ is clearly linear.

Because the van der Corput sequence is biased to the left, the finite sum always underestimates the integral of a monotonically increasing function.
\begin{apptheorem}\label{lem:En-mono}
    If $f\in \calL_1\rbrm[\big]{[0, 1)}$ is monotonically increasing, then $E_n(f)\leq0$. Conversely, if $f$ is monotonically decreasing, then $E_n(f)\geq 0$.
\end{apptheorem}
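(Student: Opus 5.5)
The plan is to reduce the claim to \cref{lem:vdc-bias} via a layer-cake (Fubini) representation of a monotone function. By linearity of $E_n$ and since $E_n(-f)=-E_n(f)$, it suffices to treat the increasing case. For a monotonically increasing $f\in\calL_1([0,1))$, the first step is to sort the sample points: let $\phi!n_0<\cdots<\phi!n_{n-1}$ be the increasing rearrangement of $\{\vdc_i\}_{i\in[n]}$. Since $E_n$ only depends on the multiset of points, we get
\begin{align*}
    \frac1n\sum_{i=0}^{n-1}f(\vdc_i)=\frac1n\sum_{k=0}^{n-1}f(\phi!n_k).
\end{align*}

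The second step uses the bias of the sequence. \Cref{lem:vdc-bias} gives $\phi!n_k\le k/n$. By monotonicity, $f(\phi!n_k)\le f(x)$ for every $x\in[k/n,(k+1)/n)$, since such $x\ge k/n\ge \phi!n_k$. Averaging over that interval yields
\begin{align*}
    \frac1n f(\phi!n_k)\le \int_{k/n}^{(k+1)/n} f(x)\,\dd{x}.
\end{align*}
Summing over $k=0,\dots,n-1$ gives $\frac1n\sum_k f(\phi!n_k)\le\int_0^1 f$, i.e.\ $E_n(f)\le 0$.

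The only subtle points are measure-theoretic. First, $f$ may be unbounded below near $0$ (but $f(0)$ is finite since $f$ is defined on $[0,1)$, and monotonicity gives $f\ge f(0)$), or unbounded above near $1$, where integrability handles it. Second, the integral over each subinterval is finite because $f\in\calL_1$. The pointwise comparison needs no continuity, so I expect no real obstacle; the main step is simply recognizing that the sorted-order form of \cref{lem:vdc-bias} is exactly what converts left bias into a Riemann-sum underestimate. The decreasing case follows by applying the result to $-f$.
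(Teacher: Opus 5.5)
Your proof is correct and is essentially the paper's argument: sort the points, use \cref{lem:vdc-bias} to get $\phi^{(n)}_k\le k/n$, compare each term with the integral of $f$ over $[k/n,(k+1)/n)$ by monotonicity (the paper simply inserts the intermediate value $f(k/n)$), and obtain the decreasing case by applying the result to $-f$. One small inconsistency: the ``layer-cake (Fubini)'' representation you announce at the start is never used, since what you actually give is the direct sorted Riemann-sum comparison.
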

\begin{proof}
    Assume $f$ is monotonically increasing first. Let $\phi!n_0<\cdots<\phi!n_{n-1}$ be the increasing permutation of $\{\vdc_i\}_{i\in [n]}$. From \cref{lem:vdc-bias}, we know that $\phi!n_i\leq \frac{i}{n}$. Therefore,
    \begin{align*}
        \frac{1}{n}\sum_{i=0}^{n-1} f(\vdc_i)
        & = \frac{1}{n}\sum_{i=0}^{n-1} f(\phi!n_i) \\
        & \leq \frac{1}{n}\sum_{i=0}^{n-1} f\rbrm[\Big]{\frac{i}{n}} \tag{monotonicity; \Cref{lem:vdc-bias}} \\
        & = \sum_{i=0}^{n-1} \int_{i/n}^{(i+1)/n} f\rbrm[\Big]{\frac{i}{n}} \dd{x} \\
        & \leq \sum_{i=0}^{n-1} \int_{i/n}^{(i+1)/n} f(x) \dd{x} \tag{monotonicity} \\
        & = \int_0^1 f(x)\dd{x},
    \end{align*}
    and thus $E_n(f)\leq 0$.

    If $f$ is monotonically decreasing, applying the above argument to $-f$ proves that $E_n(f)=-E_n(-f)\geq 0$.
\end{proof}

In the following, we will present a result that claims $E_n(f)=\calO(1/n)$ for all functions $f$ that satisfy a certain smoothness condition.
\begin{appdefinition}\label{def:fin-dif}
    For a function $f : [a,b]\to \dR$, we define its \emph{finite difference} $\Delta_\delta f : [a, b-\delta]\to \dR$ as a function
    \begin{align*}
        \Delta_\delta f(x) = f(x+\delta)-f(x).
    \end{align*}
    We also define its \emph{second-order finite difference} $\Delta_\delta^2 f$, or simply \emph{second-order difference}, as the $\Delta_\delta$ transform applied twice, $\Delta_\delta^2 f = \Delta_\delta(\Delta_\delta f)$, expanded as
    \begin{align*}
        \Delta_\delta^2 f(x)=\Delta_\delta f(x+\delta)-\Delta_\delta f(x)=f(x+2\delta)-2f(x+\delta)+f(x).
    \end{align*}
\end{appdefinition}
\begin{appdefinition}\label{def:total-2nd}
    We say a function $f : [0, 1]\to \dR$ has \emph{finite total dyadic second-order difference} if $\norm{f}_{\Delta^2}<\infty$, where the \emph{total dyadic second-order difference} of $f$ is defined as
    \begin{align*}
        \norm{f}_{\Delta^2} = \frac12 \sum_{m=0}^{\infty} \sum_{a=0}^{2^m-1} \absm[\Big]{\Delta_{2^{-m-1}}^2 f(a/2^m)}.
    \end{align*}
\end{appdefinition}
The notation suggests that $\norm{\cdot}_{\Delta^2}$ is a norm. For functions with $\norm{\cdot}_{\Delta^2}<\infty$, it indeed is a seminorm; if we define the linear transform $\frakD f=\{\hat{f}_{m,a}\in\dR\}_{m\in \dN,0\leq a<2^m}$ that maps a function to a sequence, where $\hat{f}_{m,a}=\frac12 \Delta_{2^{-m-1}}^2 f(a/2^m)$, then $\norm{f}_{\Delta^2}$ is just the $L_1$ norm of the sequence $\norm{\frakD f}_1$. In the following lemma, we will show that $\frakD$ is invertible, at least for continuous functions that vanish on $\{0,1\}$.

\begin{applemma}\label{lem:Faber-Schauder}
    If $f \in C[0,1]$ and $f(0)=f(1)=0$, we have
    \begin{align}
        f(x)=\sum_{m=0}^{\infty} \sum_{a=0}^{2^m-1} \hat{f}_{m,a} v_{m,a}(x),
        \label{eq:Faber-decomposition}
    \end{align}
    for a collection of ``valley'' functions (see \cref{fig:valley-function}),
    \begin{align*}
        v_{m,a}(x)
        =
        \begin{cases}
        2^{m+1}\rbrm[\Big]{\frac{a}{2^m}-x}, & \frac{a}{2^m}\leq x<\frac{a+1/2}{2^m},\\
        2^{m+1}\rbrm[\Big]{x-\frac{a+1}{2^m}}, & \frac{a+1/2}{2^m}\leq x<\frac{a+1}{2^m},\\
        0, & \text{otherwise}.
        \end{cases}
    \end{align*}
    The sum in \eqref{eq:Faber-decomposition} converges uniformly.
\end{applemma}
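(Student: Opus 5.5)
The plan is to identify the partial sums of \eqref{eq:Faber-decomposition} with dyadic piecewise-linear interpolants of $f$, and then get uniform convergence from uniform continuity. For $M\in\dN$, let $P_M$ denote the continuous function that agrees with $f$ at the dyadic grid points $\{a/2^M\}_{0\leq a\leq 2^M}$ and is linear on each $[a/2^M,(a+1)/2^M]$. I will show by induction on $M$ that
\begin{align*}
    P_M(x)=\sum_{m=0}^{M-1}\sum_{a=0}^{2^m-1}\hat f_{m,a}v_{m,a}(x)\qquad\text{for all }x\in[0,1].
\end{align*}
The base case $M=0$ holds because $P_0$ interpolates $f(0)=f(1)=0$, so $P_0\equiv 0$, which is the empty sum.

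For the inductive step, consider $D=P_{M+1}-P_M$. This function is piecewise linear on the level-$(M+1)$ grid, since $P_M$ is linear on the coarser cells. It vanishes at every level-$M$ grid point, because both interpolants equal $f$ there. Fix the cell $\calI_{M,a}$ and its midpoint $c=(a+1/2)/2^M$. There, $D(c)=f(c)-\frac12\rbrm[\big]{f(a/2^M)+f((a+1)/2^M)}$, which is exactly $-\hat f_{M,a}$, since $\hat f_{M,a}=\frac12\Delta^2_{2^{-M-1}}f(a/2^M)=\frac12\rbrm[\big]{f(a/2^M)-2f(c)+f((a+1)/2^M)}$. On the other hand, $v_{M,a}$ is the tent supported on $\calI_{M,a}$ with value $0$ at both endpoints and $-1$ at $c$, linear on each half. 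Hence $D$ restricted to $\calI_{M,a}$ equals $\hat f_{M,a}v_{M,a}$. The supports of the $v_{M,a}$ are disjoint in $a$, so summing over $a$ gives $D=\sum_a \hat f_{M,a}v_{M,a}$, which closes the induction.

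The only care needed concerns the half-open intervals in the definition of $v_{m,a}$. Each $v_{m,a}$ is continuous, since it equals $0$ at both endpoints of its support. At $x=1$ every $v_{m,a}$ vanishes, which is consistent with $f(1)=0$, so the identity also holds at $x=1$.

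For uniform convergence, let $\omega$ be the modulus of continuity of $f$ on the compact interval $[0,1]$. On a cell of length $2^{-M}$, both $f(x)$ and $P_M(x)$ lie within $\omega(2^{-M})$ of $f$ at the left endpoint, because $P_M(x)$ is a convex combination of the endpoint values. Therefore $\sup_x\abs{f(x)-P_M(x)}\leq 2\omega(2^{-M})\to 0$, and the series converges uniformly to $f$, with the sum over $m$ taken in order of increasing level.

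I expect no real obstacle. The main point of care is the sign bookkeeping: $v_{m,a}$ is a valley rather than a peak, and $\hat f_{m,a}$ is the average of the endpoint values minus the midpoint value, so the two minus signs cancel.
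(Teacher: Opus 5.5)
Your proposal is correct and follows the paper's proof essentially step for step. The paper also proves by induction that the level-$n$ partial sum equals the dyadic piecewise-linear interpolant $f_n$: it checks that $f_n+\sum_a \hat f_{n,a}v_{n,a}$ is piecewise linear on the finer grid and matches $f$ there, which is the same computation as your cell-by-cell analysis of $P_{M+1}-P_M$. It then gets uniform convergence from uniform continuity of $f$ and the convex-combination bound, and the paper's slightly tighter constant $\omega(2^{-n})$ versus your $2\omega(2^{-M})$ makes no difference.
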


\begin{figure}[tb]
    \centering
    \begin{tikzpicture}[xscale=6, yscale=2.5]
        \draw[->, thick] (-0.1, 0) -- (1.1, 0) node[right] {$x$};
        \draw[->, thick] (0, -1.3) -- (0, 0.4) node[above] {$v_{m,a}(x)$};

        \draw[ultra thick, pastalred] 
            (0, 0) -- 
            (0.25, 0) -- 
            (0.4, -1) -- 
            (0.55, 0) -- 
            (1.0, 0);

        \draw[dashed, gray] (0.4, 0) -- (0.4, -1);
        \draw[dashed, gray] (0, -1) -- (0.4, -1);

        \node[below left] at (0,0) {$O$};
        
        \draw (0.5pt, -1) -- (-0.5pt, -1) node[left] {$-1$};
        
        \draw (0.25, 1.2pt) -- (0.25, -1.2pt) node[above=4pt] {$\frac{a}{2^m}$};
        \draw (0.4, 1.2pt) -- (0.4, -1.2pt) node[above=4pt] {$\frac{a+1/2}{2^m}$};
        \draw (0.55, 1.2pt) -- (0.55, -1.2pt) node[above=4pt] {$\frac{a+1}{2^m}$};
        \draw (1, 1.2pt) -- (1, -1.2pt) node[above=4pt] {$1$};
    \end{tikzpicture}
    \caption{Sketch of the valley function $v_{m,a}(x)$. Not to scale.}
    \label{fig:valley-function}
\end{figure}

\begin{proof}
    For notational convenience, define $L_{m,a}, M_{m,a}, R_{m,a}$ as the left endpoint, midpoint, and right endpoint of the interval $\calI_{m,a}$ (recall \cref{def:dyadic-interval}), respectively:
    \begin{align}
        L_{m,a}=\frac{a}{2^m}, \qquad M_{m,a}=\frac{a+1/2}{2^m}, \qquad R_{m,a}=\frac{a+1}{2^m}.\label{eq:def-lmr}
    \end{align}
    Also define $\calN_m$ as the $2^{-m}$-net, $\calN_m=\{0,\frac{1}{2^m},\dots,\frac{2^m-1}{2^m},1\}$.
    
    We define a family of functions $\{f_n\}_{n\in \dN}$ as follows; $f_n$ is piecewise linear with endpoints in $\calN_n$ and agrees with $f$ on exactly these points. Formally, if $L_{n,a}\leq x\leq R_{n,a}$ for $a\in \{0, 1,\dots,2^n-1\}$, we have $f_n(x)=\frac{x-L_{n,a}}{R_{n,a}-L_{n,a}}f(L_{n,a})+\frac{R_{n,a}-x}{R_{n,a}-L_{n,a}}f(R_{n,a})$.

    We first prove that 
    $f_n=\sum_{m=0}^{n-1}\sum_{a=0}^{2^m-1}\hat{f}_{m,a}v_{m,a}$. We prove this with induction. The case $n=0$ is immediate, as $f(0)=f(1)=0$ implies $f_0(x)=0$.

    For the induction step, we define $\tilde{f}_{n+1}(x)=f_n(x)+\sum_{a=0}^{2^n-1} \hat{f}_{n,a} v_{n,a}(x)$.

    We first show that $\tilde{f}_{n+1}$ is piecewise linear with endpoints in $\calN_{n+1}$. We know that $f_n$ is piecewise linear with endpoints in $\calN_n\subseteq \calN_{n+1}$, and that $v_{m,a}$ is piecewise linear with endpoints in $\{0,a/2^m,(a+1/2)/2^m, (a+1)/2^m, 1\}=\{0,2a/2^{m+1},(2a+1)/2^{m+1}, (2a+2)/2^{m+1}, 1\}\subseteq \calN_{n+1}$. A finite sum of piecewise linear functions with endpoints in $\calN_{n+1}$ must also be such a piecewise linear function.

    We then show that $\tilde{f}_{n+1}$ agrees with $f$ on points in $\calN_{n+1}$.\\
    For each point with an even index $x=\frac{2i}{2^{n+1}}$, we know that $v_{n,a}(x)=0$ for every $a$, and thus $f_{n+1}(x)=f_n(x)=f(x)$ because $f_n(x)$ agrees with $f$ on $\calN_{n}\ni \frac{i}{2^n}=x$.\\
    For each point with an odd index $x=\frac{2i+1}{2^{n+1}}=\frac{i+1/2}{2^n}=M_{n,i}$, we know that the only valley function with nonzero value is $v_{n,i}$, whose value is $-1$.  Therefore,
    \begin{align*}
        \tilde{f}_{n+1}(x)
        & = f_n(x)+\hat{f}_{n,i}v_{n,i}(x)\\
        & = \frac12 f(L_{n,i})+\frac12 f(R_{n,i})+\frac12 \Delta_{2^{-n-1}}^2 f\rbrm[\Big]{\frac{i}{2^n}}\cdot (-1) \tag{definition of $\hat{f}_{n,i}$} \\
        & = \frac12 f(L_{n,i})+\frac12 f(R_{n,i})-\frac12 \sbrm[\Big]{
            f\rbrm[\Big]{\frac{i}{2^n}}
            - 2 f\rbrm[\Big]{\frac{i}{2^n}+2^{-n-1}}
            + f\rbrm[\Big]{\frac{i}{2^n}+2\cdot 2^{-n-1}}
        }\tag{\cref{def:fin-dif}}\\
        & = \frac12 f(L_{n,i})+\frac12 f(R_{n,i})-\frac12 \sbrm[\Big]{
            f\rbrm[\big]{L_{n,i}}
            - 2 f\rbrm[\big]{M_{n,i}}
            + f\rbrm[\big]{R_{n,i}}
        } \tag{substitute \eqref{eq:def-lmr}} \\
        & = f(M_{n,i})=f(x).
    \end{align*}

    We have proved that $\tilde{f}_{n+1}(x)=f_n(x)+\sum_{a=0}^{2^n-1} \hat{f}_{n,a} v_{n,a}(x)$ is a piecewise linear function with $\calN_{n+1}$ as endpoints and agrees with $f(x)$ on $\calN_{n+1}$. Since $f_{n+1}$ is also such a function, and a piecewise linear function is uniquely determined by its endpoints and its value at those endpoints, we can conclude that $f_{n+1}=\tilde{f}_{n+1}$, finishing the induction step.
    
    Therefore, we establish by induction that $f_n$ is precisely the partial sum for the outer summation in \eqref{eq:Faber-decomposition}.
    
    It remains to prove that
    $\{f_n\}_{n\in \dN}$ converges to $f$ uniformly. As $f$ is continuous on a closed interval $[0, 1]$, $f$ is uniformly continuous. This means that for every $\eps>0$, there exists a $\delta=\delta(\eps)>0$, such that $\abs{f(x_1)-f(x_2)}\leq \eps$ whenever $\abs{x_1-x_2}\leq \delta$. 
    
    Therefore, as long as $n\geq \log_2 \delta^{-1}$, for any $x\in [0, 1]$, if $x\in \calI_{n,a}$, we know that $R_{n,a}-L_{n,a}=2^{-n}\leq \delta$, and thus $\abs{x-L_{n,a}}\leq \delta$ and $\abs{x-R_{n,a}}\leq \delta$, which implies from uniform continuity that $\abs{f(x)-f(L_{n,a})}\leq \eps$ and $\abs{f(x)-f(R_{n,a})}\leq \eps$. Because $f_n(x)$ is a convex combination of $f(L_{n,a})$ and $f(R_{n,a})$, it must satisfy $\abs{f(x)-f_n(x)}\leq \eps$ as well. Thus uniform convergence is proven.
\end{proof}

Our main result in this section is the following theorem, stating that functions with finite total dyadic second-order differences have $\calO(1/n)$ approximation error under the van der Corput sequence.

\begin{apptheorem}\label{thm:En-2nd-order}
    Let $f : [0, 1]\to \dR$ be a continuous function on $[0, 1]$ with $f(0)=f(1)=0$ and $\norm{f}_{\Delta^2}<\infty$. Then,
    \begin{align*}
        \abs{E_n(f)}\leq \frac{\norm{f}_{\Delta^2}}{n}.
    \end{align*}
\end{apptheorem}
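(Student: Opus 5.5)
The plan is to expand $f$ in the Faber--Schauder basis of \cref{lem:Faber-Schauder}, push the linear functional $E_n$ through the expansion, and prove the uniform bound $\abs{E_n(v_{m,a})}\leq \frac1n$ for every valley function. Since $\norm{f}_{\Delta^2}=\sum_{m,a}\abs{\hat f_{m,a}}$, this gives $\abs{E_n(f)}\leq \sum_{m,a}\abs{\hat f_{m,a}}\cdot\frac1n=\frac{\norm{f}_{\Delta^2}}{n}$.

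\textbf{Step 1 (interchange).} By \cref{lem:Faber-Schauder}, the partial sums $f_N=\sum_{m<N}\sum_a \hat f_{m,a}v_{m,a}$ converge uniformly to $f$. The functional $E_n$ consists of finitely many point evaluations and an integral over $[0,1]$, so $\abs{E_n(g)}\leq 2\sup\abs{g}$, and $E_n$ is continuous under uniform convergence. Hence $E_n(f)=\lim_N E_n(f_N)=\lim_N\sum_{m<N}\sum_a \hat f_{m,a}E_n(v_{m,a})$. Given the per-function bound and absolute summability of $\hat f$, the limit is bounded by $\norm{f}_{\Delta^2}/n$.

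\textbf{Step 2 (bounding $E_n(v_{m,a})$; the main step).}
\begin{itemize}
\item \emph{The integral.} $\int_0^1 v_{m,a}=-2^{-m-1}$, since the valley is a triangle with base $2^{-m}$ and depth $1$.
\item \emph{Which points land in the support.} The support is $\calI_{m,a}$. By \cref{lem:vdc-proportion}, the indices $i<n$ with $\vdc_i\in\calI_{m,a}$ are exactly $i=2^m q+r$ with $r=2^m\VDC_2(a)$ and $q<K$, where $K=\ceil{n/2^m-\VDC_2(a)}$. By \cref{lem:ceil-sub-1}, $\abs{K-n2^{-m}}\leq 1$.
\item \emph{Where they sit.} Iterating the recursion $\vdc_{2j}=\vdc_j/2$, $\vdc_{2j+1}=\vdc_j/2+\frac12$ (equivalently, reversing digits) gives $\vdc_{2^mq+r}=\vdc_r+2^{-m}\vdc_q=L_{m,a}+2^{-m}\vdc_q$.
\item \emph{Reduction to a fixed tent.} It follows that $\sum_{i<n}v_{m,a}(\vdc_i)=\sum_{q<K}g(\vdc_q)$, where $g(y)=v_{m,a}(L_{m,a}+2^{-m}y)$ is the fixed tent $g(y)=-2y$ on $[0,\frac12)$ and $g(y)=-2(1-y)$ on $[\frac12,1)$.
\item \emph{Pairing.} Consecutive indices $q=2j,2j+1$ give the points $\phi=\vdc_j/2<\frac12$ and $\phi+\frac12$. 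Their $g$-values are $-2\phi$ and $-(1-2\phi)$, which sum to exactly $-1$. So $\sum_{q<K}g(\vdc_q)=-\floor{K/2}$ plus possibly one extra term in $[-1,0]$. In either case $\abs{\sum_{q<K}g(\vdc_q)+\frac K2}\leq\frac12$.
\end{itemize}
Combining,
\begin{align*}
\abs{E_n(v_{m,a})}=\absm[\Big]{\frac1n\sum_{q<K}g(\vdc_q)+2^{-m-1}}\leq \frac1n\absm[\Big]{\sum_{q<K}g(\vdc_q)+\frac K2}+\frac12\absm[\Big]{\frac Kn-2^{-m}}\leq\frac{1}{2n}+\frac1{2n}=\frac1n.
\end{align*}

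The delicate point is getting the constant exactly $1$ rather than something larger. A crude count of points in $\calI_{m,a}$ only gives $O(1/n)$ with a worse constant. The exact self-similarity $\vdc_{2^mq+r}=L_{m,a}+2^{-m}\vdc_q$ is what lets the paired terms cancel exactly, so I would verify that identity carefully from the digit-reversal definition before relying on it.
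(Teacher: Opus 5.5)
Your proposal is correct and follows the paper's proof essentially step for step. Like the paper, you expand $f$ in the Faber--Schauder valley basis, pass the linear functional $E_n$ through the expansion, and bound $|E_n(v_{m,a})|\le 1/n$ via three ingredients: the self-similarity $\vdc_{2^m q+r}=L_{m,a}+2^{-m}\vdc_q$, the exact pairing cancellation $g(\vdc_{2j})+g(\vdc_{2j+1})=-1$, and the ceiling estimate $|K-n2^{-m}|\le 1$. Your justification of the interchange (continuity of $E_n$ under uniform convergence, applied to the partial sums) is slightly more explicit than the paper's. Your rescaled tent $g$ is also correctly the standard valley on $[0,1]$, i.e.\ $v_{0,0}$; the paper's text writes $v_{1,0}$ at that point, which is a typo.
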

\begin{proof}
    The condition of this theorem implies the condition of \cref{lem:Faber-Schauder}. 
    Because $\norm{f}_{\Delta^2}=\sum_{m,a} \absm[\big]{\hat{f}_{m,a}}<\infty$, we know that \eqref{eq:Faber-decomposition} holds uniformly and absolutely, and therefore it commutes with integration and finite sum; we have
    \begin{align*}
        E_n(f)=\sum_{m=0}^{\infty} \sum_{a=0}^{2^m-1} \hat{f}_{m,a} E_n(v_{m,a}).
    \end{align*}
    where the outer summation holds absolutely. This allows us to take the absolute value in both sides,
    \begin{align*}
        \abs{E_n(f)}
        & \leq \sum_{m=0}^{\infty} \sum_{a=0}^{2^m-1} \abs{\hat{f}_{m,a}} \abs{E_n(v_{m,a})} \\
        & \leq \sum_{m=0}^{\infty} \sum_{a=0}^{2^m-1} \abs{\hat{f}_{m,a}} \cdot \frac{1}{n} \\
        & = \frac{\norm{f}_{\Delta^2}}{n}.
    \end{align*}
    In the derivation, we use the fact that $\abs{E_n(v_{m,a})}\leq \frac{1}{n}$, which we will prove below in \cref{lem:En-valley}. This finishes the proof.
\end{proof}
\begin{applemma}\label{lem:En-valley}
    For any $n\geq 1$, any $m\geq 0$, and any $a=0,1,\dots,2^m-1$, we have
    \begin{align*}
        \abs{E_n(v_{m,a})}\leq\frac 1n.
    \end{align*}
\end{applemma}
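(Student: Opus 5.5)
The plan is to compute $E_n(v_{m,a})$ exactly, using the arithmetic structure of the van der Corput sequence from \cref{lem:vdc-proportion}. The valley function $v_{m,a}$ is supported on the dyadic interval $\calI_{m,a}$, and its integral is $-2^{-m-1}$ (a triangle of base $2^{-m}$ and depth $1$). By \cref{lem:vdc-proportion}, the indices $i$ with $\vdc_i\in\calI_{m,a}$ are exactly $i=2^m\VDC_2(a)+2^m q$, $q\geq 0$. Write $c=2^m\VDC_2(a)\in\{0,\dots,2^m-1\}$.

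For such an index, the binary digits of $i$ beyond the last $m$ are those of $q$. Hence $\vdc_i=L_{m,a}+2^{-m}\vdc_q$, so the position of $\vdc_i$ inside the interval is $\vdc_q$, after rescaling the interval to $[0,1)$. Let $g(u)=-2u$ for $u<\frac12$ and $g(u)=2u-2$ for $u\geq\frac12$. Then $v_{m,a}(L_{m,a}+2^{-m}u)=g(u)$, and therefore
\begin{align*}
    n E_n(v_{m,a})=\sum_{q=0}^{K-1} g(\vdc_q) + \frac{n}{2^{m+1}},
\end{align*}
where $K=\ceil{n/2^m - c/2^m}$ is the number of valid $q$, as in the proof of \cref{lem:vdc-proportion}. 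The goal is to show $\abs{nE_n(v_{m,a})}\le 1$.

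The next step is to evaluate $S(K)=\sum_{q<K} g(\vdc_q)$. Pair the index $2k$ (where $\vdc_{2k}=\vdc_k/2<\frac12$) with $2k+1$ (where $\vdc_{2k+1}=\vdc_k/2+\frac12$). The contributions are $-\vdc_k$ and $\vdc_k-1$, which sum to $-1$. Therefore $S(2j)=-j$, and $S(2j+1)=-j-\vdc_j$. In both cases $S(K)=-K/2+r$ with $\abs{r}\le\frac12$: for odd $K$, $r=\frac12-\vdc_j\in(-\frac12,\frac12]$.

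Finally, combine $S(K)$ with the integral term: $nE_n=-K/2+r+n/2^{m+1}$. By \cref{lem:ceil-sub-1}, $\abs{K-n/2^m}\le 1$, so $\abs{n/2^{m+1}-K/2}\le\frac12$. This gives $\abs{nE_n}\le 1$, which is the claim.

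The main obstacle is this bookkeeping of the boundary terms. The pairing and the ceiling error each contribute at most $\frac12$, and it must be checked that they do not exceed $1$ in total. If a sharper sign alignment turned out to be needed, the fallback would be to compare $K$ directly with $n/2^m$ using the exact formula $K=\ceil{(n-c)/2^m}$. The triangle-inequality bound above should already suffice, since each of the two pieces is at most $\frac12$ in absolute value.
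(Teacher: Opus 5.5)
Your proposal is correct and follows the paper's proof essentially step for step. Both arguments restrict the sum to the indices $i=2^m\VDC_2(a)+2^m q$ and rescale $v_{m,a}$ to the standard valley on $[0,1)$. Your $g$ is that valley; the paper writes it as $v_{1,0}$, but it is really $v_{0,0}$. Both then pair $2k$ with $2k+1$ to get $\abs{S(K)+K/2}\le\frac12$ and use \cref{lem:ceil-sub-1} to get $\abs{K/2-n/2^{m+1}}\le\frac12$, so the two half-errors add to at most $1$.
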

\begin{proof}
    From the definition of $v_{m,a}$, we know that $\int_0^1 v_{m,a}(x)\dd{x}=-2^{-m-1}$. Each term of the finite sum $\sum_{i=0}^{n-1} v_{m,a}(\vdc_i)$ is nonzero only when $\vdc_i\in \calI_{m,a}$, which from the proof of \cref{lem:vdc-proportion} means that $i=2^m\VDC_2(a)+2^m q$. Since $\VDC_2(\cdot)$ represents reflecting bits around the radix point, we have $\vdc_i=\VDC_2(i)=2^{-m}a+2^{-m}\vdc_q$.
    
    By the definition of $v_{m,a}$, every valley is the standard valley $v_{1,0}$ on $[0, 1]$ shifted and scaled on the $x$-axis; or formally, $v_{m,a}(x 2^{-m}+a/2^m)=v_{1,0}(x)$. Therefore, the finite sum of $v_{m,a}$ becomes
    \begin{align*}
        \sum_{i=0}^{n-1}v_{m,a}(\vdc_i)
        & = \sum_{\substack{0\leq i<n\\i=2^m(\VDC_2(a)+q)}}v_{m,a}(\vdc_i) \\
        & = \sum_{\substack{0\leq i<n\\i=2^m(\VDC_2(a)+q)}}v_{m,a}\rbrm[\Big]{\frac{a}{2^m}+2^{-m}\vdc_q} \\
        & = \sum_{\substack{q\in \dN\\2^m(\VDC_2(a)+q)<n}}v_{1,0}\rbrm{\vdc_q} \\
        & = \sum_{q=0}^{\ceilm{\frac{n}{2^m}-\VDC_2(a)}-1}v_{1,0}\rbrm{\vdc_q}.
    \end{align*}
    Define $S(n)=\sum_{q=0}^{n-1} v_{1,0}(\vdc_q)$. For any $q$, we know that $\vdc_{2q+1}=\vdc_{2q}+\frac12$, so
    \begin{align*}
        v_{1,0}(\vdc_{2q})+v_{1,0}(\vdc_{2q+1}) = -2\vdc_{2q}+2(\vdc_{2q+1}-1) = -2\vdc_{2q}+2\rbrm[\Big]{\vdc_{2q}+\frac12-1}=-1.
    \end{align*}
    This means that $S(n)=-\frac{n}{2}$ for even $n$ and $S(n)=-\frac{n-1}{2}+v_{1,0}(\vdc_{n-1})$ for odd $n$; in both cases, $\abs{S(n)+\frac{1}{2}n}\leq\frac12$. Therefore,
    \begin{align*}
        \abs{E_n(v_{m,a})}
        & = \absm[\bigg]{
            \frac{1}{n}\sum_{i=0}^{n-1} v_{m,a}(\vdc_i)
            -\int_0^1 v_{m,a}(x) \dd{x}
        } \\
        & = \absm[\bigg]{
            \frac{1}{n} S\rbrm[\Big]{\ceilm[\big]{\frac{n}{2^m}-\VDC_2(a)}}
            -(-2^{-m-1})
        } \\
        & = \frac{1}{n}\absm[\bigg]{
            S\rbrm[\Big]{\ceilm[\big]{\frac{n}{2^m}-\VDC_2(a)}}
            +\frac{1}{2}\cdot \frac{n}{2^m}
        } \\
        & = \frac{1}{n}\absm[\bigg]{
            S\rbrm[\Big]{\ceilm[\big]{\frac{n}{2^m}-\VDC_2(a)}}
            +\frac{1}{2}\ceilm[\big]{\frac{n}{2^m}-\VDC_2(a)}
        } + \frac{1}{n}\absm[\bigg]{
            \frac{1}{2}\ceilm[\big]{\frac{n}{2^m}-\VDC_2(a)}
            -\frac{1}{2}\cdot \frac{n}{2^m}
        } \\
        & \leq \frac{1}{2n} + \frac{1}{2n} \absm[\bigg]{
            \ceilm[\big]{\frac{n}{2^m}-\VDC_2(a)}
            - \frac{n}{2^m}
        } \leq \frac1n.
    \end{align*}
    In the last step, we used the fact that if $x\in \dR,y\in [0, 1)$, then $\abs{\ceil{x-y}-x} \leq 1$ (\cref{lem:ceil-sub-1}).
\end{proof}

\subsection{Bibliographic notes}

The van der Corput sequence is first proposed by \citet{vandercorput1935verteilungsfunktionen}. For an overview of Quasi-Monte Carlo using low-discrepancy sequences like $\{\vdc_i\}_{i\in \dN}$, we refer the readers to \citet{niederreiter1992random}.
The traditional low-discrepancy theory only shows that $E_n(f)=\calO(\log n/n)$ for functions $f$ of bounded variation; in this paper, a better $\calO(1/n)$ rate is needed.

The valley functions of \cref{lem:Faber-Schauder} are known as the Faber-Schauder system, first proposed by \citet{faber1910uber} and \cite{schauder1927zur}. It is the integral of the Haar wavelets \citep{haar1910zur}. Along with linear functions, it forms a countable basis of $C[0, 1]$. The exact form of this decomposition lemma appears in \citet{ulyanov1972representation}.
The system is useful in decomposing a function into local, simpler parts, akin to Fourier and wavelet transforms. We refer the readers to \cite{kashin1989orthogonal} for further discussions.

The condition defined in \cref{def:total-2nd} is related to the \emph{Besov space} $B_{1,1}^1([0, 1])$; we recover the exact condition for the Besov space if the summation of $\{\hat{f}_{m,a}=\frac12 \Delta_{2^{-m-1}}^2 f(a/2^m)\}_{a\in [2^m]}$ is replaced with the integral of $\Delta_\delta^2 f(x)$ for suitably chosen $\delta$. Our norm $\norm{\cdot}_{\Delta^2}$ corresponds exactly to the $b_{1,1}^1$ norm on our sequence $\{\hat{f}_{m,a}\}_{m\in \dN, a\in [2^m]}$. See, e.g., \citet{triebel2010bases} for a textbook on the topic, especially Section 3.1.
Our main result of this appendix \cref{thm:En-2nd-order} is thus a borderline case of results related to numerical integration in Chapter 5 of the same book.

\subsection{Useful lemmas for second-order difference calculation}

In this section, we present several helpful lemmas that aids the calculation of $\norm{f}_{\Delta^2}$.

For notational convenience, we write
\begin{align*}
    \norm{f}_{\Delta^2} = \sum_{m=0}^{\infty} \norm{f}_{\Delta^2,m}, \qquad \norm{f}_{\Delta^2,m}\defeq\frac12 \sum_{a=0}^{2^m-1} \absm[\Big]{\Delta_{2^{-m-1}}^2 f(a/2^m)}.
\end{align*}

Expanding the definition of second-order difference of the function, we have
\begin{align*}
    \norm{f}_{\Delta^2,m}&=
    \frac12\sum_{a=0}^{2^m-1}\absm[\Big]{
        2f\rbrm{M_a}
        -f\rbrm{L_a}
        -f\rbrm{R_a}
    }
    \\&=
    \frac12 \sum_{a=0}^{2^m-1}\absm[\bigg]{
        2f\rbrm[\Big]{\frac{2a+1}{2^{m+1}}}
        -f\rbrm[\Big]{\frac{2a}{2^{m+1}}}
        -f\rbrm[\Big]{\frac{2a+2}{2^{m+1}}}
    }.
\end{align*}
Note that we used the shorthand notation for the dyadic intervals,
\begin{align}
    L_{m,a}=\frac{a}{2^m}, \qquad M_{m,a}=\frac{a+1/2}{2^m}, \qquad R_{m,a}=\frac{a+1}{2^m}.\label{eq:def-lmr'}
\end{align}

The following lemma is useful in bounding one term in the summation of $\norm{f}_{\Delta^2,m}$.

\begin{applemma}\label{lem:finite-difference-smoothness}
    We have the following three bounds on the second-order difference of a function:
    \begin{enumerate}
        \item If $\abs{f(x)}\leq M$ on $[x, x+2\delta]$, then $\abs{\Delta_\delta^2 f(x)}\leq 4M$.
        \item If $f$ is $M$-Lipschitz continuous on $[x, x+2\delta]$, then $\abs{\Delta_\delta^2 f(x)}\leq 2M\delta$;
        as a consequence, if $f$ is continuous on $[x,x+2\delta]$ and $\abs{f'(x)}\leq M$ on $(x,x+2\delta)$, then the same bound holds.
        \item If $f'$ is continuous on $[x, x+2\delta]$ and $\abs{f''(x)}\leq M$ on $(x,x+2\delta)$, then $\abs{\Delta_\delta^2 f(x)}\leq M\delta^2$.
    \end{enumerate}
\end{applemma}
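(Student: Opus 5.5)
Each of the three bounds reduces to a one-line estimate after writing the second-order difference in a convenient form. Recall from \cref{def:fin-dif} that
\begin{align*}
\Delta_\delta^2 f(x)=f(x+2\delta)-2f(x+\delta)+f(x)=\Delta_\delta f(x+\delta)-\Delta_\delta f(x).
\end{align*}

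For the first bound, apply the triangle inequality to the three-term expansion. Each of $f(x)$, $f(x+\delta)$, $f(x+2\delta)$ has absolute value at most $M$, with coefficients $1,-2,1$, which gives $\abs{\Delta_\delta^2 f(x)}\leq M+2M+M=4M$.

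For the second bound, use the telescoped form $\Delta_\delta^2 f(x)=[f(x+2\delta)-f(x+\delta)]-[f(x+\delta)-f(x)]$. Each bracket is a difference of $f$ at two points of $[x,x+2\delta]$ at distance $\delta$, so each is at most $M\delta$ in absolute value by the Lipschitz assumption. Hence $\abs{\Delta_\delta^2 f(x)}\leq 2M\delta$. The consequence follows because a function continuous on $[x,x+2\delta]$ with $\abs{f'}\leq M$ on the open interval is $M$-Lipschitz there, by the mean value theorem applied to any two points of the closed interval.

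For the third bound, set $g(y)=f(y+\delta)-f(y)$ on $[x,x+\delta]$, so that $\Delta_\delta^2 f(x)=g(x+\delta)-g(x)$.
\begin{itemize}
\item Since $f'$ is continuous on $[x,x+2\delta]$, $g$ is continuous on $[x,x+\delta]$ and differentiable inside, with $g'(y)=f'(y+\delta)-f'(y)$.
\item The mean value theorem for $g$ gives some $\xi\in(x,x+\delta)$ with $\Delta_\delta^2 f(x)=\delta\, g'(\xi)=\delta\,[f'(\xi+\delta)-f'(\xi)]$.
\item The mean value theorem for $f'$ on $[\xi,\xi+\delta]\subseteq[x,x+2\delta]$ gives $\zeta\in(\xi,\xi+\delta)$ with $f'(\xi+\delta)-f'(\xi)=\delta f''(\zeta)$.
\end{itemize}
Combining, $\abs{\Delta_\delta^2 f(x)}\leq M\delta^2$.

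The only point needing care is the regularity bookkeeping in the third part. The hypothesis, implicitly, is that $f''$ exists on the open interval, and the mean value theorem for $f'$ needs $f'$ continuous on the closed subinterval and differentiable inside; both hold here. The first two parts are immediate.
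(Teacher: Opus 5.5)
Your proposal is correct. Parts 1 and 2 follow the paper's argument exactly: the triangle inequality on the three-term expansion, and then on the telescoped form with the Lipschitz bound. You also supply the mean value theorem justification for the ``consequence'' clause, which the paper leaves implicit.

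For part 3 you take a slightly different route. The paper Taylor-expands both $f(x)$ and $f(x+2\delta)$ to second order around the midpoint $x+\delta$ with Lagrange remainders. The $\pm\delta f'(x+\delta)$ terms cancel, which gives $\Delta_\delta^2 f(x)=\frac12\delta^2\bigl(f''(\xi_1)+f''(\xi_2)\bigr)$ with $\xi_1\in(x,x+\delta)$ and $\xi_2\in(x+\delta,x+2\delta)$. You instead apply the mean value theorem first to the first difference $g$ and then to $f'$. This gives the single-point identity $\Delta_\delta^2 f(x)=\delta^2 f''(\zeta)$ with $\zeta\in(x,x+2\delta)$.

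Both arguments use the same regularity: continuity of $f'$ on the closed interval and existence of $f''$ on the open one. Both give the bound $M\delta^2$. Your version needs only Lagrange's mean value theorem rather than Taylor's theorem with remainder, and it yields the sharper exact mean-value form of the second difference. The paper's symmetric expansion instead records which half-interval each evaluation of $f''$ lies in. That extra information is never used later, since the downstream applications in the appendix only invoke a uniform bound on $|f''|$ over $[x,x+2\delta]$.
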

\begin{proof}
    The zeroth-order statement can be proved with the triangle inequality
    \begin{align*}
        \abs{\Delta_\delta^2 f(x)}\leq\abs{f(x)}+2\abs{f(x+\delta)}+\abs{f(x+2\delta)}\leq M+2M+M=4M.
    \end{align*}
    The first-order statement can be proved with the triangle inequality
    \begin{align*}
        \abs{\Delta_\delta^2 f(x)}
        & =\abs{\rbr{f(x+2\delta)-f(x+\delta)}-\rbr{f(x+\delta)-f(x)}} \\
        & \leq \abs{f(x+2\delta)-f(x+\delta)}+\abs{f(x+\delta)-f(x)} \\
        & \leq \abs{(x+2\delta)-(x+\delta)}M+\abs{(x+\delta)-x}M
          \leq 2M\delta.
    \end{align*}
    The second-order statement can be proved with Taylor's expansion with a Lagrange remainder,
    \begin{align*}
        f(x)&=f(x+\delta)-\delta f'(x+\delta)+\frac12 f''(\xi_1)\delta^2,\\
        f(x+2\delta)&=f(x+\delta)+\delta f'(x+\delta)+\frac12 f''(\xi_2)\delta^2,
    \end{align*}
    where $\xi_1\in (x,x+\delta)$ and $\xi_2\in (x+\delta, x+2\delta)$. Adding these two equations and rearranging gives
    \begin{align*}
        \Delta_\delta^2 f(x)=f(x)+f(x+2\delta)-2f(x+\delta)
        =\delta f'(x+\delta)-\delta f'(x+\delta)+\frac12 \delta^2\rbrm[\big]{f''(\xi_1)+f''(\xi_2)}.
    \end{align*}
    The first two terms cancel. Taking an absolute value then gives\phantom{\qedhere}
    \begin{align*}
        \absm[\big]{\Delta_\delta^2 f(x)}\leq \frac12 \delta^2\rbrm[\big]{\abs{f''(\xi_1)}+\abs{f''(\xi_2)}}
        \leq \frac12 \delta^2 \cdot 2M = M\delta^2.\mqed
    \end{align*}
\end{proof}

Using this result, we can show $\norm{\cdot}_{\Delta^2}<\infty$ is indeed a smoothness condition, as all functions with absolutely integrable second-order derivatives satisfy this condition, formally stated in the following lemma.

\begin{applemma}\label{lem:finite-difference-global-smoothness}
    If $f : [0, 1]\to \dR$ satisfies $\int_0^1 \abs{f''(x)}\,\dd{x}\leq M$, then $\norm{f}_{\Delta^2}\leq M$. Consequently, all functions that satisfy $f''\in \calL_1((0, 1))$ have $\norm{f}_{\Delta^2}<\infty$.
\end{applemma}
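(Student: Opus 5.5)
We will bound each level $\norm{f}_{\Delta^2,m}$ by the third estimate of \cref{lem:finite-difference-smoothness} and then sum over levels, comparing the resulting Riemann-type sum with $\int_0^1\abs{f''}$. Fix $m$ and $a$, and set $\delta=2^{-m-1}$. On the interval $[a/2^m,(a+1)/2^m]=[x,x+2\delta]$ we have, by the second-order Taylor argument in the proof of \cref{lem:finite-difference-smoothness}, the exact representation
\begin{align*}
    \Delta_\delta^2 f(x)=\int_{x}^{x+2\delta} K(y)\, f''(y)\,\dd{y},
\end{align*}
where $K$ is the hat kernel $K(y)=\delta-\abs{y-(x+\delta)}$, which satisfies $0\le K\le\delta$. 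We will use this integral form (Peano kernel) rather than the pointwise sup bound, because only $f''\in\calL_1$ is assumed. It is justified whenever $f'$ is absolutely continuous on the interval, which is the natural reading of the hypothesis. Consequently
\begin{align*}
    \abs{\Delta_\delta^2 f(x)}\le \delta\int_{x}^{x+2\delta}\abs{f''(y)}\,\dd{y}.
\end{align*}

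Summing over $a=0,\dots,2^m-1$, the intervals tile $[0,1]$, so
\begin{align*}
    \norm{f}_{\Delta^2,m}\le \tfrac12\cdot 2^{-m-1}\int_0^1\abs{f''}\le 2^{-m-2}M.
\end{align*}
Summing the geometric series over $m\ge0$ then gives $\norm{f}_{\Delta^2}\le M/2\le M$. The second sentence of the lemma follows immediately: $f''\in\calL_1((0,1))$ means that $M=\int_0^1\abs{f''}$ is finite.

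The only delicate point is the regularity needed for the kernel identity. For $f$ with $f'$ absolutely continuous on $[0,1]$ it is exactly integration by parts twice, and we would state the lemma under that standing interpretation of ``$f''$ integrable''. If one insists on avoiding it, the alternative is to approximate $f$ by smooth functions in the $W^{2,1}$ sense and pass to the limit using pointwise convergence at the dyadic points. We expect this to be routine, so the main work is just writing the Peano kernel computation carefully.
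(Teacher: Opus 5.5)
Your proof is correct and is essentially the paper's argument. The paper writes $\Delta_\delta^2 f(x)=\int_0^\delta\int_0^\delta f''(x+s+t)\,\dd{s}\,\dd{t}$ by applying the fundamental theorem of calculus twice; after the substitution $u=s+t$ this is exactly your hat-kernel (Peano) representation. It then bounds this by $\delta\int_x^{x+2\delta}\abs{f''}$, tiles $[0,1]$, and sums the geometric series over levels, just as you do.

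Two small remarks. You keep the factor $\frac12$ in $\norm{f}_{\Delta^2,m}$ that the paper drops, so you obtain the slightly sharper bound $M/2$. Also, the kernel identity comes from Taylor's formula with integral remainder, not from the Lagrange-form argument you cite. This is harmless under your reading that $f'$ is absolutely continuous, which the paper's use of the fundamental theorem of calculus also implicitly needs.
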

\begin{proof}
    Assume $f$ satisfies the condition $\int_0^1 \abs{f''(x)}\,\dd{x}\leq M$. This implies that $f$ is twice differentiable on $(0, 1)$. Let $m$ be a nonnegative integer and let $\delta=2^{-m-1}$.

    For any $x\in [0, 1-2\delta]$, the second-order finite difference is
    \begin{align*}
        \Delta_\delta^2 f(x)
        & = [f(x+2\delta)-f(x+\delta)]-[f(x+\delta)-f(x)] \\
        & = \int_0^\delta f'(x+\delta+t)\dd{t}-\int_0^\delta f'(x+t)\dd{t}\tag{fundamental theorem of calculus} \\
        & = \int_0^\delta f'(x+\delta+t)-f'(x+t)\dd{t} \\
        & = \int_0^\delta \rbrm[\Big]{\int_0^\delta f''(x+s+t)\dd{s}}\dd{t}. \tag{again, fundamental theorem of calculus}
    \end{align*}
    Taking an absolute value yields
    \begin{align*}
        \abs{\Delta_\delta^2 f(x)}
        & \leq \int_0^\delta \int_0^\delta \abs{f''(x+s+t)}\,\dd{s}\,\dd{t} \\
        & = \int_0^\delta \int_{t}^{t+\delta} \abs{f''(x+u)}\,\dd{u}\,\dd{t}\tag{substitute $u=s+t$, $\dd{u}=\dd{s}$}\\
        & \leq \int_0^\delta \int_{0}^{2\delta} \abs{f''(x+u)}\,\dd{u}\,\dd{t}\tag{$[t,t+\delta]\subseteq[0,2\delta]$}\\
        & = \delta\int_{0}^{2\delta}\abs{f''(x+u)}\,\dd{u}.
    \end{align*}
    Therefore, the total second-order difference at level $m$ is the summation 
    \begin{align*}
        \sum_{a=0}^{2^m-1}\abs{\Delta_\delta^2 f(2a\delta)}
        & \leq \sum_{a=0}^{2^m-1}\delta\int_{0}^{2\delta}\abs{f''(2a\delta+u)}\,\dd{u}\\
        & = \sum_{a=0}^{2^m-1}\delta\int_{2a\delta}^{2(a+1)\delta}\abs{f''(u)}\,\dd{u}\\
        & = \delta\int_{0}^{1}\abs{f''(u)}\,\dd{u}\leq \delta M=2^{-m-1}M.
    \end{align*}
    Finally, summing this over all levels $m$ yields
    \begin{align*}
        \norm{f}_{\Delta^2}\leq \sum_{m=0}^\infty2^{-m-1}M\leq M.
    \end{align*}
    This proves the main statement. The second statement is proved by the observation that the condition $f''\in \calL_1((0,1))$ is equivalent to $M=\int_0^1\abs{f''(x)}\,\dd{x}\leq \infty$.
\end{proof}

We also show the following lemma, which bounds \emph{one level} of the total dyadic second-order difference if a function has bounded variation.

\begin{applemma}\label{lem:finite-difference-bounded-variation}
    If a function $f$ has bounded variation on $[0, 1]$, i.e., its total variation is finite $V_0^1(f)<\infty$,
    then $\norm{f}_{\Delta^2,m}\leq \frac12 V_0^1(f)$ for any $m\in \dN$.
\end{applemma}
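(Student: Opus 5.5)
We bound each summand of $\norm{f}_{\Delta^2,m}$ by a local variation and then sum over the disjoint dyadic intervals. Fix $m\in\dN$ and $a\in\{0,\dots,2^m-1\}$, and write $L=L_{m,a}$, $M=M_{m,a}$, $R=R_{m,a}$ as in \eqref{eq:def-lmr'}. The corresponding term is
\begin{align*}
    \absm[\big]{\Delta^2_{2^{-m-1}}f(L)}=\absm[\big]{(f(R)-f(M))-(f(M)-f(L))}.
\end{align*}
By the triangle inequality this is at most $\abs{f(R)-f(M)}+\abs{f(M)-f(L)}$. This is exactly the sum for the partition $\{L,M,R\}$ of $[L,R]$, so it is bounded by $V_L^R(f)$ directly from the definition of total variation as a supremum over partitions.

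Next we sum over $a$. The intervals $[L_{m,a},R_{m,a}]$ for $a=0,\dots,2^m-1$ tile $[0,1]$ and overlap only at endpoints. Total variation is additive over adjacent intervals: $V_0^1(f)=\sum_a V_{L_{m,a}}^{R_{m,a}}(f)$. More simply, concatenating the three-point partitions $\{L_{m,a},M_{m,a},R_{m,a}\}$ over all $a$ gives a single partition of $[0,1]$, namely $\{0,2^{-m-1},2\cdot 2^{-m-1},\dots,1\}$, whose variation sum is at most $V_0^1(f)$. Therefore
\begin{align*}
    \norm{f}_{\Delta^2,m}\le \frac12\sum_{a=0}^{2^m-1}\rbr{\abs{f(R_{m,a})-f(M_{m,a})}+\abs{f(M_{m,a})-f(L_{m,a})}}\le \frac12 V_0^1(f).
\end{align*}

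The argument is short. The only point needing care is using the concatenated partition, rather than invoking additivity of variation, to avoid any subtleties about half-open versus closed intervals at shared endpoints. Since the partition is a single finite partition of $[0,1]$, the bound follows straight from the definition of $V_0^1$.
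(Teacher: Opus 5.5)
Your proposal is correct and follows essentially the same argument as the paper. Both apply the triangle inequality to each second-order difference, then observe that the resulting terms form the variation sum over the single partition $\{0,2^{-m-1},\dots,1\}$ of $[0,1]$, which is bounded by $V_0^1(f)$.
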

\begin{proof}
    Using the triangle inequality and the definition of $V_0^1(f)$, we have
    \begin{align*}
        \norm{f}_{\Delta^2,m}
        & =
        \frac12\sum_{a=0}^{2^m-1}\absm[\Big]{
            2f\rbrm{M_a}
            -f\rbrm{L_a}
            -f\rbrm{R_a}
        } \\
        & \leq \frac12\sum_{a=0}^{2^m-1}\rbrm[\Big]{\absm[\big]{
            f\rbrm{M_a}
            -f\rbrm{L_a}
        }+\absm[\big]{
            f\rbrm{R_a}
            -f\rbrm{M_a}
        }}\\
        & = \frac12\sum_{a=0}^{2^{m+1}-1}\absm[\big]{
            f((a+1)\delta)-f(a\delta)
        }\leq \frac12 V_0^1(f),
    \end{align*}
    where $\delta=2^{-m-1}$, and $\{0,\delta,2\delta,\dots,(2^{m+1}-1)\delta,2^{m+1}\delta=1\}$ forms a partition of $[0, 1]$.
\end{proof}

We note that the definition of $\norm{\cdot}_{\Delta^2}$ is symmetric on the interval $[0, 1]$, captured in the following lemma:
\begin{applemma}\label{lem:finite-difference-reflection}
    For any function $f : [0, 1]\to \dR$, let $\cev{f}(u)=f(1-u)$. We have $\normm[\big]{\cev{f}}_{\Delta^2}=\norm{f}_{\Delta^2}$.
\end{applemma}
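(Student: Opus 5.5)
The plan is to unfold the definition and reindex the double sum defining $\norm{\cdot}_{\Delta^2}$. First note that $\cev{f}$ is defined on $[0,1]$ whenever $f$ is. The identity will be proved level by level, i.e.\ we will show $\normm[\big]{\cev{f}}_{\Delta^2,m}=\norm{f}_{\Delta^2,m}$ for every $m\in\dN$. The full identity then follows by summing over $m$; both sides are series of nonnegative terms, so this holds even when they are infinite.

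Fix $m$ and set $\delta=2^{-m-1}$. For each $a\in\{0,\dots,2^m-1\}$ we expand
\begin{align*}
    \Delta_\delta^2\cev{f}(a/2^m)
    &= f(1-L_{m,a})-2f(1-M_{m,a})+f(1-R_{m,a}).
\end{align*}
We then use the reflection identities $1-L_{m,a}=R_{m,a'}$, $1-M_{m,a}=M_{m,a'}$ and $1-R_{m,a}=L_{m,a'}$, where $a'=2^m-1-a$. Each of these is a one-line check from \eqref{eq:def-lmr'}; for instance $1-\frac{a+1/2}{2^m}=\frac{(2^m-1-a)+1/2}{2^m}$. Substituting gives $\Delta_\delta^2\cev{f}(a/2^m)=f(R_{m,a'})-2f(M_{m,a'})+f(L_{m,a'})=\Delta_\delta^2 f(a'/2^m)$, since the second difference is symmetric in its two endpoints.

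Finally, the map $a\mapsto 2^m-1-a$ is a bijection of $\{0,\dots,2^m-1\}$ onto itself. Summing $\frac12\abs{\cdot}$ over $a$ therefore gives $\normm[\big]{\cev{f}}_{\Delta^2,m}=\norm{f}_{\Delta^2,m}$.

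There is no real obstacle here. The only points needing care are that the reflected dyadic intervals are again dyadic intervals of the same level, and that the second difference is invariant under swapping $L$ and $R$.
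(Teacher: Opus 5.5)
Your proof is correct and follows the same route as the paper. Both arguments use the fact that reflection $u\mapsto 1-u$ sends the level-$m$ dyadic interval with index $a$ to the one with index $2^m-1-a$ without changing the second difference, and then reverse the order of the inner sum at each level; you simply write out the endpoint identities, and the nonnegativity argument for summing over $m$, that the paper leaves implicit.
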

\begin{proof}
    If we expand the norm notation into a sum of sums, we can see that the second difference in $\calI_{m,a}$ for $\cev{f}$ is the same as the second difference in $\calI_{m,2^m-1-a}$ for $f$. Reversing the order of inner summation proves the statement.
\end{proof}

\subsection{Application: Discretizing Pareto using Quantiles}\label{ah:discretization-main}

In this part of the appendix, we apply our theory on dyadic second-order differences to the function used in this paper.

Assume $\beta\in (1, \frac85]$ and $\etam\in (0, 1/\sqrt2]$. Let $\calE = p\Pareto(\etam,\beta)+(1-p)\delta_{\etam}$ be the Pareto-point-mass mixture we aim to discretize, where $p\in (0, 1]$. We know that its quantile function is
\begin{align*}
    Q_\calE(u)=\begin{cases}
        \etam, & u\leq 1-p, \\
        \etam \rbrm[\big]{\frac{1-u}{p}}^{-1/\beta}, &u>1-p.
    \end{cases}
\end{align*}
Let $f_\psi(z)=1-2\cos\psi\cdot z+z^2$, where $\psi\in (0, \frac\pi3]$. 

We aim to prove, for any $a\in [0, 1]$ and positive integer $n$ ,
\begin{align*}
    \frac1n\sum_{t=0}^{n-1} \ln f_\psi(a^2\eta_t^2)
    \leq
    \EE_{\eta\sim\calE}[\ln f_\psi(a^2\eta^2)]
    + \frac{C}{n}
\end{align*}
for some $C<\infty$, where $\eta_t=Q_\calE(\vdc_t)$. The notation assumes $L=1$; the case for a generic $L$ is achieved by scaling $\calE$ by $1/L$ and scaling $a$ by $L$. The expectation is equal to
\begin{align*}
    \EE_{\eta\sim\calE}[\ln f_\psi(a^2\eta^2 )]
    =
    \EE_{u\sim U[0, 1]}[\ln f_\psi(a^2Q_\calE^2(u))].
\end{align*}
Using the definition \eqref{eq:def-En} of the previous section, we equivalently want to prove
\begin{align*}
    C/n\geq \frac1n\sum_{t=0}^{n-1} \ln f_\psi(a^2Q_\calE^2(\vdc_t))-\EE_{u\sim U[0, 1]}[\ln f_\psi(a^2Q_\calE^2(u))]
    =E_n(h),
\end{align*}
where we define the function $h(u)=\ln f_\psi(a^2Q^2_\calE(u))$.

\begin{appremark}
    We use a general parametrization to unify all cases needed in the paper. For the results for single-stepsize EG in this paper, let $\psi=\frac\pi3$; for double-stepsize EG, let $\psi=\pi-\theta$ and $\rho=2+2\cos\theta=2-2\cos\psi$. For the results with a pure Pareto distribution, take $p=1$.
\end{appremark}

Intuitively, we want to show that our function $h$ satisfies the condition of \cref{def:total-2nd}. However, this is impossible; when $u\to 1$, $h(u)\sim 2\ln(a^2 Q_\calE^2(u))\to \infty$, and if $h$ is unbounded, $\norm{h}_{\Delta_2}$ must be infinite. To address this issue, we make use of the following monotonic function: 
\begin{align*}
    \acute{H}(u)=\begin{cases}
        0, & 0\leq u<1-\sigma,\\
        2\ln(a^2 Q_\calE^2(u)),&1-\sigma\leq u<1,
    \end{cases}
\end{align*}
where $\sigma$ is a threshold such that $a^2 Q_\calE^2(1-\sigma)=1$. The monotonicity of $\acute{H}$ is guaranteed by the monotonicity of $Q_\calE$, and \cref{lem:En-mono} shows that $E_n(\acute{H})\leq 0$. We then need to show that $h-\acute{H}$ (almost) satisfies the condition of \cref{thm:En-2nd-order}, which we will detail below.

We will present the proof of the main theorem first, which makes use of the $\norm{\cdot}_{\Delta^2}<\infty$ condition in \cref{lem:pareto-remainder-second-difference}. Both results make use of the analysis on the derivatives of the functions involved (\cref{lem:log-poly-basic} and \ref{lem:log-poly-psi}).

\begin{apptheorem}\label{thm:pareto-qmc-discretize}
    Under the definitions and assumptions of this section, there exists a constant $C=C(\psi)<\infty$ such that, for every $a\in[0,1]$ and every $n\geq1$,
    \begin{align*}
        E_n(h)\leq \frac{C}{n}.
    \end{align*}
\end{apptheorem}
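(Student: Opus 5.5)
Write $h=\acute H+g$ with $g\defeq h-\acute H$. By linearity $E_n(h)=E_n(\acute H)+E_n(g)$. Since $\acute H$ is monotonically increasing, \cref{lem:En-mono} gives $E_n(\acute H)\le 0$. It therefore suffices to show $\abs{E_n(g)}\le C/n$ with $C$ depending only on $\psi$ (uniformly in $a\in[0,1]$, $p$, $\beta$, $\etam$). The degenerate case where no threshold $\sigma$ exists, i.e.\ $a^2Q_\calE^2(u)<1$ near $u=1$, cannot occur for $a>0$ because $Q_\calE(u)\to\infty$. For $a=0$, $h\equiv 0$ and the claim is trivial.

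To apply \cref{thm:En-2nd-order} we need boundary values zero. Let $\ell(u)=g(0)(1-u)+g(1^-)u$ and $g_0=g-\ell$. Here $g(0)=\ln f_\psi(a^2\etam^2)$ is bounded, and $g(1^-)=\lim_{z\to\infty}[\ln f_\psi(z)-2\ln z]=0$. Thus $\ell$ is affine with bounded slope, and $E_n(\ell)$ reduces to $E_n$ of a linear function, which is $O(1/n)$ by \cref{lem:vdc-proportion} (or directly by \cref{lem:En-mono}, which gives a one-sided bound of the right sign after splitting by monotonicity). It remains to prove $\norm{g_0}_{\Delta^2}\le C(\psi)$. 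Two facts must be checked: $g$ is continuous on $[0,1]$, and it has the required regularity. Continuity at $u=1-\sigma$ holds because $\ln f_\psi(z)$ and $\ln f_\psi(z)-2\ln z$ agree at $z=1$, where both equal $\ln(2-2\cos\psi)$. Note that a jump of size $2\ln 1=0$ is harmless, but there is a kink.

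For the regularity I would work in the variable $z=a^2Q_\calE^2(u)$ and use the derivative bounds of \cref{lem:log-poly-basic,lem:log-poly-psi} on $\phi(z)=\ln f_\psi(z)$, which satisfies $\abs{z\phi'(z)}\lesssim 1/\sin\psi$, and on $\phi(z)-2\ln z$ for $z\ge1$, which decays like $1/z$. Then I would split the dyadic levels into three groups.

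\begin{enumerate}
\item On $u\le1-p$ the function $g$ is constant, so those intervals contribute nothing.
\item At the three special points, namely $u=1-p$ (kink in $Q_\calE$), $u=1-\sigma$ (kink from $\acute H$), and the region near the minimum of $f_\psi$, only one dyadic interval per level contains each point. I bound these with \cref{lem:finite-difference-smoothness}(2) using a Lipschitz bound on $g$ in the $u$-variable. This gives $O(2^{-m})$ per level, summable. On the Pareto branch, $\frac{d}{du}\phi(z(u))=\phi'(z)z\cdot\frac{2}{\beta(1-u)}$. This is large as $u\to1$, but there $g$ decays like $1/z\propto(1-u)^{2/\beta}$, so $g'\sim(1-u)^{2/\beta-1}$ is integrable.
\item Elsewhere I apply \cref{lem:finite-difference-global-smoothness} piecewise, bounding $\int\abs{g''}$ on each smooth piece. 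Near $u\to1$, $g''\sim(1-u)^{2/\beta-2}$, which is integrable since $\beta<2$. The restriction $\beta\le 8/5$ presumably enters in controlling these exponents uniformly.
\end{enumerate}

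Crucially, the dependence on $a$ and $\etam$ must cancel. After substituting $w=1-u$, the smooth piece is a function of $a^2\etam^2(w/p)^{-2/\beta}$, and integrals of $\abs{g''}$ should be scale-invariant in the log variable, since both the total variation and $\int\abs{g''}$ are invariant under the dilation $w\mapsto cw$ up to boundary terms. \cref{lem:finite-difference-bounded-variation} handles any level where the localized argument is awkward, but applied to every level it would only give a logarithmic count. So the global bound must come from the $g''$ integral.

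\textbf{Main obstacle.} The hardest step is making the constant uniform in $a$ (equivalently in $\sigma\to0$ and in where $z=1$ falls). I would first try to show that $\int_0^1\abs{g''(u)}(1-u)\,\dd u$-type weighted quantities are bounded in terms of $\int_0^\infty\abs{(z\partial_z)^k\phi}\,\dd z/z$. These integrals are finite and depend only on $\psi$, which is exactly what \cref{lem:log-poly-psi} should supply.
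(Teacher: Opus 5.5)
Your skeleton matches the paper's: peel off the increasing $\acute H$, subtract an increasing affine interpolant via \cref{lem:En-mono}, then apply \cref{thm:En-2nd-order}. But the step that carries all the difficulty is left open. That step is a bound on $\norm{g_0}_{\Delta^2}$ uniform in $a$, i.e.\ in $\sigma=p(a\etam)^\beta\to 0$, and the tools you name for it are not scale invariant.

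For $u\ge 1-p$ your $g$ is exactly $\Psi_{\beta,\psi}((1-u)/\sigma)$. Hence $\int_{1-\sigma}^1\abs{g''}\,\dd{u}=\sigma^{-1}\int_0^1\abs{\Psi''_{\beta,\psi}(t)}\,\dd{t}\ge \frac{2}{\beta\sigma}$. Under $w\mapsto w/\sigma$ only the total variation is invariant, not $\int\abs{g''}\,\dd{w}$. The log-variable version is invariant, but \cref{lem:finite-difference-global-smoothness} does not use it and you supply no substitute. So that lemma yields only $O(1/\sigma)$. Likewise, the Lipschitz constant of $g$ near $u=1-\sigma$ is of order $1/\sigma$, so summing \cref{lem:finite-difference-smoothness}(2) over all levels, as in your item~2, also gives $O(1/\sigma)$. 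Falling back on \cref{lem:finite-difference-bounded-variation} costs $O(1)$ per level, i.e.\ $O(\log(1/\sigma))$ over the coarse levels.

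The missing idea is to split the levels at $m_0$ with $2^{-m_0-1}<\sigma\le 2^{-m_0}$, working in $w=1-u$:
\begin{itemize}
\item For $m<m_0$, use only the pointwise decay $\abs{\Psi_{\beta,\psi}(t)}\le 2\ln(\sin\psi)^{-1}\min\{t^{2/\beta},t^{-2/\beta}\}$, summed over the grid points $k\delta/\sigma$. This is a zeta sum, finite because $2/\beta>1$ and uniformly so because $\beta\le\frac85$. It gives $O(2^{m-m_0})$.
\item For the $O(\log(1/\sin\psi))$ levels around $m_0$, use the BV bound.
\item For $m\ge m_0+2$, use the Lipschitz bound only on the $O(1)$ intervals meeting $\{0,\sigma/2,\sigma,2\sigma\}$. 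Elsewhere use the weighted bounds $\abs{\Psi''_{\beta,\psi}(t)}\lesssim t^{2/\beta-2}$ for $t\le\frac12$, $\lesssim t^{-2/\beta-2}$ for $t\ge 2$, and $\lesssim(\sin\psi)^{-2}$ in between. This gives $O(2^{m_0-m})$ with a $\psi$-dependent constant.
\end{itemize}
Both geometric tails then sum to a $\sigma$-independent constant. The essential input on coarse levels is the zeroth-order decay of $\Psi_{\beta,\psi}$, not any integral of $g''$.

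There are two smaller problems.

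First, you keep the point-mass branch inside $g$. Then the last dyadic interval at every level with $2^{-m-1}\ge p$ has second difference exactly $-g(0)=-\ell_\psi(a^2\etam^2)$. So $\norm{g_0}_{\Delta^2}\ge\frac12\abs{g(0)}\lfloor\log_2(1/p)\rfloor$, and your route cannot be uniform in $p$ as you assert. The paper avoids this by first replacing $h$ with the pure-Pareto $H(u)=\ell_\psi(a^2\hat{Q}_\calE^2(u))$. Since $h-H$ is increasing, $E_n(h)\le E_n(H)$, and only the one-parameter dilation family $\Psi_{\beta,\psi}(\cdot/\sigma)$ remains.

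Second, $E_n$ of a linear function is not $O(1/n)$ for the van der Corput sequence. The quantity $\sum_{i<n}\vdc_i-\frac n2$ is unbounded: it drops by more than $\frac14$ per $1$-bit along $n=(1010\cdots01)_2$. So a two-sided bound $\abs{E_n(g)}\le C/n$ cannot hold when $g(0)\ne 0$. Your fallback is the right one: your affine $\ell$ is increasing because $g(0)\le 0$, so $E_n(\ell)\le 0$, and a one-sided bound is all the theorem needs.
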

\begin{proof}
    If $a=0$, then $h(u)=\ln f_\psi(a^2Q^2_\calE(u))\equiv 0$, and the claim is immediate. Assume $a>0$.

    We first handle the point-mass branch. Define the function $\hat{Q}_\calE(u)=\etam \rbrm[\big]{\frac{1-u}{p}}^{-1/\beta}$ for all $u\in [0, 1)$. (This function is the quantile function of a Pareto distribution that has the same tail as $\calE$ on $(\etam,+\infty)$.) Define
    \begin{align*}
    H(u)=\ell_\psi(a^2 \hat{Q}_\calE^2(u)),
    \end{align*}
    where, for notational convenience, we define
    \begin{align*}
        \ell_\psi(z)=\ln f_\psi(z).
    \end{align*}

    On the interval $[1-p,1)$, the functions $Q_\calE$ and $\hat{Q}_\calE$ agree, so $H(u)-h(u)=0$. On the interval $[0, 1-p]$, $\hat{Q}_\calE(u)$ is monotonically increasing, and $a^2\hat{Q}_\calE^2(u)\leq a^2\etam^2\leq \frac12$; as the function $f_\psi$ is monotonically decreasing on $[0, \cos\psi]\supseteq[0, \frac12]$, $H(u)=\ln f_\psi(a^2\hat{Q}_\calE^2(u))$ is monotonically decreasing as well, and $H(u)\leq H(0)\leq 0$.
    
    Therefore, $h-H$ is continuous and monotonically increasing (or constant) on both branches, so it is monotonically increasing on $[0, 1)$, and \cref{lem:En-mono} guarantees that $E_n(h-H)\leq 0$, or $E_n(h)\leq E_n(H)$.

    In order to investigate $H(u)=\ln f_\psi(a^2\hat{Q}_\calE^2(u))$, we expand the inner expression $a^2\hat{Q}_\calE^2(u))=a^2\etam^2\rbrm[\big]{\frac{1-u}{p}}^{-2/\beta}=\rbrm[\big]{\frac{1-u}{p a^\beta \etam^\beta}}^{-2/\beta}=\rbrm[\big]{\frac{1-u}{\sigma}}^{-2/\beta}$, where we define $\sigma=pa^\beta \etam^\beta$; this number also satisfies $a^2 \hat{Q}_\calE^2(1-\sigma)=a^2 \rbrm[\big]{\etam (\sigma/p)^{-1/\beta}}^2=1$.

    We decompose $H$ into a monotone part and a bounded part. Define
    \begin{align*}
        \acute{H}(u)=\begin{cases}
            0, & 0\leq u<1-\sigma,\\
            2\ln(a^2 \hat{Q}_\calE^2(u)),&1-\sigma\leq u<1,
        \end{cases}
    \end{align*}
    and let $\mathsf{H}=H-\acute{H}$. We know that $\acute{H}(1-\sigma)=2\ln(a^2 \hat{Q}_\calE^2(1-\sigma))=2\ln1=0$, so $\acute{H}$ is continuous; $\acute{H}$ is monotonically increasing on both branches (the second being guaranteed by the monotonicity of $\hat{Q}_\calE$), so $\acute{H}$ is monotonically increasing, and \cref{lem:En-mono} again guarantees that $E_n(\acute{H})\leq 0$, and $E_n(H)\leq E_n(\mathsf{H})$.
    
    If we expand and simplify $\mathsf{H}$, we know that it agrees with $H$ when $0\leq u<1-\sigma$; on $1-\sigma\leq u<1$, we have
    \begin{align*}
        \mathsf{H}(u)=\ell_\psi\rbrm[\big]{a^2 \hat{Q}_\calE^2(u)} - 2\ln\rbrm[\big]{a^2\hat{Q}_\calE^2(u)}
        =\ell_\psi\rbrm[\big]{[a^2 \hat{Q}_\calE^2(u)]^{-1}},
    \end{align*}
    due to \eqref{eq:l_theta-self} of \cref{lem:log-poly-basic} below. We additionally define $\mathsf{H}(1)=\lim_{u\to 1^-} \mathsf{H}(u)=\ell_\psi(0)=0$. This means %
    \begin{align*}
        \mathsf{H}(u)
        & =\begin{cases}
            \ell_\psi\rbrm[\big]{[a^2 \hat{Q}_\calE^2(u)]}, & 0\leq u<1-\sigma, \\
            \ell_\psi\rbrm[\big]{[a^2 \hat{Q}_\calE^2(u)]^{-1}}, & 1-\sigma\leq u\leq 1.
        \end{cases}\\
        & =\begin{cases}
            \ell_\psi\rbrm[\Big]{\rbrm[\big]{\frac{1-u}{\sigma}}^{-2/\beta}}, & 0\leq u<1-\sigma, \\
            \ell_\psi\rbrm[\Big]{\rbrm[\big]{\frac{1-u}{\sigma}}^{2/\beta}}, & 1-\sigma\leq u\leq 1.
        \end{cases}
    \end{align*}
    Equivalently, 
    \begin{align*}
        \mathsf{H}(1-u)
        & =\begin{cases}
            \ell_\psi\rbrm[\big]{(\sigma/u)^{2/\beta}}, & \sigma<u\leq 1, \\
            \ell_\psi\rbrm[\big]{(u/\sigma)^{2/\beta}}, & 0\leq u\leq \sigma.
        \end{cases}
    \end{align*}

    If we define $\cev{\mathsf{H}}(u)=\mathsf{H}(1-u)$, we notice that the function $\cev{\mathsf{H}}$ has finite total dyadic second-order difference $\normm[\big]{\cev{\mathsf{H}}}_{\Delta^2}\leq C(\psi)$, where $C(\psi)$ is a constant that only depends on $\psi$ and grows as $\calO((\ln \psi^{-1})^2)$. The analysis is the main technical difficulty of this section, and we defer it to \cref{lem:pareto-remainder-second-difference} below. From \cref{lem:finite-difference-reflection}, we know that $\normm{\mathsf{H}}_{\Delta^2}\leq C(\psi)$ as well.

    We are very close to applying \cref{thm:En-2nd-order} on $\mathsf{H}$; however, the theorem requires the function to evaluate to $0$ on both endpoints $\{0, 1\}$, which $\mathsf{H}$ does not satisfy. We define an affine function $\bar{\mathsf{H}}(u)$ that satisfies
    \begin{align*}
        \bar{\mathsf{H}}(0)=\mathsf{H}(0)=H(0),
        \qquad
        \bar{\mathsf{H}}(1)=\mathsf{H}(1)=0,
    \end{align*}
    or $\bar{\mathsf{H}}(u)=-H(0)(x-1)$. Since $H(0)\leq 0$, the function $\bar{\mathsf{H}}$ is monotonically increasing, and yet again \cref{lem:En-mono} implies that $E_n(\bar{\mathsf{H}})\leq 0$, or equivalently $E_n(\mathsf{H})\leq E_n(\mathsf{H}-\bar{\mathsf{H}})$.

    Now, the function $\mathsf{H}-\bar{\mathsf{H}}$:
    \begin{itemize}
        \item equals 0 on $\{0, 1\}$, by the definition of $\bar{\mathsf{H}}$;
        \item has the same second-order differences as $\mathsf{H}$, because $\bar{\mathsf{H}}$ is linear and satisfies $\Delta^2_\delta\bar{\mathsf{H}}\equiv 0$, so $\normm[\big]{\mathsf{H}-\bar{\mathsf{H}}}_{\Delta^2}=\normm{\mathsf{H}}_{\Delta^2}\leq C(\psi)$.
    \end{itemize}
    So, \cref{thm:En-2nd-order} proves that $\absm[\big]{E_n(\mathsf{H}-\bar{\mathsf{H}})}\leq \frac{C(\psi)}{n}$ for all positive integer $n$. Therefore,
    \begin{align*}
        E_n(h)\leq E_n(H)\leq E_n(\mathsf{H})\leq E_n(\mathsf{H}-\bar{\mathsf{H}})\leq \frac{C(\psi)}{n},
    \end{align*}
    where the first three inequalities hold from monotonicity of their differences, and the last inequality holds from the bounded $\Delta^2$ norm.
    This proves the theorem with $C=C(\psi)$.
\end{proof}

\begin{applemma}\label{lem:log-poly-basic}
    Let $\ell_\psi(z)=\ln f_\psi(z)$. Then:
    \begin{enumerate}
        \item For $z>0$,
            \begin{align}
                \ell_\psi(z)=2\ln z+\ell_\psi(1/z).\label{eq:l_theta-self}
            \end{align}
        \item For $z\in[0,1]$,
            \begin{align}
                -2\ln(\sin\psi)^{-1} \leq{} &\ell_\psi(z)\leq 0,\label{eq:l_theta-0th}
                \\
                \frac{-4\ln(\sin\psi)^{-1}}{\cos\psi}z\leq{}&\ell_\psi(z).\label{eq:l_theta-1st-app}
            \end{align}
        \item For $z\in (0, 1]$,
            \begin{align}
                \abs{\ell'_\psi(z)}&\leq \frac{2}{\sin\psi},\label{eq:l_theta-1st}\\
                \abs{\ell''_\psi(z)}&\leq \frac{2}{(\sin\psi)^2}.\label{eq:l_theta-2nd}
            \end{align}
        \item For $z\in (0, \frac12]$,
            \begin{align}
                \abs{\ell'_\psi(z)}&\leq 4,\label{eq:l_theta-1st-half}\\
                \abs{\ell''_\psi(z)}&\leq 8,\label{eq:l_theta-2nd-half}
            \end{align}
        \item The function $\ell_\psi$ has a bounded total variation
        \begin{align}
            V_0^1(\ell_\psi)\leq4\ln(\sin\psi)^{-1}.\label{eq:l_tehta-bv}
        \end{align}
    \end{enumerate}
\end{applemma}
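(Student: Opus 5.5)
The plan is to work with $f_\psi(z)=1-2\cos\psi\, z+z^2=(z-e^{i\psi})(z-e^{-i\psi})=(z-\cos\psi)^2+\sin^2\psi$ and read off each claim from this factorization.

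\textbf{Item 1.} Use $f_\psi(z)=z^2 f_\psi(1/z)$ for $z>0$ (divide by $z^2$); taking logarithms gives \eqref{eq:l_theta-self}.

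\textbf{Item 2.}
\begin{itemize}
\item Lower bound in \eqref{eq:l_theta-0th}: $f_\psi(z)\ge\sin^2\psi$, so $\ell_\psi(z)\ge -2\ln(\sin\psi)^{-1}$.
\item Upper bound in \eqref{eq:l_theta-0th}: on $[0,1]$ we have $f_\psi(z)-1=z(z-2\cos\psi)$. Since $\psi\le\pi/3$ gives $2\cos\psi\ge1\ge z$, this is $\le0$, so $\ell_\psi\le0$.
\item Linear bound \eqref{eq:l_theta-1st-app}: $\ell_\psi$ is decreasing on $[0,\cos\psi]$ and increasing on $[\cos\psi,1]$. On $[0,\cos\psi]$, I would argue that $\ell_\psi$ is concave near $0$; if not, use $\ell_\psi(z)\ge \ell_\psi(\cos\psi)\ge -2\ln(\sin\psi)^{-1}$ together with $z/\cos\psi\ge$ something. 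The cleanest route is a case split.
  \begin{itemize}
  \item For $z\ge\cos\psi/2$: $\ell_\psi(z)\ge-2\ln(\sin\psi)^{-1}\ge -\frac{4\ln(\sin\psi)^{-1}}{\cos\psi}z$, since $z/\cos\psi\ge1/2$.
  \item For $z\le\cos\psi/2$: $f_\psi(z)\ge 1-2\cos\psi\, z\ge 1-z/(\text{stuff})$, and then use $\ln(1-x)\ge -2x\ln2$ for $x\le1/2$ (concavity). Here $x=2z\cos\psi\le\cos^2\psi\le1$ needs care. Alternatively, directly use $f_\psi(z)\ge (1-z/\cos\psi)\cdot 1+ (z/\cos\psi)\sin^2\psi$, which follows from convexity of $f_\psi$ in $z$ on $[0,\cos\psi]$ (the chord lies above?).
  \end{itemize}
  Since $f_\psi$ is a convex quadratic, the chord lies \emph{above}, so that fails. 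Instead use concavity of $\ln$: $\ell_\psi(z)\ge$ the chord of $\ell_\psi$ only if $\ell_\psi$ is concave, which holds on $[0,\cos\psi]$ when $\ell''_\psi=\frac{2f-f'^2}{f^2}\le0$, i.e.\ $2f\le f'^2$. At $z=0$ this reads $2\le4\cos^2\psi$, true for $\psi\le\pi/4$ but not up to $\pi/3$. So the fallback is the case split with the elementary bound $\ln f\ge 1-1/f$: for $z\le\cos\psi/2$ we have $f\ge 1-2\cos\psi\, z\ge\sin^2\psi$ and $1-1/f\ge -\frac{2\cos\psi\, z}{\sin^2\psi}$. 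This gives a bound of a slightly different constant form; I would adjust constants, expecting this to be the fiddliest bookkeeping, possibly replacing $1/\sin^2$ by a logarithmic factor via concavity of $\ln(1-x)$ on $x\le x_0$.
\end{itemize}

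\textbf{Item 3.} Write $\ell'_\psi(z)=2\,\mathrm{Re}\,\frac{1}{z-e^{i\psi}}$ and $\ell''_\psi(z)=-2\,\mathrm{Re}\,\frac{1}{(z-e^{i\psi})^2}$. Since $|z-e^{i\psi}|\ge\sin\psi$ for real $z$, this gives \eqref{eq:l_theta-1st} and \eqref{eq:l_theta-2nd}.

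\textbf{Item 4.} For $z\le\frac12$: $|z-e^{i\psi}|\ge |1-z|\cdot$? More simply, $|z-e^{i\psi}|\ge \mathrm{dist}(z,\text{unit circle})=1-z\ge\frac12$. This yields $|\ell'_\psi|\le4$ and $|\ell''_\psi|\le8$.

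\textbf{Item 5.} $\ell_\psi$ is monotone on the two pieces $[0,\cos\psi]$ and $[\cos\psi,1]$. Its values are $0$ at $0$, $2\ln\sin\psi$ at $\cos\psi$, and $\ell_\psi(1)\le0$ at $1$. Hence $V_0^1(\ell_\psi)\le 2\cdot 2\ln(\sin\psi)^{-1}$.

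The main obstacle is the constant in \eqref{eq:l_theta-1st-app}.
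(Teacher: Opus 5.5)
\textbf{Verdict: there is a genuine gap in your proof of \eqref{eq:l_theta-1st-app}.} Items 1, 3, 4 and 5, and both halves of \eqref{eq:l_theta-0th}, are correct and essentially match the paper; your distance-to-the-unit-circle bound $|z-e^{i\psi}|\ge 1-z$ in item 4 is a valid variant of the paper's monotonicity argument.

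The linear lower bound \eqref{eq:l_theta-1st-app} is never established. Your case $z\ge\frac{\cos\psi}{2}$ is fine. For $z\le\frac{\cos\psi}{2}$, however, the route you commit to is $\ln f\ge 1-1/f$ with $f\ge\sin^2\psi$. This only gives $\ell_\psi(z)\ge -\frac{2\cos\psi}{\sin^2\psi}\,z$, and no adjustment of constants turns that into the claim. Indeed $\frac{2\cos\psi}{\sin^2\psi}>\frac{4\ln(\sin\psi)^{-1}}{\cos\psi}$ for every $\psi\in(0,\pi/3]$, which is equivalent to $1-s+s\ln s>0$ with $s=\sin^2\psi<1$. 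For example, at $\psi=\pi/3$ the two constants are $\frac43$ versus about $1.15$, and as $\psi\to0$ they behave like $\Theta(\psi^{-2})$ versus $\Theta(\ln\psi^{-1})$. So the inequality as stated, with its logarithmic dependence on $\sin\psi$, does not follow from what you wrote.

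The missing idea is to apply concavity of $\ln$ in the variable $f$, not in $z$. For $z\in[0,1]$ you already have $f_\psi(z)\in[\sin^2\psi,1]$. The chord of the concave function $\ln$ over that interval gives
\[
\ln f_\psi(z)\ \ge\ \frac{\ln\sin^2\psi}{1-\sin^2\psi}\,\bigl(1-f_\psi(z)\bigr)\ =\ -\frac{2\ln(\sin\psi)^{-1}}{\cos^2\psi}\,\bigl(1-f_\psi(z)\bigr).
\]
Combining this with $0\le 1-f_\psi(z)=z(2\cos\psi-z)\le 2z\cos\psi$ yields exactly $-\frac{4\ln(\sin\psi)^{-1}}{\cos\psi}\,z$, with no case split. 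This is the paper's argument, phrased there as $\ell_\psi(z)/z=\frac{\ln f}{1-f}\cdot\frac{1-f}{z}$ together with the fact that $x\mapsto\frac{\ln x}{1-x}$ is increasing on $(0,1)$.

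Your abandoned idea also works if you choose the right endpoint. For $z\le\frac{\cos\psi}{2}$, set $x=2z\cos\psi\in[0,\cos^2\psi]$. Then concavity of $\ln(1-x)$ on $[0,\cos^2\psi]$ (rather than on $[0,\frac12]$) gives $\ln f_\psi(z)\ge\ln(1-x)\ge\frac{\ln\sin^2\psi}{\cos^2\psi}\,x$, which is the same constant.
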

\begin{proof}
    The first statement follows from $f_\psi(z)=z^2 f_\psi(1/z)$.
    
    For \eqref{eq:l_theta-0th}, observe that $f_\psi$ is convex and is maximized over $[0,1]$ at an endpoint. Since $f_\psi(0)=1$ and $f_\psi(1)=2-2\cos\psi\leq 1$ , we have $f_\psi(z)\leq 1$ for $z\in[0,1]$; solving the first-order condition $f_\psi'(z)=0$ yields $z^*=\cos\psi$, so the minimum of $f_\psi$ is $f_\psi(z^*)=1-(\cos\psi)^2=(\sin\psi)^2$; thus $\ell_\psi(z)\geq \ln(\sin\psi)^2=-2\ln(\sin\psi)^{-1}$. (This also implies that $z^*=\cos\psi\geq \frac12$, and the function $f_\psi$ is monotonically decreasing on $[0, \frac12]$, which we will use later.)
    
    For \eqref{eq:l_theta-1st-app}, we notice that $\frac{\ell_\psi(z)}{z}=\frac{\ln f_\psi(z)}{1-f_\psi(z)}\frac{1-f_\psi(z)}{z}$. The first term is of the form $t(x)=\frac{\ln x}{1-x}$ and negative; the gradient of this function is $t'(x)=\frac{1-x+x\ln x}{x(1-x)^2}$, which is positive for $x\in (0, 1)$, thus $t(f(x))$ is minimized whenever $f(x)$ is minimized, and $t(f(x))\geq t((\sin\psi)^2)=\frac{-2\ln(\sin\psi)^{-1}}{(\cos\psi)^2}$; the second term is $\frac{1-f_\psi(z)}{z}=\frac{2\cos\psi\cdot z-z^2}{z}=2\cos\psi-z\leq 2\cos\psi$ is positive. Therefore, $\frac{\ell_\psi(z)}{z}\geq \frac{-2\ln(\sin\psi)^{-1}}{(\cos\psi)^2}\cdot(2\cos\psi)=\frac{-4\ln(\sin\psi)^{-1}}{\cos\psi}$.

    For the derivative bounds, we notice that
    \begin{align*}
        \ell'_\psi(z)&=\frac{f'_\psi(z)}{f_\psi(z)}=\frac{2z-2\cos\psi}{1-2\cos\psi z+z^2}=\frac{1}{z-e^{i\psi}}+\frac{1}{z-e^{-i\psi}},\\
        \ell''_\psi(z)&=-\frac{1}{(z-e^{i\psi})^2}-\frac{1}{(z-e^{-i\psi})^2}.
    \end{align*}
    We have $\abs{z-e^{\pm i\psi}}^2=(z-\cos\psi)^2+(\mp \sin\psi)^2=1-2z\cos\psi+z^2=f_\psi(z)$. Thus, 
    $\absm[\big]{\ell'_\psi(z)}
     \leq 
      \frac{1}{\abs{z-e^{i\psi}}}
     +\frac{1}{\abs{z-e^{-i\psi}}}
     =\frac{2}{\sqrt{f_\psi(z)}}$. We can prove \eqref{eq:l_theta-1st} with the fact that $f_\psi(z)\geq(\sin\psi)^2$.
    Furthermore, when $z\in [0, \frac12]$, we know that $f_\psi(z)$ is monotonically decreasing, and thus $f_\psi(z)\geq f_\psi(\frac12)=\frac54-\cos\psi\geq \frac14$; this proves \eqref{eq:l_theta-1st-half}.

    Similarly, we have $\absm[\big]{\ell''_\psi(z)}
     \leq 
      \frac{1}{\abs{z-e^{i\psi}}^2}
     +\frac{1}{\abs{z-e^{-i\psi}}^2}
     =\frac{2}{f_\psi(z)}$, and \eqref{eq:l_theta-2nd} and \eqref{eq:l_theta-2nd-half} can be proved with again plugging in $f_\psi(z)\geq(\sin\psi)^2$ on $[0, 1]$ and $f_\psi(z)\geq \frac14$ on $[0, \frac12]$ respectively.
    
    To prove the total variation, we notice that $\ell_\psi$ is decreasing on the left of $z^*$ and increasing on the right. Because its range is $[-2\ln(\sin\psi)^{-1}, 0]$, it varies at most twice through this range, making the total variation bounded by $4\ln(\sin\psi)^{-1}$.
\end{proof}

\begin{applemma}\label{lem:log-poly-psi}
    Let
    \begin{align*}
        \Psi_{\beta,\psi}(t)=
        \begin{cases}
            \ell_\psi(t^{2/\beta}),&0\leq t<1,\\
            \ell_\psi(t^{-2/\beta}),&t\geq 1.
        \end{cases}
    \end{align*}
    Then,
    \begin{align}
        \Psi_{\beta,\psi}(t)&\leq 0,\label{eq:psi-bound-nonpositive}\\
        \abs{\Psi_{\beta,\psi}(t)}&\leq 2\ln(\sin\psi)^{-1} \min\{t^{2/\beta},t^{-2/\beta}\},\label{eq:psi-bound-0th}\\
        \absm[\big]{\Psi'_{\beta,\psi}(t)}&\leq \frac{4}{\sin\psi},
        &&\textrm{\upshape if }t\in (0, 1)\cup(1,+\infty)
        \label{eq:psi-bound-1st} \\
        \absm[\big]{\Psi''_{\beta,\psi}(t)}&\leq 24 t^{2/\beta-2},
        &&\textrm{\upshape if }t\in (0, 1/2]
        \label{eq:psi-bound-2nd-left},\\
        \absm[\big]{\Psi''_{\beta,\psi}(t)}&\leq 40 t^{-2/\beta-2},
        &&\textrm{\upshape if }t\in [2,+\infty)
        \label{eq:psi-bound-2nd-right},\\
        \absm[\big]{\Psi''_{\beta,\psi}(t)}&\leq \frac{19}{(\sin\psi)^2},&&\textrm{\upshape if }t\in [1/3, 1)\cup(1,3].\label{eq:psi-bound-2nd-near}
    \end{align}
    Also, the function $\Psi_{\beta,\psi}$ has total variation
    \begin{align}        V_0^\infty(\Psi_{\beta,\psi})\leq8\ln(\sin\psi)^{-1}.\label{eq:psi-bv}
    \end{align}
\end{applemma}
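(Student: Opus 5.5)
The plan is to reduce every claim to the facts about $\ell_\psi$ collected in \cref{lem:log-poly-basic}, handling $t<1$ and $t>1$ separately. The two branches are linked by \eqref{eq:l_theta-self}: for $t\ge1$ we have $\Psi_{\beta,\psi}(t)=\ell_\psi(t^{-2/\beta})$ with $t^{-2/\beta}\in(0,1]$, so both branches evaluate $\ell_\psi$ only on $[0,1]$. Write $g(t)=t^{2/\beta}$ on $(0,1)$ and $g(t)=t^{-2/\beta}$ on $(1,\infty)$. Then $\Psi=\ell_\psi\circ g$ with $g\in(0,1)$, and
\begin{align*}
\Psi' = \ell_\psi'(g)\,g', \qquad \Psi''=\ell_\psi''(g)\,(g')^2+\ell_\psi'(g)\,g''.
\end{align*}
Throughout we use $\beta\in(1,\frac85]$, so $2/\beta\in[\frac54,2)$.

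\textbf{Zeroth order and total variation.} Nonpositivity \eqref{eq:psi-bound-nonpositive} is immediate from $\ell_\psi\le0$ on $[0,1]$. For \eqref{eq:psi-bound-0th}, combine \eqref{eq:l_theta-1st-app} with $\ell_\psi\le0$ to get $|\ell_\psi(z)|\le c\,z$, where $c=4\ln(\sin\psi)^{-1}/\cos\psi\le 8\ln(\sin\psi)^{-1}$ since $\cos\psi\ge\frac12$. This gives the $\min\{t^{2/\beta},t^{-2/\beta}\}$ shape, but with constant $8\ln(\sin\psi)^{-1}$ rather than the stated $2\ln(\sin\psi)^{-1}$. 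To recover the stated constant, I would sharpen to a convexity bound: on $[0,1]$, $\ell_\psi(z)\ge -2\ln(\sin\psi)^{-1}\,\min\{1,z/\cos\psi\}$, using concavity of $\ell_\psi$ near $0$ together with \eqref{eq:l_theta-0th}. If this sharpening fails, the weaker constant is harmless downstream. For \eqref{eq:psi-bv}, $g$ is monotone on each of $(0,1)$ and $(1,\infty)$ with range $(0,1)$, so each branch contributes at most $V_0^1(\ell_\psi)\le 4\ln(\sin\psi)^{-1}$ by \eqref{eq:l_tehta-bv}; the total is at most $8\ln(\sin\psi)^{-1}$.

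\textbf{First order, \eqref{eq:psi-bound-1st}.} We have $|g'|=\frac2\beta t^{2/\beta-1}\le 2$ on $(0,1)$, because the exponent is positive. On $(1,\infty)$, $|g'|=\frac2\beta t^{-2/\beta-1}\le 2$. Combining with \eqref{eq:l_theta-1st} gives $|\Psi'|\le 4/\sin\psi$.

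\textbf{Second order.} On $(0,\frac12]$, $g=t^{2/\beta}\le 2^{-5/4}<\frac12$, so \eqref{eq:l_theta-1st-half} and \eqref{eq:l_theta-2nd-half} apply. We have $(g')^2\le 4t^{4/\beta-2}\le 4t^{2/\beta-2}$ (since $t\le1$), and $|g''|=\frac2\beta|\frac2\beta-1|\,t^{2/\beta-2}\le 2t^{2/\beta-2}$. Hence $|\Psi''|\le 8\cdot4+4\cdot2=40$ times $t^{2/\beta-2}$ at face value. To reach the stated $24$, I would use the precise coefficient bounds $\frac2\beta(\frac2\beta-1)\le 1$ and $(\frac2\beta)^2\le4$, together with $t^{2/\beta}\le\frac12$ inserted into the $(g')^2$ term: this gives $8\cdot4\cdot\frac12+4\cdot1=20\le24$. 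On $[2,\infty)$, $g\le 2^{-5/4}<\frac12$ again. Here $(g')^2\le 4t^{-4/\beta-2}\le 4t^{-2/\beta-2}\cdot\frac12$ and $|g''|=\frac2\beta(\frac2\beta+1)t^{-2/\beta-2}\le 6t^{-2/\beta-2}$, so $|\Psi''|\le 16+24=40$. Near $t=1$, on $[\frac13,1)\cup(1,3]$, $g\in(0,1)$, so we must use \eqref{eq:l_theta-1st} and \eqref{eq:l_theta-2nd} instead. Now $(g')^2\le (2\cdot 3^{2/\beta+1}\ldots)$ must be bounded by a numeric constant; the factor that can blow up is $t^{-2/\beta-2}$ at $t=\frac13$ on the left branch. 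For $t\in[\frac13,1)$, however, the exponents $2/\beta-1>0$ and $2/\beta-2<0$ give $(g')^2\le4$ and $|g''|\le 3^{2-2/\beta}\le 3^{3/4}$. For $t\in(1,3]$, everything is at most $6$ because $t\ge1$. Summing with $1/\sin\psi\le 1/\sin^2\psi$ yields a constant around $19/\sin^2\psi$.

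The main obstacle is purely the bookkeeping of numerical constants, in particular matching exactly $24$, $40$, $19$, and the factor $2$ in \eqref{eq:psi-bound-0th}. I would first verify the coefficient maxima over $2/\beta\in[\frac54,2)$, and if a stated constant proves slightly off, carry the achievable constant instead, since only finiteness in terms of $\psi$ matters downstream.
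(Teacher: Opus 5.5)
Apart from \eqref{eq:psi-bound-0th}, your plan is the paper's proof. You apply the chain rule on each branch of $\Psi_{\beta,\psi}=\ell_\psi\circ g$ with the derivative bounds of \cref{lem:log-poly-basic}, and use monotonicity of $g$ on each side of $t=1$ for \eqref{eq:psi-bv}.

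\textbf{The main issue: \eqref{eq:psi-bound-0th} is false as stated.} No sharpening can establish it. Take $t_0=(\cos\psi)^{\beta/2}\in(0,1)$. Then $\Psi_{\beta,\psi}(t_0)=\ell_\psi(\cos\psi)=\ln\bigl((\sin\psi)^2\bigr)$, so $|\Psi_{\beta,\psi}(t_0)|=2\ln(\sin\psi)^{-1}$. The claimed bound is only $2\ln(\sin\psi)^{-1}\cos\psi$, which is strictly smaller. For $\psi=\pi/3$ it also fails as $t\to0$, since $|\ell_\psi(z)|/z\to2\cos\psi=1>\ln\frac43$.

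Your proposed chord inequality $\ell_\psi(z)\ge-2\ln(\sin\psi)^{-1}z/\cos\psi$ on $[0,\cos\psi]$ is itself false, for three reasons:
\begin{enumerate}
\item $\ell_\psi$ reaches its minimum at $z=\cos\psi$ with zero slope, while the chord arrives there with strictly negative slope. So $\ell_\psi$ dips below the chord just to the left of $\cos\psi$, for every $\psi$.
\item $\ell_\psi$ is convex, not concave, near $0$ when $\psi=\pi/3$, because $\ell_\psi''(0)=2-4\cos^2\psi=1$.
\item Even if the chord bound held, it would give the constant $2\ln(\sin\psi)^{-1}/\cos\psi$, not $2\ln(\sin\psi)^{-1}$.
\end{enumerate}
The paper's one-line justification (``follows from \eqref{eq:l_theta-0th}'') does not work either, since \eqref{eq:l_theta-0th} gives no decay in $t$ at all.

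Your fallback is the correct repair. Drop the sharpening and use \eqref{eq:l_theta-1st-app} together with $\ell_\psi\le0$. This gives $|\Psi_{\beta,\psi}(t)|\le\frac{4\ln(\sin\psi)^{-1}}{\cos\psi}\min\{t^{2/\beta},t^{-2/\beta}\}\le 8\ln(\sin\psi)^{-1}\min\{t^{2/\beta},t^{-2/\beta}\}$. Its only use is the coarse-level estimate in \cref{lem:pareto-remainder-second-difference}, which picks up a factor of at most $4$; the main theorems are unaffected.

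\textbf{Smaller slips in the constant bookkeeping.} The stated constants $24$ and $19$ still come out once these are corrected.
\begin{enumerate}
\item The claim $\frac2\beta(\frac2\beta-1)\le1$ is false for $\beta<\sqrt5-1\approx1.236$; it tends to $2$ as $\beta\to1$. This covers the whole double-stepsize range $\beta<\frac87$. With the correct bound $2$, your own computation gives $8\cdot4\cdot\frac12+4\cdot2=24$, which is exactly \eqref{eq:psi-bound-2nd-left}.
\item On $[\frac13,1)$ you need $|g''|\le2\cdot3^{3/4}$ rather than $3^{3/4}$. This gives $(8+4\cdot3^{3/4}\sin\psi)/(\sin\psi)^2\le16/(\sin\psi)^2$.
\item On $(1,3]$ you get $(8+12\sin\psi)/(\sin\psi)^2$. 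This is at most $19/(\sin\psi)^2$ only after using $\sin\psi\le\frac{\sqrt3}{2}$, i.e.\ $\psi\le\frac\pi3$. Replacing $1/\sin\psi$ by $1/(\sin\psi)^2$, as you suggest, yields $20$.
\end{enumerate}
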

\begin{proof}
    The pointwise bounds \eqref{eq:psi-bound-nonpositive} and \eqref{eq:psi-bound-0th} follows from \eqref{eq:l_theta-0th}.

    By differentiating the definition of $\Psi$, we have, for $t\in (0, 1)$,
    \begin{align}
        \Psi'_{\beta,\psi}(t)&=\frac{2}{\beta}t^{2/\beta-1}\ell'_\psi(t^{2/\beta}),\label{eq:step-psi-left-1st}\\
        \Psi''_{\beta,\psi}(t)&=\frac{2}{\beta}\rbrm[\Big]{\frac{2}{\beta}-1}t^{2/\beta-2}\ell'_\psi(t^{2/\beta})+\frac{4}{\beta^2}t^{4/\beta-2}\ell''_\psi(t^{2/\beta}),
        \label{eq:step-psi-left-2nd}
    \end{align}
    and for $t\in (1, +\infty)$,
    \begin{align}
        \Psi'_{\beta,\psi}(t)&=-\frac{2}{\beta}t^{-2/\beta-1}\ell'_\psi(t^{-2/\beta}),\label{eq:step-psi-right-1st}\\
        \Psi''_{\beta,\psi}(t)&=-\frac{2}{\beta}\rbrm[\Big]{-\frac{2}{\beta}-1}t^{-2/\beta-2}\ell'_\psi(t^{-2/\beta})+\frac{4}{\beta^2}t^{-4/\beta-2}\ell''_\psi(t^{-2/\beta}).
        \label{eq:step-psi-right-2nd}
    \end{align}

    To prove \eqref{eq:psi-bound-1st}, we notice that $t^{2/\beta-1}<1$ when $t\in (0, 1)$ and $t^{-2/\beta-1}<1$ when $t\in (1,+\infty)$, so
    \begin{align*}
        \absm[\big]{\Psi'_{\beta,\psi}(t)}<\frac{2}{\beta}\ell'_\psi\rbrm[\big]{\min\{t^{2/\beta-2},t^{-2/\beta-2}\}}\leq \frac{2}{\beta}\frac{2}{\sin\psi}\leq \frac{4}{\sin\psi}.
    \end{align*}

    When $0<t\leq\frac12$, the argument $z=t^{2/\beta}$ is in the range $(0, \frac12]$ because $2/\beta>1$; using \eqref{eq:step-psi-left-2nd}, \eqref{eq:l_theta-1st-half}, \eqref{eq:l_theta-2nd-half} and $\frac{2}{\beta}\in [5/4, 2]$, we have
    \begin{align*}
        \absm[\big]{\Psi''_{\beta,\psi}(t)}\leq 8t^{2/\beta-2}+32t^{4/\beta-2}=8t^{2/\beta-2}\rbrm[\big]{1+4t^{2/\beta}}\leq 24t^{2/\beta-2}.
    \end{align*}
    When $t\geq2$, we similarly use \eqref{eq:step-psi-right-2nd}, \eqref{eq:l_theta-1st-half}, and \eqref{eq:l_theta-2nd-half} to derive
    \begin{align*}
        \absm[\big]{\Psi''_{\beta,\psi}(t)}\leq 24t^{-2/\beta-2}+32t^{-4/\beta-2}=8t^{-2/\beta-2}\rbrm[\big]{3+4t^{-2/\beta}}\leq 40t^{-2/\beta-2}.
    \end{align*}
    Thus \eqref{eq:psi-bound-2nd-left} and \eqref{eq:psi-bound-2nd-right} are proved.

    When $\frac13\leq t<1$, we have $t^{2/\beta-1}\in \sbrm[\big]{\frac13, 1}$, $t^{2/\beta-2}\in \sbrm[\big]{1, 3^{3/4}}$, and $t^{4/\beta-2}\in \sbrm[\big]{\frac19, 1}$. In this case, together with \eqref{eq:l_theta-1st} and \eqref{eq:l_theta-2nd}, we have
    \begin{align*}
         \absm[\big]{\Psi''_{\beta,\psi}(t)}&\leq
            2\cdot1\cdot 3^{3/4}\cdot \frac{2}{\sin\psi}
            + 4\cdot 1\cdot \frac{2}{(\sin\psi)^2}
            =\frac{1}{(\sin\psi)^2}(4\cdot 3^{3/4}\sin\psi+8)
            \leq \frac{16}{(\sin\psi)^2}.
    \end{align*}
    When $1<t\leq 3$, we have $t^{-2/\beta-1}\in \sbrm[\big]{\frac1{27}, 1}$, $t^{-2/\beta-2}\in \sbrm[\big]{\frac1{81}, 1}$, and $t^{-4/\beta-2}\in \sbrm[\big]{\frac1{729}, 1}$. Again, together with \eqref{eq:l_theta-1st} and \eqref{eq:l_theta-2nd}, we have
    \begin{align*}
         \absm[\big]{\Psi''_{\beta,\psi}(t)}&\leq
            2\cdot3\cdot 1\cdot \frac{2}{\sin\psi}
            + 4\cdot 1\cdot \frac{2}{(\sin\psi)^2}
            =\frac{1}{(\sin\psi)^2}(12\sin\psi+8)
            \leq \frac{19}{(\sin\psi)^2}.
    \end{align*}
    These together prove \eqref{eq:psi-bound-2nd-near}.

    To prove that $\Psi_{\beta,\psi}$ has bounded total variation, we see that the value $\begin{cases}
        t^{2/\beta},&0\leq t<1,\\
        t^{-2/\beta},&t\geq 1.
    \end{cases}$
    increases from 0 to 1 and then decreases to 0, so the total variation of $\Psi_{\beta,\psi}$ is twice the total variation of $\ell_\psi$, which is bounded as \eqref{eq:l_tehta-bv}.
\end{proof}
\begin{applemma}\label{lem:pareto-remainder-second-difference}
    Suppose $0\leq \sigma<1$. Define the function %
    \begin{align*}
        f(u)\defeq
        \Psi_{\beta,\psi}(u/\sigma)
        =
        \begin{cases}
            \ell_\psi\rbrm[\big]{(u/\sigma)^{2/\beta}},&0\leq u<\sigma,\\
            \ell_\psi\rbrm[\big]{(\sigma/u)^{2/\beta}},&\sigma\leq u\leq1.
        \end{cases}
    \end{align*}
    If $\sigma=0$, let $f\equiv 0$. Then, we have
    \begin{align*}
        \norm{f}_{\Delta^2}\leq 12(\ln(\sin\psi)^{-1})^2+45\ln(\sin\psi)^{-1}+2,
    \end{align*}
    for every $\sigma \in [0, 1)$.
\end{applemma}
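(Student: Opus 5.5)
Write $\Lambda=\ln(\sin\psi)^{-1}$ and $\norm{f}_{\Delta^2}=\sum_{m\ge0}\norm{f}_{\Delta^2,m}$. The plan is to bound each level $m$ separately, using \cref{lem:log-poly-psi} after rescaling $t=u/\sigma$. With $\delta=2^{-m-1}$, each dyadic interval $\calI_{m,a}=[a/2^m,(a+1)/2^m]$ has length $2\delta$. The case $\sigma=0$ is trivial. For $\sigma>0$, the chain rule gives $f''(u)=\sigma^{-2}\Psi''_{\beta,\psi}(u/\sigma)$. The singular behaviour sits at $u=\sigma$ (kink) and near $u=0$ (where $t^{2/\beta}$ has an unbounded second derivative when $2/\beta<2$). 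Let $m_0$ be the level with $2^{-m_0}\approx\sigma$, and split the levels into coarse ones ($2^{-m}\gtrsim\sigma$) and fine ones ($2^{-m}\lesssim\sigma$).

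\textbf{Coarse levels.} I would use the decay bound \eqref{eq:psi-bound-0th} and the second-derivative bound \eqref{eq:psi-bound-2nd-right} on the right tail. Intervals meeting $[0,3\sigma]$ are at most two per level. For these, the zeroth-order bound of \cref{lem:finite-difference-smoothness} gives $|\Delta^2_\delta f|\le 8\Lambda\,\min\{1,(\sigma/u)^{2/\beta}\}$. For the interval adjacent to $\sigma$ this must be summed over roughly $\log_2(1/\sigma)$ levels, which is too many. So instead I would use the total variation: by \eqref{eq:psi-bv}, a single interval contributes at most $V\le 8\Lambda$. The kink at $u=\sigma$ is harmless at a coarse scale $2\delta\gg\sigma$ because $f$ is small there relative to its structure: $|f|\le 2\Lambda(\sigma/u)^{2/\beta}$ on the right, and the part on $[0,\sigma]$ has variation at most $4\Lambda$. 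Bounding the left endpoint term by the variation over $[0,2\delta]$ and the other two by $2\Lambda(\sigma/\delta)^{2/\beta}$ gives a geometric series in $(\sigma 2^m)^{2/\beta}$ over $m\le m_0$, summing to $O(\Lambda)$. On intervals inside $[3\sigma,1]$, \cref{lem:finite-difference-smoothness}(3) with $|f''(u)|\le 40\sigma^{2/\beta}u^{-2/\beta-2}$ gives per-level sum $\lesssim \delta\int_{\sim\delta}^1 40\sigma^{2/\beta}u^{-2/\beta-2}\,du\cdot\delta\lesssim\sigma^{2/\beta}\delta^{-2/\beta}$. This is again geometric over coarse $m$, so the total is $O(1)$.

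\textbf{Fine levels.} Here I would bound most intervals by \cref{lem:finite-difference-smoothness}(3), summed as in \cref{lem:finite-difference-global-smoothness}: level $m$ contributes $\le\delta\int|f''|$ over the region excluding the few bad intervals. On $[0,\sigma/2]$, $\int|f''|=\sigma^{-1}\int_0^{1/2}|\Psi''|\,dt$, and $\int_0^{1/2}24t^{2/\beta-2}dt<\infty$ because $2/\beta>1$; this gives $O(\delta/\sigma)$. On $[\sigma/3,3\sigma]$ minus the kink, \eqref{eq:psi-bound-2nd-near} gives $\int|f''|\le 6\cdot 19/(\sigma\sin^2\psi)$. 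On $[2\sigma,1]$ the bound is $O(1/\sigma)$ by \eqref{eq:psi-bound-2nd-right}. Summing $\delta/\sigma$ over $2^{-m}\le\sigma$ is geometric, but the $(\sin\psi)^{-2}$ factor is not acceptable against the claimed $\Lambda^2$ bound.

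\textbf{Main obstacle.} The main obstacle is this near-kink region: the target constant is polylogarithmic in $1/\sin\psi$, whereas \eqref{eq:psi-bound-2nd-near} is polynomial. I would therefore handle the fine levels near $\sigma$ with a mixed bound instead. For $2\delta$ above a threshold $\tau$, use the first-order bound $|\Delta^2_\delta f|\le 2\delta\cdot 4/(\sigma\sin\psi)$ from \eqref{eq:psi-bound-1st} together with the variation bound \cref{lem:finite-difference-bounded-variation}, $\norm{f}_{\Delta^2,m}\le 4\Lambda$. For smaller $2\delta$, use the second-order bound. Choosing the crossover at scale $\delta\approx\sigma\sin^2\psi$, the variation-bounded levels number about $O(\Lambda)$, giving $O(\Lambda^2)$; beyond the crossover the second-order contributions form a geometric series with sum $O(1)$.

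The kink interval itself contributes one term per level, at most $2\delta\cdot 4/(\sigma\sin\psi)$. This is absorbed the same way: use $8\Lambda$ for the $O(\Lambda)$ levels near $m_0$ and the Lipschitz bound afterwards. The $\sigma\sin\psi$ crossover adds another $O(\Lambda)$ levels, hence again $O(\Lambda^2)$.

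Collecting the coarse levels ($O(\Lambda)+O(1)$) and the fine levels ($O(\Lambda^2)+O(\Lambda)+O(1)$) gives a bound of the form $c_2\Lambda^2+c_1\Lambda+c_0$. I would then adjust the explicit constants to match $12\Lambda^2+45\Lambda+2$, where the $\Lambda^2$ term comes from (number of variation-bounded levels $\approx 2\Lambda/\ln 2$) $\times$ (per-level bound $4\Lambda$), up to rounding.
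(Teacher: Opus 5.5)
Your overall architecture matches the paper's. You bound each level $\norm{f}_{\Delta^2,m}$ separately and use the variation bound $\norm{f}_{\Delta^2,m}\le 4\Lambda$ (\cref{lem:finite-difference-bounded-variation} with \eqref{eq:psi-bv}) on a window of about $2\log_2(\sin\psi)^{-1}+O(1)$ levels around $m_0$. The fine-level second-order bound, which carries the factor $(\sin\psi)^{-2}$, takes over only once $2^{m_0-m}$ has absorbed that factor. This is exactly where the paper's $K=2\log_2(\sin\psi)^{-1}+6$ and the leading term $(8/\ln 2)\Lambda^2\le 12\Lambda^2$ come from. Your fine-level treatment uses the integral form of \cref{lem:finite-difference-global-smoothness} instead of the paper's interval-by-interval supremum of $|f''|$, which is a harmless variant.

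The coarse-level paragraph, however, contains a step that fails as written. There are about $\log_2(1/\sigma)$ coarse levels, unbounded as $\sigma\to0$, so every coarse per-level bound must decay geometrically in $m_0-m$. You bound the left-endpoint term of $\Delta^2_\delta f(0)$ by the variation of $f$ over $[0,2\delta]$. That gives a contribution of order $\Lambda$ at every coarse level, since the variation on $[0,\sigma]$ alone is at least $2\Lambda$. Summed, this grows like $\Lambda\log(1/\sigma)$, which contradicts a bound uniform in $\sigma$.

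The missing observation is simply $f(0)=\ell_\psi(0)=0$. Once $\delta\ge\sigma$, every other sample point $k\delta$ with $k\ge1$ satisfies $k\delta\ge\sigma$ and obeys the decay bound \eqref{eq:psi-bound-0th}. The paper then handles all coarse levels in one stroke, without your separate second-derivative estimate on $[3\sigma,1]$. Since $f\le0$, for $m\le m_0-1$ it gets $\norm{f}_{\Delta^2,m}\le 2\sum_{k\ge1}|\Psi_{\beta,\psi}(k\delta/\sigma)|\le 4\Lambda\,\zeta(2/\beta)(\sigma/\delta)^{2/\beta}\le 40\Lambda\,2^{m-m_0}$, using \cref{lem:zeta-bound}.

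This accounting is also what produces the stated constants. The coarse levels $m\le m_0-4$ give $5\Lambda$. The window of $\lfloor K\rfloor+4\le 2\log_2(\sin\psi)^{-1}+10$ levels, at $4\Lambda$ each, gives at most $12\Lambda^2+40\Lambda$. The tail $\sum_{m\ge m_0+\lceil K\rceil}2^{m_0+K-m}$ gives $2$.

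Inside the window, do not charge the kink interval separately. The $4\Lambda$ variation bound already covers the whole level. Adding an extra $8\Lambda$ per level over $O(\Lambda)$ levels, as your last paragraph suggests, would push the $\Lambda^2$ coefficient well past $12$. Likewise, the extra $\Lambda$-free term from your $[3\sigma,1]$ split is not accounted for in this budget, which has very little slack at $\psi=\pi/3$.
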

\begin{proof}
    The proof makes heavy use of \cref{lem:log-poly-basic} and \ref{lem:log-poly-psi} below.

    The case $\sigma=0$ is immediate, so assume $\sigma>0$.
    We will prove it by bounding $\norm{f}_{\Delta^2,m}$ for each $m$. Let $m$ be a nonnegative integer and set $\delta=2^{-m-1}$. Choose an integer $m_0\geq 0$ such that $2^{-m_0-1}<\sigma\leq 2^{-m_0}$.

    \paragraph{Bounding all levels.} We first see that $\norm{f}_{\Delta^2,m}\leq 4\ln(\sin\psi)^{-1}$ for any $m$. This is because the total variation of $f$ on $[0, 1]$ is equal to the total variation of $\Psi_{\beta,\psi}$ on $[0, \frac{1}{\sigma}]$, and is at most the total variation of $\Psi_{\beta,\psi}$ on $[0, \infty)$; the bound is then implied by \cref{lem:finite-difference-bounded-variation}.

    \paragraph{Bounding coarse levels.} We then are going to bound $\norm{f}_{\Delta^2,m}$ when $m \leq m_0 - 1$.

    Let $\eta=\frac\delta\sigma$. Since $m \leq m_0 - 1$, we know that $\eta\geq 1$. 
    Since $\Psi(t)\leq 0$ for all $t$, we have $f(u)\leq 0$ for all $u\in [0, 1]$, so
    \begin{align*}
        \norm{f}_{\Delta^2,m} 
        & = \frac12\sum_{a=0}^{2^m-1}\absm[\big]{
            2f\rbrm{M_a}
            -f\rbrm{L_a}
            -f\rbrm{R_a}
        } \\
        & \leq \frac12\sum_{a=0}^{2^m-1} \rbrm[\big]{
            \abs{2f\rbrm{M_a}}
            +\abs{f\rbrm{L_a}}
            +\abs{f\rbrm{R_a}}
        } = -\frac12\sum_{a=0}^{2^m-1} \rbrm[\big]{
            {2f\rbrm{M_a}}
            +{f\rbrm{L_a}}
            +{f\rbrm{R_a}}
        } \\
        & \leq -\sum_{k=0}^{2^{m+1}} 2 f(k\delta)
          \leq -\sum_{k=0}^{\infty} 2 f(k\delta)
          = -2 \sum_{k=0}^{\infty} \Psi_{\beta,\psi}(k\eta) \\
        & \leq 4\ln(\sin\psi)^{-1} \sum_{k=0}^\infty \min\cbrm[\big]{
            (k\eta)^{2/\beta},(k\eta)^{-2/\beta}
        },
    \end{align*}
    where we used \eqref{eq:psi-bound-0th} in \cref{lem:log-poly-psi}. The term $k=0$ can be discarded. For any $k\geq 1$, we know that $k\eta\geq 1$, and the summand is just $(k\eta)^{-2/\beta}$; summing it over $k=1,2,\dots$ yields $\eta^{-2/\beta} \zeta(2/\beta)$, where $\zeta(\cdot)$ is the Riemann zeta function. As $\beta\in (1, \frac85]$, this implies
    \begin{align*}
        \norm{f}_{\Delta^2,m} \leq {} & 4\ln(\sin\psi)^{-1} \eta^{-1} \zeta(5/4)\leq 20 \ln(\sin\psi)^{-1} \eta^{-1} \\
        & = 40\ln(\sin\psi)^{-1} \frac{\delta}{2^{-m}}\leq 40\ln(\sin\psi)^{-1} 2^{m-m_0},
    \end{align*}
    where we used \cref{lem:zeta-bound} to bound $\zeta(2/\beta)$.

    \paragraph{Bounding fine levels.} We next bound $\norm{f}_{\Delta^2,m}$ when $m \geq m_0 + 2$.

    Again, let $\eta=\frac\delta\sigma$. Since $m \geq m_0 + 2$, we know that $\eta< 2^{m_0-m}<\frac14$. We split all intervals of the form $[L_a, R_a]$ for $a=0,1,\dots,2^m-1$ into four cases.
    \begin{itemize}
        \item Consider all intervals such that $\{0,\frac\sigma 2,\sigma,2\sigma\}\cap[L_a,R_a]\neq \varnothing$. There are at most 7 such intervals; for each such interval, we apply the second condition from \cref{lem:finite-difference-smoothness}, and using the fact that $f$ is $\frac{4}{\sigma\sin\psi}$-Lipschitz (from \eqref{eq:psi-bound-1st} and the definition of $f$), we can bound $\frac12\absm[\big]{\Delta^2_\delta f(2a\delta)}\leq \frac12 2\cdot \frac{4}{\sigma\sin\psi}\cdot \delta=\frac{4}{\sin\psi} \eta$. All 7 intervals together contribute at most $\frac{28}{\sin\psi}\eta$.
        \item Consider any interval with $0<L_a<R_a<\frac\sigma2$. On the interval, if $u\in [L_a, R_a]$, from \eqref{eq:psi-bound-2nd-left},
        \begin{align*}
            \abs{f''(u)}
            & \leq \frac{1}{\sigma^2}\absm[\big]{\Psi_{\beta,\psi}''(u/\sigma)}
              \leq \frac{1}{\sigma^2}\cdot 24\rbrm{u/\sigma}^{2/\beta-2}
              = \frac{24}{\sigma^{2/\beta}}u^{2/\beta-2}\\
            & \leq \frac{24}{\sigma^{2/\beta}}(2a\delta)^{2/\beta-2}.
        \end{align*}
        The third statement of \cref{lem:finite-difference-smoothness} implies that
        \begin{align*}
            \frac12 \absm[\big]{\Delta^2_\delta f(2a\delta)}
            & \leq \frac12 \frac{24}{\sigma^{2/\beta}}(2a\delta)^{2/\beta-2} \delta^2
            \leq 12\frac{\delta^{2/\beta}}{\sigma^{2/\beta}} a^{2/\beta-2}=12\eta^{2/\beta}a^{2/\beta-2}.
        \end{align*}
        Summing the RHS over $1\leq a<\frac{\sigma}{4\delta}-1=\frac1{4\eta}-1$ yields
        \begin{align*}
            \frac12\sum_{0<L_a<R_a<\frac\sigma2}\absm[\big]{\Delta^2_\delta f(2a\delta)}
            & \leq 12\eta^{2/\beta}\sum_{a=1}^{\floorm{\frac{1}{4\eta}}-1}a^{2/\beta-2} \\
            & \leq 12\eta^{2/\beta}\int_{0}^{1/(4\eta)}a^{2/\beta-2}\dd{a} 
              = 12\eta^{2/\beta}\frac{\rbrm[\big]{\frac{1}{4\eta}}^{2/\beta-1}}{2/\beta-1} \\
            & = \frac{12\cdot 4^{1-2/\beta}}{2/\beta-1}\eta
              \leq 24\sqrt2 \eta \leq 34\eta.
        \end{align*}
        \item Consider any interval with $L_a>2\sigma$. Similarly, if $u\in [L_a, R_a]$, from \eqref{eq:psi-bound-2nd-right},
        \begin{align*}
            \abs{f''(u)}
            & \leq \frac{1}{\sigma^2}\absm[\big]{\Psi_{\beta,\psi}''(u/\sigma)}
              \leq \frac{1}{\sigma^2}\cdot 40\rbrm{u/\sigma}^{-2/\beta-2}
              \leq 40\sigma^{2/\beta} (2a\delta)^{-2/\beta-2}.
        \end{align*}
        Again, using the third statement of \cref{lem:finite-difference-smoothness}, we have
        \begin{align*}
            \frac12 \absm[\big]{\Delta^2_\delta f(2a\delta)}
            & \leq \frac12 40\sigma^{2/\beta} (2a\delta)^{-2/\beta-2} \delta^2
            \leq \frac{5}{2^{5/4}}\frac{\sigma^{2/\beta}}{\delta^{2/\beta}} a^{-2/\beta-2}=\frac52\eta^{-2/\beta}a^{-2/\beta-2}.
        \end{align*}
        Summing the RHS over $a>\frac{2\sigma}{2\delta}=\frac1{\eta}$ yields
        \begin{align*}
            \frac12\sum_{L_a>2\sigma}\absm[\big]{\Delta^2_\delta f(2a\delta)}
            & \leq \frac52\eta^{-2/\beta}\sum_{a=\ceilm{1/\eta}}^{2^m-1}a^{-2/\beta-2} \\
            & \leq \frac52\eta^{-2/\beta}\int_{1/\eta-1}^{+\infty}a^{-2/\beta-2}\dd{a}
              = \frac52\eta^{-2/\beta}\frac{\rbrm[\big]{\frac{1}{\eta}-1}^{-2/\beta-1}}{2/\beta-1} \\
            & \leq \frac52\eta^{-2/\beta}\frac{\rbrm[\big]{\frac{3}{4\eta}}^{-2/\beta-1}}{2/\beta-1} 
              = \frac{(5/2)\cdot (4/3)^{2/\beta+1}}{2/\beta-1}\eta
              \leq \frac{160\sqrt2}{3^{9/4}} \eta \leq 20\eta.
        \end{align*}
        \item Finally, consider all intervals with $\frac\sigma2<L_a<R_a<2\sigma$ such that $\sigma\not\in [L_a,R_a]$. On these intervals, we have $\abs{f''(u)}\leq \frac{1}{\sigma^2}\absm[\big]{\Psi_{\beta,\psi}''(u/\sigma)} \leq \frac{19}{\sigma^2 (\sin\psi)^2}$, and the third statement of \cref{lem:finite-difference-smoothness} guarantees that $\frac12 \absm[\big]{\Delta^2_\delta f(2a\delta)}\leq \frac12\frac{19}{\sigma^2 (\sin\psi)^2}\delta^2\leq \frac{19}{2(\sin\psi)^2}\eta^2$. There are at most $\frac{2\sigma-\frac\sigma2}{2\delta}=\frac34 \eta^{-1}$ such intervals, and their total contribution bounded by $\frac{19}{2(\sin\psi)^2}\cdot \frac34\eta\leq\frac{8}{(\sin\psi)^2}\eta$.
    \end{itemize}

    Totaling all four cases together, we have
    \begin{align*}
        \norm{f}_{\Delta^2,m} 
        & = \frac12\sum_{a=0}^{2^m-1} \absm[\big]{\Delta^2_\delta f(2a\delta)} 
         \leq \frac{28}{\sin\psi}\eta+34\eta+20\eta+\frac{8}{(\sin\psi)^2}\eta \\
        & \leq \rbrm[\Big]{54+\frac{28}{\sin\psi}+\frac{8}{(\sin\psi)^2}}\eta
          < \rbrm[\Big]{54+\frac{28}{\sin\psi}+\frac{8}{(\sin\psi)^2}} 2^{m_0-m}.
    \end{align*}

    \paragraph{Putting everything together.}
    Let $K=2\log_2(\sin \psi)^{-1}+6$. We have $2^K=\frac{64}{(\sin\psi)^2}\geq 54+\frac{28}{\sin\psi}+\frac{8}{(\sin\psi)^2}$ for all $(\sin\psi)^{-1}\geq \frac{2}{\sqrt3}$, so $\norm{f}_{\Delta^2,m}\leq \rbrm[\big]{54+\frac{28}{\sin\psi}+\frac{8}{(\sin\psi)^2}} 2^{m_0-m}\leq 2^{m_0+K-m}$ whenever $m\geq m_0+K$. Therefore,
    \begin{align*}
        \norm{f}_{\Delta^2} 
        & \leq \sum_{m=0}^\infty \norm{f}_{\Delta^2,m} 
          \leq \sum_{m=0}^{m_0-4} \norm{f}_{\Delta^2,m} + \sum_{m=m_0-3}^{m_0+\floor{K}} \norm{f}_{\Delta^2,m}+\sum_{m=m_0+\ceil{K}}^{\infty} \norm{f}_{\Delta^2,m} \\
        & \leq \sum_{m=0}^{m_0-4} 40\ln(\sin\psi)^{-1} 2^{m-m_0} + (\floor{K}+4)\cdot 4\ln(\sin\psi)^{-1} + \sum_{m=m_0+\ceil{K}}^{\infty} 2^{m_0+K-m} \\
        & \leq 5\ln(\sin\psi)^{-1}  + (\floor{K}+4)\cdot 4\ln(\sin\psi)^{-1} + 2 
          \leq (4K+21)\ln(\sin\psi)^{-1}+2 \\
        & \leq (8\log_2(\sin \psi)^{-1}+45)\ln(\sin\psi)^{-1}+2 \\
        & \leq 12(\ln(\sin\psi)^{-1})^2+45\ln(\sin\psi)^{-1}+2,
    \end{align*}
    which proves the final claim.
\end{proof}

\section{Power-law stepsizes in OG}\label{ah:og}

In this section, we provide preliminary theoretical results when our power-law stepsize schedule is applied to the Optimistic Gradient (OG) algorithm. We only derive weaker, randomized geometric-mean results for the single-stepsize version of OG, as the purpose of this section is to demonstrate that our methodology has the potential of generalizing to other algorithms.

\paragraph{Algorithm definition} We use the standard Optimistic Gradient (OG) algorithm:
\begin{align}
    \bigg\{\begin{aligned}
        x_{t+\frac 12} &= x_t-\eta_t\nabla_x \ell(x_{t-\frac 12},y_{t-\frac 12}),\\
        y_{t+\frac 12} &= y_t+\eta_t\nabla_y \ell(x_{t-\frac 12},y_{t-\frac 12}),
    \end{aligned}\quad
    \bigg\{\begin{aligned}
        x_{t+1} &= x_t-\eta_t\nabla_x \ell(x_{t+\frac 12},y_{t+\frac 12}),\\
        y_{t+1} &= y_t+\eta_t\nabla_y \ell(x_{t+\frac 12},y_{t+\frac 12}),
    \end{aligned} \quad t=0,1,2,...
    \label{eq:OG}
\end{align}
with the initialization $(x_{-\frac12},y_{-\frac12})=(x_0,y_0)$. Note that we use the gradient at the last update step in place of the gradient of the extrapolation step. If we write $z_t=(x_t,y_t)$, OG can be concisely represented with the gradient operator $G_\ell$ as
\begin{align}
    z_{t+\frac12}&=z_t-\eta_t G_\ell(z_{t-\frac12}), & %
    z_{t+1}&=z_t-\eta_t G_\ell(z_{t+\frac 12}).\label{eq:og-def}
\end{align}

\paragraph{Reformulation as optimization}
Similar to EG, OG also makes use of only affine operations and calls to the gradient operator $G_\ell$. We claim that OG also satisfies \cref{lem:decompose-translation} and \cref{lem:decompose}; the formal proofs are omitted. They allow us to focus on the $1\times 1$ case, which, similar to \cref{lem:opt-1x1}, leads to the following lemma.
\begin{applemma}\label{lem:og-opt-1x1}
    Let $(\dR, \dR, \ell(x,y)=axy)$ be a bilinear min-max optimization problem. The trajectory of OG on this problem satisfies the following:
    \begin{align}
        \GN(z_T)
        \leq
            \norm{z_0}
                \cdot \abs{a} \prod_{t=0}^{T-1} \sqrt{\frac{4\eta_t^4 a^4+1+\sqrt{(4\eta_t^4 a^4+1)^2-4\eta_t^2 a^2}}{2}}.
        \label{eq:og-1x1}
    \end{align}
\end{applemma}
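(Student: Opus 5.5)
The plan is to follow the proof of \cref{lem:opt-1x1}: identify the plane with $\dC$ via the map $\iota$, so that $G_\ell$ becomes multiplication by $w=ai$. OG then becomes a linear recursion with complex coefficients. Because OG carries memory, the scalar factor $f_t(w)$ of EG is replaced by a $2\times 2$ complex matrix, and I will bound the product of these matrices one factor at a time.

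\textbf{Step 1: the one-step matrix.} Write $c_t=\eta_t w$, so $\abs{c_t}=\eta_t a$. From \eqref{eq:og-def},
\begin{align*}
z_{t+\frac12}=z_t-c_t z_{t-\frac12},\qquad z_{t+1}=(1-c_t)z_t+c_t^2 z_{t-\frac12}.
\end{align*}
Hence the state $u_t=(z_t, z_{t-\frac12})$ evolves as $u_{t+1}=M_t u_t$ with $M_t=\smimatrix{1-c_t & c_t^2\\ 1 & -c_t}$. Since the matrices are complex, $\iota$ acts entrywise and norms are preserved exactly as in \cref{lem:opt-1x1}. The initialization gives $u_0=(z_0,z_0)$. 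Finally $\GN(z_T)=\abs{a}\abs{z_T}\le \abs{a}\norm{u_T}$ in any norm on the state that dominates the first coordinate.

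\textbf{Step 2: choice of norm.} I will bound $\norm{u_T}\le \prod_t \norm{M_t}\,\norm{u_0}$ in a weighted Euclidean norm. The weight must be chosen so that three things hold.
\begin{itemize}
\item The initial vector $u_0=(z_0,z_0)$ has norm comparable to $\abs{z_0}$.
\item The norm still dominates the first coordinate $\abs{z_T}$, as required in Step 1.
\item The operator norm of $M_t$ is exactly the square root of the largest eigenvalue of $M_t^{*}M_t$, which is the expression in \eqref{eq:og-1x1}.
\end{itemize}
Since $c_t$ is purely imaginary, $\abs{1-c_t}^2=1+\abs{c_t}^2$. A direct computation gives $\abs{\det M_t}=\abs{c_t}$, so the largest eigenvalue is $\bigl(\operatorname{tr}+\sqrt{\operatorname{tr}^2-4\abs{\det}^2}\bigr)/2$ with $\abs{\det}^2=\eta_t^2a^2$. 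This matches the $-4\eta_t^2a^2$ term under the square root in \eqref{eq:og-1x1}. What remains is to choose the weighting or change of coordinates so that the trace becomes $4\eta_t^4a^4+1$.

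\textbf{Main obstacle.} The hard step is Step 2: choosing the weighting so that the trace is exactly the stated one. In the raw coordinates the trace is $2+2\eta_t^2a^2+\eta_t^4a^4$. In the coordinates $(z_t, c_t z_{t-\frac12})$ the trace is $1+4\eta_t^2a^2$. The latter suggests that the intended state is a rescaled previous gradient, but the rescaling cannot depend on $\eta_t$ if the factor-by-factor bound is to telescope.

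I would first try $(z_t,\, w z_{t-\frac12})$ with a fixed weight. If that does not reproduce $4\eta_t^4a^4+1$ exactly, I would instead diagonalize $M_t^*M_t$ for a general weight $\kappa$ and pick $\kappa$ to match. Failing that, I would reconcile the trace by exploiting the special initialization $z_{-\frac12}=z_0$.

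Once the per-step factor is established, the product over $t$, multiplied by $\abs{a}\norm{z_0}$, gives \eqref{eq:og-1x1}.
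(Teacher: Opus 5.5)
Your Step 1 is sound: the recursion, the matrix $M_t=\smimatrix{1-c_t & c_t^2\\ 1 & -c_t}$ on $(z_t,z_{t-\frac12})$, and $\abs{\det M_t}=\abs{c_t}$ are all correct. You also correctly see that the proof hinges on one fixed, $\eta_t$-independent change of coordinates. But that is exactly the step you leave open, and none of the candidates you list can supply it.

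Any diagonal reweighting $D=\operatorname{diag}(d_1,d_2)$ leaves the diagonal entry $1-c_t$ of $DM_tD^{-1}$ unchanged. This covers $(z_t,\kappa z_{t-\frac12})$ and $(z_t,w z_{t-\frac12})$. An operator norm dominates the modulus of every entry, so your per-step factor would be at least $\abs{1-c_t}=\sqrt{1+\eta_t^2a^2}>1$. The factor in \eqref{eq:og-1x1}, by contrast, is strictly less than $1$ whenever $0<\eta_t\abs{a}<\frac12$. So no weight can work. Concretely, for $D=\operatorname{diag}(1,\kappa)$ the trace of $(DM_tD^{-1})^*(DM_tD^{-1})$ is $1+2x^2+\abs{\kappa}^2+x^4/\abs{\kappa}^2$ with $x=\eta_t a$, never $1+4x^4$.

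Your last fallback does not help either. The initialization enters only through $u_0$, not through the per-step factor. Relatedly, ``comparable to $\abs{z_0}$'' is not enough, because the claimed constant is exactly $1$; in your raw coordinates, for example, $\norm{u_0}=\sqrt2\norm{z_0}$.

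The missing idea is a shear rather than a scaling. Track the state $(z_{t-\frac12}-z_t,\; z_t)$, i.e., conjugate the recursion written on $(z_{t-\frac12},z_t)$ by $\smimatrix{1&-1\\0&1}$. This produces $\smimatrix{-c_t-c_t^2 & -c_t^2\\ c_t^2 & 1-c_t+c_t^2}$, whose diagonal entry $1-c_t+c_t^2$ is exactly the contracting EG factor. With $c_t=xi$ this is the complex matrix $\smimatrix{x^2-xi & x^2\\ -x^2 & 1-x^2-xi}$. Its squared Frobenius norm is $4x^4+1$ and its determinant has modulus $\abs{x}$, which gives precisely the largest singular value stated in \eqref{eq:og-1x1}.

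The shear does not depend on $\eta_t$, so the per-step bounds telescope. The initialization $z_{-\frac12}=z_0$ makes the initial state $(0,z_0)$, of norm exactly $\norm{z_0}$. The final state's norm dominates $\norm{z_T}$, and $\GN(z_T)=\abs{a}\norm{z_T}$ finishes the proof. This is the paper's argument.
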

\begin{proof}
    Following the proof of \cref{lem:opt-1x1}, we know that $G_\ell$ has the matrix $\smimatrix{0&a\\-a&0}=\iota(ai)$. From \eqref{eq:og-def}, we know that
    \begin{align*}
        \begin{cases}
            z_{t+\frac12} = z_t-\eta_t G_\ell(z_{t-\frac12}),\\
            z_{t+1} = z_t-\eta_t G_\ell(z_{t+\frac12})=z_t-\eta_t G_\ell\rbrm[\big]{
                z_t-\eta_t G_\ell(z_{t-\frac12})
            }=(I-\eta_t G_\ell)z_t+\eta_t^2 G_\ell^2 z_{t-\frac12},
        \end{cases}
    \end{align*}
    which can be concisely written as
    \begin{align*}
        \begin{pmatrix}
            z_{t+\frac12} \\ z_{t+1}
        \end{pmatrix}
        =
        \begin{pmatrix}
            -\eta_t G_\ell     & I \\
             \eta_t^2 G_\ell^2 & I-\eta_t G_\ell
        \end{pmatrix}
        \begin{pmatrix}
            z_{t-\frac12} \\ z_{t}
        \end{pmatrix}.
    \end{align*}
    As $\smimatrix{z_{t-\frac12}-z_t\\z_t}=\smimatrix{1&-1\\0&1}{z_{t-\frac12}\\z_t}$, we have equivalently
    \begin{align*}
        \begin{pmatrix}
            z_{t+\frac12}-z_{t+1} \\ z_{t+1}
        \end{pmatrix}
        & =
        \begin{pmatrix}
            1&-1\\0&1
        \end{pmatrix}
        \begin{pmatrix}
            -\eta_t G_\ell     & I \\
             \eta_t^2 G_\ell^2 & I-\eta_t G_\ell
        \end{pmatrix}
        \begin{pmatrix}
            1&-1\\0&1
        \end{pmatrix}^{-1}
        \begin{pmatrix}
            z_{t-\frac12}-z_{t} \\ z_{t}
        \end{pmatrix} \\
        & = 
        \begin{pmatrix}
            -\eta_t G_\ell-\eta_t^2 G_\ell^2 & -\eta_t^2 G_\ell^2 \\
             \eta_t^2 G_\ell^2               & I-\eta_t G_\ell+\eta_t^2 G_\ell^2
        \end{pmatrix}
        \begin{pmatrix}
            z_{t-\frac12}-z_{t} \\ z_{t}
        \end{pmatrix} \\
        & = 
        \underbrace{\begin{pmatrix}
            \iota(-xi-(xi)^2) & \iota(-(xi)^2) \\
            \iota((xi)^2)               & \iota(1-xi+(xi)^2)
        \end{pmatrix}
        }_{\defeq M_t}
        \begin{pmatrix}
            z_{t-\frac12}-z_{t} \\ z_{t}
        \end{pmatrix},
    \end{align*}
    where we write $x=a\eta_t\in \dR$, and $\iota(xi)=\eta_t G_\ell$. This implies that
    \begin{align*}
        \norm{
            \begin{pmatrix}
                z_{t+\frac12}-z_{t+1} \\ z_{t+1}
            \end{pmatrix}
        } \leq \norm{M_t} \norm{
            \begin{pmatrix}
                z_{t-\frac12}-z_{t} \\ z_{t}
            \end{pmatrix}
        }
    \end{align*}
    
    Without proving, we claim that a block real matrix where each block is $\iota(a_{ij})$ has the same singular values as the complex matrix $\{a_{ij}\}_{i,j}$, with each multiplicity doubled. As a matrix's induced $L_2$ norm is its largest singular value, we have $\norm{M_t}=\norm{m_t}$, where $m_t=\smimatrix{-xi-(xi)^2&-(xi)^2\\x^2&1-x+(xi)^2}=\smimatrix{x^2-xi&x^2\\-x^2&-x^2-xi+1}$. To calculate $\norm{m_t}$, we consider its singular values, which are the eigenvalues of $m_t^* m_t$. We have
    \begin{align*}
        \lambda I - m_t^* m_t
        & = 
        \lambda I - \begin{pmatrix}
            x^2+xi&-x^2\\x^2&-x^2+xi+1
        \end{pmatrix}
        \begin{pmatrix}
            x^2-xi&x^2\\-x^2&-x^2-xi+1
        \end{pmatrix}\\
        & =
        \begin{pmatrix}
            \lambda-2x^4-x^2&-2x^4-2ix^3+x^2\\-2x^4+2ix^3+x^2&\lambda-2x^4+x^2-1
        \end{pmatrix}.
    \end{align*}
    The determinant of the matrix in the equation simplifies to be
    \begin{align*}
        \det(\lambda I - m_t^* m_t)=\lambda^2-(4x^4+1)\lambda+x^2.
    \end{align*}
    The equation $\det(\lambda I - m_t^* m_t)=0$ is quadratic, and solves to
    \begin{align*}
        \lambda=\frac{4x^4+1\pm\sqrt{(4x^4+1)^2-4x^2}}{2}.
    \end{align*}
    Both roots of $\lambda$ are real, and the larger root takes the ``$+$'' in the ``$\pm$'' sign. As the singular values of $m_t$ are the square roots of the roots $\lambda$, we have
    \begin{align*}
        \norm{M_t}=\norm{m_t}&\leq \sqrt{\frac{4x^4+1+\sqrt{(4x^4+1)^2-4x^2}}{2}} \\
        & = \sqrt{\frac{4(\eta_t a)^4+1+\sqrt{(4(\eta_t a)^4+1)^2-4(\eta_t a)^2}}{2}}.
    \end{align*}
    Because the derivation above works for all $t$, we have
    \begin{align*}
        \norm{z_T} \leq \norm{
            \begin{pmatrix}
                z_{T-\frac12}-z_{T} \\ z_{T}
            \end{pmatrix}
        } & \leq
            \norm{M_{T-1}} \norm{M_{T-2}}\cdots \norm{M_{0}} \norm{
            \begin{pmatrix}
                z_{-\frac12}-z_{0} \\ z_{0}
            \end{pmatrix}
        } \\
        & = \norm{M_{T-1}} \norm{M_{T-2}}\cdots \norm{M_{0}} \norm{z_0}.
    \end{align*}
    Together with $\norm{G_\ell(z_T)}\leq \abs{a}\norm{z_T}$, this proves the lemma.
\end{proof}
\begin{remark}
    Unlike \cref{lem:opt-1x1}, this lemma is not tight, as \eqref{eq:og-1x1} is just an inequality. The matrices $\{M_t\}$ (or $\{m_t\}$) in general do not commute with each other, so the exact product $\smimatrix{z_{T-1/2}-z_T\\z_T}=M_{T-1} M_{T-2}\cdots M_0\smimatrix{0\\z_0}$ is not permutation-invariant, which prevents us from having an exact contraction formula. Hence, this section only provides a weak analysis on OG; \cref{ah:experiment-more} shows that OG empirically has a faster convergence rate than predicted in this section.
\end{remark}

The rest of the analysis follows the same framework as the EG cases before. Minimizing \eqref{eq:og-1x1} can be formulated as the following optimization problem:

\begin{align}
    \minimize_{\{\eta_t\}_{t\in [T]}}\; \maximize_{a\in [0,L]} \;
    \calROG_{T}(\{\eta_t\}_{t\in [T]},a)\defeq
    {
        a \prod_{t=0}^{T-1} \sqrt{\frac{4\eta_t^4 a^4+1+\sqrt{(4\eta_t^4 a^4+1)^2-4\eta_t^2 a^2}}{2}}
    }.
    \label{eq:og-minmax}
\end{align}

If $\eta_0,\eta_1,\dots$ are sampled from a distribution $\calE$, we again define $\calROG_T(\calE, a)$ as the geometric mean, or formally,
\begin{align}
    \ln \calROG_{T}(\calE, a)
    & \defeq
    \EE_{\eta_0,\dots,\eta_{T-1}\sim \calE}
    \sbrm[\Bigg]{
    \ln\rbrm[\bigg]{
        a \prod_{t=0}^{T-1} \sqrt{\frac{4\eta_t^4 a^4+1+\sqrt{(4\eta_t^4 a^4+1)^2-4\eta_t^2 a^2}}{2}}
    }} \\
    & =
    {
        \ln a + \frac T2\EE_{\eta\sim \calE}\sbrm[\bigg]{\ln\frac{4\eta^4 a^4+1+\sqrt{(4\eta^4 a^4+1)^2-4\eta^2 a^2}}{2}}
    },\label{eq:calL-og}
\end{align}
and, when $\calE=\Pareto(\etam/L, \beta)$,
\begin{align}
    & \EE_{\eta\sim \calE}\sbrm[\bigg]{\ln\frac{4\eta^4 a^4+1+\sqrt{(4\eta^4 a^4+1)^2-4\eta^2 a^2}}{2}}\notag\\
    {}={}& \frac{\beta}{2}\rbrm[\Big]{ \frac{\etam a}{L} }^{\beta} \int_{(\etam a/L)^2}^{\infty} \ln\frac{4t^2+1+\sqrt{(4t^2+1)^2-4t}}{2}t^{-\beta/2-1}\dd{t}.\label{eq:pareto-contraction-og}
\end{align}
In the last step, we use the substitution $t=\eta^2a^2$. These two equations are OG versions of \eqref{eq:calL-def} and \eqref{eq:pareto-contraction-int}, respectively.

\paragraph{Solving for a Pareto distribution}
Similar to the EG case, we define the integration in the RHS of \eqref{eq:pareto-contraction-og} as
\begin{align*}
    \IOG_{\beta}(s)=\int_{s}^{\infty} \ln\frac{4t^2+1+\sqrt{(4t^2+1)^2-4t}}{2}t^{-\beta/2-1}\dd{t},
\end{align*}
and we need to find $\IOG_{\beta}(s)<0$ for sufficiently small $s$. The logarithmic term $\ln\frac{4t^2+1+\sqrt{(4t^2+1)^2-4t}}{2}$ is negative on $(0, \frac14)$ and positive on $(\frac14,+\infty)$. Similar to the EG cases, the integral $\IOG_{\beta}$ can be evaluated into a closed form when $1<\beta<2$; however, this proof is significantly more complicated and requires special functions; see \cref{lem:m-func-og}, where $\IOG_\beta(0)$ is obtained with the substitution $s=\beta/2$:
\begin{align*}
    \IOG_\beta(0)=\begin{aligned}[t]
        &\frac{2^{\beta/2+1}\pi}{\beta\sin(\pi \beta/4)}
        \ghg43\rbrm[\Big]{
            -\frac{\beta}{4},\frac{\beta}{12},\frac{\beta+4}{12},\frac{\beta+8}{12};
            1, \frac12, \frac12;
            \frac{27}{64}
        }\\
        &{}-
        \frac{2^{\beta/2-4}(\beta+2)\pi}{\cos(\pi \beta/4)}
        \ghg43\rbrm[\Big]{
            \frac{2-\beta}{4},\frac{\beta+6}{12},\frac{\beta+10}{12},\frac{\beta+14}{12};
            1, \frac32, \frac32;
            \frac{27}{64}
        },
        \end{aligned}
\end{align*}
where $\ghg43$ is the generalized hypergeometric function. Using numerical calculation, we can solve for $\IOG_\beta(0)<0$; the result is $\beta>\beta^\star\approx 1.732$. Similar to the single- and double-stepsize EG analysis before, we have
\begin{applemma}\label{lem:og-cont}
    For any $\beta\in (\beta^\star, 2)$, there exists $\etam, C\in \dR_+$, and distribution $\calE=\Pareto(\etam/L, \beta)$, such that for all $a\in (0, L]$,
    \begin{align*}
        \calROG_{T}(\calE,a)\leq CLT^{-1/\beta}.
    \end{align*}
    As a corollary, for any $L$-smooth biaffine min-max optimization instance $(\dR^n, \dR^m, \ell)$ with an NE $z^\star$, if OG is initiated at $z_0$ with $\norm{z_0-z^\star}\leq R$ and samples its stepsizes $\eta_0,\eta_1,\dots$ from $\calE$, its gradient norm will satisfy
    \begin{align*}
        \EE[\ln \GN(z_T)]\leq \ln\sbrm[\big]{CLRT^{-1/\beta}}.
    \end{align*}
\end{applemma}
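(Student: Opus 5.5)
The plan is to follow the proof of \cref{lem:eg-cont} (and its double-stepsize analogue \cref{lem:dseg-cont}) almost verbatim, with $g(t)\defeq\ln\frac{4t^2+1+\sqrt{(4t^2+1)^2-4t}}{2}$ in place of $\ln(1-t+t^2)$. The proof has three parts: derive a one-step contraction bound for the Pareto distribution, optimize over $a$, and transfer the result to general instances.

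First, take $\IOG_\beta(0)<0$ for $\beta\in(\beta^\star,2)$ as given. This follows from the closed form of \cref{lem:m-func-og} together with the numerical determination of $\beta^\star$. Convergence of the Mellin integral at $0$ needs $g(t)=O(t)$, and at $\infty$ needs $g(t)=O(\ln t)$ with $\beta>0$; both hold.

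Second, derive the analogue of \eqref{eq:Ibeta-bound}. By \cref{lem:og-first-order-0}, $-g(t)\le t$ for all $t>0$. Hence
\[
\IOG_\beta(s)=\IOG_\beta(0)-\int_0^s g(t)t^{-\beta/2-1}\dd{t}\le \IOG_\beta(0)+\frac{s^{1-\beta/2}}{1-\beta/2}.
\]
Choose $\etam$ exactly as in \eqref{eq:step:eg-cont-proof-first}, namely so that $\frac{(\etam^2)^{1-\beta/2}}{1-\beta/2}=-\IOG_\beta(0)/2$.

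Third, establish monotonicity of $\IOG_\beta$. Its derivative $-g(s)s^{-\beta/2-1}$ is positive on $(0,\frac14)$, since $g<0$ there. So on $(0,\frac14)$ we get $\IOG_\beta(s)\le\IOG_\beta(\etam^2)$ for $s\le\etam^2$. For this I additionally need $\etam^2\le\frac14$, which I enforce by taking the minimum with $\frac12$, as in \cref{lem:dseg-cont}. This yields $\IOG_\beta((\etam a/L)^2)\le\frac12\IOG_\beta(0)<0$ for all $a\in[0,L]$.

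Substituting into \eqref{eq:pareto-contraction-og} and \eqref{eq:calL-og} gives
\[
\ln\calROG_T(\calE,a)\le \ln L+\ln b+\frac{T\beta\etam^\beta\IOG_\beta(0)}{8}b^\beta,\qquad b=a/L.
\]
Maximizing over $b$ with \cref{lem:ln-plus-monomial} then yields $\ln L-\frac1\beta\ln T+\ln C$, with $C$ defined as in \eqref{eq:lem-eg-cont-final}.

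For the corollary, I use the (claimed) OG analogues of \cref{lem:decompose-translation} and \cref{lem:decompose}. These reduce a general instance to independent $1\times1$ instances with singular values $\sigma_i\in(0,L]$, plus a stationary part; the stationary part needs care because OG uses $z_{-1/2}=z_0$, but $G$ vanishes on it. Then apply \cref{lem:og-opt-1x1} to each block. The difficulty is that the geometric-mean bound holds per $a$, while $\GN(z_T)^2=\sum_i\GN_i^2$. To handle this, I would bound $\GN(z_T)\le \norm{z_0-z^\star}\max_i\sigma_i\prod_t(\cdot)$, pointwise for each realization of the stepsizes.

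I expect the main obstacle to be exchanging the expectation and the maximum over $i$ in the last step. In the EG setting the paper applies \cref{lem:opt} pathwise, and the expected log of the max-over-$a$ is not obviously bounded by the max of expected logs. I would handle this the way the paper implicitly does: interpret the corollary's bound as holding for the worst singular direction, or else for rank-one instances. Failing that, I would take $\ln\sum_i w_i X_i\le$ \ldots\ via Jensen only if the paper's earlier corollaries accept this; otherwise I would state the bound per fixed $a$. All remaining steps are routine copies of earlier computations.
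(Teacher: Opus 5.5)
Your proof of the main inequality follows the paper's proof. The paper also takes $\IOG_\beta(0)<0$ from the numerics, derives $\IOG_\beta(s)\le\IOG_\beta(0)+\frac{s^{1-\beta/2}}{1-\beta/2}$ from \cref{lem:og-first-order-0}, chooses the same $\etam$, and defers the rest to \cref{lem:eg-cont}. Your monotonicity step and the cap $\etam\le\frac12$ are harmless but unnecessary. \Cref{lem:og-first-order-0} holds for every $t>0$, so that bound is valid for all $s>0$ and is itself increasing in $s$; this gives $\IOG_\beta(s)\le\frac12\IOG_\beta(0)$ for all $s\le\etam^2$ directly. Moreover, the paper's $\etam$ is automatically below $\frac12$, because $-\IOG_\beta(0)\le\int_0^{1/4}t^{-\beta/2}\,\mathrm{d}t$.

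The corollary is where your proposal has a gap, and the paper's proof has the same gap rather than filling it. The paper claims the bound for every instance, not only rank-one ones. Its only justification is that the rest ``follows identically'' to \cref{lem:eg-cont}, where the corollary is called a direct consequence of \cref{lem:opt}. As you noticed, that step needs $\EE[\max_i Y_i]\le\max_a\EE[Y_a]$, with $Y_a=\ln a+\frac12\sum_t g(\eta_t^2a^2)$. This is the wrong direction. It is legitimate only when all of $z_0-z^\star$ sits on one singular value, e.g.\ $1\times 1$ instances.

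Your fallbacks do not prove the stated claim. Restricting to rank-one instances or a fixed $a$ changes the statement. Jensen cannot help, because it requires arithmetic means such as $\EE[\prod_t e^{g(\eta_t^2\sigma_i^2)}]$. These are infinite, since $e^{g(t)}\ge\max\{\frac12,2t^2\}$ while $\EE[\eta^4]=\infty$ for $\beta<2$.

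Under the paper's convention that $\sim$ fixes only marginals, the general claim is in fact false. Let every $\eta_t$ equal one Pareto draw, and take singular values $L$ and $LT^{-10}$ with $z_0-z^\star$ split evenly between them. With probability $\etam^\beta$ we have $\eta L\ge 1$. On that event, constant-stepsize OG on the top block has an iteration matrix of spectral radius above $1.9$, and the initial vector has a nondegenerate component along the expanding eigenvector. Otherwise, the small block keeps $\ln\GN(z_T)\ge-10\ln T-O(1)$. Hence $\EE[\ln\GN(z_T)]$ grows linearly in $T$.

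With independent stepsizes the corollary can be repaired:
\begin{itemize}
\item Use $\EE[\ln\GN(z_T)]\le\frac1\lambda\ln\EE[\GN(z_T)^\lambda]$ for a small $\lambda\in(0,\beta/2)$.
\item Split over blocks by subadditivity of $x\mapsto x^{\lambda/2}$.
\item Factor $\EE\prod_t e^{\lambda g/2}=(\EE e^{\lambda g/2})^T$.
\item Show $\EE[e^{\lambda g(\eta^2a^2)/2}]\le 1-c(a/L)^\beta$ by the same Mellin argument as for $\IOG_\beta$, with $\etam$ re-chosen. This works because $\frac1\lambda\int_0^\infty(e^{\lambda g(t)/2}-1)t^{-\beta/2-1}\,\mathrm{d}t\to\frac12\IOG_\beta(0)<0$ as $\lambda\to0$.
\end{itemize}
This yields the $T^{-1/\beta}$ rate, but with a constant growing polynomially in the rank. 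A rank-free constant would require bounding $\EE[\sup_a Y_a]$, which neither your proposal nor the paper does.
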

\begin{proof}
    Assume $\beta\in(\beta^\star, 2)$; from the definition of $\beta^\star$ above, we know that $\IOG_\beta(0)<0$. We have
    \begin{align*}
        \IOG_\beta(s)
        & = \IOG_\beta(0)-\int_{0}^{s} \ln\frac{4t^2+1+\sqrt{(4t^2+1)^2-4t}}{2}t^{-\beta/2-1}\dd{t} \\
        & \leq \IOG_\beta(0) - \int_{0}^{s} (-t) t^{-\beta/2-1}\dd{t}\tag{\cref{lem:og-first-order-0}} \\
        & \leq \IOG_\beta(0) + \int_{0}^{s} t^{-\beta/2}\dd{t}
          = \IOG_\beta(0)+\frac{s^{1-\beta/2}}{1-\beta/2}.
    \end{align*}
    Therefore, if we let $\etam=\rbrm[\big]{-(1-\frac\beta2)\frac{\IOG_\beta(0)}{2}}^{1/(2-\beta)}>0$, everything follows identically to the proof of \cref{lem:eg-cont} (from \eqref{eq:step:eg-cont-proof-first} onwards). We omit the rest of the proof.
\end{proof}

This lemma demonstrates that our randomized analysis generalizes to OG as well, achieving a $\calO(T^{-1/\beta^\star+\eps})\approx \calO(T^{-0.577})$ rate, which beats the $\calO(T^{-1/2})$ rate achieved by OG with a fixed stepsize. We believe that our other EG results, including the constant factor optimization by using a point-mass-Pareto-mixture and the discretization, are applicable for OG as well; however, we decide not to include them in the paper due to the analysis being suboptimal, and also to avoid adding tediousness to the already long appendices.

The suboptimality of OG appears to be an artifact of our analysis. This section only predicts a $\calO(T^{-1/\beta})$ for $\beta>\beta^\star\approx 1.732$; in comparison, in our numerical experiment (see \cref{fig:gradient-norm-compare-og} of \cref{ah:experiment-more} below), OG converges at a $\calO(T^{-0.66})$ rate with a $\beta=100/66\approx 1.515$ distribution, suggesting that there is still room for improvement for the theory of stepsize tuning for OG.

\section{Experiment Setup and Results}\label{ah:experiment-more}

We implement all stepsize schedules and run numerical experiments on them. The experiments are designed to mainly verify the polynomial growth rate of the gradient norm, i.e., the slope in the log-log plots.
The source code of the experiments are available in the GitHub repository \url{https://github.com/EtaoinWu/powerlaw-eg}.

\subsection{Experiment setup}

\paragraph{Instance generation} For $1\times 1$ problems with $\ell(x,y)=axy$, we randomly sample $a\in [a_{\mathrm{min}} L, L]$ for $a_{\mathrm{min}}=\frac{1}{100T}$. The distribution is uniform in log, i.e., $\ln a \sim U[\ln(a_{\mathrm{min}} L), \ln L]$. The data is generated this way because our theory predicts the hard instance to be $a\approx \Theta(T^{-2/3})$ for single-stepsize power-law EG and $a\approx\Theta(T^{-1})$ for double-stepsize power-law EG; sampling $a$ from a uniform distribution in $[0, 1]$ will make hard instances unlikely, resulting in overly optimistic performances of the algorithms.

For the same reason, for $n\times m$ problems, we generate the matrix $A$ by randomly sampling $r=\min\{n,m\}$ singular values independently from $\ln\sigma_i\sim U[\ln(a_{\mathrm{min}} L), \ln L]$, and sample the singular vector directions by randomly generating orthogonal vectors using the Haar measure. We sample a saddle point $x^\star, y^\star$ uniformly among all points with a distance $R$ from the origin. A prescribed saddle point is necessary, because otherwise the instance may have no solution (e.g., $\min_x \max_y y$ is biaffine and has no solution). 
Due to the affine nature of the step, scaling isn't important, and we choose $L=1, R=1$ throughout.

\paragraph{Stepsize schedule} For our algorithms, We use the stepsize distributions specified in \cref{lem:eg-mixture} and \cref{lem:dseg-mixture}, i.e., the mixtures of a Pareto and a point mass, to optimize for the constant factors. We set $\etam=\frac{1}{\sqrt2}$ for both. The actual stepsize schedule in the experiment can be seen in \cref{fig:vdc-schedule-mixture}; we can still clearly see the dyadic structure. We also set $\eta=\frac{1}{\sqrt2 L}$ for our baseline algorithms with fixed stepsizes (EG, and EAG of \citet{yoon2021accelerated}). All setups of single-stepsize EG will have $\beta=100/66$, and all setups of double-stepsize EG will have $\beta=100/99$, corresponding to $\calO(T^{-0.66})$ and $\calO(T^{-0.99})$ convergence rates respectively.

\paragraph{Result compilation} For each experiment, $K$ individual random instances $\{\ell_i\}_{i\in [K]}$ will be generated. For deterministic stepsize schedules and (deterministic) baseline algorithms, the worst-case gradient norm is reported --- for each $t$, we show $\max_{i\in [K]} \GN_{\ell_i}(z_t)$ for $t=1,\dots, T$. For randomized i.i.d. stepsize schedules, each instance is repeated $Q$ times, and their geometric mean is counted; this is then maximized over $K$ instances. The geometric mean is chosen because it corresponds to our convergence theorems on $\EE[\ln\GN(z_T)]$.

The dashed lines are added to represent different slopes visually, which correspond to different values of $k$ in the growth rate $\calO(T^{-k})$. The intercepts of the lines have no specific meaning; they are chosen such that dashed lines are close to, but do not overlap with, the curves.

\subsection{Experimental Results}
\paragraph{Validating main theorems} To validate the claims of \cref{thm:main-eg} and \cref{thm:main-dseg}, we compared EG with constant stepsize, single-stepsize power-law EG, double-stepsize power-law EG, and EAG, on $K=128$ random games of $n\times m=4\times 4$ for a time horizon of $T=2\times 10^6$. The results is shown in \cref{fig:gradient-norm-compare}. The dotted lines are added for reference; their slope in the log-log plot represents the polynomial growth rate. The result confirms that each of our algorithms behaves closely to our prediction.

\paragraph{Validating dimension independence} Our stepsize schedules and analysis are completely agnostic to the dimension of the vectors. To show that the growth rate is irrelevant to the dimensions, we repeated the same setup, but with $n\times m=100\times 128$ and $T=10^5$. The results in \cref{fig:gradient-norm-compare-large} shows that the last-iterate convergence trends are very similar, confirming that our analysis works across multiple dimensions.

\paragraph{Testing generalization to OG} Due to the similarity of EG and OG algorithms, we also applied our stepsize schedule to the OG family of algorithms. The base stepsize is changed to $\frac{1}{2L}$ for all three algorithms (OG with a fixed stepsize, OG with a power-law stepsize with $\beta=100/66$, and the AOG algorithm of \citet{cai2023doubly}); all other parameters are identical to our first experiment. The results are visualized in \cref{fig:gradient-norm-compare-og}. The AOG algorithm by \citet{cai2023doubly} is a variant of OG with anchoring, achieving also a last-iterate convergence of $\calO(T^{-1})$.

As analyzed in \cref{ah:og}, the OG algorithm (with a single gradient call and thus a single stepsize per iteration) can be accelerated to $\calO(T^{-0.577})$.
Empirically, OG achieves a better rate of $\calO(T^{-0.66})$, matching the convergence of EG.
This result verifies and improves upon the theory of the previous section, showing potential room for improvement in the theoretical analysis of OG.

\begin{figure}[tbh]
    \centering
    \includegraphics[width=.875\linewidth]{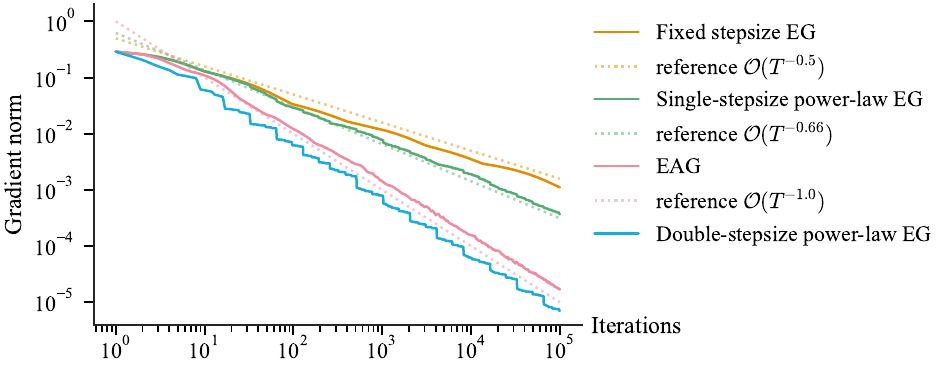}
    \caption{A repetition of \cref{fig:gradient-norm-compare} with larger matrices.
    }
    \label{fig:gradient-norm-compare-large}
\end{figure}

\begin{figure}[tbh]
    \centering
    \includegraphics[width=.875\linewidth]{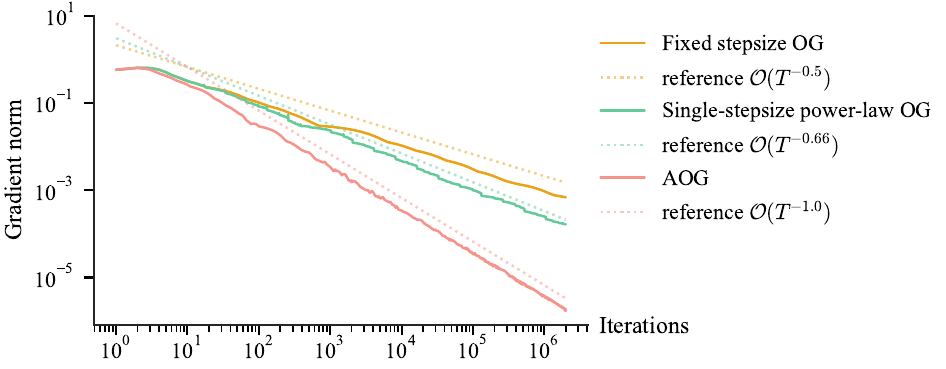}
    \caption{Last iterate convergence of OG with a fixed stepsize (orange), OG with power-law stepsize with $\beta=1/0.66$ (green), and the AOG algorithm of \cite{cai2023doubly} (pink). X-axis: iteration number $T$; Y-axis: $\GN(z_T)$. Both axes are displayed in log scale.
    The three dashed lines are the theoretical growth rates of the 3 algorithms for reference.
    }
    \label{fig:gradient-norm-compare-og}
\end{figure}

\end{document}